\newcommand{\C}{\mathbb{C}}
\newcommand{\Z}{\mathbb{Z}}
\newcommand{\IR}{\mathbb{R}}
\newcommand{\IC}{\mathbb{C}}
\newcommand{\IZ}{\mathbb{Z}}
\newcommand{\IN}{\mathbb{N}}
\newcommand{\IQ}{\mathbb{Q}}
\newcommand{\IG}{\mathbb{G}}
\newcommand{\IQbar}{\overline{\mathbb{Q}}}
\newcommand{\mat}[1]{\mathrm{Mat}_{#1}}
\newcommand{\gl}[1]{\mathrm{GL}_{#1}}
\newcommand{\lcm}[1]{\mathrm{lcm}({#1})}
\newcommand{\be}{{\boldsymbol{e}}}
\newcommand{\mf}{{f}}
\newcommand{\bzeta}{{\boldsymbol{\zeta}}}
\newcommand{\bfeta}{{\boldsymbol{\eta}}}
\newcommand{\bxi}{{\boldsymbol{\xi}}}
\newcommand{\ord}{\mathrm{ord}}
\newcommand{\vi}{{r}} %used to be i
\newcounter{maincounter}
\numberwithin{maincounter}{section}
\numberwithin{equation}{section}
\newtheorem{thm}[maincounter]{Theorem}
\newtheorem{lemma}[maincounter]{Lemma}
\newtheorem{corol}[maincounter]{Corollary}
\newtheorem{conj}[maincounter]{Conjecture}
\newtheorem{defin}[maincounter]{Definition}
\newtheorem{propo}[maincounter]{Proposition}
\newcommand{\disc}[1]{\mathrm{disc}({#1})}
\newcommand{\dist}[1]{\mathrm{dist}({#1})}
\newcommand{\gal}[1]{\mathrm{Gal}({#1})}
\newcommand{\cD}{\mathcal{D}}
\newcommand{\cE}{\mathcal{E}}
\newcommand{\cBprim}{{\mathcal{B}}}
\newcommand{\ssm}{\backslash}
\newcommand{\hproj}[1]{h({#1})}
\newcommand{\ma}[1]{m({#1})}
\newcommand{\vol}[1]{{\mathrm{vol}}({#1})}
\newcommand{\sv}{r}
\newcommand{\rk}[1]{{\mathrm{rk}}({#1})}
\newcommand{\GammaN}{{(\IZ/N\IZ)^\times}}
\newcommand{\GammaM}{{(\IZ/M\IZ)^\times}}
\newcommand{\GammaE}{{(\IZ/E\IZ)^\times}}
\newcommand{\GammaNp}{{(\IZ/N'\IZ)^\times}}
\newcommand{\GammaNi}{{(\IZ/N_i\IZ)^\times}}
\newcommand{\Gammaf}{{(\IZ/f\IZ)^\times}}
\newcommand{\essatoral}{{essentially atoral}}  %%% always use with an
\def\imod#1{\allowbreak\mkern10mu({\operator@font mod}\,\,#1)}
\newif\ifextracontent
\newcommand{\refcomment}[2]{{#2}}
\begin{document}

\title[Galois orbits and atoral sets]{Galois orbits of torsion points
near atoral sets}
\date{\today}
\author{V. Dimitrov and P. Habegger}

\address{Department of Mathematics, University of Toronto, 40 St. George Street, 
Toronto ON, M25 2E5, Canada}
\email{vesselin.dimitrov@gmail.com}
\address{Department of Mathematics and Computer Science, University of Basel, Spiegelgasse 1, 4051 Basel, Switzerland}
\email{philipp.habegger@unibas.ch}

%%  	11J83   	 	Number theory/ 	Diophantine
%%  	approximation, transcendental /        number theory Metric
%%  	theory

%% 	11R06   	PV-numbers and generalizations; other special
%% 	algebraic numbers; Mahler measure

%  	14G40   	Algebraic geometry / Arithmetic problems.
%  	Diophantine geometry / Arithmetic varieties and schemes;
%  	Arakelov theory; heights

%%      37A45 Dynamical systems and ergodic theory /  Relations with number theory and harmonic analysis

%       37P30   	Dynamical systems and ergodic theory /
%       Arithmetic and non-Archimedean dynamical systems /
%       Height functions; Green functions; invariant measures

\subjclass[2010]{11J83, 11R06, 14G40,  37A45, 37P30}

\begin{abstract}
  We prove that the Galois equidistribution of torsion points of the
  algebraic torus $\mathbb{G}_{m}^d$ extends to the singular test
  functions of the form $\log{|P|}$, where $P$ is a Laurent polynomial
  having algebraic coefficients that vanishes on the unit real $d$-torus
  in a set whose Zariski closure in $\IG_m^d$ has codimension at least
  $2$. Our result includes a power
  saving quantitative estimate of the decay rate of the
  equidistribution. It refines an ergodic theorem of Lind, Schmidt, and
  Verbitskiy, of which it also supplies a purely Diophantine proof. As
  an application, we confirm Ih's integrality finiteness conjecture on
  torsion points for a class of atoral divisors of~$\mathbb{G}_m^d$.
\end{abstract}

\maketitle

\tableofcontents

\section{Introduction}
%% PH: revised April 8, 2019

\subsection{Main results}

Let $d\ge 1$ be an integer and let $\IG_m^d$ denote the
$d$-dimensional algebraic torus with base field $\IC$. We will
identify $\IG_m^d$ with $(\IC\ssm\{0\})^d$, the group of its
$\IC$-points.

Let $\bzeta\in\IG_m^d$ be a \emph{torsion point}, \textit{i.e.}, a
point of a finite order. We define
\begin{equation}
\label{def:deltazeta}
\delta(\bzeta) = \inf \bigl\{|a| : a \in\IZ^d\ssm\{0\} \text{ with }\bzeta^a = 1\bigr\}
\end{equation}
where, here and throughout the article, $|\cdot|$ denotes the
maximum-norm; we refer to Section \ref{sec:notation} for the notation
$\bzeta^a$.

It is well-known that the Galois orbit $\{\bzeta^\sigma
: \sigma \in \gal{\IQ(\bzeta)/\IQ} \}$ becomes equidistributed in
$\IG_m^d$ with respect to the Haar measure as
$\delta(\bzeta)\rightarrow\infty$. More precisely, if $f:
\IG_m^d\rightarrow \IR$ is a continuous  function with compact support, then
\begin{equation}
  \label{def:equidistribution}
\frac{1}{[\IQ(\bzeta):\IQ]} \sum_{\sigma\in \gal{\IQ(\bzeta)/\IQ}}
f(\bzeta^\sigma) \rightarrow \int_{[0,1)^d} f(\be(x)) dx
\end{equation}
as $\delta(\bzeta)\rightarrow\infty$
where
\begin{equation}
  \label{eq:defbex}
 \be(x) = \left(e^{2\pi \sqrt{-1}x_1}, \ldots, e^{2\pi \sqrt{-1}x_d}\right)
\end{equation}
for $x=(x_1,\ldots,x_d)\in\IR^d$.

Our aim is to investigate the equidistribution result for test
functions $f = \log|P|$ where $P$ is a Laurent polynomial in $d$
unknowns and with algebraic coefficients. Such $P$ may vanish on
$(S^1)^d$, where $S^1 = \{z\in\IC : |z|=1\}$ is the unit circle, and
so $f$ is not defined everywhere. But for $\delta(\bzeta)$ large in
terms of $P$, Laurent's Theorem~\cite{Laurent} 
also known as the Manin--Mumford
Conjecture for $\IG_m^d$, implies that $P$ does not vanish at any
conjugate of $\bzeta$.
\refcomment{1}{See also \cite{SarnakAdams} for 
another proof by Sarnak and Adams.}
Moreover, the integral of $f$ over $(S^1)^d$
exists as the singularity is merely logarithmic. It is known as the
\textit{Mahler measure}
\begin{equation*}
%  \label{def:mahlermeasure}
  m(P) = \int_{[0,1)^d} \log |P(\be(x))| dx,
\end{equation*}
see for instance Section 3.4 in~\cite{Schinzel} for the convergence of
this integral for arbitrary $P\in \IC[X_1^{\pm 1},\ldots,X_d^{\pm
1}]\ssm\{0\}$.

 % that is \textit{\essatoral{}}, a property we will
 % introduce shortly.

A \textit{torsion coset} of $\IG_m^d$ is the translate of a connected algebraic
subgroup of $\IG_m^d$ by a point of finite order.
We call a torsion coset \textit{proper} if it does not equal $\IG_m^d$.

We call
  $P\in \IC[X_1^{\pm 1},\ldots,X_d^{\pm
1}]\ssm\{0\}$ \textit{\essatoral{}} if the Zariski closure of
\begin{equation*}
%\label{eq:zerosPunitcircle}
\{(z_1,\ldots,z_d) \in (S^1)^d : P(z_1,\ldots,z_d)=0\}
\end{equation*}
in  $\IG_m^d$ is a finite union of irreducible algebraic sets of
 codimension at
least $2$ and proper torsion cosets.

For example, if  $d=1$
then  $P$ is \essatoral{} if and only if it does not vanish
at any point of infinite multiplicative order in $S^1$.

\refcomment{3}{Lind--Schmidt--Verbitskiy
define the notion of an \textit{atoral} Laurent polynomial}
$P\in \IZ[X_1^{\pm 1},\ldots,X_d^{\pm 1}]\ssm\{0\}$
in  Definition
2.1~\cite{LSV:13}.
An atoral Laurent polynomial  is \essatoral{} in our sense.
Moreover, if $P$ is  irreducible
then it  is atoral if and only if the
intersection of its zero locus with $(S^1)^d$ has dimension at most
$d-2$ \refcomment{2}{as a
semi-algebraic set}, cf. by Proposition
2.2~\cite{LSV:13}.
A related, but not quite equivalent, definition of atoral Laurent
polynomials with complex coefficients was introduced earlier by
Agler--McCarthy--Stankus~\cite{AMS:06}.

% If $P$ is an irreducible element of $\IZ[X_1^{\pm 1},\ldots,X_d^{\pm
%     1}]$ and is atoral in the sense of
% Lind--Schmidt--Verbitskiy, then $P$ is \essatoral{}. %%  in our
% %% sense, cf. the proof of Proposition 2.2~\cite{LSV:13}. 

Let $\IQbar$ denote the algebraic closure of $\IQ$ in $\IC$. We are
ready to state our first result.

\begin{thm}
\label{thm:main}
For each \essatoral{} 
  $P\in \IQbar[X_1^{\pm 1},\ldots,X_d^{\pm 1}]\ssm \{0\}$  %is \essatoral{}.
  there exists $\kappa >0$ with the following property. 
  Suppose $\bzeta\in\IG_m^d$ has finite  order
with $\delta(\bzeta)$  sufficiently large.
  Then $P(\bzeta^\sigma)\not=0$ for all $\sigma\in
\gal{\IQ(\bzeta)/\IQ}$ and
\begin{equation*}
\frac{1}{[\IQ(\bzeta):\IQ]} \sum_{\sigma \in \gal{\IQ(\bzeta)/\IQ}} \log |P(\bzeta^\sigma)|
   = m(P) +O(\delta(\bzeta)^{-\kappa})
\end{equation*}
as $\delta(\bzeta)\rightarrow\infty$, where the implicit constant
depends only on $d$ and $P$.
\end{thm}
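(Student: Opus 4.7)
The plan is to combine a quantitative equidistribution for smooth test functions with a power-saving Diophantine avoidance statement preventing Galois conjugates of $\bzeta$ from clustering near $\zeroset{P}$. First, factor $P$ over $\IQbar$; since both $m(P)$ and the Galois-orbital average of $\log|P|$ are additive in factorizations and each irreducible factor inherits the \essatoral{} property (the Zariski-closure condition passes to irreducible components), we may assume $P$ is irreducible. The hypothesis then splits into two subcases. In the \emph{cyclotomic subcase}, when the Zariski closure of~\eqref{eq:zerosPunitcircle} is a proper torsion coset, the classification of irreducible hypersurfaces that contain a torsion coset forces $P$, up to a unit monomial, to be a generalized cyclotomic polynomial $\Phi_n(X^a)$ with primitive $a\in\IZ^d$; then $m(P)=0$ and the Galois sum reduces, after the substitution $Y = X^a$, to the one-dimensional sum $\sum_\sigma\log|\Phi_n(\xi^\sigma)|$ over conjugates of a root of unity $\xi$, which is controlled quantitatively by classical cyclotomic-resolvent and discriminant estimates. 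Henceforth I assume the \emph{geometric subcase}, where the Zariski closure has codimension at least two and so $\zeroset{P} := \{P=0\}\cap (S^1)^d$ has real codimension at least two in the $d$-torus.

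Next I introduce a smoothing parameter $\epsilon\in(0,1)$ and write $\log|P| = f_\epsilon + R_\epsilon$, where $f_\epsilon$ is a smooth approximant agreeing with $\log|P|$ outside the $\epsilon$-tube $U_\epsilon$ around $\zeroset{P}$, and $R_\epsilon$ is supported on $U_\epsilon$ with $|R_\epsilon| = O(|\log\epsilon|)$. The codimension-two bound on $\zeroset{P}$ yields $\vol{U_\epsilon} = O(\epsilon^2|\log\epsilon|^{c_0})$, so the contribution of $R_\epsilon$ to $m(P)$ is $O(\epsilon^2|\log\epsilon|^{c_0+1})$. For the smooth piece, the standard Fourier-analytic quantitative equidistribution of torsion orbits on $(S^1)^d$ --- using that $\sum_\sigma (\bzeta^\sigma)^n$ is either zero or is concentrated on those $n\in\IZ^d$ with $\bzeta^n=1$, for which $|n|\geq\delta(\bzeta)$ --- produces
\begin{equation*}
\frac{1}{[\IQ(\bzeta):\IQ]}\sum_\sigma f_\epsilon(\bzeta^\sigma) = \int_{[0,1)^d} f_\epsilon(\be(x))\,dx + O\bigl(\|f_\epsilon\|_{C^k}\,\delta(\bzeta)^{-\gamma}\bigr)
\end{equation*}
for each $k$ and some $\gamma=\gamma(k)>0$, with $\|f_\epsilon\|_{C^k} = O(\epsilon^{-kA})$ for some $A$ depending only on $P$.

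The final and most delicate ingredient controls the pointwise contribution from conjugates $\bzeta^\sigma\in U_\epsilon$. Each such term satisfies $|\log|P(\bzeta^\sigma)|| = O(\log\delta(\bzeta))$ by a Baker-type lower bound on the modulus of $P$ at algebraic points (non-vanishing being guaranteed by Laurent--Sarnak once $\delta(\bzeta)$ is large). The required Diophantine input is therefore an avoidance bound of the form
\begin{equation*}
\#\bigl\{\sigma\in\gal{\IQ(\bzeta)/\IQ} : \bzeta^\sigma \in U_\epsilon\bigr\} \ll [\IQ(\bzeta):\IQ]\bigl(\epsilon^{c_1} + \delta(\bzeta)^{-c_2}\bigr)
\end{equation*}
for positive constants $c_1,c_2$ depending only on $P$ and $d$. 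This is the principal obstacle: it asserts that Galois orbits of torsion do not cluster abnormally on a real codimension-two subset of the torus, and I would attempt it via an effective Bogomolov-type estimate on the complex Zariski closure of $\zeroset{P}$ (a proper subvariety of $\IG_m^d$ in the geometric subcase), combined with a recursion on $d$ that reduces to the smooth equidistribution argument on lower-dimensional subtori. Once this is in hand, the optimal choice $\epsilon = \delta(\bzeta)^{-\beta}$ balances $\epsilon^{-kA}\delta(\bzeta)^{-\gamma}$ against $\epsilon^{c_1}\log\delta(\bzeta)$ and produces the asserted $\delta(\bzeta)^{-\kappa}$ decay.
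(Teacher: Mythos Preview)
Your proposal has a genuine gap at the step you call ``the final and most delicate ingredient.'' The claimed pointwise bound $|\log|P(\bzeta^\sigma)|| = O(\log\delta(\bzeta))$ is not a consequence of any Baker-type theorem. Linear forms in logarithms bound \emph{products} $|\alpha_1^{b_1}\cdots\alpha_n^{b_n}-1|$, not sums of roots of unity; for the quantity $P(\bzeta^\sigma)$, which is a $\IQbar$-linear combination of $N$-th roots of unity, the only unconditional lower bound is Liouville's, giving $|\log|P(\bzeta^\sigma)|| = O(N)$ where $N=\ord(\bzeta)$. Even if one could obtain $O(\log N)$, this would still be fatal: there is no bound for $N$ in terms of $\delta(\bzeta)$ (take $\bzeta=(\zeta_p,\zeta_{p^n})$ with $n$ large, so $\delta(\bzeta)\le p$ while $N=p^n$), so no choice of $\epsilon=\delta(\bzeta)^{-\beta}$ can balance a factor of $\log N$ or $N$ against a power of $\delta(\bzeta)$. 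The paper discusses exactly this obstruction in the overview and proposes Conjecture~\ref{conj:linearforms} as the missing Diophantine input; it is open. Your avoidance bound is likewise left as a hope rather than a proof.

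The paper's route is structurally different from smoothing-plus-avoidance. It writes $\bzeta=\zeta^a$ for a scalar root of unity $\zeta$ of order $N$ and passes to the univariate $Q(X)=P(X^a)$. The \essatoral{} hypothesis, via Bombieri--Masser--Zannier, guarantees that for most $a$ the polynomial $Q$ has no root in $S^1\setminus\mu_\infty$; then the small values of $|Q(\zeta^\sigma)|$ are controlled not by Baker but by a multi-pair root-separation inequality of Mahler--Mignotte type (Theorem~\ref{mahlerextended}), exploiting the ``repulsion'' $|z-\zeta^\sigma|\ge \bigl||z|-1\bigr|$ for roots $z$ of $Q$ off the unit circle. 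The decoupling of $N$ from $\delta(\bzeta)$---the very issue that breaks your scheme---is handled by a Geometry-of-Numbers factorization $\bzeta=\bfeta\bxi$ (Proposition~\ref{prop:geometryofnumbers}) in which $\ord(\bfeta)$ is a tiny power of $N$ while $\bxi$ admits a Galois conjugate with a power-saving in $N$ close to $1\in\IG_m^d$. This factorization, together with an induction on $d$ and a quantitative Lawton theorem, replaces both your avoidance count and your pointwise Baker input.
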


Theorem \ref{thm:main2} below is a more precise version of this
result. 
In particular, we allow $\sigma$ to range over subgroups of
$\gal{\IQ(\bzeta)/\IQ}$ whose index and conductor grow sufficiently slow, the
conductor is defined in
Section \ref{quantequidistribution}.
Moreover, $\kappa$ 
 depends only on $d$ and the number of non-zero terms
appearing in $P$.  
Our method of proof allows one to determine an explicit value for
$\kappa$.

Torsion points in $\IG_m^d$ are characterized as the algebraic points of height
zero; see Section~\ref{sec:notation} for the definition of the height 
$h : \IG_m^n(\bar{\IQ}) \to\ [0,\infty)$.
Bilu~\cite{Bilu} proved that Galois orbits of algebraic points $\boldsymbol{\alpha} \in \IG_m^d$
of small height satisfy an analogous equidistribution statement as
(\ref{def:equidistribution}), asymptotically as $h(\boldsymbol{\alpha}) \to 0$
and $\delta(\boldsymbol{\alpha}) \to \infty$; the definition
(\ref{def:deltazeta}) extends naturally to non-torsion points and may
take infinity as a value. It is natural to ask whether Theorem
\ref{thm:main} admits a suitable generalization to points of small
height. Autissier's example~\cite{Autissier:06} rules out the verbatim
generalization  already for $\IG_m$. He constructed a sequence
$(\alpha_n)_{n\in\IN}$ of pairwise distinct algebraic numbers whose
height tends to $0$ but such that $\frac{1}{[\IQ(\alpha_n):\IQ]}
\sum_{\sigma} \log|\sigma(\alpha_n)-2|$ tends to $0$ for
$n\rightarrow\infty$. 
But the  integral of the corresponding test function
against the unit circle is $\log 2$. 
An interesting problem still arises if the test function has at
worst a logarithmic
singularity of real codimension at least $2$ on $(S^1)^d$. Suppose that
 $|f(z)|$  is  $O\!\left(|\!\log(|P(z)|^2+|Q(z)|^2)|\right)$ on an open neighborhood
of $(S^1)^d$  in $\IG_m^d$, where $P$ and $Q$ are
non-constant and coprime Laurent polynomials with algebraic
coefficients, and that $f$ vanishes on the complement of a compact
set in $\IG_m^d$. One may then ask about comparing the average of $f$ over the Galois orbit of 
 $\boldsymbol{\alpha} \in \IG_m^d(\bar{\IQ})$ with the
average of $f$ over $(S^1)^d$: is their difference bounded by
$\ll_f ( h(\boldsymbol{\alpha}) + \delta(\boldsymbol{\alpha})^{-1} )^{\kappa}$, 
for some $\kappa > 0$ depending only on $P$ and $Q$? We also mention Chambert-Loir and
Thuillier's Th\'eor\`eme 1.2~\cite{CLT:09} which is a general equidistribution result
for points of small height,  allowing $\log|P|$ as a test function if
%  the Mahler measure of $P$
% vanishes. By the higher dimensional Kronecker Theorem
% \cite{Boyd:Kronecker,Lawton77,SmythMahler} the Mahler measure of $P$ vanishes
% if and only if
the zero locus of $P$ in $\IG_m^d$ is a finite union of
torsion cosets. In this paper we allow $\log{|P|}$ as a test
function if $P$ is essentially atoral but we average over points
of finite order.

Our Theorem \ref{thm:main} recovers a variant of the result of
 Lind--Schmidt--Verbitskiy~\cite{LSV:13}. In their work,  the
 sum is not over the Galois orbit of a single point of finite order
 but rather over a finite subgroup $G$ of $\IG_m^d$. For this purpose
 we define
\begin{equation}
\label{def:deltaG}
  \delta(G) = \inf \bigl\{|a| : a\in\IZ^d\ssm\{0\} \text{ such that
  }\bzeta^a =1 \text{ for all }\bzeta\in G\bigr\}.
\end{equation}

Each finite subgroup of $\IG_m^d$ is a disjoint union of Galois
orbits. This observation allows us to recover the Theorem of Lind,
Schmidt, and Verbitskiy with an estimate on the decay rate.

\begin{thm}
\label{thm:LSV}
  Let $P\in \IQ[X_1^{\pm 1},\ldots,X_d^{\pm 1}]\ssm\{0\}$ be \essatoral{}.
  There exists $\kappa>0$ such that for any
  finite  subgroup $G\subset\IG_m^d$
  we have
  \begin{equation}
    \label{eq:LSVaverage}
    \frac{1}{\#G} \sum_{\substack{\bzeta\in G  \\ P(\bzeta)\not=0}}
    \log |P(\bzeta)| = m(P) + O(\delta(G)^{-\kappa})
  \end{equation}
  %as $\delta(G)\rightarrow\infty$ and
  where the implicit constant
  depends only on $d$ and $P$.
\end{thm}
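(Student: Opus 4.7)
The plan is to partition $G$ into Galois orbits, apply Theorem~\ref{thm:main} to those whose members enjoy a large value of $\delta$, and control the contribution of the remaining orbits by an elementary counting argument. Writing $\Lambda = \{a\in\IZ^d : \bxi^a = 1 \text{ for all } \bxi\in G\}$ for the annihilator of $G$, we have $(\bzeta^\sigma)^a = (\bzeta^a)^\sigma = 1$ for $\bzeta\in G$ and $a\in\Lambda$, so $G$ is stable under the absolute Galois group of $\IQ$ and decomposes into a disjoint union of Galois orbits $O$; moreover $\delta(\bzeta)$ is constant on each such orbit. Fix a parameter $N<\delta(G)$ to be optimized later, and call an orbit \emph{good} if $\delta(\bzeta)>N$ on it and \emph{bad} otherwise.

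The key counting step bounds the size of the bad set. For each $a\in\IZ^d\setminus\{0\}$ with $|a|\le N$, $a\notin\Lambda$, and the minimal $k\ge 1$ with $ka\in\Lambda$ satisfies $k|a|\ge\delta(G)$, whence $k\ge\delta(G)/N$. The character $\chi_a:G\to\IC^*$, $\bzeta\mapsto\bzeta^a$, has order exactly this $k$, so its kernel has cardinality $\#G/k\le \#G\cdot N/\delta(G)$. Since $\{\bzeta\in G : \delta(\bzeta)\le N\}$ is the union of these kernels over the $\ll N^d$ admissible choices of $a$, we obtain $\#\{\bzeta\in G : \delta(\bzeta)\le N\} \ll N^{d+1}\#G/\delta(G)$.

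For each good orbit $O$, once $N$ exceeds a constant depending on $P$, Theorem~\ref{thm:main} yields $\sum_{\bzeta\in O}\log|P(\bzeta)| = \#O\cdot m(P) + O(\#O\cdot N^{-\kappa})$. For bad orbits we use a crude pointwise bound: after scaling by a nonzero rational constant we may assume $P\in\IZ[X_1^{\pm 1},\ldots,X_d^{\pm 1}]$, in which case, whenever $P$ does not vanish identically on $O$, the product $\prod_{\bzeta\in O}P(\bzeta)$ is a nonzero integer of absolute value at least $1$, while each factor satisfies $|P(\bzeta)|\le\|P\|_1$ because $|\bzeta_i|=1$. The resulting sandwich $0\le\sum_{\bzeta\in O}\log|P(\bzeta)|\le \#O\log\|P\|_1$ forces the per-orbit deviation from $\#O\cdot m(P)$ to be $O_P(\#O)$. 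The orbits on which $P$ does vanish identically lie in the finitely many proper torsion cosets $\bxi H\subset\{P=0\}$ guaranteed by essential atorality; each such coset meets $G$ in at most $O_P(\#G/\delta(G))$ points, via the same character estimate applied to a defining character of $H$, and therefore contributes at most $O_P(\#G/\delta(G))$ to the final error.

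Combining everything, the average in~(\ref{eq:LSVaverage}) differs from $m(P)$ by $O(N^{-\kappa})+O_P(N^{d+1}/\delta(G))$, and balancing with $N=\delta(G)^{1/(\kappa+d+1)}$ yields a power saving $\delta(G)^{-\kappa/(\kappa+d+1)}$ as claimed. The principal obstacle is the crudeness of the per-orbit bound on bad orbits, which leaks a polynomial factor $N^{d+1}$ through the counting lemma and so degrades the exponent; sharpening it would require either a finer understanding of torsion points of small $\delta$, or an appeal to the more precise Theorem~\ref{thm:main2} alluded to in the introduction, which permits averaging over proper subgroups of the Galois group and might allow one to treat $G$ as a single averaging set rather than an assembly of individual orbits.
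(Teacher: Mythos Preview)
Your argument is correct and is essentially the paper's own proof in Appendix~\ref{app:LSV}: decompose $G$ into Galois orbits, apply Theorem~\ref{thm:main} to those with $\delta(\bzeta)>N$, bound the remaining ones crudely using the counting estimate $\#\{\bzeta\in G:\delta(\bzeta)\le N\}\ll N^{d+1}\#G/\delta(G)$ (your character--kernel computation is exactly Lemma~\ref{lem:finitegroupcount}), and balance. The paper phrases the bad-orbit bound via the Weil height rather than the integrality of the norm, but the content is identical. Two minor remarks: your separate handling of vanishing orbits is unnecessary, since such an orbit is already bad and contributes a deviation $O_P(\#O)$ from $\#O\cdot m(P)$ just like the nonvanishing bad orbits; and the finitely many torsion cosets inside $\{P=0\}$ are supplied by the Laurent--Sarnak theorem (Manin--Mumford for $\IG_m^d$), not by essential atorality \emph{per se}.
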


\refcomment{7}{To relate (\ref{eq:LSVaverage}) to the expression in
Lind, Schmidt, and Verbitskiy's Theorem 1.3~\cite{LSV:13} we refer to
Lemma 2.1~\cite{LSV:entropygrowth} as well as the comments on page
1063 and 1064~\cite{LSV:13}. Note that $G$ is $\Omega_\Gamma$ and
$\# G$ is
$|\IZ^d/\Gamma|$ in the notation of \cite{LSV:13}.}

\refcomment{4,5}{Lind, Schmidt, and Verbitskiy's approach is based on an in-depth
study~\cite{SV:sandpiles,LSV:entropygrowth,LSV:13} of an associated
dynamical system: the algebraic $\Z^d$-action on a closed,
shift-invariant subgroup of
$(S^1)^{\Z^d}$ whose  dual is $\IZ[X_1^{\pm 1},\ldots,X_d^{\pm
1}]/(P)$. The atoral condition, in the sense of~\cite{LSV:13}, turns
out to be equivalent to the existence of a non-trivial summable
homoclinic point.}
% , by means of a
% generalization of the classical Wiener's lemma.
% If
% $\Gamma\subset\IZ^d$ is dual to $G$, the number of $\Gamma$-periodic
% points of $T$ is equal to the exponential of the left-hand side of
% (\ref{eq:LSVaverage}) times $\#G=[\IZ^d:\Gamma]$, when the zero set of
% $P$ does not meet $G$.

Theorem~\ref{thm:LSV} may be read as a strong quantitative
estimate on the growth of periodic points for such dynamical systems.
% We note however that the homoclinic method of Lind, Schmidt and
% Verbitskiy may yield to the same type of quantitative
% refinement. On the other hand, the
The refinement to Galois orbits, Theorem~\ref{thm:main}, does not seem
to be directly possible by the homoclinic method, nor does it seem to
follow formally from the case (\ref{eq:LSVaverage}) of finite subgroups,
which is where the dynamical method applies. 

Our method of proof draws its origins in work of
Duke~\cite{Duke:Combinatorial}. It differs from the method of Lind,
Schmidt, and Verbitskiy. However, it is striking that the notion of
atoral appears crucially in both approaches.

%%  The average \ref{eq:LSVaverage} is closely related to growth of
%% periodic points in a dynamical system attached to $P$. 
%% The reader can consult pages 1063-1064 \cite{LSV:13} for a dictionary
%% translating the language algebraic systems of dynamical origins to
%% ours.

The first-named author~\cite{dimitrov} was able to prove Theorem \ref{thm:LSV} for a
general Laurent polynomial  when $G$ equals the group of
$N$-torsion elements in $\IG_m^d$. 

%% Our deduction  of Theorem \ref{thm:LSV} from a result such as
%% Theorem \ref{thm:main} is rather simple. However, we were not able to
%% find a simple argument that deduces Theorem \ref{thm:LSV} from the
%% latter. 

Let us return to Galois orbits. 
We believe that the hypothesis on $P$ being essentially atoral is
also unnecessary in  Theorem \ref{thm:main} on Galois orbits. The next conjecture sums up our
expectations. It is related to Schmidt's Conjecture 
  \cite[Remark 21.16(2)]{Schmidt:DSAO}. %  essentially that  Theorem \ref{thm:LSV}
% holds true without assuming that $P$ is \essatoral{}.  

\begin{conj}
  \label{conj:galoisallP}
  % Let $K\subset \IC$ be a number field and suppose
For each $P\in \IQbar[X_1^{\pm 1},\ldots,X_d^{\pm 1}]\ssm\{0\}$
  there exists $\kappa>0$ with the following property.  Suppose
  $\bzeta\in\IG_m^d$ has finite order with 
 $\delta(\bzeta)$  sufficiently large. Then
  $P(\bzeta^\sigma)\not=0$ for all $\sigma\in \gal{\IQ(\bzeta)/\IQ}$ and 
  \begin{equation*}
    \frac{1}{[\IQ(\bzeta):\IQ]} \sum_{\sigma\in \gal{\IQ(\bzeta)/\IQ}}
    \log |P(\bzeta^\sigma)| = m(P) + O(\delta(\bzeta)^{-\kappa})
  \end{equation*}
  as $\delta(\bzeta)\rightarrow\infty$,
  where the implicit constant
  depends only on $d$ and $P$.
\end{conj}

For $d=1$ this conjecture follows from work of M.~Baker, Ih, and Rumely
\cite{bakerihrumely}, see their statement around (6). They use a
version of Baker's deep estimates on linear forms in logarithms. 
Already the case $d=2$ and $P(X_1,X_2) = X_1 + X_1^{-1} + X_2 + X_2^{-1} - 3$
is open. 

%% The analog conjecture
%% for finite subgroups of $\IG_m^d$ is also already open for $d=2$. But
%% it is known for $d=1$ by a result of Baker, Ih, and Rumely~\cite{bakerihrumely}. 

\subsection{Ih's conjecture on integral torsion points}
As another application of our results we derive a 
special case of Ih's Conjecture~\cite{bakerihrumely} in the
multiplicative setting. 
Let $P \in \IQbar[X_1^{\pm 1},\ldots,X_d^{\pm 1}]$. A special case of Ih's Conjecture
predicts that the set of torsion points
$\bzeta \in \IG_m^d$ such that
$P(\bzeta)$ is an algebraic unit is not Zariski dense in
$\mathbb{G}_m^d$, unless the zero
set of $P$ in $\IG_m^d$ is itself a finite union of proper torsion cosets. 
M.~Baker, Ih, and Rumely~\cite{bakerihrumely} cover the case
$d = 1$ for arbitrary polynomials.
Their approach runs through a similar limiting statement as our
Theorem \ref{thm:main} for univariate polynomials. 

Here we solve a  case of Ih's Conjecture for
\essatoral{} polynomials  with integral coefficients.
% The quantitative nature of our methods allows us to get extra
% uniformity which is reflected in the fact that $B$ below depends only
% on $d$, the degree of the polynomial, and the field of coefficients. Hindry \cite{Hindry:Lang}
% and others earlier found similiar uniformity in
% the context of Laurent's Theorem \cite{Laurent}, the Manin--Mumford
% Conjecture for $\IG_m^d$,

\begin{corol}
  \label{cor:Ih}
  Let $K\subset\IC$ be a number field with ring of integers $\IZ_K$ and let
  $P \in \IZ_K[X_1^{\pm 1},\ldots,X_d^{\pm 1}]\ssm\{0\}$. 
  Suppose that the zero set of $P$ in $\IG_m^d$ is  not 
  a finite union of torsion cosets. % defined over $\IC$.
  Suppose  in addition that $\tau(P)$ is \essatoral{} for all field
  embeddings $\tau:K\rightarrow\IC$. 
  \refcomment{9}{Then there exists $B\ge 1$ such that 
    if $\bzeta\in\IG_m^d$ has finite order and $P(\bzeta)$ is an algebraic
    unit, then
    $\delta(\bzeta)\le B$.}
\end{corol}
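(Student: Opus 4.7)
The plan is to use the product formula for algebraic units to convert the unit hypothesis on $P(\bzeta)$ into an identity satisfied by the Galois averages of $\log|\tau(P)|$ at conjugates of $\bzeta$, apply the subgroup refinement Theorem~\ref{thm:main2} to each essentially atoral polynomial $\tau(P)$, and then conclude via the multivariate Kronecker theorem that $\delta(\bzeta)$ must be bounded.

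Fix a torsion point $\bzeta\in\IG_m^d$ with $P(\bzeta)$ an algebraic unit, and assume $\delta(\bzeta)$ is large enough that Theorem~\ref{thm:main} applies to every $\tau(P)$ (so that in particular each value $\tau(P)(\rho(\bzeta))$ is nonzero). Since $P(\bzeta)\in K(\bzeta)$ is an algebraic unit, the product formula gives
\[
\sum_{\sigma\colon K(\bzeta)\hookrightarrow\IC}\log|\sigma(P(\bzeta))|=0.
\]
Each $\sigma$ corresponds to a compatible pair $(\tau,\rho)$ with $\tau=\sigma|_K$, $\rho=\sigma|_{\IQ(\bzeta)}$, and $\tau|_M=\rho|_M$, where $M=K\cap\IQ(\bzeta)\subset\IQ^{\mathrm{cyc}}$; moreover $\sigma(P(\bzeta))=\tau(P)(\rho(\bzeta))$. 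Regrouping by $\tau$, the inner sum runs over a coset $\rho_0H$ of $H=\gal{\IQ(\bzeta)/M}$ in $\gal{\IQ(\bzeta)/\IQ}$. After the substitution $\bzeta\mapsto\rho_0(\bzeta)$, which preserves $\delta$, Theorem~\ref{thm:main2} applies to $\tau(P)$ and the subgroup $H$, whose index $[M:\IQ]\leq[K:\IQ]$ and whose conductor (the conductor of $M$) are bounded in terms of $K$ alone, yielding
\[
\frac{1}{|H|}\sum_{\rho\in\rho_0H}\log|\tau(P)(\rho(\bzeta))|=m(\tau(P))+O(\delta(\bzeta)^{-\kappa})
\]
with $\kappa>0$ and implicit constant depending only on $d$, $K$, and $P$. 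Summing over the $[K:\IQ]$ embeddings $\tau$ and invoking the vanishing of the product-formula sum, we obtain
\[
\sum_{\tau\colon K\hookrightarrow\IC}m(\tau(P))=O\bigl(\delta(\bzeta)^{-\kappa}\bigr).
\]

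It remains to show the left-hand side is a strictly positive constant. The product $Q=\prod_{\tau}\tau(P)$ is Galois-invariant over $\IQ$ and hence lies in $\IZ[X_1^{\pm 1},\ldots,X_d^{\pm 1}]$, with $m(Q)=\sum_\tau m(\tau(P))$. The multivariate version of Kronecker's theorem states that $m(Q)=0$ precisely when the zero locus of $Q$ in $\IG_m^d$ is a finite union of codimension-one torsion cosets. Since $P$ divides $Q$, every irreducible component of $\{P=0\}$ is an irreducible component of $\{Q=0\}$, so any torsion-coset decomposition of the latter would induce one of the former, contradicting the hypothesis. Hence $m(Q)>0$, and comparing with the previous estimate forces $\delta(\bzeta)\leq B$ for some $B$ depending only on $d$ and $P$, producing the required $a\in\IZ^d\ssm\{0\}$ with $|a|\leq B$ and $\bzeta^a=1$.

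The main technical point is the coset bookkeeping in the Galois group when $K\cap\IQ(\bzeta)$ is nontrivial, which is precisely what the subgroup refinement Theorem~\ref{thm:main2} is designed to handle. Apart from this, the argument is a direct transfer from equidistribution to Diophantine finiteness via the product formula, the only nontrivial external input being the classification of integer Laurent polynomials of vanishing Mahler measure.
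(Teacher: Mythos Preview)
Your argument is correct and follows essentially the same route as the paper's own proof: decompose the sum $\sum_{\sigma}\log|\sigma(P(\bzeta))|$ over embeddings of $K(\bzeta)$ according to $\tau=\sigma|_K$, apply the subgroup version of the equidistribution result to each $\tau(P)$ with the subgroup $H=\gal{\IQ(\bzeta)/K\cap\IQ(\bzeta)}$, and conclude via the positivity of $m\bigl(\prod_\tau\tau(P)\bigr)$ from the multivariate Kronecker theorem. One small technical point: Theorem~\ref{thm:main2} takes $c$-admissibility rather than essential atorality as its hypothesis, so you should either cite Corollary~\ref{cor:limiterrorterm} (which is exactly what the paper does and which already absorbs the passage from essential atorality to admissibility via Lemma~\ref{lem:Petaprops}) or invoke that lemma explicitly.
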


Ih's Conjecture expects % does not require $\tau(P)$ to be \essatoral{}; it expects
the existence of $B$  without assuming that each $\tau(P)$ is \essatoral{}. Observe
that the result of M.~Baker, Ih, and Rumely is not a direct consequence
of this corollary, as we do not allow univariate polynomials that
vanish at a point of infinite multiplicative  order on the unit
circle.
Our approach does not depend on the theory of linear forms in logarithms.

% Here are two questions regarding uniformity in our corollary.
% Can the bound $B$ be chosen to depend only on $d,\deg P$, and
% $[K:\IQ]$?  Can we replace the dependency in $\deg P$ by the
% dependency in the number of 
%  non-zero terms of $P$? 

A special class of atoral
polynomials, to which our results apply \emph{a fortiori}, are the
irreducible integer Laurent polynomials $P \in \IZ[X_1^{\pm
    1},\ldots,X_n^{\pm 1}]\setminus \{0\}$ that are not fixed up-to a
monomial factor and up-to a sign by the involution sending each $X_i$ to $1/X_i$. We call
these $P$ {asymmetric}. They are atoral in the sense of Lind--Schmidt--Verbitskiy, see the proof of
Proposition 2.2~\cite{LSV:13}. Hence an asymmetric Laurent polynomial is  \essatoral{}.  %% as we may
The converse is false as the Laurent polynomial 
\begin{equation*}
  X_1+X_1^{-1}+X_2+X_2^{-1}-4.
\end{equation*}
is \essatoral{}; \refcomment{11}{indeed, its zero locus on $(S^1)^2$ consists of the
single point $(1,1)$.}
%% A classical theorem of Smyth gives the truth of Lehmer's
%% conjecture for univariate asymmetric Laurent polynomials; here, for the same class of
%% Laurent polynomials, we get a quantitative convergence to the Mahler measure
%% for averages over arbitrary strict sequences of either finite
%% subgroups or Galois orbits of torsion points.

If $K=\IQ$, Corollary~\ref{cor:Ih} in the case of an asymmetric, and
thus necessarily irreducible Laurent polynomial $P$, can be deduced as
follows from the Manin--Mumford Conjecture for $\mathbb{G}_m^d$. Indeed,
if  $\gamma$ is a unit in the ring of algebraic integers of a cyclotomic field,
then
$\eta=\overline{\gamma} / \gamma$ is an algebraic integer whose Galois
conjugates lie on $S^1$. \refcomment{12}{So $\eta$ is a root of unity by Kronecker's
Theorem, see Theorem 1.5.9~\cite{BG}.} 
We consider the zero $(\eta,\bzeta)$ of
$P(X_1^{-1},\ldots,X_d^{-1})-X_0P(X_1,\ldots,X_d)$, which is
irreducible and defines an algebraic subset of $\IG_m^d$ none of whose
geometric irreducible components is a torsion coset. A similar
argument applies if $K$ is a totally real number field.

\subsection{Overview of the proof}

We close the introduction by describing the method of proof of
Theorem~\ref{thm:main}, which builds upon work of the second-named
author~\cite{hab:gaussian} and is related to the approach
of Duke~\cite{Duke:Combinatorial}.
The basic idea is to reduce the multivariate statement in
Theorem~\ref{thm:main} to the univariate case.
Whereas we worked with torsion points of prime order
in~\cite{hab:gaussian}, 
the main technical difficulty in this paper is that we
allow torsion points of arbitrary order.

Any torsion point $\bzeta\in\IG_m^d$ of order $N$ takes on the
form $(\zeta^{a_1},\ldots,\zeta^{a_d})$ where $\zeta=\be(1/N)$ is a
root of unity of order $N$ and $a = (a_1,\ldots,a_d)\in\IZ^d$.
The precise manner how the non-unique $a$ is chosen is delicate and
will be discussed
below. 
The notation $\bzeta = \zeta^a$ will be quite useful. A non-boldface
$\zeta$ denotes a root of unity and boldface $\bzeta$ suggests a torsion point of
$\IG_m^d$. 

If $P$ is as in Theorem~\ref{thm:main}, but for simplicity with coefficients in
$K=\IQ$, we define
the univariate polynomial
\begin{equation}
  \label{eq:univariatespecialization}
  Q(X) = P(X^a)=
P(X^{a_1},\ldots,X^{a_d}) \in \IQ[X^{\pm 1}].
\end{equation}
\refcomment{13}{Multiplying $Q$ by a power of $X$ turns out to be  harmless, so
 one can assume that $Q$ is a polynomial.}
 The values $|P(\bzeta^\sigma)|$
equal the values of $|Q(\zeta^{\sigma})|$ as $\sigma$ ranges over
$\gal{\IQ(\zeta)/\IQ}$. %all field embeddings $\IQ(\zeta)\rightarrow\IC$. 

\medskip
    {\bf The univariate case and root separation (Section \ref{mahler}).}
Let us suppose for the moment that $\bzeta = \zeta$ is  a root of
unity. It is classical that the Galois conjugates of $\zeta$ are
equidistributed around the unit circle; we recall of these  facts in Section~\ref{quantequidistribution}. So
(\ref{def:equidistribution}) holds for $f(z) = \log|Q(z)|$ provided
$Q$ has no zero on the unit circle.
In  Proposition~\ref{univariatebound} we make convergence
quantitative for such $Q$. Roughly speaking, 
   for all $\epsilon > 0$ we have
\begin{equation}
  \label{eq:univariatecasesimplified}
\frac{1}{[\IQ(\zeta):\IQ]} \sum_{\sigma} \log |Q(\zeta^\sigma)| = m(Q)
+ O_{P,\epsilon} \left(\frac{|a|^{1+\epsilon}}{N^{1-\epsilon}}\right)
\end{equation}
%if $Q(\zeta)\not=0$
where $\sigma$ runs over $\gal{\IQ(\zeta)/\IQ}$. %the  field embeddings mentioned above.
Actually, the hypothesis on $Q$ is slightly weaker as we  allow it
to vanish at roots of unity, if all $Q(\zeta^\sigma)\not=0$. 
%% This  proposition contains the hypothesis
%% that $Q$ does not vanish at any point of infinite order in $S^1$.
This hypothesis is ultimately a reflection of  the hypothesis that
the multivariate $P$ is \essatoral{} in Theorem~\ref{thm:main}.
Indeed, in the  univariate case, being  \essatoral{} boils down to not
vanishing at any
point of infinite multiplicative  order in $S^1$. 
The hypothesis on $Q$ is crucial for our method to work. The main
difficulty we encounter in the average
(\ref{eq:univariatecasesimplified})
are exceptionally small values of  $Q$ at some $\zeta^\sigma$.
The burden is to show, in a uniform sense, that no complex root $z$ of $Q$ can be too close
to  $\zeta^\sigma$ in a suitable sense.

If $z$ is itself a root of unity, doing this is straightforward
as $|z - 1|\gg 1/\ord(z)$.

The difficulty lies in the case when $z$ has infinite multiplicative  order. Here it is tempting to
apply a version of Baker's Theorem on linear forms in logarithms,
as did M.~Baker, Ih, and Rumely~\cite{bakerihrumely}.
However, and as already discussed  by Duke 
 in Section 3~\cite{Duke:Combinatorial} this seems unhelpful
 for the problem at hand.
\refcomment{14}{ Indeed,    estimates on linear forms in
 two logarithms such as \cite{LMN:95} lead to a factor
 $[\IQ(z):\IQ]^2
= O(|a|^2)$ in a bound for any member of the sum in 
(\ref{eq:univariatecasesimplified}).}
This is not good enough for our application as 
$|a|^2/[\IQ(\zeta):\IQ]$ may spoil the average in
(\ref{eq:univariatecasesimplified}).

Our solution is to use the banal inequality $|z-\zeta|\ge
\bigl||z|-1\bigr|$ which lies at the heart of the method
here and in~\cite{hab:gaussian}. 
As $z$ is no root of unity, and as $Q$ does not vanish at points of infinite
multiplicative  order on $S^1$, we have $|z|\not=1$ and so the banal inequality provides a 
non-trivial lower bound. We now explain how it leads to  a useful estimate on $|z - \zeta|$
via lower bounding $\bigl||z|-1\bigr|$. 

If $z$ is close to the unit circle, then $\bigl||z|-1\bigr|$ is
approximately $|z-1/\overline z|$.
In~\cite{hab:gaussian} a result  of Mahler~\cite{Mahler:DiscIneq}
on the separation of roots of an integer polynomial led to
a suitable lower bound for $|z-1/\overline z|$.
In that paper, the second-named author used his  counting result on 
approximations to a set definable in an o-minimal structure.
This allowed to make Mahler's estimate uniform over the
various zeros $z$ of $Q$.

The main tool of the present paper is a uniform generalization
of Mahler's inequality for the separation of
several pairs of roots of $Q$. Such a generalization was obtained by
Mignotte \cite{Mignotte:95}. 
In Section \ref{mahler} we give a variant of Mignotte's theorem that
is tailored to our application and is self-contained. We thus
bypass the o-minimal theory used in~\cite{hab:gaussian}.
We still require Bombieri, Masser, and Zannier's
Theorem~\cite{BMZGeometric} to be mentioned below.
Moreover, our Theorem \ref{thm:main} is effective in nature.
%%  with
%% the implication of making our paper largely self-contained, with the notable
%% exception of the use of Bombieri, Masser, and Zannier's
%% Theorem~\cite{BMZGeometric} to be mentioned below. 
%% A suitable uniform version of Theorem
%% 1.5~\cite{BMZGeometric} would have interesting consequences for our results.   Moreover, our results are in principle effective.

A possible approach towards Conjecture \ref{conj:galoisallP} lies in extending
 (\ref{eq:univariatecasesimplified}) to $Q$ that are allowed
to vanish at any point of $S^1$.
As observed,  we lack a suitable lower bound for
$|z-\zeta|$ if $z$ is an  algebraic number of infinite multiplicative
order on the unit circle.
\refcomment{15}{As suggested in the similar setting of Lemma 4.2~\cite{hab:gaussian},
it turns out that the $z$ of interest have small height $h(z)$. We
therefore propose the following conjecture.}

\begin{conj}\label{conj:linearforms}
For all $B\ge 1$ and  $\epsilon > 0$ there exists a constant
$c=c(B,\epsilon)>0$ with the following property. 
  Let $z \in \IC$ be an algebraic number with $|z|=1$ and
  $h(z)\le B/D$ where 
  $D =
  [\IQ(z):\IQ]$. If $\zeta\in\IC\ssm\{z\}$ is a root of unity of order $N$, then
  $\log |\zeta-z|\ge - c D^{1+\epsilon} N^{\epsilon}$. 
\end{conj}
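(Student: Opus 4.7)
The plan is to combine the sharpest available lower bounds for linear forms in two logarithms with the small-height hypothesis $h(z)\le B/D$, and to treat the root-of-unity case separately. If $z$ itself is a root of unity of order $M$, then $D=\varphi(M)\gg M/\log\log M$ together with $|z-\zeta|\ge 4/(MN)$ for $z\ne\zeta$ give $\log|z-\zeta|\ge-\log(DN)-O(1)$, well within the conjectured bound. So I may assume henceforth that $z$ has infinite multiplicative order, whence $\log z$ and $\log(-1)=i\pi$ are $\IQ$-linearly independent.

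Writing $\log z=2\pi i\theta$ with $\theta\in(-1/2,1/2]$ and choosing $a\in\IZ$ nearest to $N\theta$ (so that $\zeta=\be(a/N)$ is the $N$-th root of unity closest to $z$), one has $|\zeta-z|=2|\sin\pi(\theta-a/N)|\ge 4|N\theta-a|/N$. The task reduces to a lower bound on the two-logarithm linear form
\[
\Lambda\;=\;N\log z\,-\,2a\log(-1)\;=\;2\pi i\,(N\theta-a),
\]
with $\alpha_1=z$, $\alpha_2=-1$, $b_1=N$, $b_2=2a$. The modified heights obey $h'(z):=\max\bigl(h(z),\,|\log z|/D,\,1/D\bigr)\ll B/D$ (from the hypothesis $h(z)\le B/D$ and $|\log z|\le\pi$) and $h'(-1)=\pi/D$, and the bound $\log B'\ll\log N$ holds. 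Substituting in the standard shape $\log|\Lambda|\ge -CD^{k}h'(\alpha_1)h'(\alpha_2)(\log B')^{2}$ of the linear forms in two logarithms estimate yields
\[
\log|\Lambda|\;\ge\;-\,CB\,D^{k-2}(\log N)^{2},
\]
and correspondingly $\log|\zeta-z|\ge -C'B\,D^{k-2}(\log N)^{2}-\log N$.

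The success of the plan hinges on the value of the exponent $k$. Since $(\log N)^{2}=O_\epsilon(N^\epsilon)$, any linear forms estimate with $k\le 3$ would yield the desired bound $\log|\zeta-z|\ge -c(B,\epsilon)D^{1+\epsilon}N^\epsilon$ after absorbing $(\log N)^2$ into the $N^\epsilon$ factor. The Laurent--Mignotte--Nesterenko theorem~\cite{LMN:95} however provides only $k=4$, leaving a net $D^{1-\epsilon}$ loss beyond the $1/D^{2}$ savings from the small-height hypothesis. The conjecture therefore amounts to a demand for a refinement from $D^{4}$ to $D^{3}$ in the linear forms estimate, in the special regime $h(\alpha_1)\ll 1/D$ with $\alpha_2$ a fixed root of unity; this refinement is the main obstacle. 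The most natural route would be to exploit the arithmetic rigidity of reciprocal polynomials of bounded Mahler measure: since $M(p_z)=\exp(Dh(z))\le e^B$, the minimal polynomial $p_z$ factors into cyclotomic pieces and at most one Salem component of Mahler measure $\le e^B$, placing $z$ in a very restricted class for which a sharpened interpolation determinant argument in the style of Laurent should deliver the missing exponent.
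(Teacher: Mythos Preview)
The statement is a \emph{conjecture}, not a theorem; the paper does not prove it and explicitly presents it as open. Your analysis via linear forms in two logarithms is essentially the paper's own discussion of why current technology falls short: the Laurent--Mignotte--Nesterenko bound, after cashing in the savings $h'(z)\ll B/D$ and $h'(-1)\ll 1/D$, yields a net dependency of order $D^{2}$ rather than the conjectured $D^{1+\epsilon}$. The paper says exactly this in the sentence following the conjecture (``the state-of-the-art results \ldots have only a quadratic dependency on $D$''). So there is no proof in the paper against which to compare your attempt; you have correctly rediscovered the obstruction.

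Your proposal is therefore not a proof but a diagnosis, and the closing paragraph does not close the gap. The suggestion that ``the minimal polynomial $p_z$ factors into cyclotomic pieces and at most one Salem component'' is not correct as written: $p_z$ is irreducible by definition, so it does not factor at all; and an irreducible polynomial of Mahler measure at most $e^{B}$ with a root on $S^{1}$ need not be a Salem polynomial. The structural rigidity you invoke is precisely the content of Lehmer's problem, which is itself open, and in any case it is unclear how a sharpened interpolation-determinant argument would extract the missing factor of $D$ from it. The honest status is that Conjecture~\ref{conj:linearforms} is genuinely open and serves in the paper only to explain why the \essatoral{} hypothesis in Theorem~\ref{thm:main} cannot currently be removed.
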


The crux of this conjecture is its best-possible dependency on the degree $D$. 
In comparison, the state-of-the-art results in the theory of linear
forms in two logarithms of algebraic numbers in the $D$-aspect, such as Laurent, Mignotte, and
Nesterenko's Th\'eor\`eme~3~\cite{LMN:95}, have only a quadratic dependency on $D$. 

\medskip
{\bf Equidistribution of torsion points (Section
  \ref{quantequidistribution}).}
\refcomment{16}{We return to the  case $\bzeta=\zeta^a$ of a general torsion point in
$\IG_m^d$ of order $N$.} The exponent vector $a$ used to define $Q$
as in (\ref{eq:univariatespecialization})
depends on $\bzeta$. 
For this reason it is important that  the error term
in Proposition \ref{univariatebound} is explicit  in
terms of $Q$.
Moreover, it is important to choose $a$ with $|a|$ as small as possible. 
For fixed $\zeta$ the exponent $a$ is
well-defined up-to addition of an element in $N\IZ^d$. So clearly we may assume
$|a|\le N$, although this is not good enough in view of (\ref{eq:univariatecasesimplified}). Fortunately, there is a
second degree of freedom, namely we can replace $\bzeta$ by any Galois
conjugate of itself.

This leads us to classical questions of equidistribution of the Galois
orbit of $\bzeta$; we compile the necessary statements in Section
\ref{quantequidistribution}. Using the 
Erd\"os--Tur\'an Theorem and the theory of Gau\ss{} sums, 
 Lemma \ref{approx} produces $a$ with $|a| =
 O(N \delta(\bzeta)^{-1/(3d)})$ such that $\zeta^a$ is a Galois
 conjugate of $\bzeta$.

Let us return to  the
error term in (\ref{eq:univariatecasesimplified}). One factor $N$
cancels out and the error term becomes
$N^{2\epsilon} \delta(\bzeta)^{-(1+\epsilon)/(3d)}$.
\refcomment{17}{The innocuous $\epsilon$ in (\ref{eq:univariatecasesimplified}) is
ultimately responsible for the factor $N^{2\epsilon}$.
Although
$\delta(\bzeta)\le N$, there is no non-trivial bound in the reverse
direction and 
$N^{2\epsilon} \delta(\bzeta)^{-(1+\epsilon)/(3d)}$ could explode.
}

\medskip
{\bf Factoring $\bzeta$ (Section \ref{sec:lattice}).} The solution to this problem is described
in Section \ref{sec:lattice}. In Proposition
\ref{prop:geometryofnumbers} we factor $\bzeta$ into a product
$\bfeta\bxi$ where $\bxi$ has finite order $M$ such that
$\bxi=\be(a/M)$ where $|a| = O(M^{1-\kappa})$. Moreover, the order of
$\bfeta$ is  bounded from above
by a small power of $N$. The power saving obtained in the
exponent of $N$  is small even when compared to 
the saving obtained for $|a|$. The methods
employed come from the \refcomment{18}{geometry of numbers} and  
\refcomment{19}{slopes of lattices in $\IR^d$.} 

We will  replace
$\bzeta$ by $\bxi$ and the univariate polynomial $Q(X)=P(X^a)$ by
$P(\bfeta^{a} X^a)$. This last transformation does not change the height or
the monomial structure of $Q$. But it can change the field generated by
its coefficients
as the order of $\bfeta$ and hence its field of definition vary as
$\bzeta$ varies.
For this reason, we must keep track of the base field of $Q$ throughout
the whole argument. 

\medskip
{\bf Putting everything together (Sections \ref{sec:preliminary},
  \ref{sec:equidistribution}, \ref{sec:endgame}).} In Sections
\ref{sec:preliminary}, and \ref{sec:endgame} we put all ingredients
together to prove the final result. Here we apply a result of
Bombieri, Masser, and Zannier~\cite{BMZGeometric} on the intersections
of a subvariety in $\IG_m^d$ of codimension at least $2$ with all
$1$-dimensional algebraic subgroups of $\IG_m^d$. Roughly speaking,
this result shows that if $P$ is \essatoral{}, then for ``most''
choices of $a$ the univariate polynomial $Q$ as in
(\ref{eq:univariatespecialization}) does not vanish at any point of
infinite  multiplicative  order on $S^1$. Recall that this property of $Q$ was crucial
to deduce (\ref{eq:univariatecasesimplified}). Bombieri, Masser, and
Zannier's result is related to the study
\refcomment{21}{of}   unlikely intersections, for an
overview we refer to Zannier's book~\cite{ZannierBook}. Another tool
that makes an appearance is Lawton's Theorem~\cite{Lawton}.

The intermediate Section \ref{sec:equidistribution} contains a weak
version of a result of Hlawka~\cite{Hlawka:71} on the numerical
integration of a continuous, multivariate function. The results
obtained there are useful in connection with the function attaching
the Mahler measure to a non-zero polynomial.

\medskip
{\bf Appendices.} In Appendix~\ref{app:lawton} we give a quantitative
version of Lawton's Theorem~\cite{Lawton} regarding the convergence of
a sequence of Mahler measures. Unfortunately, we are not able to use
the very closely related theorem in~\cite{hab:gaussian} as we require
 additional uniformity. The arguments in this appendix follow
closely Lawton's strategy. %%  and the arguments
%% in~\cite{hab:gaussian}
 Finally, in the second Appendix we show how to
deduce Theorem \ref{thm:LSV}, the Theorem of
Lind--Schmidt--Verbitskiy, from our Theorem \ref{thm:main}.

\refcomment{22}{\subsection{Final remarks}}

The results mentioned above, in particular the theorem of Bombieri,
Masser, and Zannier, also play an important role in Le's
approach~\cite{Le:homologytorsion}. The question on how small a sum of
roots of unity can be was raised by
Myerson~\cite{Myerson:sumofrootsof1} in connection with a
combinatorial question~\cite{Myerson:CombI,Myerson:CombII} which was
later studied by Duke~\cite{Duke:Combinatorial}.
Dubickas~\cite{Dubickas:23rootsofunity} has more recent work in this
direction for sums of $2$ and $3$ roots of unity of prime order.

\subsection*{Acknowledgments} 
The authors thank Pierre Le Boudec for references regarding Gau\ss{}
sums, Peter Sarnak for the reference to
Le's~\cite{Le:homologytorsion}, and Shouwu Zhang for pointing out
Chambert-Loir and Thuillier's work~\cite{CLT:09}. We also thank the
referee for carefully reading this text and for providing many
valuable comments that led to improvements of the text and some
simplifications. The authors thank Fran\c cois Brunault, Antonin
Guilloux, Mahya Mehrabdollahi, and Riccardo Pengo for pointing out a
mistake in an earlier attempt to prove Lemma~\ref{lem:volSPep}(i)
and for providing the reference to Dobrowolski's work~\cite{Dobrowolski:Lawton}.
Vesselin Dimitrov gratefully acknowledges support
from the European Research Council via ERC grant GeTeMo 617129.
Philipp Habegger has received funding from the Swiss National Science
Foundation project n$^\circ$ 200020\_184623.

\section{Notation and preliminaries} \label{sec:notation}
Apart from the notation already introduced we use $\IN$ to denote the
natural numbers $\{1,2,3,\ldots\}$. If $x=(x_1,\ldots,x_m)$ with all
$x_i$ elements in an abelian group $G$ and if $A = (a_{i,j})_{i,j} \in
\mat{m,n}(\IZ)$ we write $x^A = (x_1^{a_{1,1}}\cdots
x_m^{a_{m,1}},\ldots,x_1^{a_{1,n}}\cdots x_m^{a_{m,n}})\in G^n$. So if
$B\in \mat{n,p}(\IZ)$, then $(x^{A})^{B} = x^{AB}$. For a commutative
ring $R$ with $1$ we let $R^\times$ denote its group of units. Euler's
function $\varphi$ maps $N\in\IN$ to the cardinality of
$(\IZ/N\IZ)^\times$. The group of all roots of unity in $\IC^\times$
is $\mu_\infty$. We often identify $\IG_m^d$ with the set
of its complex points $(\IC^\times)^d$ \refcomment{74}{and let $1$ denote the unit
element $(1,\ldots,1) \in\IG_m^d$.}
 If $\bzeta\in\IG_m^d$ is a torsion point, we write $\ord(\bzeta)$ for
its order. We write $\langle\cdot,\cdot\rangle$ for the Euclidean
inner product on $\IR^d$, $|\cdot|_2$ for the Euclidean norm on
$\IR^d$, and $|\cdot|$ for the maximum-norm on $\IR^d$
\refcomment{69}{and $\mat{m,n}(\IR)$}. 
We define $\log^+ x = \log \max\{1,x\}$ for all $x\ge 0$.

The constants implicit in Vinogradov's notation
$\ll_{x,y,z,\ldots},\gg_{x,y,z,\ldots},$ and in
$O_{x,y,z,\ldots}(\cdots)$ depend only on the values $x,y,z,\ldots$
appearing in the subscript.

Let $P\in \IC[X_1^{\pm 1},\ldots,X_d^{\pm 1}]\ssm\{0\}$, then
$|P|$ denotes the maximum-norm of the coefficient vector
of $P$ and we set $|0|=0$.
Recall that $m(P)$ is the Mahler measure of $P$. 
It follows from Corollaries 4 and 6 in Chapter 3.4~\cite{Schinzel}
that $\exp(m(P))$ is at most the Hermitian norm of the coefficient
vector of $P$. Suppose $P$ has at most $k\ge 1$ non-zero terms, we
find
\begin{equation}
  \label{eq:mPlogPub}
  m(P) \le \log|P| + \frac 12\log k. 
\end{equation}
The following result~\cite[Corollary 2]{DS:16} of Dobrowolski and Smyth
provides a reverse inequality of the same quality.

\begin{thm}[Dobrowolski--Smyth]
  \label{thm:dobsmyth}
Suppose $P\in \IC[X_1^{\pm 1},\ldots, X_d^{\pm 1}]\ssm\{0\}$ has at most
 $k\ge 2$ non-zero terms with $k$ an integer. Then
  $m(P)\ge \log|P| - (k-2)\log 2$. 
\end{thm}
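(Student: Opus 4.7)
My plan is to reduce the multivariate bound to the univariate one, normalize so that the coefficient of maximum modulus equals $1$, and then argue by induction on~$k$. After normalization the target becomes $m(P)\ge -(k-2)\log 2$. For the reduction to one variable I would invoke Lawton's theorem, as quantified in Appendix~\ref{app:lawton}: if $P$ has exponent vectors $e_1,\ldots,e_k\in\IZ^d$ and one chooses $r\in\IZ^d$ so that the integers $\langle e_i,r\rangle$ are pairwise distinct, then the univariate specialization $\widetilde{P}(X)=P(X^{r_1},\ldots,X^{r_d})$ inherits exactly the $k$ non-zero coefficients of $P$, so $|\widetilde{P}|=|P|$, while $m(\widetilde{P})\to m(P)$ along suitably generic sequences of $r$. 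The dimension-free lower bound for univariate Laurent polynomials therefore yields the multivariate one by passing to the limit.

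In the univariate case I induct on $k\ge 2$. The base $k=2$ is immediate: if $P=c_1X^{e_1}+c_2X^{e_2}$ with $|c_1|\ge|c_2|$, then factoring out $c_1X^{e_1}$ and using the identity $m(1+\beta X^e)=\log^{+}|\beta|$ (valid for $\beta\in\IC$ and $e\in\IZ\ssm\{0\}$) gives $m(P)=\log|c_1|=\log|P|$. For $k\ge 3$, after multiplying $P$ by a monomial I may assume that its maximum-modulus coefficient is the constant term and equals~$1$, so $P(X)=1+\sum_{i=2}^{k}c_iX^{e_i}$ with all $|c_i|\le 1$. Writing $P(X)=a\prod_j(X-\alpha_j)$ and invoking Jensen's formula $\exp(m(P))=|a|\prod_j\max(1,|\alpha_j|)$, Vieta's formulas present the coefficients of $P$ as $\pm|a|\cdot e_m(\alpha)$, where all but $k$ of the elementary symmetric polynomials $e_m(\alpha)$ must vanish by sparsity. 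The inductive step thus reduces to the combinatorial inequality $\max_m|e_m(\alpha)|\le 2^{k-2}\prod_j\max(1,|\alpha_j|)$ under these vanishing relations.

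The main obstacle is this last estimate: extracting the clean dyadic factor $2^{k-2}$ from the sparsity constraint on the symmetric polynomials. The binary form of the constant suggests a recursive splitting, and my first attempt would be a peeling procedure that isolates the term of second-largest modulus $c_jX^{e_j}$ and performs a polynomial division by $1+c_jX^{e_j}$, reducing to a polynomial with at most $k-1$ non-zero terms and losing a factor of at most $2$ at each pass; after $k-2$ iterations one lands on a binomial whose Mahler measure already equals $\log|P|$. The delicate point is to track the Mahler measures through these divisions since $m$ is not additive in general and the quotients can introduce spurious contributions; a careful use of Newton's identities, which convert the sparsity hypothesis into a $(k-1)$-term linear recurrence on the power sums $p_n=\sum_j\alpha_j^n$, may provide the rigidity needed to control them.
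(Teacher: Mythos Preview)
This statement is not proved in the paper at all; it is quoted as Corollary~2 of Dobrowolski--Smyth~\cite{DS:16} and used as a black box. So there is no ``paper's own proof'' to compare against, and your proposal is really an attempted reconstruction of an external result.

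Your reduction to the univariate case via Lawton's theorem is sound: choosing $r\in\IZ^d$ with the $\langle e_i,r\rangle$ pairwise distinct and with $\rho(r)\to\infty$ preserves the coefficient vector exactly, and Theorem~\ref{thm:lawtonquant} then gives $m(P(X^{r_1},\ldots,X^{r_d}))\to m(P)$, so the univariate bound passes to the limit. The base case $k=2$ is also correct.

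The inductive step, however, has a genuine gap. Dividing $P=1+c_jX^{e_j}+(\text{rest})$ by the binomial $1+c_jX^{e_j}$ does \emph{not} yield a Laurent polynomial with $k-1$ non-zero terms; the quotient $P/(1+c_jX^{e_j})$ is in general an infinite power series (take $P=1+X+X^3$ and divide by $1+X$). So ``reducing to a polynomial with at most $k-1$ non-zero terms'' simply fails, and with it the whole recursion. Two further points: first, your remark that ``$m$ is not additive in general'' is incorrect---the Mahler measure \emph{is} additive under multiplication, $m(AB)=m(A)+m(B)$, so additivity is not the obstruction; the obstruction is that the factor you would need is not sparse. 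Second, the assertion that one ``loses a factor of at most $2$ at each pass'' is never justified, and the appeal to Newton's identities and a linear recurrence on the power sums is purely speculative: you have not shown how the vanishing of all but $k$ elementary symmetric functions forces $\max_m|e_m(\alpha)|\le 2^{k-2}\prod_j\max(1,|\alpha_j|)$, which is exactly the nontrivial content of the theorem.

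In short, the preliminary reductions are fine but the heart of the argument---the inductive step producing the clean factor $2^{k-2}$---is missing. You should consult the original Dobrowolski--Smyth paper for the actual mechanism.
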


Therefore,
\begin{equation}
  \label{eq:diffmahlerlog}
|m(P)-\log|P|| \ll k 
\end{equation}
with absolute implied constant. 
Observe that if
 $P$ is a polynomial, then 
$m(P) \ge \log |P| - \log(2)\sum_{i=1}^d \deg_{X_i}P$
by the classical Lemma 1.6.10~\cite{BG}. So (\ref{eq:diffmahlerlog})
is stronger when the number of terms in $P$ is known to be
bounded, which is often the case in our work. 

Let $x$ be an element of a number field $K$. The absolute logarithmic
Weil height, or just height, of $x$ is
\begin{equation}
  \label{def:heightx}
  \hproj{x} = \frac{1}{[K:\IQ]} \sum_{v} [K_v :\IQ_v] 
\log\max\{1,|x|_v\};
\end{equation}
here $v$ runs over all places of $K$ normalized such that $|2|_v=2$
for an infinite place $v$ and $|p|_v=1/p$ if $v$ lies above the rational
prime $p$, the completion of $K$ with respect to $v$ is $K_v$ and the
completion of $\IQ$ with respect to the restriction of $v$ is $\IQ_v$.
Let $P$ be a non-zero Laurent polynomial with  coefficients $x_0,\ldots,x_n\in K$.
The absolute logarithmic Weil height, or just height, of $P$ is 
\begin{equation}
\label{def:hprojP}
  \hproj{P} = \frac{1}{[K:\IQ]} \sum_{v} [K_v :\IQ_v] 
\log\max\{|x_0|_v,\ldots,|x_n|_v\}.
\end{equation}
See Chapter 1~\cite{BG} for
more details on heights. For example, $\hproj{x}$ and $\hproj{P}$ are
well-defined for $x\in\IQbar$ and $P\in
\IQbar[X_1^{\pm 1},\ldots,X_d^{\pm 1}]$, \textit{i.e.}, the values do
not depend on the number field $K$ containing $x$ and the coefficients
of $P$, respectively. Moreover 
$\hproj{P}=\hproj{\lambda P}$ for all $\lambda\in\IQbar^\times$.

%%% Local Variables:
%%% TeX-master: "atoral"
%%% End:

\section{Quantitative Galois equidistribution for torsion points}  \label{quantequidistribution}
%% PH: Revised October 30, 2018
%% PH: Revised April 16, 2019

We need a strong enough quantitative version of the Galois
equidistribution of torsion points $\bzeta$ of
$\mathbb{G}_m^d$, with a power saving discrepancy  in
$\delta{(\boldsymbol{\zeta})}$ defined in (\ref{def:deltazeta}). 

Different approaches are possible and we opt to use the
Erd\"os--Tur\'an--Koksma bound. This reduces the problem to the
estimation of certain exponential sums, which happen to be Gau\ss{}
sums that can be explicitly evaluated.

Let $N\in\IN$. For a divisor $f\in\IN$ of $N$ we work with the
canonical surjective, homomorphism $\GammaN\rightarrow\Gammaf$ induced
by reducing modulo $f$.

The \textit{conductor} $\mf_G$ of a subgroup $G$ of $\GammaN$ is the
least positive integer $f\mid N$ such that $G$ contains
$\ker(\GammaN\rightarrow \Gammaf)$. Observe that
$[\GammaN:G]\le\varphi(\mf_G)$. 

Certainly, $\mf_G$ is well-defined  as
$\ker(\GammaN\rightarrow\GammaN)$ is the trivial subgroup. Moreover, $\mf_{\GammaN}=1$.
But one should take care \refcomment{25}{as the conductor} of $G=\{1\}$ is $N/2$ for $N\equiv 2\imod 4$. 

The group $\GammaN$ is naturally isomorphic to the Galois group of
$\IQ(\zeta)/\IQ$, where $\zeta$ is a root of unity of  order  $N$.
Let $L\subset \IQ(\zeta)$ be the fixed field of $G$. Then $L$ lies in the fixed field of
$\ker(\GammaN\rightarrow(\IZ/\mf_G\IZ)^\times)$ which equals
$\IQ(\zeta_{\mf_G})$ where $\zeta_{\mf_G}$ is a root of unity of   order $\mf_G$. 

Let $f\ge 1$ be an integer  and $\zeta_f$ of
 order $f$. We claim 
$L \subset \IQ(\zeta_f)$ if and only if $\mf_G \mid f$. 
Indeed, if the inclusion holds, then $L\subset\IQ(\zeta_f) \cap
\IQ(\zeta_{\mf_G})$. It is \refcomment{26}{well-known} 
that  the intersection is generated by a root of unity of
 order $\gcd(f,\mf_G)$. By minimality of $\mf_G$ we find
$\mf_G\mid f$.
The converse direction follows as 
$\IQ(\zeta_{\mf_G}) \subset \IQ(\zeta_f)$ if $\mf_G\mid f$.

So $\mf_G$ is the greatest common divisor of all $f$, for which
$L\subset \IQ(\zeta_f)$.
Equivalently $\mf_G$ is the greatest common divisor of all $f \mid N$,
for which $\ker(\GammaN \rightarrow\Gammaf)\subset G$.

By \refcomment{27}{class field theory}, $\mf_G$ is the finite part of the conductor of
the abelian extension $L/\IQ$.

The next lemma collects some classical facts on Gau\ss{} sums. We write
$\mf_\chi=\mf_{\ker\chi}$ for a character
$\chi:\GammaN\rightarrow\IC^\times$. We recall that $\be(\cdot)$ was
defined in (\ref{eq:defbex}).

\begin{lemma}
\label{lem:boundcharsum}
  Let $N\in\IN$ and say %and $k\in\IZ$ coprime to $N$. Say
$\chi : \GammaN \rightarrow\IC^\times$ is a character. 
For $k\in\IZ$ we define $\tau = \sum_{\sigma\in\GammaN}
\chi(\sigma)\be(k\sigma /N)$, then the following hold true. 
\begin{enumerate}
\item [(i)] If $\gcd(k,N)=1$ then $|\tau|\le \mf_\chi^{1/2}$. 
\item[(ii)] For unrestricted $k$ we set $N' = N/\gcd(k,N)$. Then
  \begin{equation*}
    |\tau| \le \frac{\varphi(N)}{\varphi(N')} \mf_\chi^{1/2}. 
  \end{equation*}
\end{enumerate}
\end{lemma}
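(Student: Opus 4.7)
The plan is to deduce part (ii) from part (i) by pushing the sum down to $\Gamma_{N'}$, and to prove part (i) by a M\"obius-inversion argument that reduces $\tau$ to the classical Gauss sum of the primitive character underlying $\chi$.

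For (ii), I would first observe that $\be(k\sigma/N)=\be(k'\sigma/N')$ where $k'=k/\gcd(k,N)$ is coprime to $N'$. Partitioning $\sigma\in\Gamma_N$ according to its image $b\in\Gamma_{N'}$ under the surjective natural map $\Gamma_N\to\Gamma_{N'}$, each fiber has size $\varphi(N)/\varphi(N')$, and the fiber sum $\sum_{\sigma\mapsto b}\chi(\sigma)$ equals $\chi(\sigma_0)\sum_{t\in\ker}\chi(t)$ for any lift $\sigma_0$ of $b$. By orthogonality this vanishes unless $\chi$ is trivial on $\ker(\Gamma_N\to\Gamma_{N'})$, in which case $\chi$ factors through $\Gamma_{N'}$ with unchanged conductor. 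In the non-trivial case $\tau=0$ and the bound is immediate; in the trivial case, part (i) applied to the induced character on $\Gamma_{N'}$ with $k'$ coprime to $N'$, multiplied by the fiber size $\varphi(N)/\varphi(N')$, yields the claim.

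For (i), the substitution $\sigma\mapsto k^{-1}\sigma$ (valid because $k\in\Gamma_N$) reduces to $k=1$ up to a unit-modulus prefactor $\overline{\chi(k)}$. Let $f=\mf_\chi$ and write $\chi=\chi^*\circ\pi$ with $\chi^*$ the primitive character of conductor $f$, extended to $\IZ$ by $0$ on integers not coprime to $f$. M\"obius inversion of the indicator $\mathbf{1}_{\gcd(\sigma,N)=1}$ yields
\begin{equation*}
\tau = \overline{\chi(k)}\sum_{d\mid N}\mu(d)\chi^*(d)\sum_{m=1}^{N/d}\chi^*(m)\be\!\left(m/(N/d)\right).
\end{equation*}
Terms with $\gcd(d,f)>1$ vanish because $\chi^*(d)=0$; for $\gcd(d,f)=1$ the relations $d\mid N$ and $f\mid N$ force $f\mid N/d$, so one can split $m=a+fj$ with $a\in\Gamma_f$ and $0\le j<s$ for $s=(N/d)/f$, exploit $\chi^*(m)=\chi^*(a)$, and evaluate the geometric sum $\sum_{j=0}^{s-1}\be(j/s)$. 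The latter vanishes for $s\ge 2$, so only the divisor $d=N/f$ can survive; on that divisor the inner sum equals precisely the classical primitive Gauss sum $g(\chi^*)=\sum_{a\in\Gamma_f}\chi^*(a)\be(a/f)$, of absolute value $\sqrt{f}$. Combining this with $|\mu(N/f)\chi^*(N/f)\overline{\chi(k)}|\le 1$ gives $|\tau|\le\mf_\chi^{1/2}$.

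The main step is the bookkeeping in the M\"obius expansion and the identification of the unique non-vanishing inner sum; this hinges on the vanishing $\sum_{j=0}^{s-1}\be(j/s)=0$ for $s\ge 2$. The rest reduces to the classical evaluation $|g(\chi^*)|=\sqrt{f}$ for primitive $\chi^*$, which one can cite, together with the orthogonality computation used in the quotient step for (ii).
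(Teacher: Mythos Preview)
Your argument is correct and, for part (ii), follows the same route as the paper: push the sum down to $\Gamma_{N'}$ along the natural surjection, use orthogonality on the kernel to see that $\tau=0$ unless $\chi$ factors, and then apply (i) to the factored character together with the fiber count $\varphi(N)/\varphi(N')$. Your remark that the conductor is unchanged is in fact correct (the paper only records $\mf_{\chi'}\mid\mf_\chi$, which already suffices).

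For part (i) the paper simply cites Iwaniec--Kowalski for the case $k=1$ and then makes the same unit substitution you use. Your M\"obius-inversion computation is the standard unpacking of that citation: after expanding $\mathbf{1}_{\gcd(\sigma,N)=1}$, the key point is that for $\gcd(d,f)=1$ one has $f\mid N/d$, so the inner sum factors and the geometric sum $\sum_{j=0}^{s-1}\be(j/s)$ kills every term except possibly $d=N/f$, leaving at most the primitive Gauss sum $g(\chi^*)$ of modulus $\sqrt{f}$. Note also that when $\gcd(N/f,f)>1$ or $N/f$ is not squarefree this surviving term is itself zero (via $\chi^*(N/f)=0$ or $\mu(N/f)=0$), consistent with your bound. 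So your (i) is a more explicit, self-contained version of what the paper outsources to a reference; otherwise the two proofs coincide.
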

\begin{proof}
\newcommand{\sigmay}{\sigma}
  If $k=1$, part (i) follows directly
  from Lemma 3.1, Section 3.4~\cite{IwaniecKowalski}. The more general
  case $\gcd(k,N)=1$ follows as
  $\sum_{\sigma\in \GammaN} \chi(\sigma) \be(k\sigma/N)=\sum_{\sigmay\in \GammaN} \chi(k'\sigmay) \be(\sigmay/N)$
  where $kk'\equiv 1 \imod N$
  and since 
  $\chi$ is completely multiplicative. 
  
To prove (ii) set $N'=N/\gcd(k,N)$ and $k' = k/\gcd(k,N)$. Then
$\tau$ is
\begin{equation*}
 \sum_{\sigma\in\GammaN} \chi(\sigma)\be\left(\frac{k'}{N'}\sigma\right)
 = \sum_{\sigma'\in\GammaNp} 
\left(\sum_{\substack{\sigma\in\GammaN \\ \sigma\equiv \sigma'\imod {N'}}} \chi(\sigma)\right)\be\left(\frac{k'}{N'}\sigma\right).
\end{equation*}
The inner sum on the right
runs over a coset of the kernel of $\GammaN\rightarrow\GammaNp$.
 Since $\chi$ is a character, the
inner sum equals $0$ if the said kernel does not lie in the kernel
of $\chi$. In this case, $\tau=0$ and we are done. 

Otherwise, $\ker(\GammaN\rightarrow\GammaNp)\subset \ker \chi$,
and then $\mf_\chi  \mid N'$.
We find moreover that $\chi$
factors through  a character
$\chi' :\GammaNp\rightarrow\IC^\times$
and $\mf_{\chi'} \mid \mf_{\chi}$. 
As the kernel
of $\GammaN\rightarrow \GammaNp$ has order $\varphi(N)/\varphi(N')$
 we have
\begin{equation*}
  \tau = \frac{\varphi(N)}{\varphi(N')}\sum_{\sigma'\in\GammaNp}\chi'(\sigma')
\be\left(\frac{k'}{N'}\sigma\right). 
\end{equation*}
Part (ii) now follows from (i) since $\gcd(k',N')=1$. 
\end{proof}

\begin{lemma}
  \label{eq:ramsumbound}
  Let $N\in\IN$, let $G$ be a subgroup of $\GammaN$, and let
  $k\in\IZ$.
  We define $N'=N/\gcd(k,N)$, then
  \begin{equation*}
    \frac{1}{\#G}\left|\sum_{\sigma\in G} \be(k\sigma/N)\right|\le
    \frac{[\GammaN:G]}{\varphi(N')} \mf_G^{1/2}. 
  \end{equation*}
\end{lemma}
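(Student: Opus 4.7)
The strategy is to reduce to Lemma \ref{lem:boundcharsum}(ii) by Fourier inversion on the finite abelian group $\Gamma_N$, which converts the sum over $G$ into a sum over the dual group of characters trivial on $G$. The computation of conductors via the chain of kernel inclusions then collapses everything to a single $\mf_G^{1/2}$ factor.

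Concretely, let $G^\perp = \{\chi : \Gamma_N \to \IC^\times \,:\, \chi|_G = 1\}$, which has order $[\Gamma_N:G]$. First I would write the indicator function of $G$ in $\Gamma_N$ as
\begin{equation*}
\mathbf{1}_G(\sigma) = \frac{1}{[\Gamma_N:G]}\sum_{\chi\in G^\perp} \chi(\sigma),
\end{equation*}
and then extend the sum $\sum_{\sigma\in G}\be(k\sigma/N)$ to a sum over all of $\Gamma_N$ weighted by $\mathbf{1}_G$. Interchanging the order of summation gives
\begin{equation*}
\sum_{\sigma\in G}\be(k\sigma/N) = \frac{1}{[\Gamma_N:G]}\sum_{\chi\in G^\perp} \tau_\chi,
\qquad \tau_\chi=\sum_{\sigma\in\Gamma_N}\chi(\sigma)\be(k\sigma/N).
\end{equation*}

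Next I would apply Lemma \ref{lem:boundcharsum}(ii) to each $\tau_\chi$, obtaining $|\tau_\chi|\le (\varphi(N)/\varphi(N'))\mf_\chi^{1/2}$. The key combinatorial observation is that if $\chi\in G^\perp$, then $G\subset\ker\chi$, hence $\ker(\Gamma_N\to\Gamma_{\mf_G})\subset G\subset\ker\chi$, so $\chi$ factors through $\Gamma_{\mf_G}$ and therefore $\mf_\chi\mid\mf_G$. In particular $\mf_\chi\le\mf_G$ for every $\chi\in G^\perp$.

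Assembling these ingredients and using the identities $\#G^\perp=[\Gamma_N:G]$ and $\#G\cdot[\Gamma_N:G]=\varphi(N)$, I get
\begin{equation*}
\frac{1}{\#G}\left|\sum_{\sigma\in G}\be(k\sigma/N)\right|
\le \frac{1}{\#G\,[\Gamma_N:G]}\cdot[\Gamma_N:G]\cdot\frac{\varphi(N)}{\varphi(N')}\mf_G^{1/2}
=\frac{[\Gamma_N:G]}{\varphi(N')}\mf_G^{1/2},
\end{equation*}
which is the claimed estimate. There is no real obstacle here; the only point that requires care is the conductor comparison $\mf_\chi\le\mf_G$, which is a direct unwinding of the definitions but is essential since a na\"ive summation of $\mf_\chi^{1/2}$ over all characters would be wasteful and give a far weaker bound.
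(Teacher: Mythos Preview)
Your proof is correct and follows essentially the same approach as the paper: both arguments express the indicator function of $G$ in $\Gamma_N$ via character orthogonality (your $G^\perp$ is exactly the paper's set of characters $\chi_i$ factoring through $\Gamma_N/G$), apply Lemma~\ref{lem:boundcharsum}(ii) to each resulting Gau\ss{} sum, and use the conductor inequality $\mf_\chi\le\mf_G$ for $\chi\in G^\perp$. Your justification of the conductor bound via the chain $\ker(\Gamma_N\to\Gamma_{\mf_G})\subset G\subset\ker\chi$ is slightly more explicit than the paper's appeal to minimality, but the content is identical.
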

\begin{proof}
  Let $\chi'_1,\ldots,\chi'_m : \GammaN /G\rightarrow\IC^\times$ be all
  characters and $m=[\GammaN:G]$. 
Then $\sum_{i=1}^m \chi_i'(\sigma)=0$ for all $\sigma\in \GammaN/G$ expect for the
neutral element, where this sum equals $m$. Write $\chi_i$ for
$\GammaN\rightarrow \GammaN/G$ composed with 
$\chi'_i$. Then $\sum_{i=1}^m\chi_i(\sigma)=0$ if and only if 
$\sigma\in \GammaN\ssm G$, otherwise this sum is $m$. 
Therefore,
\begin{equation}
\label{eq:charfunctionG}
  \sum_{\sigma\in G} \be(k\sigma/N)
 = \frac 1m \sum_{i=1}^m   \sum_{\sigma\in \GammaN}\chi_i(\sigma) \be(k\sigma/N)
\end{equation}
and  lemma \ref{lem:boundcharsum}(ii) implies
\begin{equation*}
\left|\sum_{\sigma\in \GammaN} \chi_i(\sigma) \be(k\sigma/N)\right|
\le \frac{\varphi(N)}{\varphi(N')} \mf_{\chi_i}^{1/2}. 
\end{equation*}
Note that $G\subset\ker\chi_i$ because $\chi_i$ factors through 
$\GammaN\rightarrow\GammaN/G$. 
So $\mf_{\chi_i}\le \mf_G$, by the minimality of $\mf_{\chi_i}$. 
The current lemma now follows  from (\ref{eq:charfunctionG}). 
\end{proof}

\refcomment{29}{Let $d,n \in\IN$ and  $x_1,\ldots,x_n\in [0,1)^d$. 
The \textit{discrepancy} of $(x_1,\ldots,x_n)$ is
\begin{equation}
  \label{def:discrepancy}
 \cD (x_1,\ldots,x_n) = \sup_B \left|\frac{\#\{ i : x_i \in B\}}{n}
 - \mathrm{vol}(B)\right|
\end{equation}
where $B$ ranges over all products $\prod_{i=1}^d [\alpha_i,\beta_i)$
with $0\le \alpha_i <\beta_i\le 1$.} \refcomment{31}{Note that the
discrepancy lies in $[0,1]$.} In  some references such as \cite{Harman},
 the discrepancy is not
normalized by dividing by $n$ and can be greater than $1$.

In the next proposition we bound from above the discrepancy of the
Galois orbit of a point of finite order in $\IG_m^d$ using the Gau\ss{}
sum estimates above. 
Below,  $d_0(N)$ denotes the number of divisors of a natural number $N$.%%  and the

\begin{propo}
\label{propo:discbound}
Let $\bzeta\in\IG_m^d$ have  order $N$ and let $G$ be a subgroup
of $\GammaN$ 
such that
$\{ \bzeta^\sigma : \sigma\in G\} = \{\be(x_i): 1\le i\le \#G\}$
with all $x_i$ in $[0,1)^d$.
\begin{enumerate}
\item [(i)] We have
\begin{equation*}
\cD(x_1,\ldots,x_{\# G}) \ll_d [\GammaN:G]\mf_G^{1/2} \frac{(\log
  2\delta(\bzeta))^{d-1} \log\log 3\delta(\bzeta)}{\delta(\bzeta)^{1/2}}.
\end{equation*}
\item[(ii)] If $d=1$, then
\begin{equation*}
  \cD(x_1,\ldots,x_{\#G}) \ll
[\GammaN:G]\mf_G^{1/2}
  \frac{\log(2N)d_0(N)}{\varphi(N)}.
\end{equation*}
\end{enumerate}
\end{propo}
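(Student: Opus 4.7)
The plan is to apply the multidimensional Erd\H{o}s--Tur\'an--Koksma inequality and then to bound each constituent exponential sum via Lemma~\ref{eq:ramsumbound}, optimizing the frequency cutoff at the end.

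Write $\bzeta = \be(b/N)$ with $b\in\IZ^d$ satisfying $\gcd(b_1,\ldots,b_d,N)=1$ (equivalent to $\bzeta$ having exact order $N$). For each $\sigma\in G\subset\Gamma_N$ the representative $x_\sigma = \{\sigma b/N\}\in[0,1)^d$ satisfies $\be(h\cdot x_\sigma) = \be(k\sigma/N)$ with $k := h\cdot b\in\IZ$. For every integer $H\ge 1$, the Erd\H{o}s--Tur\'an--Koksma inequality gives
\begin{equation*}
\cD \ll_d \frac{1}{H} + \sum_{0 < |h|\le H} \frac{1}{r(h)}\left|\frac{1}{\#G}\sum_{\sigma\in G}\be(k\sigma/N)\right|, \qquad r(h) := \prod_{j=1}^{d}\max(1,|h_j|),
\end{equation*}
and Lemma~\ref{eq:ramsumbound} controls the inner sum by $[\Gamma_N:G]\,\mf_G^{1/2}/\varphi(N')$ with $N'=N/\gcd(k,N)$.

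The key geometric input is that $N'$ equals the multiplicative order of $\bzeta^h$, so $N'h\in\IZ^d$ is annihilated by $\bzeta$. If $h\ne 0$ and $|h|<\delta(\bzeta)$ then $\bzeta^h\ne 1$, and the defining property of $\delta(\bzeta)$ forces $N'\ge \delta(\bzeta)/|h|$. Choosing $H=\lfloor\delta(\bzeta)^{1/2}\rfloor$ (when positive; otherwise the bound is trivial from $\cD\le 1$), this lower bound is in force throughout the sum. Combining with Mertens' estimate $\varphi(n)\gg n/\log\log(3n)$, one deduces
\begin{equation*}
\frac{1}{\varphi(N')}\ll \frac{|h|\log\log(3\delta(\bzeta))}{\delta(\bzeta)},
\end{equation*}
where we use that $\log\log(3N')$ and $\log\log(3\delta(\bzeta))$ are comparable up to a $d$-dependent constant in the range where the estimate is non-trivial. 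Inserting this into the Erd\H{o}s--Tur\'an--Koksma inequality reduces the task to $\sum_{0<|h|\le H}|h|/r(h)\ll_d H(\log 2H)^{d-1}$ -- the standard computation obtained by separating the coordinate realizing $|h|$ and summing harmonic sums in the remaining coordinates. We arrive at
\begin{equation*}
\cD\ll_d \frac{1}{H} + [\Gamma_N:G]\,\mf_G^{1/2}\cdot\frac{H(\log 2H)^{d-1}\log\log(3\delta(\bzeta))}{\delta(\bzeta)},
\end{equation*}
and the choice $H=\lfloor\delta(\bzeta)^{1/2}\rfloor$ balances both terms, yielding~(i).

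For~(ii), the dimension $d=1$ affords the sharpening $\delta(\bzeta)=N$ and $\gcd(b,N)=1$, so $N/\gcd(hb,N)=N/\gcd(h,N)$. Taking $H=N$ in the one-dimensional Erd\H{o}s--Tur\'an inequality and reorganizing $\sum_{h=1}^{N}\frac{1}{h\,\varphi(N/\gcd(h,N))}$ by the divisor $M:=N/\gcd(h,N)$ (writing $h=(N/M)h'$ with $\gcd(h',M)=1$) gives
\begin{equation*}
\sum_{M\mid N}\frac{1}{\varphi(M)}\sum_{\substack{1\le h'\le M\\ \gcd(h',M)=1}}\frac{M}{Nh'}\ll \frac{\log(2N)}{N}\sum_{M\mid N}\frac{M}{\varphi(M)}\ll \frac{\log(2N)\,d_0(N)}{\varphi(N)},
\end{equation*}
using that $M/\varphi(M)\le N/\varphi(N)$ for every divisor $M\mid N$ (the prime support of $M$ being contained in that of $N$). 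The main obstacle is coupling the geometric lower bound $N'\ge\delta(\bzeta)/|h|$ with the frequency sum: only the single choice $H\asymp\delta(\bzeta)^{1/2}$ balances the two Erd\H{o}s--Tur\'an--Koksma error terms, and book-keeping the two $\log\log$ factors (one from Mertens, one from the passage between $\log\log(3N')$ and $\log\log(3\delta(\bzeta))$) requires care but is routine.
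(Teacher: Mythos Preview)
Your argument is essentially the paper's: Erd\H{o}s--Tur\'an--Koksma, Lemma~\ref{eq:ramsumbound}, the key lower bound $N'\ge\delta(\bzeta)/|h|$, the estimate $\sum_{0<|h|\le H}|h|/r(h)\ll_d H(\log 2H)^{d-1}$, and the choice $H\asymp\delta(\bzeta)^{1/2}$ for (i), $H=N$ for (ii). Part (ii) is correct as written; your bijective reorganisation by $M=N/\gcd(h,N)$ and the inequality $M/\varphi(M)\le N/\varphi(N)$ are equivalent to the paper's $g\varphi(N/g)\ge\varphi(N)$.

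One point in (i) is stated incorrectly, though the conclusion is right. You justify $1/\varphi(N')\ll|h|\log\log(3\delta(\bzeta))/\delta(\bzeta)$ by asserting that $\log\log(3N')$ and $\log\log(3\delta(\bzeta))$ are comparable. They need not be: $N'$ can equal $N$, and $N$ is not bounded in terms of $\delta(\bzeta)$, so $\log\log(3N')/\log\log(3\delta(\bzeta))$ can be arbitrarily large. The correct argument (which the paper uses) is that $t\mapsto(\log\log(3+t))/t$ is decreasing on $t>0$; since $N'\ge\delta(\bzeta)/|h|$ this gives
\[
\frac{\log\log(3+N')}{N'}\le\frac{|h|\log\log\!\bigl(3+\delta(\bzeta)/|h|\bigr)}{\delta(\bzeta)}\le\frac{|h|\log\log(3+\delta(\bzeta))}{\delta(\bzeta)},
\]
which is what you need. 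With this correction your proof of (i) is complete.
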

\begin{proof}
  We abbreviate $n=\#G$. We fix $a\in\IZ^d$ with $\bzeta = \be(a/N)$.
  Then $N$ and the entries of $a$ are coprime. Let $H\ge 4$ be an
  integer. We use the Erd\"os--Tur\'an--Koksma inequality, Theorem
  5.21~\cite{Harman}, to bound the discrepancy $\cD = \cD
  (x_1,\ldots,x_{n})$ as follows
  \begin{equation}
    \label{eq:discboundgeneral}
    \cD \ll_d \frac{1}{H} + \sum_{\substack{b\in \IZ^d\ssm\{0\}
        \\ |b|\le H}} \frac{1}{r(b)} \left|\frac{1}{n}\sum_{\sigma\in
        G} \be\left(\frac{\langle a,b\rangle}{N}\sigma\right) \right|
  \end{equation}
  here $r(b_1,\ldots,b_d) = \max\{1,|b_1|\}\cdots\max\{1,|b_d|\}$.

  By Lemma \ref{eq:ramsumbound}, the expression inside the modulus is
  at most $C/\varphi(N/\gcd(\langle a,b\rangle,N))$ with
  $C=[\GammaN:G]\mf_{G}^{1/2}$. \refcomment{30}{We have $\varphi(M) \gg
    M / \log\log(3+M)$ for all integers $M\ge 1$ with an absolute and
    effective implicit constant, see for example Theorem 15~\cite{RS}.}
  Therefore,
  \begin{alignat*}1
    \cD &\ll_d
    \frac 1H +C\sum_{\substack{b\in \IZ^d\ssm\{0\} \\ |b|\le
        H}} \frac{1}{r(b)}\frac{\gcd(\langle
      a,b\rangle,N)}{N}\log\log(3+N/\gcd(\langle a,b\rangle,N)).
  \end{alignat*}

  If $b\in\IZ^d\ssm\{0\}$ with $|b|\le H$, then 
  \begin{equation*}
    \left\langle a,\frac {N}{\gcd(\langle a,b\rangle,N)}b\right\rangle
    = N\frac{\langle a,b\rangle}{\gcd(\langle a,b\rangle,N)} \in N\IZ
  \end{equation*}
  which implies $\bzeta^{bN/\gcd(\langle a,b\rangle,N)}=1$. So
  $N/\gcd(\langle a,b\rangle,N) \ge \delta/|b|>0$ where
  $\delta=\delta(\bzeta)$. 
  As $t\mapsto (\log\log(3+t))/t$ is decreasing on $t>0$ we
  find 
  \begin{alignat*}1 \cD &\ll_d \frac{1}{H}+ C\frac{1}{\delta}
    \sum_{\substack{b\in \IZ^d\ssm\{0\} \\ |b|\le H}}
    \frac{|b|}{r(b)}\log\log(3+\delta).
  \end{alignat*}
  \refcomment{31}{The sum of $|b|/r(b)$ over all $b \in\IZ^d$ with
    $1\le |b|\le H$ is $\ll_d H(\log H)^{d-1}$, so we find
    \begin{equation*}
      \cD \ll_d \frac{1}{H} +C \frac{\log\log(3\delta)}{\delta}H (\log
      H)^{d-1}.
    \end{equation*}
  }
  Part (i) follows by fixing $H$ to be the least integer with $H\ge
  \delta^{1/2}$ and $H\ge 4$.

  In part (ii) we have $d=1$. We may assume $N\ge 4$
  \refcomment{31}{as the discrepancy is at most $1$.} Here $a$ is
  coprime to $N$ and so $\gcd(ab,N)=\gcd(b,N)$. In
  (\ref{eq:discboundgeneral}) we take $H=N$ and use again Lemma
  \ref{eq:ramsumbound} with $C$ as before to find
  \begin{equation*}
    \cD \ll \frac 1N +  \sum_{b=1}^N \frac{C}{b \varphi(N/\gcd(b,N))}
    % \ll \frac 1N + \sum_{g\mid N}
    % \sum_{\substack{b=1 \\ g\mid
    % b}}^N  \frac{C}{b\varphi(N/g)}
    \refcomment{33}{\le}
    \frac 1N + \sum_{g\mid N}\frac {C}{g\varphi(N/g)} 
    \sum_{e =1}^{ N/g} \frac{1}{e}.
  \end{equation*}
  In the sum over $g$ we have $g\varphi(N/g)\ge \varphi(N)$ and the
  harmonic sum is $\ll \log N$. So $\cD \ll 1/N+ C (\log N)d_0(N)/
  \varphi(N)$, which implies (ii).
\end{proof}

A variant of the case $d=1$ already appears in Lemma
1.3~\cite{bakerihrumely}, it is attributed to Pomerance.

The discrepancy bound in (i) depends on $\delta(\bzeta)$. But
$\delta(\bzeta)$ is always bounded above by $N$. So estimates
involving $N$ are stronger than estimates involving $\delta(\bzeta)$.
However, there can be no upper bound for the discrepancy in terms of
the order $N$.

For $d=1$ we have $\delta(\bzeta)=N$. If $[\GammaN:G]$ and $\mf_G$
are fixed, the decay of the discrepancy is $1/N$ up-to terms of
subpolynomial growth. This fact will be important.

\refcomment{34}{
  The \textit{total variation} of a real valued function
$F$ whose domain contains the interval $[a,b]$ with $a\le b$ is
\begin{equation*}
  \mathrm{Var}_a^b(F) = \sup_{a\le x_0 \le \cdots \le x_m \le b}
  \sum_{i=1}^n |F(x_i)-F(x_{i-1})|.
\end{equation*}
% and $F$ is said to be of \textit{bounded variation} on $[a,b]$ if
% $\mathrm{Var}_a^b(F)<\infty$. 
For $a=0$ and $b=1$ we abbreviate
$\mathrm{Var}(F) = \mathrm{Var}_a^b(F)$.}

The next lemma requires Koksma's inequality.

\begin{lemma}\label{thm:bir}
  Let $F:[0,1]\rightarrow\IR$ be a function with
  $\mathrm{Var}(F)<\infty$. If
  $N\ge 1$ is an integer and $G$ is a subgroup of $\GammaN$ such that
  $\{ \zeta^\sigma : \sigma\in G\} = \{\be(x_i): 1\le i\le \#G\}$ with
  all $x_i$ in $[0,1)$, then
  \begin{eqnarray*}
    \left| \frac{1}{\#G} \sum_{i=1}^{\#G}
    F(x_i) - \int_{0}^1 F(x) dx \right|  \ll
    [\GammaN:G] \mf_G^{1/2}
    \frac{\log(2N) d_0(N)}{\varphi(N)}{\mathrm{Var}}(F).
  \end{eqnarray*}
\end{lemma}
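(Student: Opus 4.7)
The plan is to derive the lemma as a direct combination of two ingredients: a standard one-dimensional Koksma inequality and the discrepancy estimate already established in Proposition~\ref{propo:discbound}(ii).

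First I would invoke Koksma's classical inequality: if $F\colon [0,1]\to \IR$ has bounded total variation and $x_1,\ldots,x_n \in [0,1)$, then
\begin{equation*}
\left|\frac{1}{n}\sum_{i=1}^{n} F(x_i) - \int_0^1 F(x)\,dx\right| \le \mathrm{Var}(F)\cdot \cD(x_1,\ldots,x_n),
\end{equation*}
where $\cD$ is the discrepancy as defined in (\ref{def:discrepancy}); see, for instance, Theorem 5.1 of~\cite{Harman}, or derive it in two lines by summation by parts applied to the counting function of the $x_i$ against the increments of $F$. Note that our $\cD$ dominates the star-discrepancy used in some formulations of Koksma's bound, so no loss is incurred.

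Next, with $n=\#G$ and $x_1,\ldots,x_{\#G}$ identified with the arguments of the Galois orbit of $\bzeta$ as in the hypothesis, Proposition~\ref{propo:discbound}(ii) in the case $d=1$ yields
\begin{equation*}
\cD(x_1,\ldots,x_{\#G}) \ll [\Gamma_N:G]\,\mf_G^{1/2}\,\frac{\log(2N)\,d_0(N)}{\varphi(N)}.
\end{equation*}
Substituting this discrepancy bound into Koksma's inequality produces the stated estimate.

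There is essentially no obstacle: all the combinatorial and analytic content sits inside Proposition~\ref{propo:discbound}(ii), whose proof already absorbed the Erd\H{o}s--Tur\'an--Koksma inequality together with the Gau\ss{}-sum bounds of Lemmas~\ref{lem:boundcharsum} and~\ref{eq:ramsumbound}. The only small point to verify is that the $x_i$ can be taken pairwise distinct so that Koksma's inequality applies as stated; this is automatic since $\bzeta$ has order $N$ and $G\subset\Gamma_N$ acts freely on the orbit $\{\bzeta^\sigma:\sigma\in G\}$.
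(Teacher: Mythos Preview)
Your proposal is correct and matches the paper's own proof essentially verbatim: the paper simply cites Koksma's inequality (Theorem 5.4 in~\cite{Harman}) together with Proposition~\ref{propo:discbound}(ii). Your additional remark about pairwise distinctness is harmless but unnecessary, since Koksma's inequality holds for arbitrary finite sequences regardless of repetitions.
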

\begin{proof}
  The claim follows from
  Theorems 1.3 and  5.1 in Chapter 2~\cite{KuipersNiederreiter}
  together with
  Proposition \ref{propo:discbound}(ii).
\end{proof}

\subsection{A univariate average}
\begin{lemma}
  \label{lem:VarFbound}
  \refcomment{35}{Let  $\alpha\in\IC$ and $r>0$. For $x\in [0,1]$ we define
  \begin{equation*}
    F_{\alpha,r}(x) = \log \max\left( r, |\be(x) -
      \alpha| \right).
  \end{equation*}
  Then
  $F_{\alpha,r}:[0,1]\rightarrow \IR$
  satisfies
  $\mathrm{Var}(F_{\alpha,r}) \le 3 \log(1+2/r)$.
  }
\end{lemma}
\begin{proof}
  We abbreviate $F = F_{\alpha,r}$.   
  By elementary geometry we can find $m\le 3$ and $0=x_0 < x_1<\cdots
  < x_m = 1$ 
   such that  $F$ is monotone on all $[x_{i-1},x_i]$.
   Then $\mathrm{Var}_{x_{i-1}}^{x_i}(F) =
  |F(x_{i})-F(x_{i-1})|$ and 
  $\mathrm{Var}(F) =\sum_{i=1}^m 
  \mathrm{Var}_{x_{i-1}}^{x_i}(F)$.
  We have $\log\max\{r,|\alpha|-1\}\le F(x)\le
  \log\max\{r , |\alpha|+1\}$ for all $x\in [0,1]$. Hence
  $\mathrm{Var}_{x_{i-1}}^{x_i}(F) \le \log
  \frac{\max\{r,|\alpha|+1\}}{\max\{r,|\alpha|-1\}}$ which we see is
  at most
  $\log(1+2/r)$ by considering the cases 
  $|\alpha|\ge 1+r$ and  $|\alpha|<1+r$.
  Thus $\mathrm{Var}(F) \le 3 \log(1+2/r)$.
\end{proof}
%}
The value $r$ serves as a truncation parameter. We now  apply Koksma's
inequality to $F_{\alpha,r}$. 

\begin{lemma}  \label{logintegral}
  Let  $\zeta\in \mu_\infty$ have  order $N$
  and let $G$ be a subgroup of $\GammaN$. 
  Let $\alpha\in \IC$ and $r\in (0,1]$, then
  \begin{equation}\label{logintegralest}
    \frac{1}{\#G} \sum_{ \substack{ \sigma \in G\\
        |\zeta^{\sigma} - \alpha| > r  } } \log{|
      \zeta^{\sigma}-\alpha|}   = \log^+ |\alpha| +
    O\left(\left([\GammaN:G]\mf_G^{1/2}\frac{\log(2N)
          d_0(N)}{\varphi(N)}+  r\right)\left
        |\log \frac r 2\right|\right).
  \end{equation}
\end{lemma}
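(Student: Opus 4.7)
The plan is to apply Koksma's inequality (Lemma~\ref{thm:bir}) to the truncated test function $F_{\alpha,r}$ defined in~(\ref{def:Falphaz}), using the total variation estimate $\mathrm{Var}(F_{\alpha,r})\ll|\!\log r|$ from~(\ref{eq:VarFbound}). Writing $\zeta^\sigma=\be(x_\sigma)$ with $x_\sigma\in[0,1)$ and setting $E_r=\{\sigma\in G : |\zeta^\sigma-\alpha|\le r\}$, the definition of $F_{\alpha,r}$ yields the identity
\begin{equation*}
\frac{1}{\#G}\sum_{\sigma\in G}F_{\alpha,r}(x_\sigma)
=\frac{1}{\#G}\sum_{\substack{\sigma\in G\\ |\zeta^\sigma-\alpha|>r}}\log|\zeta^\sigma-\alpha|
+(\log r)\,\frac{\#E_r}{\#G},
\end{equation*}
and Lemma~\ref{thm:bir} approximates the left-hand side by $\int_0^1 F_{\alpha,r}(x)\,dx$ up to an error of precisely the shape $[\Gamma_N:G]\mf_G^{1/2}\frac{\log(2N)d_0(N)}{\varphi(N)}|\!\log r|$.

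Next I would compute the integral. By Jensen's formula $\int_0^1\log|\be(x)-\alpha|\,dx=\log^+|\alpha|$, and $F_{\alpha,r}$ differs from $\log|\be(x)-\alpha|$ only on the arc $A_r=\{x\in[0,1) : |\be(x)-\alpha|\le r\}$. From the identity $|\be(x)-\alpha|^2=(1-|\alpha|)^2+2|\alpha|(1-\cos(2\pi x-\arg\alpha))$ one sees that $A_r$ is empty unless $\bigl||\alpha|-1\bigr|\le r$; in that case it is the union of at most two subintervals of $[0,1)$ of total length $O(r)$. The elementary substitution $w=\sqrt{|\alpha|}(2\pi x-\arg\alpha)$ followed by $s=w/r$ reduces $\int_{A_r}(\log r-\log|\be(x)-\alpha|)\,dx$ to an integral dominated by $r\int_0^1 (-\log s)\,ds = r$, yielding the uniform estimate $\int_0^1 F_{\alpha,r}(x)\,dx=\log^+|\alpha|+O(r)$.

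To finish, I would estimate $\#E_r$ using the discrepancy bound from Proposition~\ref{propo:discbound}(ii). Since $A_r$ is a union of at most two intervals, hence a ``box'' in the sense of~(\ref{def:discrepancy}), and has total length $O(r)$, we obtain
\begin{equation*}
\frac{\#E_r}{\#G}\ll r+[\Gamma_N:G]\mf_G^{1/2}\frac{\log(2N)d_0(N)}{\varphi(N)}.
\end{equation*}
Multiplying this inequality by $|\!\log r|$ and combining with the Koksma error and the $O(r)$ remainder from the integral computation produces exactly the two error terms in the statement.

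The proof is essentially a bookkeeping exercise and presents no single hard step. The only mildly delicate point is verifying the $O(r)$ bound on the arc integral uniformly in $\alpha\in\IC$; the worst case is $\alpha$ on or very close to the unit circle, and is handled by the local expansion $|\be(x)-\alpha|^2\approx(1-|\alpha|)^2+|\alpha|(2\pi x-\arg\alpha)^2$ near the point of closest approach, which reduces the estimate to an elementary integral of $\log$. The regime of $|\alpha|$ small together with $r$ close to $1$ is trivial, since there the bound $r|\!\log r|=O(1)$ already absorbs any contribution coming from a uniformly integrable logarithmic singularity.
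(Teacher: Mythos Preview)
Your proposal is correct and follows essentially the same approach as the paper: apply Koksma's inequality (Lemma~\ref{thm:bir}) to $F_{\alpha,r}$, control $\#E_r$ via the discrepancy, and use Jensen's formula for the main term. The only cosmetic difference is that the paper applies Lemma~\ref{thm:bir} a second time to the characteristic function of $A_r$ rather than invoking Proposition~\ref{propo:discbound}(ii) directly, and it bounds $\int_{A_r}\log|\be(x)-\alpha|\,dx$ by the cruder $O(r|\!\log r|)$ instead of your $O(r)$; neither change affects the outcome.
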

\begin{proof}
  We let $I$ denote the left-hand side of  (\ref{logintegralest}). Then
  $I=I_1+I_2$ with 
  \begin{equation*}
    I_1 = \frac{1}{\#G}\sum_{i=1}^{\# G}
    F_{\alpha,r}(x_i)\quad\text{and}\quad I_2=    \frac{1}{\#G} \sum_{\substack{i\\
        |\be(x_i) - \alpha| \le r  }}-\log r
  \end{equation*}
  with the $x_i\in [0,1)$ as in Lemma \ref{thm:bir} and $F_{\alpha,r}$
  as in Lemma \ref{lem:VarFbound}. 
  The integrals below
  are understood to be over subsets of $[0,1]$.  
  Applying Lemma \ref{thm:bir} to $F_{\alpha,r}$ yields 
  \begin{equation}
    \label{eq:galoisequiI}
    I_1 = \int_0^1  F_{\alpha,r}(x)  dx + 
    O\left([\GammaN:G]\mf_G^{1/2}\frac{\log(2N) d_0(N)}{\varphi(N)}
      \left|\log \frac r 2\right|\right).
  \end{equation}
  
  The set of $x\in [0,1]$ with 
  $|\be(x)-\alpha|\le r$ is 
  of the form $\emptyset, [a,b],$ or $[0,a]\cup [b,1]$.
    So its characteristic function 
  has total
  variation at most $2$. Lemma \ref{thm:bir} applied to this
  characteristic function yields
  \begin{equation}
    \label{eq:galoisequiIb}
    I_2 = 
    -\int_{|\be(x)-\alpha| \le r} \log r dx
    + 
    O\left([\GammaN:G]\mf_G^{1/2}\frac{\log(2N) d_0(N)}{\varphi(N)}
    \right).
  \end{equation}
  The sum of the integrals in (\ref{eq:galoisequiI}) and
  (\ref{eq:galoisequiIb}) equals
  \begin{equation*}
    \int_{0}^1  \log| \be(x)-\alpha|dx
    - \int_{|\be(x)-\alpha|\le r} \log|\be(x)-\alpha| dx.
  \end{equation*}
  Jensen's formula,
  \refcomment{38}{Proposition 1.6.5~\cite{BG},} implies that the
  first integral equals $\log^+|\alpha|$. To complete the proof it suffices to show that the second integral is
  $O(r|\!\log r/2|)$.

\refcomment{39}{The integral is non-positive as
    $r\le 1$  and
    we may assume that it is non-zero. First assume, $|\alpha|\le 1/2$, then
    $r\ge 1/2$. In this case $|\be(x)-\alpha|\ge 1/2$ and the
    integral is $O(r|\!\log r/2|)$. Second, say $|\alpha|>1/2$.
    Lemma 11.6.1~\cite{RahmanSchmeisser}
    implies  $|\be(x)-\alpha|\ge |\alpha|^{1/2}
    |\be(x)-\be(y)| \ge 2^{-1/2} |\be(x-y)-1|$ where $\alpha = |\alpha|\be(y)$ and
    $|x-y|\le 1/2$. There is an absolute and effectively computable
    constant $C>0$ with $|\be(x-y)-1|\ge C |x-y|$ and thus
    $|\be(x)-\alpha|\ge 2^{-1/2} C |x-y|$. In the integral we have
 $r\ge |\be(x)-\alpha|$ and so the desired bound follows from
    elementary analysis.} 
%  The lemma follows from $I=I_1+I_2$. 
\end{proof}

\subsection{A Galois conjugate near $1$} We will also need an estimate
on the minimal distance of  a Galois conjugate  of a torsion point to
the unit element.

%%% PH: No \epsilon in the statement below for simplicity, using 1/(4d) < 1/(3d). 
\begin{lemma}  \label{approx}
Let  $\bzeta\in\IG_m^d$ have 
order $N$ and let $G$
be a subgroup of $\GammaN$. 
There exist $\sigma \in G$
and $a\in\IZ^d$
with $\bzeta = \be(a\sigma /N),|a|<N,$ and
\begin{equation} 
\label{approximation}
\frac{|a|}{N} \ll_d \frac{[\GammaN:G]^{1/d} \mf_G^{1/(2d)}}
 {\delta(\boldsymbol{\zeta})^{1/(3d)}}.
\end{equation}
\end{lemma}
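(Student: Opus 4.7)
Fix $a_0 \in \IZ^d$ with $\bzeta = \be(a_0/N)$, and for each $\sigma \in G$ (viewed as a residue class coprime to $N$), let $x_\sigma \in [0,1)^d$ denote the representative of $a_0\sigma/N$ modulo $\IZ^d$. The set $\{\bzeta^\sigma : \sigma \in G\} = \{\be(x_\sigma)\}$ then fits the hypothesis of Proposition \ref{propo:discbound}(i), giving a discrepancy bound
\[
\cD \ll_d [\Gamma_N:G]\,\mf_G^{1/2}\,\frac{(\log 2\delta)^{d-1}\log\log 3\delta}{\delta^{1/2}},
\]
writing $\delta := \delta(\bzeta)$. The goal is to catch some $x_\sigma$ very close to the origin modulo $\IZ^d$.

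To do this, for $0 < r < 1/2$ consider the ``box around the origin modulo $\IZ^d$''
\[
T_r = \Bigl\{x \in [0,1)^d : \min_{m \in \IZ^d} |x - m| \le r\Bigr\},
\]
which decomposes as a disjoint union of $2^d$ axis-parallel sub-boxes of $[0,1)^d$, each of side length $r$, for a total volume $(2r)^d$. Applying the definition of discrepancy to each of these sub-boxes and summing, the triangle inequality gives
\[
\Bigl|\tfrac{1}{\#G}\#\{\sigma \in G : x_\sigma \in T_r\} - (2r)^d\Bigr| \le 2^d \cD.
\]
Choose $r$ slightly above $\cD^{1/d}$; provided this is $<1/2$, the right side is strictly smaller than $(2r)^d$, forcing the existence of $\sigma_0 \in G$ with $x_{\sigma_0} \in T_r$. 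Unfolding, there is $a' \in \IZ^d$ with $a' \equiv a_0 \sigma_0 \imod N$ and $|a'| \le rN$.

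Now set $\sigma := \sigma_0^{-1} \in G$ and $a := a'$. Then $a\sigma \equiv a_0\sigma_0\sigma_0^{-1} = a_0 \imod N$, so $\be(a\sigma/N) = \bzeta$, and $|a| \le rN < N$ by the choice $r < 1/2$. Thus $|a|/N \ll_d \cD^{1/d}$, and it remains to match this against the target exponent $1/(3d)$. Taking $1/d$-th powers in the discrepancy bound yields a factor of $\delta^{-1/(2d)}$ decorated by the logarithmic prefactor $\bigl((\log 2\delta)^{d-1}\log\log 3\delta\bigr)^{1/d}$; since $1/(2d) - 1/(3d) = 1/(6d) > 0$, the excess $\delta^{-1/(6d)}$ absorbs the logarithmic prefactor once $\delta \ge C_d$ for a suitable threshold $C_d$.

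The complementary range $\delta < C_d$ is handled trivially: the target right-hand side is then bounded below by a constant $\gg_d 1$, so choosing $\sigma = 1 \in G$ and $a = a_0$ reduced componentwise into $[0,N)$ already gives $|a|/N < 1$, absorbed into the implicit constant. The only real technical point is the sub-additivity argument for the discrepancy over the $2^d$ corner-boxes composing $T_r$, combined with the elementary optimisation to turn the $\delta^{-1/2}$ in Proposition \ref{propo:discbound}(i) into the slightly weaker but logarithm-free $\delta^{-1/3}$ before extracting the $d$-th root.
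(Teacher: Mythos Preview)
Your proof is correct and follows essentially the same route as the paper: use the discrepancy bound of Proposition~\ref{propo:discbound}(i) to place some $\bzeta^{\sigma_0}$ in a small box near the origin, then invert by $\sigma_0$. The paper streamlines this slightly by absorbing the logarithmic factors into a single clean bound $\cD \ll_d [\Gamma_N:G]\mf_G^{1/2}\delta(\bzeta)^{-1/3}$ upfront and then testing only the single box $[0,\kappa)^d$ rather than your $2^d$ corner boxes $T_r$, but the argument is the same.
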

\begin{proof}
Let $\bzeta = \be(b/N)$ with $b\in\IZ^d$, the entries of $b$ and $N$ have no
common prime divisor. Suppose $x_1,\ldots,x_n$ are as in
Proposition \ref{propo:discbound} coming from the $\bzeta^\sigma$ as
$\sigma$ ranges over $G$ where $n=\# G$. There exists
$c(d)>0$ depending only on $d$ with $\cD(x_1,\ldots,x_n)\le c(d)
[\GammaN:G] \mf_G^{1/2} \delta(\bzeta)^{-1/3}$. We set $\kappa
=2c(d)^{1/d} [\GammaN:G]^{1/d}
\mf_G^{1/(2d)} \delta(\bzeta)^{-1/(3d)}$.
There is nothing to show if $\kappa\ge 1$. Otherwise, by the
 definition of the discrepancy
 the hypercube $[0,\kappa)^d$
  contains some  $x_i=a/N$. Hence $a$ satisfies, $|a|<N$,
  (\ref{approximation}), and $\be(a/N) = \bzeta^{\sigma^{-1}}$
  for some $\sigma\in G$. 
\end{proof}

%%% Local Variables:
%%% TeX-master: "atoral"
%%% End:

\section{Theorem of Mahler--Mignotte}  \label{mahler}
%% PH: Revised October 30-31, 2018
%% PH: Revised April 16-24, 2019

In this section, we firstly establish 
the separation of pairs of roots of an integer polynomial. 
Theorem \ref{mahlerextended} below was shown by
Mahler~\cite{Mahler:DiscIneq} for the case $k = 1$ of a single pair of roots.
Mignotte~\cite{Mignotte:95} generalized Mahler's inequality to products over
several disjoint pairs of roots (see his
Theorem 1). We reproduce here a lightened version of Mignotte's theorem
that is suitable for our needs.
The proof is an adaptation of Mahler's original argument about a single pair, guided by the principle that
Liouville's Inequality bounds an algebraic number at an arbitrary set
of places in terms of the height. 
Let us also mention G\"uting's
proof~\cite{Gueting:61} of a less precise earlier result involving
the length of a polynomial instead of the Mahler measure.

Let $Q\in\C[X]$ be a non-zero univariate polynomial. By Jensen's formula its Mahler measure 
equals
\begin{equation}
\label{eq:mahlerjensen}
m(Q) =\log|a_0| + \sum_{i=1}^D \log^+|z_i|
\end{equation}
if $Q=a_0(X-z_1)\cdots(X-z_D)$ and where the $z_i$ are complex.
If $Q$ is non-constant, we let $\disc{Q}$ denote its discriminant as a
degree $\deg Q$ polynomial.
 %% It may
%% generalize similarly to subsets of disjoint pairs of roots in a way
%% that equally fits our purposes here.
% with different numerical constants. It is possible to slightly improve the
% numerical constant $1/\log 2$  below, but we refrain from doing so here
% for the sake of simplicity.

\begin{thm}  \label{mahlerextended}
Let $Q \in \C[X] \setminus \C$ be of degree $D$
and with no repeated roots. 
%% with constant
%% term $a_D\not=0$, and no repeated roots. 
If
$z_1,\ldots,z_k, z_1',\ldots,z_k'$ are pairwise distinct complex roots of $Q$, then
 \begin{equation} \label{subproduct} \sum_{j=1}^k
- \log|z_j-z'_j| \le 
  \frac {D+2k}{2} \log D - \frac{k}{2}\log 3
+ (D-1) m(Q) - \frac{1}{2} \log |\disc{Q}|
\end{equation}
with strict inequality for $k\ge 1$. 
\end{thm}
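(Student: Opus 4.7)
The plan is to extend Mahler's classical proof via a row-reduced Vandermonde determinant, incorporating Mignotte's refinement for multiple pairs. Write $Q(X) = a_0\prod_{i=1}^D (X - \alpha_i)$, enumerating the roots so that $z_j, z_j'$ ($1 \le j \le k$) appear among the $\alpha_i$. The starting identity is $|\disc{Q}|^{1/2} = |a_0|^{D-1}|\det V|$, where $V$ is the Vandermonde matrix whose $\alpha$-row is $R_\alpha := (1, \alpha, \alpha^2, \ldots, \alpha^{D-1})$. By the symmetry $z_j \leftrightarrow z_j'$ in the statement, I may relabel within each pair so that $|z_j| \le |z_j'|$. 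For every pair, I then replace $R_{z_j'}$ by the \emph{divided-difference row}
$$\frac{R_{z_j'} - R_{z_j}}{z_j' - z_j} = (0, 1, h_1(z_j, z_j'), \ldots, h_{D-2}(z_j, z_j')),$$
where $h_n(z, w) := \sum_{\ell=0}^n z^\ell w^{n-\ell}$. Call the resulting matrix $\tilde M$. An elementary row-operation and scaling computation shows $|\det \tilde M| = |\det V|/\prod_{j=1}^k |z_j - z_j'|$.

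Next, I apply Hadamard's inequality $|\det \tilde M|^2 \le \prod_i \|R_i\|^2$, where $\|\cdot\|$ denotes the Euclidean norm of a row. Setting $M_\alpha := \max(1, |\alpha|)$, the $D - 2k$ unmodified non-paired rows and the $k$ kept rows $R_{z_j}$ each satisfy $\|R_\alpha\|^2 = \sum_{i=0}^{D-1}|\alpha|^{2i} \le D\cdot M_\alpha^{2(D-1)}$. For the divided-difference rows, the elementary bound $|h_n(z, w)| \le (n+1)\max(|z|, |w|)^n$ combined with
$$\sum_{i=1}^{D-1} i^2 = \frac{(D-1)D(2D-1)}{6} < \frac{D^3}{3}$$
yields $\|R_{\mathrm{d.d.},j}\|^2 < (D^3/3)\cdot M_{z_j'}^{2(D-2)}$, using the arrangement $|z_j|\le |z_j'|$ to identify $\max(1,|z_j|,|z_j'|) = M_{z_j'}$. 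Multiplying the $D$ row-norm bounds together and absorbing $M_{z_j'}^{2(D-2)} \le M_{z_j'}^{2(D-1)}$ into the full product, the Hadamard bound collapses to
$$|\det \tilde M|^2 < \frac{D^{D+2k}}{3^k}\,\prod_{i=1}^D M_{\alpha_i}^{2(D-1)}.$$

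Taking square roots, multiplying by $|a_0|^{D-1}$, and recognizing $|a_0|^{D-1}\prod_i M_{\alpha_i}^{D-1} = e^{(D-1)m(Q)}$ via Jensen's formula (\ref{eq:mahlerjensen}) for $m(Q)$ delivers
$$\frac{|\disc{Q}|^{1/2}}{\prod_{j=1}^k |z_j - z_j'|} = |a_0|^{D-1}|\det \tilde M| < \frac{D^{(D+2k)/2}}{3^{k/2}}\,e^{(D-1)m(Q)}.$$
Taking logarithms and rearranging yields the claimed bound, and the strictness for $k \ge 1$ is inherited from the strict inequality $(D-1)D(2D-1)/6 < D^3/3$, which kicks in as soon as at least one divided-difference row is present. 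The hardest technical step is the bound on the divided-difference row norms; the subtle point is that exponents must balance symmetrically across each pair, and the WLOG relabeling is precisely what allows the divided-difference exponent $2(D-2)$ to be charged to the larger member $z_j'$ and cleanly absorbed into $M_{z_j'}^{2(D-1)}$, matching the factor contributed by an unmodified Vandermonde row.
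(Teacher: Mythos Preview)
Your proof is correct and follows essentially the same approach as the paper's: both modify the Vandermonde determinant by replacing one member of each pair with a divided-difference row (or column), apply Hadamard's inequality with the sharp bound $\sum_{i=1}^{D-1} i^2 < D^3/3$, and collect the $\max(1,|\alpha_i|)^{D-1}$ factors into $e^{(D-1)m(Q)}$ via Jensen's formula. The only cosmetic differences are that the paper normalizes $Q$ to be monic at the outset (you instead carry $|a_0|^{D-1}$ through the discriminant identity), and the paper works with columns rather than rows; your choice of which member of the pair to replace and the direction of the WLOG inequality $|z_j|\le|z_j'|$ are the mirror image of the paper's but lead to the identical bound.
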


\begin{proof}
We modify Mahler's argument as follows. 

%Clearly, we may assume $k\ge 1$. 
 Both sides of (\ref{subproduct}) are invariant under multiplication $Q$ by
a non-zero scalar. So we may assume that $Q$ is monic.  After
possibly swapping $z_j$ with $z'_j$ we 
may assume $|z_j|\ge |z'_j|$ for all $j$.

% After  rearranging $(z_1,z'_1),\ldots,(z_k,z'_k)$
%  we assume that $s\ge 0$ is an integer with
% \begin{alignat}1
% \label{eq:zjcase1}
% \max\{1,|z_j|\} &>  2^{-1/D}   |z'_j| \quad\text{for all}\quad 1\le j\le s,\\
% \nonumber
% \max\{1,|z_j|\}&\le 2^{-1/D}   |z'_j| \quad\text{for all}\quad s+1\le j\le k. 
% \end{alignat}

% We first bound the sum in (\ref{subproduct}) over $s+1\le j\le k$. Indeed,
%  $|z_j-z'_j|\ge |z'_j|-|z_j|\ge |z'_j|-\max\{1,|z_j|\} \ge (2^{1/D}-1)\max\{1,|z_j|\}$. Using 
% $2^{1/D}\ge 1+(\log 2)/D$ we find 
% \begin{equation}
% \label{eq:boundlasts}
%   \sum_{j=s+1}^k - \log|z_j - z'_j| 
% \le (k-s) \log\left( \frac{D}{\log 2}\right). % - \sum_{j=s+1}^k \log^+|z_j|.
% \end{equation}

We augment $z_1,\ldots,z_k$ to all complex
roots $z_1,\ldots,z_D$ of  $Q$. Then we consider the
Vandermonde determinant
$$
V = \det \left( \begin{array}{cccc}
              1 & 1 & \ldots & 1 \\
              z_1 & z_2 & \ldots & z_D \\
              \vdots & \vdots &  & \vdots \\
              z_1^{D-1} & z_2^{D-1} & \ldots & z_D^{D-1}
            \end{array} \right),
$$
which is non-zero as $z_1,\ldots,z_D$ are pairwise distinct.
For $j \in \{ 1,\ldots,k\}$, let $i_j>k$ be the index with $z_j' =
z_{i_j}$.  For these $j$, we subtract
 the $i_j$-th column from the $j$-th column
 and factoring each difference $z_j - z_{i_j}$ out of the
 determinant with the identities $z_j^m - z_{i_j}^m = (z_j -
 z_{i_j})(z_j^{m-1} + z_j^{m-2}z_{i_j} + \cdots + z_{i_j}^{m-1})$,
 $1 \leq m \leq D-1$.
We obtain an expression
\begin{equation} \label{detquotient}
V=W\prod_{j=1}^k (z_j - z_{i_j})=W\prod_{j=1}^k (z_j - z'_{j}),
\end{equation}
where $W\not=0$ is the determinant of the matrix having
$$
\left( \begin{array}{c}
         0 \\
         1 \\
         z_j + z'_{j} \\
         \vdots \\
         z_j^{D-2} + z_j^{D-3}z'_{j} + \cdots + {z'_{j}}^{D-2}
       \end{array} \right)
$$
for its $j$-th column, $j \in  \{1,\ldots,k\}$, and the same entries as in
the Vandermonde matrix in the remaining columns. 
By Hadamard's inequality, $|W|$ is bounded from above by the product of the
Hermitian norms of all these columns. The $j$-th column, for some $j
\in \{1,\ldots,k\}$, has Hermitian norm
\begin{alignat*}1
\sqrt{\sum_{m=0}^{D-2} |z_j^m + z_j^{m-1}z'_{j}+\cdots + {z'_{j}}^m|^2} 
&\leq\sqrt{\sum_{m=0}^{D-2} (m+1)^2 } \max\{1,|{z_j}|,|{z'_j}|\}^{D-2}
 \\
& <
\sqrt{D^3/3}\cdot \max\{1,|{z_j}|\}^{D-1}  %  \\
 % &\leq \sqrt{4D^3/3} \max\{1,|z_j|\}^{D-1}
\end{alignat*}
where we used  $|z'_j|\le |z_j|$. 
The Hermitian norm of the $j$-th column with  $j \in \{k+1,\ldots,D\}$ 
is at most $\sqrt D \max\{1,|z_j|\}^{D-1}$. 

Applying Hadamard's inequality, using these two bounds, 
and taking the logarithm 
yields
\begin{alignat*}1
\log  |W| &\le \frac k2 \log\left(\frac{D^3}{3}\right)
+\frac{D-k}{2}\log D 
+(D-1)\sum_{j=1}^D \log^+|z_j|\\
%- \sum_{j=1}^s \log^+|z_j|\\
&= \frac {D+2k}{2} \log D - \frac{k}{2}\log 3
+ (D-1) m(Q)
\end{alignat*}
\refcomment{40}{as $Q$ is monic; the inequality is strict for $k\ge
  1$. If $k=0$ no column operations are necessary and $V=W$.}

The squarefree polynomial $Q$ has discriminant 
 $\disc{Q} = V^2$. Consequently $|V| = |\disc{Q}|^{1/2}$, and  in
 view of (\ref{detquotient}) we have
\begin{alignat*}1
\sum_{j=1}^k -\log|z_j-z'_j| &= \log|W|-\log|V|\\
&\le \frac {D+2k}{2} \log D - \frac{k}{2}\log 3
+ (D-1) m(Q) - \frac{1}{2} \log |\disc{Q}|
\end{alignat*}
\refcomment{40}{with a strict inequality for $k\ge 1$.} This concludes the proof. 
% We add this inequality 
% to
% (\ref{eq:boundlasts}) and find
% that $ \sum_{j=1}^k - \log|z_j-z'_j|$ is at most
% \begin{equation*}
% (k-s) \log\left(\frac{D}{\log 2}\right)+\frac s2 \log\left(\frac{4D^3}{3}\right)+\frac{D-s}{2}{\log D} 
% + (D-1)m(Q) - \frac 12 \log |\disc{Q}|.
% \end{equation*}
% The theorem follows as
% $-s \log(D/\log 2) + \frac s2 \log(4D^3/3) - \frac s2  \log D =
% s \log(\log(4)/\sqrt{3})\le 0$. 
\end{proof}

While Theorem~\ref{mahlerextended} suffices for our needs here, we remark
that it is possible to relax the
hypothesis to having $z_1,\ldots,z_k$ pairwise distinct and
$\{z_1,\ldots,z_k\}\cap\{z'_1,\ldots,z'_k\}=\emptyset$, at the cost of a slightly worse upper bound (\ref{subproduct}).

The following corollary holds for integral polynomials that are not
necessarily squarefree.

\begin{corol}
\label{cor:genmahler}
    Let $Q \in \Z[X] \setminus \Z$ be of degree $D$.
%% with $Q(0)\not=0$.
If
$z_1,\ldots,z_k, z_1',\ldots,z_k'$ are pairwise distinct complex roots of $Q$, then
% Then, for any sequence $z_1,\ldots,z_k,z_1',\ldots,z_k'$ of
%  complex roots of $Q$ having $z_1,\ldots,z_k$ pairwise distinct and $\{z_1, \ldots, z_k\} \cap \{z_1',\ldots,z_k'\} = \emptyset$, it holds
  \begin{alignat}1
\label{eq:genmahlerbound}
  \sum_{j=1}^k - \log|z_j-z'_j| &\le 
\frac{D+2k}{2} \log D
-\frac k2 \log 3+ (D-1)m(Q)
% \\
% &k\log\left(\frac{D}{\log 2}\right)+ \frac{D}{2}{\log D}
% + (D-1)m(Q). 
  \end{alignat}
with strict inequality for $k\ge 1$. 
\end{corol}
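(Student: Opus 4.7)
The plan is to reduce to the squarefree setting of Theorem~\ref{mahlerextended}. The key observation is that an \emph{integer-coefficient} squarefree part of $Q$ inherits the bound $|\disc{\cdot}| \ge 1$ from integrality of the discriminant. By Gauss's Lemma, I would factor $Q = c\prod_{i=1}^r P_i^{e_i}$ with $c\in\Z\setminus\{0\}$ the content of $Q$, the $P_i \in \Z[X]$ pairwise non-associate primitive irreducibles, and $e_i \geq 1$; then define $\tilde Q := c\prod_{i=1}^r P_i \in \Z[X]$, of degree $\tilde D \le D$. Distinct primitive irreducibles in $\Z[X]$ remain coprime in $\C[X]$, so $\tilde Q$ has no repeated complex roots and shares its complex root set with $Q$; in particular, $z_1,\ldots,z_k,z'_1,\ldots,z'_k$ are simple roots of $\tilde Q$, and $\tilde Q$ is non-constant whenever $k \ge 1$ (the case $k=0$ being vacuous, as the right-hand side is manifestly non-negative).

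Next I would apply Theorem~\ref{mahlerextended} to $\tilde Q$ to obtain the strict inequality
\begin{equation*}
\sum_{j=1}^k - \log |z_j - z'_j| < \frac{\tilde D + 2k}{2} \log \tilde D - \frac{k}{2} \log 3 + (\tilde D - 1) m(\tilde Q) - \frac{1}{2} \log |\disc{\tilde Q}|.
\end{equation*}
Since $\tilde Q \in \Z[X]$ is squarefree, $\disc{\tilde Q}$ is a nonzero integer, so $|\disc{\tilde Q}| \ge 1$ and the last term is non-positive; discarding it preserves the strict inequality.

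To finish I would upgrade $(\tilde D, m(\tilde Q))$ to $(D, m(Q))$. Every primitive $P_i \in \Z[X]$ has Mahler measure at least $1$, since its leading coefficient is a nonzero integer and $\prod\max(1,|\rho|) \ge 1$ over its complex roots $\rho$; hence $m(P_i) \ge 0$, and
\begin{equation*}
m(\tilde Q) = \log|c| + \sum_i m(P_i) \le \log|c| + \sum_i e_i m(P_i) = m(Q),
\end{equation*}
with both sides non-negative. Combined with $\tilde D \le D$ and the evident monotonicity of $d \mapsto \tfrac{d+2k}{2}\log d$ for $d \ge 1$, this yields the asserted inequality~(\ref{eq:genmahlerbound}), with strictness for $k \ge 1$. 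The one point requiring care is the integrality of $\tilde Q$: the slicker alternative $Q/\gcd(Q,Q')\in\Q[X]$ would be squarefree with the same roots, but could have a tiny or fractional discriminant and thus forfeit the essential bound $|\disc{\cdot}| \ge 1$; this is what forces the $\Z[X]$-factorization step.
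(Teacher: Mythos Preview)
Your proof is correct and follows essentially the same route as the paper: both pass to an integral squarefree part $\tilde Q$ of $Q$ (the paper writes this as a factorization $Q=\tilde Q R$ with $\tilde Q,R\in\Z[X]$), invoke Theorem~\ref{mahlerextended}, drop the discriminant term via $|\disc{\tilde Q}|\ge 1$, and then use $m(\tilde Q)\le m(Q)$ together with $\tilde D\le D$ and $m(\tilde Q)\ge 0$ to upgrade to the stated bound. Your presentation is a bit more explicit about the construction of $\tilde Q$ via Gauss's Lemma and about why a rational squarefree part would not suffice, but the substance is the same.
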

\begin{proof}
Again we may assume $k\ge 1$. 
We begin by splitting off the squarefree part of $Q$.
More precisely, we  factor $Q=\widetilde QR$ where $\widetilde Q,R\in \IZ[X]$ and
$\widetilde Q$ is
squarefree %% , has content $1$,
and vanishes at all complex roots of $Q$. %% So
%% $\cont{Q}=\cont{R}$ by the Gauss Lemma. 
The discriminant $\disc{\widetilde Q}$
is a non-zero integer, and so $|\disc{\widetilde Q}|\ge 1$.
Moreover, $m(\widetilde{Q})\ge 0$. 
Theorem \ref{mahlerextended}  applied to $\widetilde Q$
 and $1\le \deg \widetilde Q\le D$
imply that the sum on the left of (\ref{eq:genmahlerbound}) 
is at most $\frac 12 (D+2k) \log D - \frac k2 \log 3 + (D-1)m(\widetilde Q)$. 
The corollary follows from $m(\widetilde Q) = m(Q)-m(R)\le m(Q)$. 
%% Now $Q/\cont{Q} = \widetilde Q R/\cont{R}$ which implies 
%% $m(Q/\cont{Q})\ge m(\widetilde Q)$ and thus the claim. 
\end{proof}

\subsection{A repulsion property of the unit circle}
A key point in~\cite{hab:gaussian} is that while Mahler's theorem does not
give a strong enough
 bound for the distance of a complex root of $Q \in \Z[X]
\setminus \{0\}$ to an $N$-th root of unity (the product
$(X^N - 1) Q(X)$ has an exceedingly large degree), it can be used to bound the distance from the
unit circle to the locus of roots of $P$ lying off the unit circle. 
With  Corollary~\ref{cor:genmahler}, this repulsion property of the unit circle can be strengthened as follows.
%% For any non-empty subset $X\subset \C$ and $z\in\C$ we write
%% \begin{equation*}
%%   \dist{x,X} = \inf \{ |x-z| : z\in X\}. 
%% \end{equation*}

\begin{lemma} \label{lem:repulsion}
Let $Q \in \Z[X] \setminus \Z$   and $Q = a_0(X-z_1)\cdots(X-z_D)$ where $z_1,\ldots,z_D\in\IC$. 
Then
\begin{equation}
  \label{circlebound}
\begin{aligned}
\sum_{\substack{j=1\\|z_j|\not=1}}^D \log^+ \frac{1}{\bigl||z_j|-1\bigr|} &\leq 
D\log\left(\frac{3+\sqrt{5}}{2}\right) + 2D\log(2D) + 4D m(Q) \\
&\leq 
4D \bigl(\log(2D) + m(Q)\bigr). 
\end{aligned}
\end{equation}
\end{lemma}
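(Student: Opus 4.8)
The plan is to reduce the bound to a statement about distances between roots of $Q$ and roots of its reciprocal polynomial $Q^{*}(X) := X^{D}Q(1/X)$, and then to apply Corollary~\ref{cor:genmahler}. I would first record two harmless reductions. A root at $0$ contributes $0$ to the sum and the two claimed upper bounds are nondecreasing in $D$, so I may assume $Q(0)\neq 0$. Also, the left-hand side, the degree, and $m(\cdot)$ are all additive along a factorization of $Q$ into primitive irreducible factors (the content of $Q$ contributing $0$ to the first two and a nonnegative amount to $m$), whereas the claimed bound is superadditive under such a decomposition (using $\log(2\deg Q_{i})\le\log(2D)$ and $m(Q_{i})\ge 0$); hence it suffices to treat $Q$ irreducible, in which case $Q$ is squarefree and $Q(0)\neq 0$ except for the trivial case $Q=cX$.

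The next step is an elementary repulsion estimate. For $z\in\C\setminus\{0\}$ we have $z-1/\overline z=(|z|^{2}-1)/\overline z$, so $|z-1/\overline z|=\bigl||z|^{2}-1\bigr|/|z|$, a quantity depending only on $t=|z|$. A short comparison of $t\mapsto t/|t^{2}-1|$ with $1/|t-1|$, in which the golden ratio $\phi=(1+\sqrt5)/2$ (satisfying $\phi^{2}=(3+\sqrt5)/2$) enters at the thresholds $t=1/\phi$ and $t=\phi$, yields
\[
\log^{+}\frac{1}{\bigl||z|-1\bigr|}\ \le\ \log\frac{3+\sqrt5}{2}\ +\ \log^{+}\frac{1}{|z-1/\overline z|}\qquad(z\neq 0,\ |z|\neq 1).
\]
Summing over the (at most $D$) roots $z_{j}$ of $Q$ with $|z_{j}|\neq 1$, it remains to show $\Sigma\le 2D\log(2D)+4Dm(Q)$, where $\Sigma:=\sum_{|z_{j}|\neq 1}\log^{+}\bigl(1/|z_{j}-1/\overline{z_{j}}|\bigr)$.

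To bound $\Sigma$ I exploit that $Q$ has real coefficients: with $z$, also $\overline z$ is a root of $Q$, so $1/\overline z$ is a root of $Q^{*}$, and $m(Q^{*})=m(Q)$. Split the roots $z$ of $Q$ with $|z|\neq 1$ into \emph{type A}, where $1/\overline z$ is not a root of $Q$, and \emph{type B}, where it is; since the root set of $Q$ is conjugation-stable, the type-B roots are exactly the common roots of $Q$ and $Q^{*}$. Let $\tilde{G}\in\Z[X]$ be the primitive part of $\gcd(Q,Q^{*})$, so that (by Gauss's lemma) $Q=\tilde{G}\tilde{A}$ with $\tilde{A}\in\Z[X]$, $\deg\tilde{G}+\deg\tilde{A}=D$, $m(\tilde{G})+m(\tilde{A})=m(Q)$, and $m(\tilde{G}),m(\tilde{A})\ge 0$; the roots of $\tilde{A}$ are precisely the type-A roots, and the roots of $\tilde{G}$ are the type-B roots — grouped into disjoint pairs $\{w,1/\overline w\}$ by $z\mapsto 1/\overline z$ — together with the unit-circle roots of $Q$. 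Applying Corollary~\ref{cor:genmahler} to $\tilde{A}\,\tilde{A}^{*}$ (degree $2\deg\tilde{A}$ since $\tilde{A}(0)\neq 0$, Mahler measure $2m(\tilde{A})$) with the $\deg\tilde{A}$ disjoint pairs $(z,1/\overline z)$ — admissible because these $2\deg\tilde{A}$ points are pairwise distinct, $1/\overline z$ never being a root of $\tilde{A}$ for type-A $z$ — bounds the type-A part $S_{A}$ by $2\deg\tilde{A}\log(2\deg\tilde{A})+4\deg\tilde{A}\,m(\tilde{A})$. Applying Corollary~\ref{cor:genmahler} to $\tilde{G}$ with the disjoint pairs $(w,1/\overline w)$ (at most $\tfrac12\deg\tilde{G}$ of them) bounds $S_{B}:=\sum\log^{+}(1/|w-1/\overline w|)$ over those pairs by $\deg\tilde{G}\log(2\deg\tilde{G})+\deg\tilde{G}\,m(\tilde{G})$. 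In each application only the pairs at distance $<1$ contribute, which reconciles $\log^{+}$ with $-\log$. Finally, each type-A root contributes once to $\Sigma$, while a type-B pair $\{w,1/\overline w\}$ contributes the same distance $|w-1/\overline w|$ twice (from $w$ and from $1/\overline w$), so $\Sigma=S_{A}+2S_{B}$; inserting the two bounds and using $\deg\tilde{A}+\deg\tilde{G}=D$, $\log(2\deg\tilde{A}),\log(2\deg\tilde{G})\le\log(2D)$, and $m(\tilde{G})\ge 0$ gives $\Sigma\le 2D\log(2D)+4Dm(Q)$. Together with the elementary estimate this is the first bound of the lemma, and the second follows from $(3+\sqrt5)/2\le 4\le 4D^{2}$.

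I expect the main obstacle to be the ``type-B'' bookkeeping: a root $z$ for which $1/\overline z$ is again a root of $Q$ is counted twice in $\Sigma$, so pairing it with $1/\overline z$ inside $Q^{*}$ would capture only half of its contribution, and Corollary~\ref{cor:genmahler} leaves no room to double it, since its chosen roots must be pairwise distinct complex numbers. Separating out these roots as $\gcd(Q,Q^{*})$ and estimating their contribution by a dedicated application of the Corollary directly to $\gcd(Q,Q^{*})$ — a polynomial of degree $\le D$, whose bound therefore absorbs the factor $2$ — is what makes the constants come out as stated. The reduction to squarefree $Q$ and the verification of the elementary inequality are routine.
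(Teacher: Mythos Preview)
Your proof is correct and follows essentially the same route as the paper's. Both arguments reduce to irreducible $Q$, relate $\bigl||z|-1\bigr|$ to $|z-1/\overline z|$ via the golden-ratio threshold, and then apply Corollary~\ref{cor:genmahler} to $Q$ (in the self-reciprocal case) or to $Q\cdot Q^{*}$ (otherwise). Your type-A/type-B split through $\gcd(Q,Q^{*})$ is exactly the paper's dichotomy $\delta\in\{1,2\}$ once $Q$ is irreducible, and your global inequality $\log^{+}\tfrac{1}{||z|-1|}\le\log\tfrac{3+\sqrt5}{2}+\log^{+}\tfrac{1}{|z-1/\overline z|}$ is a clean unified packaging of what the paper does by splitting into the annulus $\phi^{-1}\le|z|\le\phi$ and its complement.
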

Before we come to the proof let us remark that 
 $||z|-1|$ is the distance $\dist{z,S^1}$ of $z\in\C$  to the unit circle
$S^1$. Thus inequality (\ref{circlebound}) can be restated as
 providing 
\begin{equation*}
\frac
 1D \sum_{\substack{j=1\\|z_j|\not=1}}^D \log^+
 \frac{1}{\dist{z_j,S^1}}
\le \log\left(\frac{3+\sqrt{5}}{2}\right) + 2\log(2D) + 4 m(Q).
% \le 4\bigl(\log(2D) + m(Q)\bigr). 
\end{equation*}
Our result suggests that the unit circle repels roots of $Q$ that lie
off the unit circle.
Related estimates are implicit in work of
Dubickas~\cite{Dubickas:96}, \textit{cf.} his Theorem 2.
\begin{proof}
  The second bound in (\ref{circlebound}) is elementary, so it
  suffices to prove the first one.

  \refcomment{42}{
    Say $Q=a_0 Q_1\cdots Q_n$ where each $Q_i\in
    \IZ[X]$ is irreducible  of positive degree with $a_0\in\IZ$. 
    Observe that $m(Q_i)\le m(Q)$ and $\sum_i \deg Q_i = \deg
    Q$.
    So it suffices to prove (\ref{circlebound}) for $Q$ irreducible in
    $\IZ[X]$. We  may also assume $Q(0)\not=0$.
  }

  We will apply Corollary~\ref{cor:genmahler} to the polynomial
  $\widetilde Q\in\IZ[X]$ constructed from $Q$ in the following manner.
  If $Q(1/X)X^D \not= \pm Q$ we take $\widetilde Q = Q(X)Q(1/X)X^D$ and
  $\widetilde Q = Q$ otherwise. So $\widetilde D = \deg\widetilde Q =
  \delta D$ and $m(\widetilde Q)=\delta m(Q)$ with $\delta=2$ in the
  first case and $\delta=1$ in the second case. For any root $z$ of
  $\widetilde Q$ we also have $\widetilde Q(1/\overline z)=0$.

  The following basic observation for a complex number $z$ will prove
  useful. We have $|z-1/\overline{z}|\le 1$ if and only if $\phi^{-1}
  \le |z| \le \phi$ with $\phi = (1+\sqrt{5})/2$ the golden ratio.

  Let $w_1,\ldots,w_k$ be the roots of $\widetilde Q$ without
  repetition such that $\phi^{-1} \le |w_j|< 1$. Then
  $w'_j=1/\overline{w_j}$ is a root of $\widetilde Q$ for each $j\in
  \{1,\ldots,k\}$ with $|w'_j|>1$. Corollary~\ref{cor:genmahler} yields
  \begin{equation}
    \label{circleboundprep}
    \sum_{j=1}^k \log^+\frac{1}{\bigl|w_j -
      1/\overline{w_j}\bigr|} \le
    {\delta D}\log(\delta D) +
    \delta^2 D m(Q)
  \end{equation}
  because $k\le \widetilde D /2=\delta D/2$ and $m(Q)\ge 0$.

  Suppose $z_j$ is a root of $Q$ with $|z_j|\not=1$ and
  $\phi^{-1} \le |z_j|\le \phi$.
  Then %% $1/2\le |z|<1$ or $1/2\le |1/\overline{z}|<1$, so
  $z_j\in \{w_l,1/\overline{w_l}\}$ for some unique $l$.
  The mapping $j\mapsto l$ is at worst $2$-to-$1$
  and injective if $\delta=2$ as $Q$ is irreducible. \footnote{Indeed, if
    $z_j,z_k \in \{w_l,1/\overline{w_l}\}$ with $z_j\not=z_k$,
    then $z_j = 1/\overline{z_k}$. So $\widetilde Q=Q$ and hence $\delta=1$ in this case.}
  This leads to the factor $2/\delta$ in 
  \begin{equation}
    \label{circleboundprep2}
    \sum_{\substack{|z_j|\not=1\\1/\phi\le |z_j|\le \phi}}  \log^+\frac{1}{\bigl|z_j-1/\overline{z_j}\bigr|}
    \le \frac{2}{\delta}\sum_{l=1}^k \log^+\frac{1}{\bigl|w_l-1/\overline{w_l}\bigr|}
  \end{equation}

  For a complex number $z$ with $|z|\ge \phi^{-1}$ we have $|z -
  1/\overline{z}| = \frac{|z|+1}{|z|}\bigl||z| - 1\bigr|  \le (1+\phi)\bigl||z|-1\bigr|$.
  This allows us to get
  \begin{equation*}
    \sum_{\substack{|z_j|\not=1\\ 1/\phi\le |z_j|\le \phi}}  \log^+ \frac{1}{\bigl||z_j|-1\bigr|}
    \le s \log (1+\phi) +
    \sum_{\substack{|z_j|\not=1\\1/\phi\le |z_j|\le \phi}}
    \log^+\frac{1}{|z_j-1/\overline z_j|}
  \end{equation*}
  where $s$ is the number of terms in the first sum.
  There are at most $D-s$ other roots of $Q$ and if
  $|z_j|<\phi^{-1}$ or $|z_j|>\phi$ we get
  $\log^+1/\bigl||z|-1\bigr|\le\log (1+\phi)$. Together with
  (\ref{circleboundprep}) and (\ref{circleboundprep2}) we find
  \begin{equation*}
    \sum_{|z_j|\not=1}\log^+ \frac{1}{\bigl||z_j|-1\bigr|}
    \le D\log (1+\phi) + 2D\log({\delta D})  + 2\delta Dm(Q).
  \end{equation*}
  We have established (\ref{circlebound})  
  for $Q$ as $\delta\le 2$.
\end{proof}

Next we generalize our bound to a polynomial with coefficients in a
number field. Recall that $\hproj{Q}$ is the absolute logarithmic
projective Weil height of a non-zero polynomial $Q$ with algebraic
coefficients.

\begin{corol}
\label{cor:repulsionmult}
Let $F\subset\IC$ be a number field and
let $Q \in F[X] \setminus F$   and $Q = a_0(X-z_1)\cdots(X-z_D)$ where $z_1,\ldots,z_D\in\IC$. Then
\begin{equation*}
%\label{eq:repulsionnf}
\sum_{\substack{j=1\\ |z_j|\not=1}}^D  \log^+ \frac{1}{\bigl||z_j|-1\bigr|}
\le 10D[F:\IQ]^2\bigl(\log(2D) + \hproj{Q}\bigr).
\end{equation*}
\end{corol}
\begin{proof}
Let $\widetilde Q$ be the product of the  $\IQ$-Galois conjugates of $Q$.
Then $\widetilde Q$ has rational coefficients
and degree  $\widetilde D\le D[F:\IQ]$. Let $\lambda\in\IN$ such that $\lambda\widetilde Q$ is integral with  content
$1$. For the projective height we find
$\hproj{\widetilde Q} = \log|\lambda \widetilde Q|$.
Together with Lemma 1.6.7~\cite{BG} we get $m(\lambda\widetilde Q) \le \frac 12
\log(1+\widetilde D) + \hproj{\widetilde Q}$.
As all $\IQ$-Galois conjugates of $Q$ have the same projective height
we use elementary estimates at local places to find
\begin{equation*}
\hproj{\widetilde Q} \le [F:\IQ]\log(1+D) + [F:\IQ]\hproj{Q}.  
\end{equation*}
By
Lemma \ref{lem:repulsion} applied to $\lambda\widetilde Q$, the sum $\sum_{j=1 : |z_j|\not=1}^D\log^+ 1/{\bigl||z_j|-1\bigr|}$ is at most 
\begin{equation*}
%4\widetilde D +
4\widetilde D\left(\log(2\widetilde D)+\frac
12 \log(1+\widetilde D) + [F:\IQ]\log(1+D)+[F:\IQ]\hproj{Q}\right). 
\end{equation*}
 We use 
$1+\widetilde D\le 2\widetilde D\le 2 D [F:\IQ] \le (2D)^{[F:\IQ]}$
to
complete the proof. 
\end{proof}

%%[PH] Moved this section closed to the one containing the root
%% separation estimates. The next two subsections are not used in
%% the final application
\subsection{Averages over roots of unity}
%%%% !!!!!!!!!!!!!!!!!!!!!! 

In this subsection we apply the repulsion property of the unit circle,
Corollary~\ref{cor:repulsionmult}, to estimate the norm of cyclotomic
integers of the form $Q(\zeta)$, \refcomment{43}{where $\zeta$ is a varying root of
unity and $Q$} is a moderately controlled univariate polynomial with
algebraic coefficients and without zeros in
$S^1 \setminus \mu_{\infty}$. This gives a fairly uniform solution of
the one dimensional \essatoral{} case and forms the basis for the
higher dimensional case to be taken up in the next sections.

\begin{propo}  \label{univariatebound}
  Let $F\subset\IC$ be  a number field and
let $Q \in F[X] \ssm \{0\}$ be of
 degree at most $D\ge 1$ with no roots in  $S^1\ssm\mu_\infty$.
Let $\zeta\in\mu_\infty$ be of  order $N$ and
 $G$  a subgroup of $\GammaN$
such that $Q(\zeta^\sigma)\not=0$ for all 
$\sigma \in G$. Then %If $T\ge \max\{1,\log N\}$, then
  \begin{equation}
  \begin{aligned}
\label{meanvalue2}
\frac{1}{\# G} \sum_{\sigma \in  G}
    &\log{|Q(\zeta^{\sigma})|}  =m(Q)  + \\
&O\left([F:\IQ]^2[\GammaN:G] \mf_G^{1/2}
D(\log(2D)+\hproj{Q})\frac{(\log 2N)^3 d_0(N)}{N}
\right).
\end{aligned}
\end{equation}
\end{propo}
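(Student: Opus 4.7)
The plan is to factor $Q$ into linear factors and reduce to analyzing each root separately. Writing $Q = a_0\prod_{j=1}^{D'}(X-z_j)$ with $D' = \deg Q\le D$ and $z_j\in\IC$, Jensen's formula (\ref{eq:mahlerjensen}) gives $m(Q) = \log|a_0|+\sum_j \log^+|z_j|$, so
\begin{equation*}
\frac{1}{\#G}\sum_{\sigma\in G}\log|Q(\zeta^\sigma)|-m(Q)=\sum_{j=1}^{D'}\left(\frac{1}{\#G}\sum_{\sigma\in G}\log|\zeta^\sigma-z_j|-\log^+|z_j|\right),
\end{equation*}
and the hypothesis $Q(\zeta^\sigma)\ne 0$ ensures that every term is finite.

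For each $z_j$, I would apply Lemma~\ref{logintegral} with the truncation parameter $r=1/N$, which yields
\begin{equation*}
\frac{1}{\#G}\sum_{\sigma\in G}\log|\zeta^\sigma-z_j|-\log^+|z_j|=E_j+T_j,
\end{equation*}
where $|E_j|\ll [\Gamma_N:G]\mf_G^{1/2}\frac{(\log 2N)^2 d_0(N)}{\varphi(N)}+\frac{\log N}{N}$ is the Lemma~\ref{logintegral} error, and $T_j=\frac{1}{\#G}\sum_{\sigma:\,|\zeta^\sigma-z_j|\le 1/N}\log|\zeta^\sigma-z_j|$ captures what was cut off. Summing the $E_j$ over $j$ and using $\varphi(N)\gg N/\log\log N$ contributes $O\!\left(D[\Gamma_N:G]\mf_G^{1/2}\frac{(\log 2N)^3 d_0(N)}{N}\right)$, which fits within the stated error bound.

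The delicate step is to bound $\sum_{j=1}^{D'}|T_j|$, which I would split by whether $|z_j|\ne 1$ or $|z_j|=1$. If $|z_j|\ne 1$, the banal inequality $|\zeta^\sigma-z_j|\ge \bigl||z_j|-1\bigr|$ makes $T_j$ vanish unless $\bigl||z_j|-1\bigr|\le 1/N$; in that case the $\zeta^\sigma$ with $|\zeta^\sigma-z_j|\le 1/N$ all lie on an arc of length $O(1/N)$ around $z_j/|z_j|$, so by Proposition~\ref{propo:discbound}(ii) their number is $\ll \#G\cdot [\Gamma_N:G]\mf_G^{1/2}\frac{\log(2N)d_0(N)}{\varphi(N)}$, each contributing at most $\log^+ 1/\bigl||z_j|-1\bigr|$. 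Summed over $j$, the latter is controlled by Corollary~\ref{cor:repulsionmult} with bound $10D[F:\IQ]^2(\log(2D)+\hproj{Q})$, producing precisely the error shape in the target. If $|z_j|=1$, the hypothesis forces $z_j\in\mu_\infty$, and $[\IQ(z_j):\IQ]\le D[F:\IQ]$ gives $\ord(z_j)\le M\ll D[F:\IQ]\log\log(3D[F:\IQ])$; then $\zeta^\sigma z_j^{-1}$ is a root of unity of order at most $NM$, so $|\zeta^\sigma-z_j|\gg 1/(NM)$, whence $|\log|\zeta^\sigma-z_j||\ll\log N+\log(D[F:\IQ])$, and an analogous discrepancy count of the nearby $\sigma$ shows that $\sum_{|z_j|=1}|T_j|$ is also absorbed by the target error.

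The main obstacle lies in the case $|z_j|\ne 1$: while an individual summand $\log^+ 1/\bigl||z_j|-1\bigr|$ can in principle be very large, its aggregate over $j$ is controlled by the repulsion inequality (Corollary~\ref{cor:repulsionmult}), and, crucially, the close Galois conjugates $\zeta^\sigma$ only account for a fraction $O\!\left([\Gamma_N:G]\mf_G^{1/2}\log(2N)d_0(N)/\varphi(N)\right)$ of $\#G$. It is this multiplicative pairing between a discrepancy estimate (for the count of close points) and a repulsion estimate (for the size of $\log 1/\bigl||z_j|-1\bigr|$) that produces the desired power-saving decay rate $(\log 2N)^3 d_0(N)/N$.
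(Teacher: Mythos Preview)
Your proposal is correct and follows the same skeleton as the paper: factor $Q$, apply Lemma~\ref{logintegral} with $r=1/N$ to each linear factor, and control the cut-off terms $T_j$ via Corollary~\ref{cor:repulsionmult} for roots off $S^1$ and via order bounds for roots in $\mu_\infty$. The one place where the paper is simpler is the count of close conjugates: instead of invoking the discrepancy bound from Proposition~\ref{propo:discbound}(ii) to estimate $\#\{\sigma\in G:|\zeta^\sigma-z_j|\le 1/N\}$, the paper just observes that distinct $N$-th roots of unity are at distance $\ge 4/N$, so \emph{at most one} $\sigma_j\in G$ can satisfy $|\zeta^{\sigma_j}-z_j|\le 1/N$; this gives directly $|T_j|\le (1/\#G)\log^+1/\bigl||z_j|-1\bigr|$ in the off-circle case and avoids the extra $\mf_G^{1/2}\log(2N)d_0(N)$ factor at this step (though it reappears anyway from the $E_j$).
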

\begin{proof}   
We may assume that $Q$ is non-constant and $D=\deg Q$. Let $Q =
a_0(X-z_1) \cdots (X-z_D)$. The idea is that each given root $z_j$ may
get within distance of \refcomment{49}{$\le 1/N^2$} to at most a
single conjugate of $\zeta$.

% $\leq e^{-T}$ with
% $T = \max\{1,2\log N\}$
% to at most a
% single conjugate of $\zeta$.  

We call $z_j$ exceptional if $|\zeta^{\sigma_j}-z_j|\leq 1/N^2$ for
some $\sigma_j\in G$. As $|\xi-\xi'|\ge 4/N$ for distinct roots of
unity $\xi,\xi'$ of order $N$ we see that $\sigma_j$ is uniquely
determined by $z_j$. Note that $\zeta^{\sigma_j}\not=z_j$ because
$Q(z_j)=0\not= Q(\zeta^{\sigma_j})$.

We  apply Lemma~\ref{logintegral} with $\alpha = z_j$ and
\refcomment{49}{$r = 1/N^2$}.   Thus
\begin{equation}
\label{eq:sumtauG}
\begin{aligned}
& \frac{1}{\#G} \sum_{\sigma \in  G} \log{|\zeta^{\sigma}-z_j|}
 \\
& \quad=
\log^+{|z_j|} +  \frac{1}{\#G}\log{|\zeta^{\sigma_j}-z_j|} + O\left(
[\GammaN:G] \mf_G^{1/2}\frac{(\log 2N)^2 d_0(N)}{\varphi(N)} + \frac{\log 2N}{N^2}
    \right),
    \end{aligned}
\end{equation}
\refcomment{44}{ if $z_j$ is exceptional, otherwise
  the same bound without the term 
 $(\#G)^{-1}\log|\zeta^{\sigma_j}-z_j|$
holds true.}
As $1/N^2 \le 1/\varphi(N)$ we \refcomment{45}{merge} $(\log 2N)/N^2$ into the first term
of the error term.
 Summing (\ref{eq:sumtauG})
over all $j\in \{1,\ldots,D\}$ and adding
$\log|a_0|$ gives
\begin{equation}
\label{eq:univariateboundquasifinal}
 \frac{1}{\#G}\sum_{\sigma\in G} \log|Q(\zeta^\sigma)|= m(Q) - \frac{1}{\#G}\sideset{}{'}\sum_{j=1}^D\log\frac{1}{|\zeta^{\sigma_j}-z_j|}
 + O\left([\GammaN:G] \mf_G^{1/2}D\frac{(\log 2N)^2 d_0(N)}{\varphi(N)}\right),
\end{equation}
the dash signifies that we only sum over those $j$ for which
$z_j$ is exceptional.  

To bound the dashed sum we require Corollary~\ref{cor:repulsionmult}.
If $z_j$ is exceptional, then $|\zeta^{\sigma_j}-z_j|\le 1$.
Therefore, the dashed sum is non-negative.

\refcomment{47}{First, we consider the subsum over all exceptional $z_j\not\in\mu_\infty$.} Then
$|z_j|\not=1$ and
$|\zeta^{\sigma_j}-z_j|\ge ||z_j|-1|$ by the reverse triangle
inequality. 
By Corollary \ref{cor:repulsionmult} we find
\begin{equation}
\label{eq:univariateboundfinal2}
\begin{aligned}
  0&\le  \sideset{}{'}\sum_{\substack{j=1 \\ z_j\not\in\mu_\infty}}^D \log\frac{1}{|\zeta^{\sigma_j}-z_j|}
  \le  \sideset{}{'}\sum_{\substack{j=1 \\ z_j\not\in\mu_\infty}}^D \log^+\frac{1}{||z_j|-1|}
  = O\left([F:\IQ]^2 D (\log(2D) +  \hproj{Q})\right).
\end{aligned}
\end{equation}

\refcomment{47}{Second, we consider the subsum over all exception
 $z_j\in\mu_\infty$, which is harmless.}
\refcomment{48}{Recall that $\zeta^{\sigma_j}\not=z_j$.}
Since the  order of $z_j$ is $\ll
[\IQ(z_j):\IQ]^{2} \le (D[F:\IQ])^2$
and the order of $\zeta^{\sigma_j}$ is $N$
we find
$|\zeta^{\sigma_j}-z_j|\gg N^{-1}(D[F:\IQ])^{-2}$.
On the other hand,
$|\zeta^{\sigma_j}-z_j| \le N^{-2}$
and hence $N\ll (D[F:\IQ])^2$.
We obtain the crude estimate
$|\zeta^{\sigma_j}-z_j|\gg (D[F:\IQ])^{-4} \gg (2D)^{-4[F:\IQ]}$
and finally bound the at  most $D$ terms below separately
 to get
\begin{equation}
\label{eq:univariateboundfinal3}
  0\le  \sideset{}{'}\sum_{\substack{j=1 \\ z_j\in\mu_\infty}}^D \log\frac{1}{|\zeta^{\sigma_j}-z_j|}
  =  O\left( [F:\IQ]D\log(2D) \right).
\end{equation}

We divide the sum of (\ref{eq:univariateboundfinal2}) and
(\ref{eq:univariateboundfinal3}) by $\#G$ to find
\begin{equation*}
0 \le \frac{1}{\#
G} \sideset{}{'}\sum_{j=1}^D \log \frac{1}{|\zeta^{\sigma_j}-z_j|}
= O\left([F:\IQ]^2  D(\log(2D)+\hproj{Q})\frac{[\GammaN:G]}{\varphi(N)}\right).
\end{equation*}
The proposition follows from (\ref{eq:univariateboundquasifinal})
and $\varphi(N) \gg N / \log\log(3N)$, \refcomment{50}{a consequence of
Theorem 15~\cite{RS}.}
\end{proof}

Proposition~\ref{univariatebound} and ultimately Theorem
\ref{mahlerextended} may be viewed as our input from transcendence
theory. If this or a comparable bound held without the restrictive
condition that $Q$ has no roots in $S^1 \setminus \mu_{\infty}$ then
it could be used to attack Conjecture~\ref{conj:galoisallP}. We were
unable to prove or disprove that a suitable version of
Theorem~\ref{univariatebound} extends to general polynomials.
\refcomment{51}{Progress on
  Conjecture~\ref{conj:linearforms} could indicate a path towards this
  goal.}
% Here the
% crux of the matter lies in Conjecture~\ref{conj:linearforms}.

%%% Local Variables:
%%% TeX-master: "atoral"
%%% End:

\section{Geometry of numbers}  \label{sec:lattice} %univariate}
%% PH: Revised October 16-17, 2018
%% PH: completely rewritten using HN Filtration (Feb. 2019)
%% PH: revised April 9, 2019

Let $d\ge 1$ and suppose $\bzeta\in\IG_m^d$ has order $N$. It would be
useful if $\bzeta$ had a Galois conjugate close to the unit
element $1$. If the  distance were at most a small
power of $N^{-1}$, this conjugate could be used to help reduce the
multivariate Theorem \ref{thm:main} to the univariate
Proposition \ref{univariatebound}, \textit{cf.}~\cite{hab:gaussian}.

Unfortunately, such a conjugate need not exist. Take for example
$\bzeta = \be(1/p,1/p^n)$ where $p$ is a prime and $n\in\IN$, here $N
= p^n$. Any conjugate of $\bzeta$ has distance $\gg 1/p$ to $1$
regardless of the value of $n$.
The problem is that $\bzeta$ is up-to a point of order $p$ contained
in the algebraic subgroup $\{1\}\times\IG_m$. 

We overcome this difficult by constructing a factorization $\bzeta
= \bfeta \bxi$ into torsion points $\bfeta$ and $\bxi$ that satisfy
the following properties for prescribed $\epsilon >0$. First, the
order of $\bfeta$ is small relative to $N$, more precisely it is
$O_{d,\epsilon}(N^\epsilon)$. Second, some Galois conjugate of $\bxi$
is at distance at most $O_{d,\epsilon}(N^{-\kappa(\epsilon)})$ to $1$.
Here $\kappa(\epsilon)$ is expected to be small for small $\epsilon$,
but we will see that $\kappa(\epsilon)/\epsilon$ is large. This is of
central importance for our application.

We use the \refcomment{18,52}{geometry of numbers}
to construct this factorization.
\refcomment{19,53}{An important tool is the slope of a lattice.}
% We
% will refer to the analog of the Harder--Narasimhan filtration for
% lattices as developed by Stuhler~\cite{stuhler:quadform} and
% Grayson~\cite{grayson:red}.

A lattice $\Lambda$ in $\IR^d$ is a finitely 
generated and discrete subgroup of $\IR^d$.
The rank of $\Lambda$ is denoted by $\rk{\Lambda}$ and its determinant
by $\det(\Lambda)$. We
 consider the set
\begin{equation*}
A=  \{(r, \log\det(\Omega)) : r\in\IZ \text{ and }\Omega\text{ is a
    subgroup of $\Lambda$ with $\rk{\Omega}=r$}\}
\end{equation*}
and use the convention $\det(\{0\})=1$. 
In constrast to the convention in  Arakelov theory, we have no sign in front of
$\log\det(\Lambda)$. Observe that the second coordinate is bounded from
below on $A$. 
In Proposition 1, Stuhler \cite{stuhler:quadform} proved that for each $j\in \{0,\ldots,\rk{\Lambda}\}$ there
exists a lattice $\Lambda_j\subset \Lambda$, possibly non-unique,
 with $\log\det(\Lambda_j)$ minimal.
The lower boundary of the convex hull of $A$ is the graph of a piece-wise linear, continuous,
convex function \refcomment{55}{$P : [0,\rk{\Lambda}] \rightarrow\IR$}.
As $\Lambda_0 = \{0\}$ and $\Lambda_{\rk{\Lambda}}=\Lambda$ 
 we find $P(0) = 0$ and $P(\rk{\Lambda}) = \log
\det(\Lambda)$. 

For each $j\in \{1,\ldots,\rk{\Lambda}\}$, the slope of $P$ on
$[j-1,j]$ is
\begin{equation*}
  \mu_j(\Lambda) = P(j)-P(j-1). 
\end{equation*}
By convexity we have
\begin{equation*}
  \mu_1(\Lambda) \le \mu_2(\Lambda) \le\cdots\le\mu_{\rk{\Lambda}} (\Lambda).   
\end{equation*}
Moreover, $\mu_1(\Lambda) + \cdots + \mu_{j}(\Lambda) =
P(j)-P(0)=P(j)$ 
for all $j$ as $P(0)=\log \det(\Lambda_0)=0$.

Assume $\Lambda\not=\{0\}$ and
 let $\nu \in (0,1/2]$ be a parameter.
  Suppose that
\begin{equation*}
 \mu_j(\Lambda)  < \nu^{\rk{\Lambda} - j + 1} \log\det(\Lambda)
\end{equation*}
for all $j\in \{1,\ldots,\rk{\Lambda}\}$. Taking the sum yields
\begin{equation*}
  \log\det(\Lambda) < (\nu + \nu^2+\cdots + \nu^{\rk{\Lambda}}) \log\det(\Lambda). 
\end{equation*}
As $\nu\in (0,1/2]$ we must have $\det(\Lambda)  <  1$.

Let us now assume $\det (\Lambda) \ge 1$, then there exists
\refcomment{59}{a unique} \refcomment{58}{$j_0 \in
\{0,\ldots,\rk{\Lambda}-1\}$} such that
\begin{alignat}1
\label{eq:mujplus1}
  \mu_k(\Lambda) &< \nu^{\rk{\Lambda} - k + 1}\log\det(\Lambda)\text{
    for all }
  {1\le k\le j_0} \quad\text{and}\quad
  \mu_{j_0+1}(\Lambda) \ge  \nu^{\rk{\Lambda} -j_0}\log\det(\Lambda).
\end{alignat}

We write $\Lambda(\nu)$ for the rank $j_0$ lattice $\Lambda_{j_0}$,
indicating its dependency on $\nu$. It satisfies
$\rk{\Lambda/\Lambda(\nu)} \ge 1$.

\refcomment{56}{
  Note that
  $\mu_{j_0}(\Lambda(\nu)) < \mu_{j_0+1}(\Lambda(\nu))$ if $j_0\ge 1$.
  Therefore, $\Lambda(\nu)$ appears in the 
  Harder--Narasimhan filtration of $\Lambda$
  as considered by Stuhler~\cite{stuhler:quadform} and
  Grayson~\cite{grayson:red}, if we
  include $\{0\}$ as a member of the filtration.
  In particular, $\Lambda(\nu)$ is the unique
  lattice in $\Lambda$ of rank $\rk{\Lambda(\nu)}$ and minimal
  determinant. 
}

Here are two simple properties.

First, for the Euclidean norm $|\cdot|_2$ we claim 
\begin{equation}
  \label{eq:vlowerbound}
  \log|v|_2 \ge \nu^{\rk{\Lambda/\Lambda(\nu)}}\log\det(\Lambda)\quad \text{for
    all}\quad v\in \Lambda\ssm\Lambda(\nu). 
\end{equation}
Indeed, the lattice
$\Lambda'$ generated by $\Lambda(\nu)$ and $v$ contains $\Lambda(\nu)$ strictly.
We must have $\rk{\Lambda'} > \rk{\Lambda}$, as $\det(\Lambda')$ would otherwise
be strictly less than $\det(\Lambda)$. (This shows in particular that
$\Lambda/\Lambda(\nu)$ is torsion free; a well-known property of the
Harder--Narasimhan filtration.)
So
$\rk{\Lambda'}=\rk{\Lambda}+1$ and by convexity of $P$ we find
$\log\det(\Lambda') \ge \log\det(\Lambda(\nu)) + \mu_{j_0+1}(\Lambda)$. 
On the other hand, $\det(\Lambda')\le \det(\Lambda') \det(\Lambda(\nu)\cap v\IZ) \le
\det(\Lambda(\nu))\det(v\IZ)$ \refcomment{65}
{is well-known, for a proof see Proposition 2~\cite{stuhler:quadform}}. We conclude $\log \det(v\IZ) \ge
\mu_{j_0+1}(\Lambda)$. Now $\det(v\IZ)=|v|_2$, so  (\ref{eq:vlowerbound}) follows from (\ref{eq:mujplus1}). 

Second, (\ref{eq:mujplus1}) implies 
\begin{equation}
  \label{eq:detLambdaub}
 \log\det(\Lambda(\nu)) \le \mu_1(\Lambda)+ \cdots + \mu_{j_0}(\Lambda) 
\le 2\nu^{1+\rk{\Lambda/\Lambda(\nu)}} \log\det(\Lambda).
\end{equation}

We now make things more concrete. 
 Let $\bzeta \in \IG_m^d$
have order $N$ and set
\begin{equation*}
%\label{def:Lambdazeta}
  \Lambda_\bzeta = \{ u \in\IZ^d : \bzeta^u = 1\}. 
\end{equation*}
\refcomment{66,\# 67}{We consider the homomorphism
  $\IZ^d\rightarrow\IG_m$ defined by $u \mapsto \bzeta^u$ and see that
 $\IZ^d/\Lambda_\bzeta$ is isomorphic
  to the finite subgroup of $\IG_m$ generated by the coordinates of
  $\bzeta$. 
So $\IZ^d/\Lambda_\bzeta$ is cyclic of order $N$.
In particular,
 $\Lambda_\bzeta$ is a lattice in $\IR^d$ of rank $d$ with
$\det(\Lambda_\bzeta)=[\IZ^d:
  \Lambda_\bzeta]=N\ge 1$.}
The 
 saturation
\begin{equation}
\label{def:tildeLambdazeta}
  \widetilde\Lambda_\bzeta(\nu) = \{ u\in\IZ^d : \text{there is $n\in\IZ\ssm\{0\}$
  such that  $nu\in\Lambda_\bzeta(\nu)$}\}
\end{equation}
of $\Lambda_\bzeta$ in $\IZ^d$ will also be useful for us. 
It is a lattice of the same rank as $\Lambda_\bzeta(\nu)$.

For any lattice $\Lambda\subset \IR^d$ of positive rank we set
\begin{equation}
\label{def:firstminimum}
\lambda_1(\Lambda) = \min \left\{ |u| :
u\in\Lambda\ssm\{0\}\right\}
\end{equation}
\refcomment{69}{where as usual $|\cdot|$ denotes the maximum-norm.} 
It is convenient to define $\lambda_1(\{0\})=\infty$.

\begin{propo}
\label{prop:geometryofnumbers}
Let $\nu\in (0,1/4]$ and let $\bzeta \in \IG_m^d$ be of  order $N$. There exists
  $V\in \gl{d}(\IZ)$ %, $\vi=\vi(\bzeta,\nu)\in \{1,\ldots,d\}$,
  and a
decomposition $\bzeta = \bfeta\bxi$ with $\bfeta$ and $\bxi$ in
$\IG_m^{d}$ of finite order $E$ and $M$, respectively, such that the
following holds. We abbreviate $\vi = \rk{\Lambda_\bzeta/\Lambda_\bzeta(\nu)} \in
  \{1,\ldots,d\}$. 
\begin{enumerate}
\item[(i)] We have $E\mid N,M\mid N,$ and $E\le N^{2\nu^{1+\vi}}$. In particular, 
$\IQ(\bfeta,\bxi)= \IQ(\bzeta)$. 
\item[(ii)] We have $|V|\ll_d N^{2\nu^{1+\vi}}$ %% . !!!TODO: IS
with  $\bxi^V = (1,\ldots,1,\bxi')$ and $\bxi'\in \IG_m^{\vi}$.
\item[(iii)] If $G$ is a subgroup of $\GammaM$ there exist
$a\in\IZ^{\vi}$ and  $\sigma\in G$
  such that ${\bxi'} = \be(a\sigma /M),$  
\begin{equation}
  \label{eq:conjugateisnear1}
  |a|<M,\quad\text{and}\quad 
\frac{|a|}{M} \ll_d \frac{[\GammaM:G]\mf_G^{1/2}}{N^{\nu^{\vi}/(6d
    )}}.
\end{equation}
\item[(iv)] With the definition (\ref{def:deltazeta}) we have
$\delta(\bxi) \ge d^{-1/2}\min\{\lambda_1(\widetilde
\Lambda_\bzeta(\nu)),N^{\nu^{d}/2}\}$.
\end{enumerate}
\refcomment{71}{Moreover, if $\vi = d$, or equivalently $\Lambda(\nu)=\{0\}$, then $V$ is the identity matrix.}
\end{propo}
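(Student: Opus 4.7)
The degenerate case $\vi = d$ (\textit{i.e.}, $\Lambda_\bzeta(\nu) = \{0\}$) is immediate with $V = I_d$, $\bxi = \bzeta$, $\bfeta = 1$, $M = N$, $E = 1$: parts (i) and (ii) are trivial, (\ref{eq:vlowerbound}) applied directly to $\Lambda_\bzeta$ yields (iv), and Lemma~\ref{approx} yields (iii). Assume henceforth $\vi < d$. Let $\widetilde\Lambda := \widetilde\Lambda_\bzeta(\nu)$. By saturation $\IZ^d/\widetilde\Lambda$ is free of rank $\vi$, and combining (\ref{eq:detLambdaub}) with $\det(\widetilde\Lambda) \le \det(\Lambda_\bzeta(\nu))$ gives $\det(\widetilde\Lambda) \le N^{2\nu^{1+\vi}}$. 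I use Minkowski's second theorem together with $\lambda_1(\widetilde\Lambda) \ge 1$ (since $\widetilde\Lambda \subset \IZ^d$) to choose a basis $v_1, \ldots, v_{d-\vi}$ of $\widetilde\Lambda$ with $|v_i| \ll_d \det(\widetilde\Lambda)$, then invoke the adapted-basis theorem for free quotients to extend to a $\IZ$-basis $v_1, \ldots, v_d$ of $\IZ^d$ with the additional vectors of comparable max-norm. Assembling the $v_i$ as columns yields $V \in \gl{d}(\IZ)$ with $|V| \ll_d N^{2\nu^{1+\vi}}$.

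Let $L = \bigoplus_{i > d-\vi} \IZ v_i$, so $\IZ^d = \widetilde\Lambda \oplus L$. Define $\bfeta \in \IG_m^d$ as the unique point with $\bfeta^u = \bzeta^u$ for $u \in \widetilde\Lambda$ and $\bfeta^u = 1$ for $u \in L$, and put $\bxi := \bzeta\bfeta^{-1}$. Each entry of $\bfeta$ and $\bxi$ is an integral monomial in the entries of $\bzeta$, so $\IQ(\bfeta, \bxi) = \IQ(\bzeta)$. The orders are $E = [\widetilde\Lambda : \widetilde\Lambda \cap \Lambda_\bzeta]$ and $M = [L : L \cap \Lambda_\bzeta]$, both dividing $N$, and using $\Lambda_\bzeta(\nu) \subset \widetilde\Lambda \cap \Lambda_\bzeta$ together with $\det(\widetilde\Lambda) \ge 1$ bounds $E \le [\widetilde\Lambda : \Lambda_\bzeta(\nu)] = \det(\Lambda_\bzeta(\nu))/\det(\widetilde\Lambda) \le N^{2\nu^{1+\vi}}$; this gives (i). By construction $\bxi^{v_i} = 1$ for $i \le d-\vi$, so $\bxi^V = (1, \ldots, 1, \bxi')$ with $\bxi' \in \IG_m^\vi$, establishing (ii).

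The key identity for (iv) is $\bxi^E = \bzeta^E$ (since $\bfeta^E = 1$). For $u \in \Lambda_\bxi \setminus \widetilde\Lambda$, from $\bxi^u = 1$ one gets $\bzeta^{Eu} = \bxi^{Eu} = 1$, so $Eu \in \Lambda_\bzeta$; were $Eu \in \Lambda_\bzeta(\nu)$, then $u$ would lie in the saturation $\widetilde\Lambda$, a contradiction. Hence $Eu \in \Lambda_\bzeta \setminus \Lambda_\bzeta(\nu)$, and (\ref{eq:vlowerbound}) applied to $\Lambda_\bzeta$ yields $|Eu|_2 \ge N^{\nu^\vi}$, so $|u|_2 \ge N^{\nu^\vi}/E \ge N^{\nu^\vi(1 - 2\nu)} \ge N^{\nu^\vi/2} \ge N^{\nu^d/2}$, invoking $\nu \le 1/4$ and $\vi \le d$. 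For $u \in \widetilde\Lambda \setminus \{0\}$, $|u| \ge \lambda_1(\widetilde\Lambda)$ is immediate. Combined with $|u| \ge d^{-1/2}|u|_2$ this delivers (iv). For (iii), apply Lemma~\ref{approx} in dimension $\vi$ to $\bxi'$ of order $M$ with the subgroup $G \subset \Gamma_M$: one obtains $\sigma \in G$ and $a \in \IZ^\vi$ with $\bxi' = \be(a\sigma/M)$, $|a| < M$, and $|a|/M \ll_\vi [\Gamma_M:G]^{1/\vi}\mf_G^{1/(2\vi)}/\delta(\bxi')^{1/(3\vi)}$. A transcription of the (iv) argument---for any nonzero $c \in \Lambda_{\bxi'}$, the lift $w := \sum_j c_j v_{d-\vi+j} \in L \cap \Lambda_\bzeta$ lies in $\Lambda_\bzeta \setminus \Lambda_\bzeta(\nu)$ and satisfies $|w|_2 \ge N^{\nu^\vi}$ while $|w|_2 \ll_d |V| \cdot |c| \ll_d N^{2\nu^{1+\vi}}|c|$---gives $\delta(\bxi') \gg_d N^{\nu^\vi/2}$. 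Substituting and using $1/(6\vi) \ge 1/(6d)$ produces the claimed bound in (iii).

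The principal technical hurdle is the construction of $V$ with the desired max-norm control, specifically extending the Minkowski-reduced basis of $\widetilde\Lambda$ to a basis of $\IZ^d$ via the adapted-basis theorem with uniform bounds on the new vectors. The arithmetically decisive input, by contrast, is the very clean multiplicative identity $\bxi^E = \bzeta^E$, which converts the saturation structure into the sharp Stuhler--Grayson slope bound (\ref{eq:vlowerbound}) and makes every exponent manipulation (relying crucially on $\nu \le 1/4$ so that $1 - 2\nu \ge 1/2$) fall into place.
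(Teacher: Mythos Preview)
Your argument is correct and follows essentially the same route as the paper: the same Minkowski/Cassels construction of $V$ adapted to the saturated lattice $\widetilde\Lambda_\bzeta(\nu)$, the same splitting $\bzeta=\bfeta\bxi$ via the decomposition $\IZ^d=\widetilde\Lambda\oplus L$, and the same case analysis for (iv) using $Eu\in\Lambda_\bzeta\setminus\Lambda_\bzeta(\nu)$ together with (\ref{eq:vlowerbound}). One cosmetic difference: for the bound on $\delta(\bxi')$ in (iii) you lift $c\in\Lambda_{\bxi'}$ directly to $w\in L\cap\Lambda_\bzeta$ and observe $w\notin\Lambda_\bzeta(\nu)$ since $L\cap\widetilde\Lambda=\{0\}$, whereas the paper first multiplies by $E$; your version is marginally cleaner (the factor $E$ in the paper is actually redundant at that spot, since $\bfeta^{V'v}=1$ already), but both lead to the same exponent $N^{\nu^{\vi}/2}$.
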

\begin{proof}
  We abbreviate $\Lambda=\Lambda_\bzeta$ as well as
  $\Lambda(\nu)=\Lambda_\bzeta(\nu)$ and   $\widetilde\Lambda(\nu)=\widetilde\Lambda_\bzeta(\nu)$. 
Note $\det(\Lambda)=N$.

We can find a collection of $d-\vi =\rk{\widetilde \Lambda(\nu)}$ linearly independent vectors in
$\widetilde\Lambda(\nu)$ whose norms are at most
$\ll_d \det(\widetilde\Lambda(\nu))$ by
applying Minkowski's Second Theorem, \refcomment{72}{see
  Theorem~V in
Chapter~VIII~\cite{cassels:geonumbers}, and using
$\lambda_1(\Lambda)\ge 1$.} By appending
suitable standard basis vectors of $\IZ^d$ we find $d$ linearly
independent vectors in $\IZ^d$. By Corollary 2, 
Chapter I.2~\cite{cassels:geonumbers} applied to $\IZ^d$
\refcomment{73}{and these
vectors}  we get a basis of $\IZ^d$
whose entries have norm $\ll_d\det(\widetilde\Lambda(\nu))$.
\refcomment{73}{By the said corollary, the original linearly independent vectors
can be expressed via an triangular matrix in terms of the new basis
vectors.} So 
the first $\rk{\widetilde \Lambda(\nu)}$
entries of this basis are a basis of the saturated group
$\widetilde\Lambda(\nu)$. Thus there exists $V\in \gl{d}(\IZ)$ whose first
$\rk{\widetilde \Lambda(\nu)}$ columns constitute a basis of $\widetilde\Lambda(\nu)$ and
\begin{equation}
\label{eq:normBbound}
|V|\ll_d \det (\widetilde \Lambda(\nu)).
\end{equation}
As $\det(\widetilde\Lambda(\nu))\le\det(\Lambda(\nu))$, the bound for $|V|$ in (ii)
follows from (\ref{eq:detLambdaub}).

We write $\bzeta^V = (\bfeta',\bxi')$
where $\bfeta' \in \IG_m^{d-\vi}$ and $\bxi'\in  \IG_m^{\vi}$ both have finite
order dividing $N$. We take $\bfeta$ and $\bxi$ from the assertion to
equal $(\bfeta',1,\ldots,1)^{V^{-1}}$ and
$(1,\ldots,1,\bxi')^{V^{-1}}$, respectively.

Observe that $[ \widetilde\Lambda(\nu):\Lambda(\nu)]
\widetilde\Lambda(\nu)\subset \Lambda(\nu) \subset\Lambda$. So the first $\rk{\widetilde{\Lambda}(\nu)}$
entries of $\bzeta^{[\widetilde \Lambda(\nu):\Lambda(\nu)]V}$ are
${\bfeta'}^{[\widetilde \Lambda(\nu):\Lambda(\nu)]}=1$. This implies
that $E=\ord(\bfeta)$ from the assertion satisfies $E \mid
[\widetilde \Lambda(\nu):\Lambda(\nu)]$ and thus $E\le
\det(\Lambda(\nu)) \le N^{2\nu^{1+\vi}}$ by (\ref{eq:detLambdaub}).

To verify (iii) let us fix $v\in\IZ^{\vi}\ssm\{0\}$ such that
${\bxi'}^{v}=1$ and $|v|=\delta(\bxi')$. Then $\bxi^{V' v}=1$ where
$V'\in \mat{d\vi}(\IZ)$ consists of the final $\vi$ columns of $V$.
Raising to the $E$-th power to kill $\bfeta$ yields $\bzeta^{EV'
v}=1$. Therefore, $EV'v \in \Lambda$. Note that
$EV'v\not\in \Lambda(\nu)$, indeed otherwise $V'v$ would lie in the
saturation $\widetilde\Lambda(\nu)$. This is impossible as no
non-trivial linear combination of columns of $V'$ lies in
$\widetilde\Lambda(\nu)$ which is generated by the first
$\rk{\widetilde\Lambda(\nu)}$ columns of $V$. Thus
(\ref{eq:vlowerbound}) implies $|EV'v|_2 \ge N^{\nu^{\vi}}$. By
(\ref{eq:normBbound})
\begin{equation*}
 |EV'v|\ll_d E |V'||v|\ll_d E|V||v|\ll_d [\widetilde \Lambda(\nu):\Lambda(\nu)]
\det(\widetilde\Lambda(\nu))|v| = \det (\Lambda(\nu))|v|
\end{equation*}
we conclude $N^{\nu^\vi} \ll_d \det(\Lambda(\nu))|v|$. 
The determinant bound in
(\ref{eq:detLambdaub}) gives
\begin{equation*}
\delta(\bxi')=  |v| \gg_d N^{\nu^{\vi}-2\nu^{1+\vi}}
  \gg_d N^{\nu^{\vi}/2}
\end{equation*}
and the last inequality used $\nu\le 1/4$. 

To complete the proof of (iii) let $G$ be a subgroup of $\GammaM$
where $M=\ord(\bxi)=\ord(\bxi')$. By Lemma
\ref{approx} applied to $\bxi'$ there are $a\in\IZ^{\vi}$
and $\sigma\in G$
with $\bxi'=\be(a\sigma/M), |a|<M,$ and 
\begin{equation*}
  \frac{|a|}{M} 
  \ll_d \frac{[\GammaM:G]^{1/i}\mf_G^{1/(2\vi)}} {\delta(\bxi')^{1/(3\vi)}}\ll_d
  \frac{[\GammaM:G]\mf_G^{1/2}} {N^{\nu^{\vi}/(6d)}}.
\end{equation*}  

It remains to check (iv). Say $v\in\IZ^d\ssm\{0\}$
with  $\bxi^v = 1$ and $|v|=\delta(\bxi)$. 
Then $\bzeta^v = \bfeta^v \bxi^v = \bfeta^v$. Thus ${Ev} \in \Lambda$
and there are two cases to consider.
If $v\in \widetilde\Lambda(\nu)$, then
 $|v|_2\ge \lambda_1(\widetilde\Lambda(\nu))$ by definition.
Otherwise, $v\not\in \widetilde\Lambda(\nu)$ in which case
 $Ev\not\in \Lambda(\nu)$ by saturation. Here we can use 
(\ref{eq:vlowerbound})  and the bound for $E$ from (i) to conclude
$|v|_2\ge E^{-1} N^{\nu^{\vi}}\ge N^{\nu^{\vi}-2\nu^{1+\vi}}\ge
N^{\nu^{\vi}/2}$. 
So $|v|\ge |v|_2/\sqrt{d}\ge N^{\nu^\vi/2}/\sqrt d$, as claimed
in (iv).
\end{proof}

\refcomment{71}{The situation simplifies in the following two cases.
  If $\vi=d$, then
 $\bxi=\bzeta,\bfeta=1,M=N, E=1$, and $V$ is the identity matrix.
If $N$ is a prime, then
 $E=1$ as $E\mid N$ and $E\le N^{2\nu^{1+\vi}}<N$ by
part (i) above. Thus again $\bxi=\bzeta$ and $\bfeta=1$.}

%%% Local Variables:
%%% TeX-master: "atoral"
%%% End:

\section{A preliminary result}\label{sec:preliminary}
%% PH: Revision finished October 25, 2018
%% PH: Revision finished May 7, 2019

Let $d\ge 1$ be an integer. 

\begin{defin}
  \label{def:Bprim}
We use the convention $\inf\emptyset = \infty$.
For  $u\in\IZ^d$ we define 
\begin{equation}
\label{def:rhoa}
  \rho(u) = \inf \bigl\{ |v| : v\in\IZ^d\ssm\{0\} \text{ and }\langle u,v\rangle=0
  \bigr\}.
\end{equation}
For a Laurent polynomial $P\in \IQbar[X_1^{\pm 1},\ldots,X_d^{\pm 1}]$ we
define
\begin{equation*}
\begin{aligned}
 \cBprim(P) = \inf\bigl\{B\in\IN :   &\text{ if $\bfeta
  \in(\mu_\infty)^d,z\in S^1\ssm\mu_\infty$ is algebraic,
and $u\in\IZ^d$ with $P(\bfeta z^u)=0$}
\\ &
\text{ then $\rho(u)\le B$}\bigr\}.
\end{aligned}
\end{equation*}
\end{defin}

Let us spell this out for $d=1$. Then \refcomment{typo}{$\rho(u)=1$} for $u=0$ and
$\rho(u)=\infty$ otherwise. If $P$ vanishes at a point $S^1$ of
infinite order, then $\cBprim(P)=\infty$. Conversely, if $P$ does not
vanish at any point of $S^1\ssm\mu_\infty$ then we have $\cBprim(P)=1$.
In particular,  if $d=1$ and $P$ is \essatoral{},  then $\cBprim(P)=1$.

% Note that Lemma \ref{lem:bmzapp} implies $\cB(P)<\infty$ if $P$ is
% \essatoral{}. 

Let $\bzeta\in\IG_m^d$ have  order $N$ and say  $\nu \in (0,1/2]$. 
Below we make  use of the canonically determined lattice  $\Lambda_\bzeta(\nu)$
 attached to $(\bzeta,\nu)$   as  in Section \ref{sec:lattice}.
Recall that 
$\lambda_1(\widetilde\Lambda_\bzeta(\nu))$ is the least positive
Euclidean norm of a vector in the saturation of $\Lambda_\bzeta(\nu)$
in $\IZ^d$. 
For technical reasons we work with
\begin{equation}
\label{def:tildelambdanu}
\widetilde\lambda(\bzeta;\nu)
= \min\left\{\lambda_1(\widetilde\Lambda_\bzeta(\nu)), N^{\nu^{d}/2}\right\}.
\end{equation}
For example, if  $\Lambda_\bzeta(\nu)$ is $\{0\}$, then  the minimum equals $N^{\nu^d/2}$.

An important goal is to generalize Proposition \ref{univariatebound}
to multivariate polynomials. Proposition \ref{prop:etaxifactorization}
below is a step in this direction.

\begin{propo}
\label{prop:etaxifactorization}
Let $K\subset\IC$ be a number field, $0<\nu\le 1/(128d^2)$, and suppose
 $P\in K[X_1,\ldots,X_d]\ssm \{0\}$ has at most $k$ non-zero terms for an integer
 $k\ge 2$ and satisfies $\cBprim(P)<\infty$. Let $\bzeta\in\IG_m^d$ have
  order $N$ and suppose
 $G$ is a subgroup of $\GammaN$ with
  $P(\bzeta^\sigma)\not=0$ for all
  $\sigma\in G$. Then the following properties hold true
  with  $\vi=d-\rk{\Lambda_\bzeta(\nu)}\ge 1$.
 \begin{enumerate}
  \item [(i)] If $d=1$, then
\begin{alignat*}1
&\frac{1}{\#G}\sum_{\sigma\in G}\log|P(\bzeta^\sigma)| =
 m(P) + O_{d,k}\left(\frac{[K:\IQ]^2 [\GammaN:G]^2
 \mf_G
  \deg(P)^2(1+\hproj{P})}{N^{\nu^{\vi}/(20d)}}\right). 
  \end{alignat*}
 \item [(ii)] If $d\ge 2$ and
 $\widetilde\lambda(\bzeta;\nu)> 
 d^{1/2} \max\{\cBprim(P),\deg P\}$,
then
\begin{equation}
\label{eq:prelimpropii}
  \begin{aligned}   
\frac{1}{\#G}\sum_{\sigma\in G}&\log|P(\bzeta^\sigma)| = m(P) +\\
& O_{d,k,\nu}\left(\frac{[K:\IQ]^2 [\GammaN:G]^2 \mf_G
  \deg(P)^2(1+\hproj{P})}{N^{\nu^{\vi}/(20d)}}+\frac{\deg(P)^{16d^2}}{\widetilde\lambda(\bzeta;\nu)^{1/(16(k-1))}}\right). 
  \end{aligned}
\end{equation}
\end{enumerate}
\end{propo}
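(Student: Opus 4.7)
Part (i) will follow by a direct application of Proposition \ref{univariatebound}. When $d=1$, the convexity of the Harder--Narasimhan slope function forces $\Lambda_\bzeta(\nu) = \{0\}$ as soon as $\nu < 1$, so $\vi = 1$, and the hypothesis $\cBprim(P) < \infty$ in one variable says precisely that $P$ has no roots on $S^1 \setminus \mu_\infty$. The error term $[K:\IQ]^2 [\Gamma_N:G] \mf_G^{1/2} \deg(P)(\log 2\deg P + \hproj P)(\log 2N)^3 d_0(N)/N$ supplied by that proposition is absorbed into the target $N^{-\nu/20}$ bound via the standard estimate $(\log 2N)^3 d_0(N) = O_{\epsilon}(N^\epsilon)$ for arbitrarily small $\epsilon > 0$; since $\nu \le 1/128$ the exponent $1 - \nu/20 - \epsilon$ stays strictly positive, and one weakens $\mf_G^{1/2}$ to $\mf_G$ and $[\Gamma_N:G]$ to $[\Gamma_N:G]^2$ to match the statement.

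For (ii) I apply Proposition \ref{prop:geometryofnumbers} with the given $\nu$ to factor $\bzeta = \bfeta \bxi$ and extract $V\in\gl{d}(\IZ)$, $a \in \IZ^\vi$, and $\sigma_0 \in G$. Letting $R \in \mat{\vi,d}(\IZ)$ denote the bottom $\vi$ rows of $V^{-1}$, a direct computation yields $\bxi_j = \zeta_M^{\sigma_0 c_j}$ with $c = aR \in \IZ^d$, whence $\bxi^w = \zeta_M^{\sigma_0 \langle c, w\rangle}$ for any $w \in \IZ^d$. In particular
\[
\rho(c) \ge \delta(\bxi) \ge d^{-1/2}\widetilde\lambda(\bzeta;\nu) > \cBprim(P),
\]
the first inequality because $\langle c, w\rangle = 0$ implies $\bxi^w = 1$, the second from Proposition \ref{prop:geometryofnumbers}(iv), and the third from the standing hypothesis. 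Let $H = G \cap \ker(\Gamma_N \to \Gamma_E)$ be the $G$-stabilizer of $\bfeta$, and partition $G$ into cosets $\sigma_1 H,\ldots,\sigma_r H$. Reducing $\sigma_0\sigma_j c$ modulo $M$ produces $c_j \in \IZ^d$ with $|c_j| \le M$, and I define the univariate polynomial $Q_j(X) = P(\bfeta^{\sigma_j} X^{c_j})$ (with the componentwise convention of Definition \ref{def:calB}). A short bookkeeping check confirms that $Q_j(\zeta_M^\tau) = P(\bzeta^{\sigma_j\tau})$ for every $\tau \in H$, and the invariance $\rho(c_j) = \rho(c) > \cBprim(P)$ forces $Q_j$ to avoid vanishing on $S^1 \setminus \mu_\infty$.

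From the coset decomposition,
\[
\frac{1}{\#G}\sum_{\sigma \in G}\log|P(\bzeta^\sigma)| = \frac{1}{[G:H]}\sum_{j=1}^r \frac{1}{\#H}\sum_{\tau \in H} \log|Q_j(\zeta_M^\tau)|,
\]
I apply Proposition \ref{univariatebound} to each $Q_j$ over the base field $F_j = \IQ(\bfeta^{\sigma_j}) \subset \IQ(\mu_E)$, with $\zeta = \zeta_M$ and $H_M$ the image of $H$ in $\Gamma_M$; the map $H \to H_M$ has uniform fibres, so the $H$-average equals the $H_M$-average. The resulting error is controlled by the bounds $[F_j:\IQ] \le \varphi(E) \le N^{2\nu^{1+\vi}}$, $[\Gamma_M:H_M] \le [\Gamma_N:G]\varphi(E)$, $\mf_{H_M} \mid \mathrm{lcm}(\mf_G,E)$, $\deg Q_j \le |c_j|\deg P \le N\deg P$, and $\hproj{Q_j} \le \hproj{P}$; the tolerance $\nu \le 1/(128d^2)$ makes the ambient $N^{O(\nu^{1+\vi})}$ powers harmless compared to the allotted $N^{-\nu^{\vi}/(20d)}$. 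Finally, the quantity $m(Q_j) - m(P)$ is bounded using the quantitative Lawton theorem of Appendix \ref{app:lawton}, applied to the $d$-variable polynomial $P_j(X_1,\ldots,X_d) := P(\bfeta^{\sigma_j}_1 X_1,\ldots,\bfeta^{\sigma_j}_d X_d)$ --- whose Mahler measure coincides with $m(P)$ by the rotation-invariance of Haar measure on $(S^1)^d$ --- and the substitution $X_k \mapsto X^{c_{j,k}}$; this contributes $|m(Q_j) - m(P)| \ll_{d,k} \deg(P)^{16d^2}/\rho(c_j)^{1/(16(k-1))} \le \deg(P)^{16d^2}/\widetilde\lambda(\bzeta;\nu)^{1/(16(k-1))}$. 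Averaging over cosets delivers (\ref{eq:prelimpropii}). The main hurdle throughout is the combinatorial bookkeeping of Galois indices, conductors, heights, and degrees when descending from $(\bzeta, N, G)$ to $(\bxi, M, H_M)$, rather than any new transcendence-theoretic input beyond Proposition \ref{univariatebound}.
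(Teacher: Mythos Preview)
Your outline for part (i) is fine and matches the paper. The problem is in part (ii), and it lies exactly where you say the ``main hurdle'' is only bookkeeping: one of those bookkeeping steps destroys the whole estimate.

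You extract $a\in\IZ^{\vi}$ from Proposition~\ref{prop:geometryofnumbers}(iii), form $c=aR$, and then for each coset representative $\sigma_j$ you \emph{reduce $\sigma_0\sigma_j c$ modulo $M$} to obtain $c_j$ with $|c_j|\le M$. This reduction throws away the only source of decay in $N$. The entire point of Proposition~\ref{prop:geometryofnumbers}(iii) is the power-saving bound $|a|/M \ll_d [\Gamma_M:H]\,\mf_H^{1/2}\,N^{-\nu^{\vi}/(6d)}$; once you multiply by a unit $\sigma_0\sigma_j\in\Gamma_M$ and reduce mod $M$, the best you can say is $|c_j|\le M$, which you record as $\deg Q_j\le N\deg P$. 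Plugging this into Proposition~\ref{univariatebound} gives an error of size
\[
[F_j:\IQ]^2\,[\Gamma_M:H_M]\,\mf_{H_M}^{1/2}\,\frac{\deg Q_j}{M}\,(\cdots)\,(\log 2M)^3 d_0(M)
\;\ll\; [K:\IQ]^2[\Gamma_N:G]\,\mf_G^{1/2}\deg(P)\,(1+h(P))\,N^{O(\nu^{1+\vi})+\epsilon},
\]
which \emph{grows} with $N$; there is nothing here that decays like $N^{-\nu^{\vi}/(20d)}$. Your sentence ``the tolerance $\nu\le 1/(128d^2)$ makes the ambient $N^{O(\nu^{1+\vi})}$ powers harmless compared to the allotted $N^{-\nu^{\vi}/(20d)}$'' is simply wrong: $N^{O(\nu^{1+\vi})}$ is a positive power of $N$, and you have produced no negative power for it to be harmless against.

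The fix, which is what the paper does, is to keep the exponent vector $u=(0,a)V^{-1}$ \emph{fixed} and let the Galois twist land on the root of unity instead: define $Q_\tau(X)=P(\bfeta^\tau X^{u})$ and evaluate it at $\be(\widetilde\tau\sigma\sigma_0/M)$. Then $\deg Q_\tau \ll_d |a|\,|V^{-1}|\deg P$, so that $\deg Q_\tau/M$ inherits the saving $N^{-\nu^{\vi}/(6d)}$ (minus the $N^{O(\nu^{1+\vi})}$ losses from $|V^{-1}|$, $E$, $\mf_H$, etc., which now really are harmless). Your claim $\rho(c_j)=\rho(c)$ is also not literally true, though the weaker $\rho(c_j)\ge\delta(\bxi)$ that you actually need does hold; and $F_j$ should be $K(\bfeta^{\sigma_j})$, not $\IQ(\bfeta^{\sigma_j})$, since $P$ has coefficients in $K$. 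But these are secondary; the degree bound is the real gap.
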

\begin{proof}
We may assume that $P$ is non-constant. Part (i) follows with ample
margin from Proposition \ref{univariatebound} with $Q=P$ and $F=K$.
Indeed, we use require the standard estimate $d_0(N) \ll_\epsilon
N^\epsilon$ which holds for all $\epsilon > 0$. We refrain from
stating better bounds in (i) for the \refcomment{76}{purpose of
better} comparability with the bounds in part (ii).
  
We split the proof of part (ii) up into 5 steps.

\smallskip
{\bf Step 1: Reduction to the univariate case.}
We write $L$ for the fixed field of $G$ in $\IQ(\bzeta)$. 
Note that $G$ is the Galois group of $\gal{\IQ(\bzeta)/L} =
\gal{L(\bzeta)/L}$. 

By Proposition \ref{prop:geometryofnumbers} \refcomment{77}{applied to} $\bzeta$ we obtain
$V \in \gl{d}(\IZ)$ and 
a decomposition $\bzeta = \bfeta\bxi$.
Let $E=\ord(\bfeta)$ and $M=\ord(\bxi)$.
By (i) of Proposition \ref{prop:geometryofnumbers}  we find
\begin{equation}
\label{eq:Eboundgon}
 E \le N^{2\nu^{1+\vi}}  \quad\text{and thus}\quad
 M\ge N/E \ge N^{1-2\nu^{1+\vi}}.
\end{equation}

The group used in Proposition \ref{prop:geometryofnumbers}(iii) is obtained as follows;
we denote it with $H$ to avoid a clash of notation with $G$ from
above.
\refcomment{84}{Let $H$ be the subgroup of 
$\GammaM$ corresponding to
$\gal{\IQ(\bxi)/\IQ(\bxi)\cap L(\bfeta)}$.}
By Galois theory, \refcomment{78}{see for example Theorem~VI.1.12~\cite{LangAlgebra},} the restriction homomorphism
 $\gal{L(\bxi)/L}\rightarrow \gal{\IQ(\bxi)/\IQ(\bxi)\cap L}$
 is an isomorphism.
 % We identify the two groups, considering them as a subgroup of
 % $\GammaM$.
\refcomment{84}{Using this isomorphism we will 
 identify $H$  with 
$\gal{L(\bxi)/L(\bxi)\cap L(\bfeta)}$.}

For future reference we estimate the conductor of $H\subset\GammaM$. The fixed field
of $H$ in $\IQ(\bxi)$ is $\IQ(\bxi)\cap L(\bfeta)$. By the characterization of $\mf_G$,
the field $L$ is contained in $\IQ(\be(1/\mf_G))$.
So $\IQ(\bxi)\cap L(\bfeta) \subset
\IQ(\be(1/M))\cap \IQ(\be(1/\mf_G),\be(1/E))$
since $\bxi$ has order $M$ and $\bfeta$ has order $E$.
This final intersection is generated by a root of unity of 
order
$\gcd(M,\lcm{\mf_G,E})$.
We conclude
\begin{equation}
\label{eq:condHbound}
\mf_H \le \lcm{\mf_G,E} \le \mf_G E \le \mf_G N^{2\nu^{1+\vi}}
\end{equation}
having used (\ref{eq:Eboundgon}).

We use basic Galois theory to compute
\begin{equation*}
%\label{eq:galoisdecompositionsum}
\frac{1}{\#G} \sum_{\sigma\in G} \log
|P(\bzeta^\sigma)| 
= \frac{1}{[L(\bfeta):L]} \sum_{\tau\in \gal{L(\bfeta)/L}}
\sum_{\substack{\sigma\in\gal{L(\bxi)/L} \\
\tau|_{L(\bfeta)\cap
L(\bxi)} =
\sigma|_{L(\bfeta)\cap
  L(\bxi)}}}
\frac{1}{\#H} \log|P(\bfeta^\tau \bxi^\sigma)|.
\end{equation*}
Observe that the inner sum is over a coset of $\widetilde\tau H$ of
$H$ inside $\GammaM$; here $\widetilde\tau \in \GammaM$ restricts to
the restriction  $\tau|_{L(\bfeta)\cap L(\bxi)}$.
Below,  $\tau$ is as in the outer sum. The inner sum equals 
\begin{alignat}1
\label{eq:Stau1}
S_\tau &= \frac{1}{\#H}\sum_{\sigma\in \widetilde\tau
H} \log|P(\bfeta^\tau \bxi^\sigma)|
= \frac{1}{\#H}\sum_{\sigma\in 
H} \log|P(\bfeta^\tau \bxi^{\widetilde\tau \sigma})|,
\end{alignat}
that is 
\begin{equation}
\label{eq:galoisdecompositionsum2}
\frac{1}{\#G}\sum_{\sigma\in G} \log|P(\bzeta^\sigma)| =
\frac{1}{[L(\bfeta):L]} \sum_{\tau \in\gal{L(\bfeta)/L}} S_\tau.
\end{equation}

By Proposition \ref{prop:geometryofnumbers}(iii) applied to $H$
we get 
 $a\in\IZ^{\vi}$ satisfying (\ref{eq:conjugateisnear1})
 and   $\sigma_0\in H$ \refcomment{80}{with
 $\bxi^V = (1,\ldots,1,\be(a\sigma_0/M))$}. 
We extend $a$ to the left by $d-\vi$ zeros
and obtain a row vector $(0,a)\in\IZ^d$.
We set $u=(0,a)V^{-1}\in\IZ^d$ 
and use Proposition \ref{prop:geometryofnumbers}(ii) 
to get
 $\bxi= \be(u\sigma_0/M)$. Let us set
\begin{equation}
\label{eq:QPequality}
Q = P(\bfeta^\tau X^u)X^l
\end{equation}
in the unknown $X$; it depends on $\tau$ and the \refcomment{81}{exponent $l$ is} chosen to make sure that $Q$ is
a polynomial. 
So $0\not=|P(\bfeta^\tau \bxi^{\widetilde \tau \sigma})| =
|Q(\be(\widetilde \tau \sigma\sigma_0/M))|$
and in particular $Q\not=0$. We may assume that $Q(0)\not=0$. 
The coefficients of $Q$ 
lie in  $F=K(\bfeta)$ and $Q$ has at most $k$ non-zero terms as $P$ has \refcomment{82}{at most} this
many non-zero terms. 
All this allows us to rewrite (\ref{eq:Stau1}) using a univariate
polynomial, $\sigma_0$ above is absorbed by the sum
\begin{equation}
\label{eq:StauinQ}
S_\tau = \frac{1}{\#H}\sum_{\sigma\in 
H} \log|Q(\be(\widetilde\tau \sigma/M))|.
\end{equation}

\smallskip
{\bf Step 2: Non-vanishing of $Q$ on $S^1\ssm \mu_\infty$.}
Suppose $w\in\IZ^d\ssm\{0\}$ satisfies $\langle u,w\rangle = 0$
and $|w|=\rho(u)$. 
 Recall that $\bxi =\be(u \sigma_0 /M)$, so $\bxi^w =1$.
Thus $|w|\ge\delta(\bxi)$ and Proposition
\ref{prop:geometryofnumbers}(iv)
together with  (\ref{def:tildelambdanu})
yield
\begin{equation}
\label{eq:deprimub2}
 \rho(u)=
|w|\ge d^{-1/2} \widetilde\lambda(\bzeta;\nu).
\end{equation}

Let $z\in S^1\backslash\mu_\infty$ be algebraic. If $Q(z)=0$
then $P(\bfeta^\tau z^u)=0$ by
(\ref{eq:QPequality}). 
\refcomment{83}{By Definition~\ref{def:Bprim} we have $\rho(u)\le \cBprim(P)$.}
% By definition 
% $\langle u,v\rangle=0$ for some $v\in\IZ^d\ssm\{0\}$ with
% $|v|\le\cBprim(P)$. 
% Since $z$ has infinite order we conclude $\langle u,v\rangle = 0$
% and hence $\rho(u)\le \cBprim(P)$.
This and (\ref{eq:deprimub2}) contradict the lower bound
$\widetilde\lambda(\bzeta;\nu)>d^{1/2} \cBprim(P)$ in the hypothesis.
Hence $Q(z)\not=0$.

Thus $Q$, having algebraic coefficients, does not vanish at any point
of $S^1\ssm\mu_\infty$. As $\rho(u)>1$ we also have $u\not=0$.

\smallskip
{\bf Step 3: Bounding quantities in prepartion of Proposition \ref{univariatebound}.}
This step is mainly bookkeeping. We aim to  apply Proposition \ref{univariatebound} to
$Q,$ the root of unity $\be(\widetilde\tau/M)$, and the subgroup $H\subset \GammaM$ to
determine the asymptotic behavoir of $S_\tau$.
To proceed we bound the  various
quantities  below separately: 
\begin{equation}
\label{eq:6bounds}
\begin{tabular}{rl}
$[\GammaM:H]$ & $\le [\GammaN:G] N^{2\nu^{1+\vi}},$  \\
$\mf_H$ & $\le \mf_G N^{2\nu^{1+\vi}},$ \\ 
$\deg(Q)$ & $\ll_d \deg(P) \min\{[\GammaN:G]\mf_G^{1/2}
N^{1-\nu^{\vi}/(10d)},N^2\},$ \\
$\hproj{Q}$ & $=  \hproj{P}$, \\
$[K(\bfeta):\IQ]=[F:\IQ]$ & $ \le [K:\IQ] N^{2\nu^{1+\vi}},$
\end{tabular}
\end{equation}

\refcomment{84}{Note that $\# H = [\IQ(\bxi):\IQ(\bxi)\cap L(\bfeta)]
 = [\IQ(\bxi):\IQ]/[\IQ(\bxi)\cap L(\bfeta):\IQ] \ge
 [\IQ(\bxi):\IQ]/[L(\bfeta):\IQ]$ and since
 $[\IQ(\bxi):\IQ] = \# \GammaM$ we find 
 $[\GammaM:H]\le [L(\bfeta):\IQ] \le [L:\IQ] E$.
 The first bound follows from (\ref{eq:Eboundgon}) and as
 $[L:\IQ] = [\GammaN:G]$.}

% Note that $\# H = [L(\bxi):L(\bxi)\cap L(\bfeta)]
% = \varphi(N)/[L(\bfeta):\IQ]  \ge \varphi(N) /([L:\IQ]E)$.
% Hence $\#H\ge \varphi(N)N^{-2\nu^{1+\vi}} /[\GammaN:G]$ by
% (\ref{eq:Eboundgon}).  
% Since $M\mid N$ we have $\varphi(M)\le \varphi(N)$ and this
% implies the  first bound.

We already proved the bound for $\mf_H$ in (\ref{eq:condHbound}). 

Next comes $\deg(Q)$. 
Observe that 
\begin{alignat*}1
\deg(Q) &\ll_{d}  |a| |V^{-1}|\deg(P)\ll_d |a| |V|^{d-1}\deg(P)
 \\ &\ll_d
 [\GammaM:H] \mf_H^{1/2} \deg(P)
 N^{1+2(d-1)\nu^{1+\vi}-\nu^{\vi}/(6d)}
 \\ &\ll_d
 [\GammaN:G] \mf_G^{1/2}  \deg(P)
 N^{1+2\nu^{1+\vi} + \nu^{1+\vi}+2(d-1)\nu^{1+\vi}-\nu^{\vi}/(6\vi)}
 \end{alignat*}
 having used the bounds in Proposition \ref{prop:geometryofnumbers},
 $M\le N$,
 and the first two bounds in (\ref{eq:6bounds}). 
As  ${\nu\le 1/(128 d^2)}$
the exponent of $N$ is at most 
$1 + (2d+1)\nu^{1+\vi}-\nu^{\vi}/(6d)\le
1- \nu^{\vi}/(10d)$ and thus we obtain 
\begin{equation*}
%\label{eq:degQbound}
\deg(Q) \ll_d [\GammaN:G] \mf_G^{1/2}\deg(P)  N^{1-\nu^{\vi}/(10
d)}
\end{equation*}
which is part of the third inequality in (\ref{eq:6bounds}).
The bound $\deg(Q) \ll_d \deg(P)N^2$ is proved similarly, but requires
only the trivial estimate $|a|<M\le N$ from (\ref{eq:conjugateisnear1}) and $|V^{-1}| \ll_d
N^{2d\nu^{1+\vi}}$. 

 We claim  that the coefficients of $P(\bfeta^\tau X^u)$
are equal to the coefficients of $P$ up-to multiplication by a root of
unity. 
In view of the definition of the height (\ref{def:hprojP}) this will imply the fourth claim in (\ref{eq:6bounds}).
Indeed, it suffices to rule out that two distinct monomials
in $P$  lead to the same power of $X$ after the substitution. 
Hence it suffices to verify $\rho(u) > \deg P$. 
But this follows from (\ref{eq:deprimub2}) and as
$\widetilde\lambda(\bzeta;\nu)>d^{1/2}\deg P$ by hypothesis.

The degree of the number field $F$ containing the coefficients of
$Q$ satisfies
\begin{equation*}
[F:\IQ] = [K(\bfeta):\IQ]\le [K:\IQ][\IQ(\bfeta):\IQ]\le
[K:\IQ]E
\le [K:\IQ] N^{2\nu^{1+\vi}}
\end{equation*}
where we used (\ref{eq:Eboundgon}). This implies 
the fifth claim in (\ref{eq:6bounds}).

\smallskip
{\bf Step 4: Applying Proposition \ref{univariatebound} in
the univariate case.}
Our aim is to determine the asymptotics of (\ref{eq:StauinQ}). 
We use the bounds from the last step to control  the
 error
term in (\ref{meanvalue2}) arise in Proposition \ref{univariatebound}
applied to $Q, \be(\widetilde\tau/M),$ and $H$. By
(\ref{eq:6bounds})
the error is
\begin{alignat*}1
&\ll  [F:\IQ]^2 [\GammaM:H]\mf_H^{1/2} \deg(Q)(\log(2\deg Q)
+\hproj{Q}) \frac{(\log 2M)^3 d_0(M)}{M} \\
&\ll_d [K:\IQ]^2 [\GammaN:G]^2 \mf_G \deg(P)(\log (2N^2\deg P)
+\hproj{P}) N^{9\nu^{1+\vi}+1-\nu^{\vi/(10d)}}\frac{(\log 2M)^3 d_0(M)}{N}
\end{alignat*}
where we use $\deg Q \ll N^2\deg P$ to bound $\log(2\deg Q)$ from
above and the lower bound for $M$ in (\ref{eq:Eboundgon}). 

The exponent of $N$ is $9\nu^{1+\vi}-\nu^{\vi}/(10d)\le
-\nu^{\vi}/(19d)$ as $\nu \le 1/(128d^2) \le 1/(256d)$. As $M\mid N$
we find $d_0(M)\le d_0(N)$. It is well-known that $d_0(N)\ll_\epsilon
N^\epsilon$ for all $\epsilon$. \refcomment{85}{We also anticipate $\log(2N^2)$ coming
from $\log(2N^2\deg P)$ to find}
\begin{equation*}
\log(2N^2)N^{9\nu^{1+\vi} - \nu^{\vi/(10d)}} (\log 2M)^3 d_0(M)\ll_{d,\nu}
N^{-\nu^{\vi}/(20d)}.
\end{equation*}
Using the crude inequality $\log \deg P\le \deg P$ the error term is thus
\begin{equation*}
\ll_{d,\nu}
[K:\IQ]^2 [\GammaN:G]^2 \mf_G \deg(P)^2
(1+\hproj{P})N^{-\nu^{\vi}/(20d)}.
\end{equation*}

Applying Proposition \ref{univariatebound} and recalling
$m(Q)=m(P(\bfeta^\tau X^u))$ we  find
\begin{alignat}1
\label{eq:stauasymptotics}
S_\tau &= m(P(\bfeta^\tau X^u)) + O_{d,\nu}\left(\frac{[K:\IQ]^2
  [\GammaN:G]^2\mf_G \deg(P)^2(1+\hproj{P})}{N^{\nu^{\vi}/(20d)}}\right).
\end{alignat}

\smallskip
{\bf Step 5: Applying a quantitative version of Lawton's Theorem.}
To determine the asymptotics of the Mahler measure
we apply our quantitative variant of Lawton's Theorem,
Theorem \ref{thm:lawtonquant}
to $P(\bfeta^\tau (X_1,\ldots,X_d))\not=0$.
This polynomial has the same degree and number of terms as $P$. 
The exponent vector  satisfies $\rho(u)\ge d^{-1/2} \widetilde\lambda(\bzeta;\nu)$ by (\ref{eq:deprimub2}). Our hypothesis implies $\rho(u)> \deg P$, as required by
Theorem \ref{thm:lawtonquant}. We find
\begin{equation}
\label{eq:lawtonconclusion} 
m(P(\bfeta^\tau X^u)) = m(P(\bfeta^\tau(X_1,\ldots,X_d))) +
O_{d,k} \left(\frac{\deg(P)^{16d^2}}{\widetilde{\lambda}(\bzeta;\nu)^{1/(16(k-1))}}\right). 
\end{equation}

The Mahler measure of $P$ and $P(\bfeta^\tau(X_1,\ldots,X_d))$ are
equal as translating by $\bfeta^\tau\in(S^1)^d$ does not affect the
value of the integral.

By combining (\ref{eq:stauasymptotics}) and
(\ref{eq:lawtonconclusion}) we conclude
\begin{alignat*}1 
S_\tau  = m(P) &+ O_{d,k,\nu}\left(\frac{[K:\IQ]^2
  [\GammaN:G]^2\mf_G 
  \deg(P)^2(1+\hproj{P})}{N^{\nu^{\vi}/(20d)}}
  + 
  \frac{\deg(P)^{16d^2}}{\widetilde{\lambda}(\bzeta;\nu)^{1/(16(k-1))}}\right).
  \end{alignat*}
The proposition  follows from
(\ref{eq:galoisdecompositionsum2}). 
\end{proof}

We now explain why the  situation
simplifies  when the order $N$ of $\bzeta$ is a prime number. 
In this case, after the proof of Proposition \ref{prop:geometryofnumbers} we observed
that   $\bfeta=1$ and $\bzeta=\bxi$. In the proof above, inequality
(\ref{eq:deprimub2}) can be replaced by
$ \rho(u) \ge \delta(\bzeta)$.
So the hypothesis on $\bzeta$ in (ii) of the proposition
can be replaced by $\delta(\bzeta) > \max\{\cBprim(P),\deg P\}$; see also
the argument near (\ref{eq:lawtonconclusion}). This is
certainly satisfied for $\delta(\bzeta)\rightarrow\infty$. 
Moreover, $\widetilde\lambda(\bzeta;\nu)$ can be replaced by
$\delta(\bzeta)$ in (\ref{eq:prelimpropii}). 
From this point it is not difficult
to deduce Theorem \ref{thm:main}  when 
$N$ is a prime.

The remaining argument is required to treat general $N$. We need  to keep track of extra information such as $[K:\IQ], [\GammaN:G],\mf_G,$ and the dependency on $P$ to anticipate a
monomial change of coordinates.

%%% Local Variables:
%%% TeX-master: "atoral"
%%% End:

\section{Equidistribution}\label{sec:equidistribution}
%PH: Revision began October 25, 2018, ended October 29, 2018
%PH: Revision began May 24, 2019, ended May 29, 2019

Proposition \ref{prop:etaxifactorization} closes in on Theorem \ref{thm:main}.
Indeed, suppose that for some choice of $\nu$ the value
 $\widetilde\lambda(\bzeta;\nu)$ grows polynomially in
 $\delta(\bzeta)$. Then the error term of (\ref{eq:prelimpropii}) tends to $0$ as
 $\delta(\bzeta)\rightarrow \infty$ and we are done. 

However, consider the following example, already found in the
beginning of Section \ref{sec:lattice}.  
Suppose $n\ge 2$ and $\bzeta_p$ and $\bzeta_{p^n}$ are roots of unity of
order $p$ and $p^n$, respectively. Say $\bzeta =
(\bzeta_p,\bzeta_{p^n})$, it has order $p^n$.
The lattice $\Lambda_{\bzeta}$ contains $(p,0)$ and this vector has
minimal positive norm in $\Lambda_{\bzeta}$.  
For $n$ large enough in terms of $\nu$ we have $\Lambda(\nu) =
(p,0)\IZ$ and $\widetilde\Lambda(\nu)=(1,0)\IZ$. Thus
$\lambda_1(\widetilde\Lambda_{\bzeta}(\nu))=1$ and
this yields $\widetilde\lambda(\bzeta;\nu)=1$.

This example suggests a  monomial change of coordinates which we will
do in the next section. In the
current section we lay the groundwork for this change of coordinates. 

% Moreover, our main result is essentially proven if  $l=0$ by the
% comment below that proposition.
% It remains to estimate the average over the Mahler measure of the
% $P_{V,\bfeta^{\tau}}$ in (\ref{eq:averagingparttwo}). 
% We will see   that
% the $\bfeta^\tau$ become equidistributed in $\IG_m^l$ as
% $\delta(\bzeta)\rightarrow\infty$.
% So the general idea is to compare this
% average with the associated integral, which equals $m(P)$.  
%  However, a difficult remains. The
% Mahler measure is continuous on non-zero polynomials but it has a
% logarithmic singularity at $0$. 

\subsection{Numerical integration}

We require a  higher dimensional replacement of the Koksma
bound, Theorem 5.4~\cite{Harman}. The classical analog is 
called the Koksma--Hlawka Inequality and applies to functions of bounded
variation in the sense of Hardy and Krause.
%% This quantity can be difficult to manage so we use a more basic approach
%% that involves
\refcomment{92}{Let $\theta: U\rightarrow\IR$ be a function whose domain
$U$ is a non-empty subset of $\IR^d$. In this subsection we use the more
rudimentary \textit{modulus of continuity} of $\theta$ defined by}
\begin{equation}
  \label{def:omega}
  \omega(\theta;t) = \sup_{\substack{x,y\in U \\ |x-y|\le t}}
  |\theta(x)-\theta(y)|
\end{equation}
 for all
$t\ge 0$; as usual $|\cdot|$ denotes the maximum-norm on $\IR^d$.
\refcomment{92}{We define $\omega(\theta;t)=0$ if $U=\emptyset$.} We
will use it to estimate a mean in terms of the corresponding integral
in Proposition \ref{prop:numintegration}. 
Hlawka~\cite{Hlawka:71} has a related and more
precise result. For the reader's convenience we give a
self-contained treatment that suffices for our purposes.

\begin{propo}
  \label{prop:numintegration}
  Let $\theta:[0,1]^d\rightarrow\IR$ be a continuous function and
  let  $x_1,\ldots,x_n\in [0,1)^d$ with
    discrepancy $\cD = \cD(x_1,\ldots,x_n)$. Then
    \begin{equation}
      \label{eq:numintbound}
      \left|\frac 1n \sum_{i=1}^n \theta(x_i) - \int_{[0,1)^d}
        \theta(x) dx\right| %% \le 2\omega(\theta;1/\lfloor
        %% \cD^{-1/(d+1)}\rfloor)
        \le (1+2^{d+1}) \omega(\theta,\cD^{1/(d+1)}).        
    \end{equation}
\end{propo}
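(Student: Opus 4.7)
The plan is to compare $\psi$ with a piecewise constant approximation on a sufficiently fine axis-aligned grid, then bound the residual using the discrepancy hypothesis. The scale $h$ will be chosen to balance two competing errors: the $L^\infty$ oscillation of $\psi$ across each cell, and the discrepancy cost aggregated over the full grid. The exponent $1/(d+1)$, rather than the $1/d$ one might naively expect, reflects that the bound should be set up to exploit translation invariance of the left-hand side in $\psi$ together with sub-additivity of the modulus of continuity, and does not require any Hardy--Krause-type assumption.

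First I would set $m = \lceil \cD^{-1/(d+1)} \rceil$ and $h = 1/m$, and partition $[0,1)^d$ into $m^d$ half-open cubes $B_j$ of side $h$. For each $j$ let $\bar\psi_j$ denote the average of $\psi$ over $B_j$, and let $\tilde\psi$ be the step function equal to $\bar\psi_j$ on $B_j$. Since any two points of $B_j$ lie within $\ell^\infty$-distance $h$, we have $\|\psi - \tilde\psi\|_\infty \le \omega(\psi, h)$; and by construction $\int \tilde\psi = \int \psi$. Letting $N_j = \#\{i : x_i \in B_j\}$, the problem thus reduces, up to an additive error of $\omega(\psi, h)$, to bounding
\begin{equation*}
\sum_j \bar\psi_j \left( \frac{N_j}{n} - h^d \right).
\end{equation*}

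For the main sum I would use that both sides of the claim are invariant under replacing $\psi$ by $\psi - C$ for any constant $C$, and so may assume $\bar\psi_{j_0} = 0$ for a fixed index $j_0$. Sub-additivity of the modulus of continuity, $\omega(\psi, a+b) \le \omega(\psi, a) + \omega(\psi, b)$, then yields $\omega(\psi, 1) \le m \cdot \omega(\psi, h)$ because $mh = 1$; averaging over pairs of cells gives $|\bar\psi_j| \le m \cdot \omega(\psi, h)$ for every $j$. Combining this with the discrepancy estimate $|N_j/n - h^d| \le \cD$ on each of the $m^d$ cubes, and using $m \le 2 \cD^{-1/(d+1)}$, bounds the main sum by $m^{d+1} \cD \cdot \omega(\psi, h) \le 2^{d+1}\, \omega(\psi, h)$. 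Since $h \le \cD^{1/(d+1)}$ and $\omega(\psi,\cdot)$ is monotone, the claim follows with the stated constant $1 + 2^{d+1}$.

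The hard point is conceptual rather than technical: recognizing that the translation invariance of the left-hand side allows one to control the $L^\infty$ size of the centered averages $\bar\psi_j$ by $m \cdot \omega(\psi, h)$ instead of by the coarser $\omega(\psi, 1)$. Without this saving, one could only balance the two errors at the scale $h \asymp \cD^{1/d}$, and the uncontrolled factor $\omega(\psi, 1)$ would persist in the final bound.
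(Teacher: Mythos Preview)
Your proof is correct and matches the paper's approach essentially step for step: both partition $[0,1)^d$ into $m^d$ cubes with $m=\lceil \cD^{-1/(d+1)}\rceil$, exploit translation invariance (the paper subtracts $\psi(0)$, you subtract a cell average $\bar\psi_{j_0}$), use sub-additivity of $\omega$ to get the bound $m\,\omega(\psi,1/m)$ on the centered values, and combine with the per-cell discrepancy estimate $|N_j/n-h^d|\le\cD$. The only cosmetic difference is that the paper picks a mean-value point $y_j$ in each cube whereas you use the cell average $\bar\psi_j$.
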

\begin{proof}
Both sides of (\ref{eq:numintbound}) are invariant under adding a
constant function to  $\theta$. So we may assume $\theta(0)=0$. 
  %% The assertion does not change when adding a constant function to
  %% $\theta$. So we may assume $\theta(0)=0$.
  
  Let $T\ge 1$ be an integral parameter to be
  determined below. 
  We write $[0,1)^d$ as a disjoint union of $T^d$ half-open hypercubes
    $Q_j$ with side length $1/T$. Let $\overline{Q_j}$ denote the
    closure of $Q_j$ in $[0,1]^d$. 
The Mean Value Theorem tells us that
for each $j$  there exists $y_j\in \overline{Q_j}$
such that $\int_{Q_j} \theta(x) dx = \vol{Q_j} \theta(y_j)=T^{-d} \theta(y_j)$.

For each
$j$ we write $n_j=\#\left\{i \in \{1,\ldots,n\} : x_i \in Q_j\right\}$.
So 
\begin{equation}
\label{eq:numint1b}  \frac 1n \left|\sum_{i=1}^n \theta(x_i)
-\sum_{j} n_j \theta(y_j) \right|
  \le \frac 1n \sum_{j}
  \sum_{\substack{i=1 \\ x_i\in Q_j}}^n|\theta(x_i) -
  \theta(y_j)|
  \le \frac 1n \sum_j \omega(\theta;1/T) n_j =\omega(\theta;1/T). 
\end{equation}

On the other hand, $  \frac 1n \sum_{j} n_j\theta(y_j)$ equals
\begin{equation*}
 \sum_{j}  \frac {n_j}{n} T^d
  \int_{Q_j}\theta(x) dx  = \sum_{j} (1+\delta_jT^d )\int_{Q_j} \theta(x) dx
  =\int_{[0,1)^d}\theta(x) dx + T^d\sum_{j} \delta_j\int_{Q_j} \theta(x) dx
\end{equation*}
where $\delta_j = n_j/n-T^{-d}$.
The definition of discrepancy implies 
$|\delta_j|\le \cD$. Hence
\begin{equation}
  \label{eq:numint2b}
   \left|\frac 1n\sum_{j} n_j\theta(y_j) - \int_{[0,1)^d} \theta(x) dx\right|
     \le T^{d}\cD \int_{[0,1)^d} |\theta(x)| dx
    \le T^{d+1}\cD \omega(\theta;1/T)
\end{equation}
where we used 
$|\theta(x)|\le  T \omega(\theta;1/T)$ for all  $x\in
[0,1]^d$; recall that $\theta(0)=0$.

We apply the triangle inequality to  (\ref{eq:numint1b})
and (\ref{eq:numint2b}) and conclude that the left-hand side of
(\ref{eq:numintbound})
is at most $(1+T^{d+1}\cD)\omega(\theta;1/T)$.
To complete the proof observe that $0<\cD\le 1$ and fix
$T = \lceil \cD^{-1/(d+1)}\rceil$ which satisfies
$\cD^{-1/(d+1)}\le T\le \cD^{-1/(d+1)}+1$. 
\end{proof}

\subsection{Averaging the Mahler measure}

This subsection is  purely in the complex setting. Let
$P\in \IC[X_1,\ldots,X_d]\ssm\{0\}$ %% satisfy $|P|=1$ and
have  at most $k\ge 2$
non-zero terms, where $k$ is an integer.

Let  $l\in \{1,\ldots,d-1\}$.
For
$x\in\IR^l$ we define $P_{\be(x)} = P(\be(x),Y_{1},\ldots,Y_{d-l})\in
\IC[Y_{1},\ldots,Y_{d-l}]$.
Next we construct an auxiliary Laurent polynomial $\widehat P$ in $l$
variables whose value at $\be(x)$ is comparable to
$|P_{\be(x)}|$. 
 \refcomment{87}{
  For $i\in\IZ^{d-l}$ we  denote $p_i \in
\IC[X_{1},\ldots,X_l]$ the coefficients of $P$ taken
 as a Laurent polynomial
in  $X_{l+1},\ldots,X_d$ and define}
\begin{equation}
  \label{eq:defQpi}
  \widehat P=\sum_{i}
  p_i(X_1,\ldots,X_l)\overline{p_i}(X_1^{-1},\ldots,X_l^{-1}) \in  \IC[X_1^{\pm 1},\ldots,X_l^{\pm 1}]
\end{equation}
where the bar denotes complex conjugation.

\begin{lemma}
  \label{lem:Qprops0}
  \refcomment{103}{In the notation above the following properties hold true:}
  \begin{enumerate}
  \item [(i)] The Laurent polynomial $\widehat P$ has at most $k^2$
    non-zero terms.  %If $|P|=1$ then  $1\ll_k |Q|\ll_k 1$. !!!NECESSARY???
  \item[(ii)] The product  $(X_1\cdots X_l)^{\deg P}\widehat P$ is a
    polynomial of degree at most $(l+1)\deg P$.  
  \end{enumerate}
\end{lemma}
\begin{proof}
  If \refcomment{88}{each} $p_i$ consists of $k_i$ non-zero terms,
  then $\widehat P$ consists of at most $\sum_{i} k_i^2$ terms. Since
  $\sum_{i} k_i\le k$ we find that $\widehat P$ has at most $k^2$
  non-zero terms. This implies part (i). 

  Part (ii) follows from (\ref{eq:defQpi}).
\end{proof}

Observe that $\widehat P(\be(x)) = \sum_i |p_i(\be(x))|^2\ge 0$. As
  $|P_{\be(x)}|$ is the maximum of $|p_i(\be(x))|$ as $i$ varies, we
find
\begin{equation}
  \label{eq:cmpQbexPbex}
\frac{1}{k^{1/2}} \widehat P(\be(x))^{1/2}\le   |P_{\be(x)}| \le \widehat P(\be(x))^{1/2}.
\end{equation}
So $P_{\be(x)}=0$ if and only if $\widehat P(\be(x))=0$. 
% By hypothesis $P\not=0$, so  $P_{\be(x)}$
% cannot  be $0$ for all $x$,
% thus $Q\not=0$.

  The main result of this subsection is 
\begin{propo}
  \label{prop:lehmermean}
Assume $P\in \IC[X_1,\ldots,X_d]\ssm \IC$ %satisfies $|P|=1$ and
has at most $k$ non-zero terms for an integer $k\ge 2$. Let $l\in \{1,\ldots,d-1\}$ and let
$\widehat P$ be as above. 
    Suppose $x_1,\ldots,x_n\in [0,1)^l$ with discrepancy
    $\cD = \cD(x_1,\ldots,x_n)$.
If $P_{\be(x_i)}\not=0$ for all $i\in
    \{1,\ldots,n\}$, then
    \begin{equation}
      \label{lem:propmahleraverageassertion}
      \frac 1n \sum_{i=1}^n m(P_{\be(x_i)}) = m(P)
      +O_{d,k}\left( \deg(P)\cD^{1/(16(d+1)k^2)}+\left| m(\widehat P)-\frac 1n \sum_{i=1}^n \log
 \widehat P(\be(x_i))\right|\right).
    \end{equation}
\end{propo}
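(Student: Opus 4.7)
The plan is to combine a Fubini identity for the Mahler measure with numerical integration against a continuous proxy for $m(P_{\be(x)})$ built from $\widehat{P}$. First, by integrating the variables $X_1,\ldots,X_l$ first in the definition of $m(P)$, one has
$$
m(P) = \int_{[0,1)^l} m(P_{\be(x)})\,dx,
$$
so the proposition reduces to bounding the numerical integration error $\frac{1}{n}\sum_i m(P_{\be(x_i)}) - \int m(P_{\be(x)})\,dx$ in terms of the quantities allowed on the right-hand side of (\ref{lem:propmahleraverageassertion}).

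The function $x \mapsto m(P_{\be(x)})$ is not amenable to Proposition \ref{prop:numintegration} directly because we have no useful control on its modulus of continuity. To sidestep this, I would introduce the continuous proxy $\widetilde{\psi}_r(x) = \tfrac{1}{2}\log\max(r, \widehat{P}(\be(x)))$ truncated at a small parameter $r > 0$ to be optimized. By the squeeze (\ref{eq:cmpQbexPbex}) together with the Dobrowolski--Smyth estimate (\ref{eq:diffmahlerlog}) applied to $P_{\be(x)}$ (which has at most $k$ nonzero terms), one obtains the pointwise identity $m(P_{\be(x)}) = \tfrac{1}{2}\log\widehat{P}(\be(x)) + E(x)$ with $|E(x)| = O_k(1)$ uniformly on $\{\widehat{P}(\be(\cdot)) > 0\}$. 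Because $\widehat{P}$ is a trigonometric polynomial of total degree $O(\deg P)$ with at most $k^2$ nonzero terms, a direct Bernstein-style differentiation bound yields $\omega(\widetilde{\psi}_r, t) \ll_k \deg(P)\, t/r$, and Proposition \ref{prop:numintegration} then gives
$$
\left|\frac{1}{n}\sum_i \widetilde{\psi}_r(x_i) - \int_{[0,1)^l} \widetilde{\psi}_r(x)\,dx\right| \ll_{d,k} \frac{\deg(P)\,\cD^{1/(l+1)}}{r}.
$$

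To pass from $\widetilde{\psi}_r$ back to $\tfrac{1}{2}\log\widehat{P}(\be(x))$, I would estimate the truncation excess $\int (\widetilde{\psi}_r - \tfrac{1}{2}\log\widehat{P})\,dx$ together with its Riemann-sum counterpart by applying Proposition \ref{prop:numintegration} to a continuous upper approximation of the indicator of the sublevel set $\{\widehat{P}(\be(\cdot)) \le r\}$; the resulting $L^1$-cost is of size $\ll \log(1/r)\cdot r^{\kappa(d,k,\deg P)}$ by a standard resolution of the logarithmic singularity of $\log\widehat{P}$. Balancing this against $\deg(P)\cD^{1/(l+1)}/r$ and optimizing the threshold $r$ produces the leading error $O_{d,k}(\deg(P)\cD^{1/(16(d+1)k^2)})$, with the conservative exponent tracking the $k^2$ terms of $\widehat{P}$, the loss from the Bernstein bound, and the use of Proposition \ref{prop:numintegration} in dimension $l+1 \le d+1$. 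Finally, the bounded correction $E(x)$ contributes only through $\frac{1}{n}\sum E(x_i) - \int E(x)\,dx$, which is absorbed into the same type of estimate applied to a continuous approximation of $E$.

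The residual discrepancy $|m(\widehat{P}) - \frac{1}{n}\sum \log\widehat{P}(\be(x_i))|$ is kept explicit in the statement precisely because the continuous-proxy argument cannot be applied to $\log\widehat{P}$ itself: the logarithmic singularities of $\log\widehat{P}$ along the zero set of $\widehat{P}$ inside $(S^1)^l$ require different tools (in the applications, the quantitative Lawton theorem of Appendix \ref{app:lawton} or an inductive version of the present proposition). The main obstacle is therefore the calibration of the truncation parameter $r$ and the bookkeeping of the $k$-dependent constants so that the final exponent of $\cD$ emerges in the claimed form $1/(16(d+1)k^2)$, while simultaneously ensuring that the excess Riemann-sum mass on the sublevel sets of $\widehat{P}$ is dominated by discrepancy-style estimates rather than by the uncontrolled term that is left explicit in the bound.
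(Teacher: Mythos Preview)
Your decomposition $m(P_{\be(x)}) = \tfrac12\log\widehat P(\be(x)) + E(x)$ with $|E(x)|\ll_k 1$ is correct, and it does isolate the explicit term $\epsilon$ cleanly as the contribution of the first summand. But the proof then stands or falls on bounding
\[
\frac1n\sum_{i=1}^n E(x_i)-\int_{[0,1)^l} E(x)\,dx
\]
by a power of the discrepancy alone, and this is where your outline has a genuine gap. The function $E$ is bounded, but boundedness gives nothing for a Koksma-type inequality; you need a modulus of continuity. You write that the $E$-average is ``absorbed into the same type of estimate applied to a continuous approximation of $E$,'' yet no such approximation with controlled modulus of continuity is provided, and none is obvious: $E(x)$ involves the Mahler measure of the \emph{normalized} polynomial $P_{\be(x)}/|P_{\be(x)}|$, whose coefficients depend on which $p_i(\be(x))$ is largest and so can vary wildly as $x$ approaches the zero set of $\widehat P$. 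The missing ingredient is precisely the H\"older continuity of the Mahler measure in the coefficients (Lemma~\ref{lem:mahlerhoelder}), which is the analytic heart of the paper's argument and which you never invoke.

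A second, related problem: you propose to handle the Riemann-sum truncation excess $\tfrac1n\sum_i\bigl(\widetilde\psi_r(x_i)-\tfrac12\log\widehat P(\be(x_i))\bigr)$ by applying Proposition~\ref{prop:numintegration} to an approximate indicator of $\{\widehat P(\be(\cdot))\le r\}$. That only controls the \emph{number} of sample points in the sublevel set, not the sum of $-\log\widehat P(\be(x_i))$ over those points, and the individual terms can be arbitrarily large. In the paper this tail sum is handled by the telescoping Lemma~\ref{lem:abslogPsumbound}, and that is exactly the place where the quantity $\epsilon$ enters: one writes the tail as a difference between the truncated sum (controlled by numerical integration) and the full sum (which differs from $m(\widehat P)$ by $\epsilon$). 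In your organization you have already spent $\epsilon$ on the bulk term $\tfrac12\log\widehat P$, so it is no longer available to close this estimate.

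The paper's route avoids both issues by truncating $m(P_{\be(x)})$ directly at level $\log r$ (the function $\mu$ of Lemma~\ref{lem:mahlermeasuremodcont}) and using Lemma~\ref{lem:mahlerhoelder} to show that $\mu$ itself has H\"older modulus of continuity $\ll_{d,k}(\deg(P)\,t/r)^{1/(8k)}$. Numerical integration then applies to $\mu$, and the tail (points with $m(P_{\be(x_i)})<\log r$) is converted via Dobrowolski--Smyth to a tail for $\widetilde P$ and bounded by Lemma~\ref{lem:abslogPsumbound}, which brings in $\epsilon$. If you insert Lemma~\ref{lem:mahlerhoelder} into your scheme to control $E$ away from a sublevel set of $\widehat P$, your argument essentially becomes the paper's, just organized around $\tfrac12\log\widehat P$ rather than around $\mu$.
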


By a theorem of Boyd~\cite{Boyd}, the Mahler measure is a continuous
function in the coefficients of a non-zero polynomial of fixed degree
\refcomment{89}{(below in Lemma~\ref{lem:mahlerhoelder} we prove that
  it is even H\"older continuous).}
Therefore, if the $P_{\be(x_i)}$ in the proposition above are
uniformly bounded away from $0$, then the average on the left in
(\ref{lem:propmahleraverageassertion}) converges to the integral
$\int_{[0,1)^l} m(P_{\be(x)}) dx$ as the discrepancy tends to $0$. But
even when $|P|=1$ it is conceivable that $|P_{\be(x_i)}|$ is small for
some $x_i$, \refcomment{90}{then 
$P_{\be{(x_i)}}$ is near  the Mahler measure's logarithmic singularity.}
 This happens if and only if $\widehat P(\be(x_i))$ is
small by (\ref{eq:cmpQbexPbex}). The proposition states that we can
handle the mean for arbitrary $x_i$ if we can control the logarithmic
mean of $\widehat P$ over the $\be(x_i)$.

The proof follows a series of lemmas. \refcomment{92,}{
  We first note a
  useful property of the modulus of continuity as defined in
  (\ref{def:omega}). Let $\theta:[0,1]^d\rightarrow
  \IR\cup\{-\infty\}$ be a function and $c\in\IR$, such that
  $\theta_c(x) = \max\{c,\theta(x)\}$ defines 
  a continuous function $[0,1]^d\rightarrow\IR$.
  Then
  \begin{equation}
    \label{eq:maxcpsibound}
    \omega(\theta_c;t) \le \omega(\theta|_{\theta^{-1}((c,\infty))};t)
    \quad\text{for all}\quad t\ge 0;
  \end{equation}
  indeed, say $x,y\in [0,1]^d$ with $|x-y|\le t$.
  To bound $|\theta_c(x)-\theta_y(y)|$ from above by the right-hand side
  of 
   (\ref{eq:maxcpsibound}) we may assume
  $\theta_c(x)>c= \theta_c(y)$. By continuity of $\theta_c$ there is
  for all small enough $\epsilon>0$
  a  $z\in [0,1]^d$ on the line segment connecting $x$ and $y$ with
  $c+\epsilon = \theta_c(z)=\theta(z)$.
  Then $|\theta_c(x)-\theta_c(y)| =
  \theta(x)-c\le|\theta(x)-\theta(z)|+|\theta(z)-c| \le
  \omega(\theta|_{\theta^{-1}((c,\infty))};t) +\epsilon$. Our claim
  (\ref{eq:maxcpsibound}) follows as $\epsilon$ can be made arbitrarily small.
}

Let $P$ and $k$ be as in
Proposition \ref{prop:lehmermean} and assume in addition that $|P|=1$.

\begin{lemma}
  \label{lem:countsmallvalues}
  let  $x_1,\ldots,x_n\in [0,1)^d$ have
  discrepancy $\cD = \cD(x_1,\ldots,x_n)$.
  If \refcomment{91}{$r \in (0,1]$}, then
  \begin{equation}
    \label{eq:proportionsmallvalue}
    \frac 1n \#\{i\in \{1,\ldots,n\} : |P(\be(x_i))|\le r\} \ll_{d,k}
    r^{1/(2k)} + \deg(P) \cD^{1/(d+1)}/r.  
  \end{equation}
\end{lemma}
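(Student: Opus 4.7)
The plan is to relate the count of small values $|P(\be(x_i))|\le r$ to the volume of the sublevel set $\{x\in[0,1)^d:|P(\be(x))|\le r\}$ on the torus via the numerical integration estimate of Proposition~\ref{prop:numintegration}, and then to bound that volume by a positive power of $r$ using that $P$ has at most $k$ non-zero terms and $|P|=1$.

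I would first introduce a Lipschitz truncation such as $\chi(x)=\max(0,\,1-|P(\be(x))|/(2r))$, which satisfies $0\le\chi\le 1$, is supported in $\{|P\circ\be|\le 2r\}$, and takes values at least $1/2$ on $\{|P\circ\be|\le r\}$. Since $|P|=1$ and $P$ has at most $k$ non-zero terms, the gradient of $x\mapsto P(\be(x))$ has norm $\ll_{d,k}\deg(P)$, so $\chi$ is $\ll_{d,k}\deg(P)/r$-Lipschitz on its support and $\omega(\chi;t)\ll_{d,k}\deg(P)\,t/r$. Applying Proposition~\ref{prop:numintegration} to $\chi$ yields
\begin{equation*}
\frac{1}{2n}\#\{i:|P(\be(x_i))|\le r\}
\le \frac{1}{n}\sum_{i=1}^n\chi(x_i)
\le \mathrm{vol}\{x:|P(\be(x))|\le 2r\}
+O_{d,k}\!\left(\frac{\deg(P)\,\cD^{1/(d+1)}}{r}\right).
\end{equation*}

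The core of the proof is then the sublevel-set volume bound
\begin{equation*}
\mathrm{vol}\{x\in[0,1)^d:|P(\be(x))|\le s\}\ll_{d,k} s^{1/(2k)}\quad\text{for all }s>0,
\end{equation*}
which I would deduce from a sparse Remez/Tur\'an--Nazarov-type inequality for trigonometric polynomials with at most $k$ non-zero Fourier coefficients. Applied to $x\mapsto P(\be(x))$, such an inequality yields a bound of the form $\sup|P\circ\be|\le(C_d/|E|)^{k-1}\sup_E|P\circ\be|$ for every measurable $E\subset[0,1)^d$; combined with the $L^2$ lower bound $\sup|P\circ\be|\ge\|P\|_{L^2((S^1)^d)}=(\sum_j|c_j|^2)^{1/2}\ge|P|=1$ and $\sup_E|P\circ\be|\le s$ taken on $E=\{|P\circ\be|\le s\}$, this gives $|E|\ll_{d,k} s^{1/(k-1)}$, which \emph{a fortiori} implies the $s^{1/(2k)}$ bound sought.

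Combining the numerical-integration estimate with the volume bound and rescaling $r$ by an absolute constant absorbs the factor of $2$ and completes the proof. The main obstacle is precisely the sublevel-set volume bound: the Dobrowolski--Smyth Theorem~\ref{thm:dobsmyth} gives only $\int -\log|P\circ\be|\,dx\le(k-2)\log 2$, and Markov's inequality then yields merely a logarithmic decay $\mathrm{vol}\{|P\circ\be|\le s\}\ll_k 1/\log(1/s)$. Promoting this to polynomial decay requires a genuinely sparse-polynomial input, such as the Tur\'an--Nazarov sparse Remez inequality or, equivalently, a uniform-in-$P$ bound on $\int|P(\be(x))|^{-\alpha}\,dx$ for some $\alpha>0$ depending only on $k$.
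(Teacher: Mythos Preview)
Your approach is essentially the same as the paper's: define a Lipschitz cutoff $\chi$ majorizing the indicator of $\{|P\circ\be|\le r\}$, bound its modulus of continuity by $\ll_{d,k}\deg(P)\,t/r$, and apply Proposition~\ref{prop:numintegration}. The paper uses $\chi(x)=\max\{0,\,2-|P(\be(x))|/r\}$ instead of your $\max\{0,\,1-|P(\be(x))|/(2r)\}$, but this is cosmetic.

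The one place you hesitate---the sublevel-set volume bound---is already available in the paper as Lemma~\ref{lem:volSPep}(i), which gives $\mathrm{vol}\,S(P,r)\ll_{d,k} r^{1/(2(k-1))}$ (hence \emph{a fortiori} $\ll_{d,k} r^{1/(2k)}$). That lemma is proved in Appendix~\ref{app:lawton} by induction on $d$: one writes $P$ as a polynomial in $X_d$ with coefficients $P_i\in\IC[X_1,\ldots,X_{d-1}]$, picks $P_i$ with $|P_i|=1$, and for fixed $x'\in[0,1)^{d-1}$ applies the one-variable estimate (Proposition~\ref{prop:lawtonthm1}, a mild sharpening of Lawton's Theorem~1) to $P(\be(x'),X)/P_i(\be(x'))$, combined via Fubini with the inductive bound on $S(P_i,r^{1/2})$. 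So you do not need to import a Tur\'an--Nazarov sparse Remez inequality; the paper's self-contained argument amounts to the same thing and already sits in the appendix. Once you cite Lemma~\ref{lem:volSPep}(i) in place of your second step, your proof and the paper's are identical.
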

\begin{proof}
  For $x\in [0,1]^d$   we set
  \begin{equation*}
    \chi(x) = \max\{0,2 - |P(\be(x))|/r\}
  \end{equation*}
  and this defines a continuous  function
  on $[0,1]^d$ with values in $[0,2]$.

  We note that $\chi(x)\ge 1$ if
  $|P(\be(x))|\le r$.
  As $\chi$ is non-negative the average
  $\frac 1n \sum_{i=1}^n \chi(x_i)$ is at least the proportion of the
  $i$ among $\{1,\ldots,n\}$ such that $|P(\be(x_i))|\le r$. 
  On the other hand, Lemma \ref{lem:volSPep}(i) implies
  \begin{equation}
    \label{eq:volumepolysmallvaluebound}
    \int_{[0,1)^d} \chi(x)dx \le 2  \vol{\{ x\in [0,1)^d :
      |P(\be(x))|< 2r\}} \ll_{d,k}  r^{1/(2(k-1))}\ll_{d,k} r^{1/(2k)}.
  \end{equation}

  We will apply Proposition \ref{prop:numintegration} to bound the
  proportion on the left in (\ref{eq:proportionsmallvalue}). Say $t>0$,
  let us verify
  \begin{equation}
    \label{eq:mocbound1}
    \omega(\chi;t)\ll_{d,k} \deg(P)t/r.
  \end{equation}
  \refcomment{92}{We apply (\ref{eq:maxcpsibound}) to $\theta(x) =
  2-|P(\be(x))|/r$ and $c=0$.
  Say $x,y\in \theta^{-1}((0,\infty))$ with $|x-y|\le t$, so  in
  particular   $|P(\be(x))|<2r$
  and  $|P(\be(y))|<2r$.  Then  $|\theta(x)-\theta(y)|=
  |P(\be(x))-P(\be(y))|/r \ll_{d,k} \deg(P) t/r$, where we used
  $|x-y| \le t$ and $|P|=1$.  We obtain (\ref{eq:mocbound1}). 
}
  
  % Let
  % $x,y\in[0,1]^d$ with $|x-y|\le t$. We wish to bound
  % $|\chi(x)-\chi(y)|$, so without loss of generality we may assume
  % that $|P(\be(x))|\le |P(\be(y))|$. Note that
  % $|P(\be(x)) - P(\be(y))|\ll_{d,k} \deg(P) t$, where we used
  % $|x-y| \le t$ and $|P|=1$.

  % First, we consider  the case $|P(\be(y))|\le 2r$, so also $|P(\be(x))|\le
  % 2r$, then
  % $\chi(x)-\chi(y) = (|P(\be(y))|-|P(\be(x))|)/r\ge 0$.
  % Hence $|\chi(x)-\chi(y)|\le |P(\be(x))-P(\be(y))|/r \ll_{d,k}
  % \deg(P) t/r$. Second, say $|P(\be(y))|>2r$ and $|P(\be(x))|\le 2r$, then
  % $0\le\chi(x)\le \chi(x) -
  % (2-|P(\be(y))|/r) = (|P(\be(y))|-|P(\be(x))|)/r$. As before we find
  % $|\chi(x)-\chi(y)|\ll_{d,k} \deg(P)t/r$. Finally, this
  % bound clearly also
  % holds
  % if $|P(\be(y))|>2r$ and $|P(\be(x))|>2r$, since then
  % $\chi(x)=\chi(y)=0$.
  % We have proved (\ref{eq:mocbound1}). 

  Let us set $t = \cD^{1/(d+1)}$. 
  We apply numerical integration, Proposition \ref{prop:numintegration}, and use (\ref{eq:volumepolysmallvaluebound})
  to conclude the proof. 
\end{proof}

In the next lemma we truncate the singularity of $x\mapsto \log
|P(\be(x))|$ using a parameter $r$ and bound the modulus of continuity
of the resulting function. 

\begin{lemma}
  \label{lem:logPmodcont}
  Let $r \in (0,1]$, for $x\in[0,1]^d$ we define
  %\begin{equation*}
  $\psi(x) = \max\{\log r, \log|P(\be(x))|\}$ as above
  (\ref{eq:maxcpsibound}). 
    % \psi(x) = \left\{
    %   \begin{array}{ll}
    %     \log|P(\be(x))| &: \text{if }|P(\be(x))|\ge
    %                       r,\\
    %     \log r &: \text{elsewise.}
    %   \end{array}
    % \right. 
  %\end{equation*}
  Then $\psi:[0,1]^d \rightarrow \IR$ is continuous and for all $t>0$
  we have
  \begin{equation*}
    \omega(\psi;t) \ll_{d,k} \frac{\deg(P)t}{r}.
  \end{equation*}
\end{lemma}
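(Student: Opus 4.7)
The plan is to check the two assertions separately, starting with continuity. Observing that $\psi(x) = \max\{\log|P(\be(x))|,\log r\}$, I would note that on the open set where $P(\be(x))\neq 0$ the function $\log|P(\be(x))|$ is continuous, and near a zero of $P\circ \be$ it tends to $-\infty$, so the pointwise maximum with the constant $\log r$ is continuous on all of $[0,1]^d$.

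For the modulus of continuity I would first establish a Lipschitz bound for $P\circ \be$ on $[0,1]^d$ of the form $|P(\be(x))-P(\be(y))|\ll_{d,k}\deg(P)|x-y|$: write $P=\sum_{\alpha}c_\alpha X^\alpha$ with at most $k$ terms and $|c_\alpha|\le 1$, and use $|\be(x)^\alpha-\be(y)^\alpha|\le 2\pi|\alpha|_1|x-y|\le 2\pi d\deg(P)|x-y|$ together with the triangle inequality, picking up a factor $k$.

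With this in hand I would run a case analysis on $x,y\in[0,1]^d$ with $|x-y|\le t$. When both $|P(\be(x))|,|P(\be(y))|\ge r$ the mean value theorem for $\log$ yields
\begin{equation*}
|\psi(x)-\psi(y)|\le \frac{|P(\be(x))-P(\be(y))|}{\min\{|P(\be(x))|,|P(\be(y))|\}}\le \frac{|P(\be(x))-P(\be(y))|}{r}.
\end{equation*}
When both values are $<r$ the two sides equal $\log r$ and the difference vanishes. In the mixed case, say $|P(\be(x))|\ge r>|P(\be(y))|$, I would use $|P(\be(x))|\le r+|P(\be(x))-P(\be(y))|$ to obtain
\begin{equation*}
0\le \psi(x)-\psi(y) = \log|P(\be(x))|-\log r \le \log\!\left(1+\frac{|P(\be(x))-P(\be(y))|}{r}\right)\le \frac{|P(\be(x))-P(\be(y))|}{r},
\end{equation*}
using $\log(1+u)\le u$ for $u\ge 0$. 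Combining with the Lipschitz bound gives $|\psi(x)-\psi(y)|\ll_{d,k}\deg(P)t/r$, as claimed.

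There is no serious obstacle here: the only point requiring a little care is the mixed case, which is handled by the elementary inequality above. The argument is completely uniform in $r\in(0,1]$ because the denominator is exactly $r$ in every branch, and uniform in $P$ (with $|P|=1$ as assumed at the start of the subsection) because the Lipschitz estimate depends only on $d$, $k$, and $\deg(P)$.
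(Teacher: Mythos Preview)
Your proof is correct and follows essentially the same case-by-case structure as the paper's: establish the Lipschitz bound $|P(\be(x))-P(\be(y))|\ll_{d,k}\deg(P)t$ and then split according to whether $|P(\be(x))|,|P(\be(y))|$ lie above or below $r$. Your treatment of the mixed case via $\log(1+u)\le u$ is in fact slightly more direct than the paper's, which instead invokes the intermediate value theorem to find a point $x^*$ on the segment $[x,y]$ with $|P(\be(x^*))|=r$ and then reduces to the first case.
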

\begin{proof}
  Clearly, $\psi$ is continuous on $[0,1]^d$.
   \refcomment{92}{We apply (\ref{eq:maxcpsibound}) to $\theta(x) = \log|P(\be(x))|$
  and $c = \log r$. Say $x,y\in [0,1]^d$ with $|P(\be(x))|\ge
  |P(\be(y))| \ge r$ and $|x-y|\le t$. Then
  as in the proof of Lemma \ref{lem:countsmallvalues} we find
 $\bigl| |P(\be(y))/P(\be(x))| - 1\bigr|\ll_{d,k} \deg(P)t / |P(\be(x))|
  \ll_{d,k} \deg(P)t/r$. Applying the logarithm and using $0\le \log s \le
  s-1$ for all $s\ge 1$ yields
  \begin{equation*}
    \bigl|\log|P(\be(x))|-\log|P(\be(y))|\bigr|\ll_{d,k}
\frac{\deg(P)t}{r},
\end{equation*}
as desired. 
}
  
%   Let $x,y\in[0,1]^d$ with $|x-y|\le
%   t$. 
%   As in the proof of Lemma \ref{lem:countsmallvalues} we find
%   $|P(\be(x)) -
%   P(\be(y))|\ll_{d,k} \deg(P)t$. To estimate $\omega(\psi;t)$ we may assume $|P(\be(x))|\le
%   |P(\be(y))|$. 
  
%   First, suppose $|P(\be(x))|\ge r$, 
%   then
%  $\bigl| |P(\be(y))/P(\be(x))| - 1\bigr|\ll_{d,k} \deg(P)t / |P(\be(x))|
%   \ll_{d,k} \deg(P)t/r$. Applying the logarithm and using $0\le \log s \le
%   s-1$ for all $s\ge 1$ yields
%   \begin{equation}
% \label{eq:firstcaselogPmod}
%     \bigl|\log|P(\be(x))|-\log|P(\be(y))|\bigr|\ll_{d,k}
% \frac{\deg(P)t}{r}.
%   \end{equation}
% In this case the claim follows as $\psi(x) = \log|P(\be(x))|$ and 
% $\psi(y) = \log|P(\be(y))|$.

%   Second, suppose $|P(\be(x))|< r$ and $|P(\be(y))|\ge r$.
% By continuity there exists $x^* \in [0,1]^d$ on the line
% segment between $x$ and $y$ with $|P(\be(x^*))|= r$.
% So $|x-x^*|\le |x-y|\le t$ and (\ref{eq:firstcaselogPmod}) holds
% with $x$ replaced by $x^*$. This yields
% $|\!\log r- \log|P(\be(y))||\ll_{d,k} \deg(P)t/r$.
% This case follows as the expression on the left is $|\psi(x)-\psi(y)|$.

% Third and finally say $|P(\be(x))|< r$ and $|P(\be(y))|< r$, then
% $\psi(x)=\psi(y)=\log r$. 
\end{proof}

\begin{lemma}
  \label{lem:logvsintpsi}
  We keep the notation of Lemma \ref{lem:logPmodcont}.
 Then
  \begin{equation*}
    \left|
      m(P)- \int_{[0,1)^d}
    \psi(x)dx\right| \ll_{d,k} r^{1/(4k)}. 
  \end{equation*}
\end{lemma}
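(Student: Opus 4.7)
\medskip

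The plan is to observe that $\psi(x) \ge \log|P(\be(x))|$ pointwise on $[0,1)^d$ with equality outside the bad set $S_r = \{x \in [0,1)^d : |P(\be(x))| < r\}$. Therefore
\begin{equation*}
\int_{[0,1)^d} \psi(x)\,dx - m(P) = \int_{S_r} \bigl(\log r - \log|P(\be(x))|\bigr)\,dx \ge 0,
\end{equation*}
and the task reduces to bounding this non-negative quantity by $O_{d,k}(r^{1/(4k)})$.

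Next I would pass to the distribution-function (layer-cake) representation. Since the integrand on $S_r$ is non-negative,
\begin{equation*}
\int_{S_r} \bigl(\log r - \log|P(\be(x))|\bigr)\,dx = \int_0^\infty \vol\bigl\{x \in [0,1)^d : |P(\be(x))| < r e^{-t}\bigr\}\,dt.
\end{equation*}
This converts the problem to estimating the sub-level set volumes of $x \mapsto |P(\be(x))|$, which is where the hypothesis on the number of non-zero terms of $P$ will enter.

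The key input is the sub-level set bound supplied by Lemma~\ref{lem:volSPep}(i) (which was already invoked in the proof of Lemma~\ref{lem:countsmallvalues}), giving
\begin{equation*}
\vol\bigl\{x \in [0,1)^d : |P(\be(x))| < s\bigr\} \ll_{d,k} s^{1/(2k)}
\end{equation*}
for all $s > 0$, using $|P|=1$ and the assumption that $P$ has at most $k$ non-zero terms. Inserting this into the layer-cake integral,
\begin{equation*}
\int_0^\infty (r e^{-t})^{1/(2k)}\,dt = 2k \cdot r^{1/(2k)},
\end{equation*}
which produces the sharper estimate $r^{1/(2k)}$. Since $r \in (0,1]$ and $1/(2k) \ge 1/(4k)$, this is a fortiori $\ll_{d,k} r^{1/(4k)}$, yielding the desired inequality.

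The main ingredient is the quantitative sub-level set bound from Lemma~\ref{lem:volSPep}, and no genuine obstacle remains: the logarithmic singularity of $\log|P|$ is tame enough that the layer-cake integral converges with a power-saving rate governed directly by the exponent $1/(2k)$ in the measure estimate. The slight loss from $1/(2k)$ to $1/(4k)$ in the statement presumably allows some flexibility, but the argument above already furnishes the stronger exponent.
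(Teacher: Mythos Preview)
Your proof is correct and follows essentially the same strategy as the paper. The paper separates $\vol{S_r}\cdot|\log r|$ and then invokes Lemma~\ref{lem:Spdeltaintegral} to bound $\int_{S_r}\bigl|\log|P(\be(x))|\bigr|\,dx$; that lemma in turn is proved by a dyadic decomposition of the sub-level sets, which is just a discretized layer-cake argument. Your direct use of the layer-cake formula is slightly more streamlined and in fact yields the sharper exponent $r^{1/(2(k-1))}$. (A minor slip: Lemma~\ref{lem:volSPep}(i) gives the exponent $1/(2(k-1))$, not $1/(2k)$; since $s=re^{-t}\le 1$ this only improves your estimate.)
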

\begin{proof}
%% Recall that the Mahler measure is defined by
%%   $\int_{[0,1)^d}\log|P(\be(x))|dx = m(P)$. 
%% So  
The absolute value in question is
  \begin{equation*}
\cE = \left|\int_{\Sigma}\log|P(\be(x))|dx - \vol{\Sigma}\log r\right|
  \end{equation*}
  where $\Sigma=S(P,r)=\{x\in [0,1)^d : |P(\be(x))|< r\}$ in the
  notation of (\ref{def:SPr}). %% ,
  %% indeed $\log|P(\be(x))| = \psi(x)$ on $[0,1)^d \ssm
  %% \Sigma$.
Hence $\vol{\Sigma}\ll_{d,k} r^{1/(2(k-1))}$ by Lemma
  \ref{lem:volSPep}(i). So
  \begin{equation*}
    \cE
    %% \ll_{d,k} \left|\int_{S(P,r)}\log|P(\be(x))|dx\right|+r^{1/(2(k-1))}|\!\log r| 
    \ll_{d,k}  
\int_{\Sigma}\bigl|\log|P(\be(x))|\bigr|dx+r^{1/(2k)}
  \end{equation*}
  as $r\le 1$. 
To bound the final integral   we use Lemma
  \ref{lem:Spdeltaintegral} which implies
%  \begin{equation*}
    $\cE\ll_{d,k} r^{1/(4(k-1))}+r^{1/(2k)}  \ll_{d,k} r^{1/(4k)}$.
  %\end{equation*}
\end{proof}

\begin{lemma}
  \label{lem:abslogPsumbound}
  %% We keep the notation of Lemma \ref{lem:logPmodcont}. Let in addition
Let  $x_1,\ldots,x_n\in [0,1)^d$ with $P(\be(x_i))\not=0$ for all $i$ and
    % such that the
    discrepancy
    $\cD  =\cD(x_1,\ldots,x_n)$. %  satsifies  $\deg(P) \cD^{1/(d+1)}\le
    % 1$.
    We set
    \begin{equation*}
\epsilon =      \left| m(P)-\frac 1n \sum_{i=1}^n \log| P(\be(x_i))|\right|.
    \end{equation*}
If $r\in (0,1]$, then 
    \begin{equation*}
  \frac 1n \sum_{|P(\be(x_i))|< r}   \bigl|\log|P(\be(x_i))|\bigr|
  \ll_{d,k} \deg(P)\cD^{1/(d+1)}r^{-2} +r^{1/(4k)} + \epsilon.
    \end{equation*}
\end{lemma}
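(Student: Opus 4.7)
The plan is to reduce the sum in question, which runs over the set $I=\{i:|P(\be(x_i))|<r\}$ of ``small'' indices, to three controllable pieces by introducing the truncation $\psi$ from Lemma~\ref{lem:logPmodcont}. First I would record the elementary identity
\begin{equation*}
\frac 1n\sum_{i\in I}\bigl|\log|P(\be(x_i))|\bigr|
= \frac{\#I}{n}\,|\log r| + \frac 1n\sum_{i=1}^n\psi(x_i) - \frac 1n\sum_{i=1}^n\log|P(\be(x_i))|,
\end{equation*}
which follows from $\psi(x_i)=\log r$ for $i\in I$ and $\psi(x_i)=\log|P(\be(x_i))|$ otherwise, together with the observation that $\log|P(\be(x_i))|\le\log r\le 0$ for $i\in I$, so $\log r - \log|P(\be(x_i))| = \bigl|\log|P(\be(x_i))|\bigr| - |\log r|$. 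Note that the right-hand side is automatically nonnegative, since $\psi\ge\log|P(\be(\cdot))|$ pointwise.

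Next I would bound the two right-hand contributions separately. For the first, Lemma~\ref{lem:countsmallvalues} gives $\#I/n\ll_{d,k}r^{1/(2k)} + \deg(P)\cD^{1/(d+1)}/r$. Multiplying by $|\log r|$ and using the elementary inequalities $|\log r|\le 1/r$ and $r^{1/(2k)}|\log r|\ll_k r^{1/(4k)}$ valid on $(0,1]$, this term is bounded by $\ll_{d,k} r^{1/(4k)} + \deg(P)\cD^{1/(d+1)}/r^2$, which is already absorbed in the target estimate.

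For the second contribution I would insert $\int_{[0,1)^d}\psi(x)\,dx$ and $m(P)$ into the telescoped difference, so that by the triangle inequality it suffices to bound the three quantities $\bigl|\tfrac 1n\sum_i\psi(x_i) - \int\psi\bigr|$, $\bigl|\int\psi - m(P)\bigr|$, and $\bigl|m(P) - \tfrac 1n\sum_i\log|P(\be(x_i))|\bigr|$. Proposition~\ref{prop:numintegration} together with the modulus-of-continuity estimate $\omega(\psi;t)\ll_{d,k}\deg(P)t/r$ from Lemma~\ref{lem:logPmodcont} handles the first by $\ll_{d,k}\deg(P)\cD^{1/(d+1)}/r$; Lemma~\ref{lem:logvsintpsi} handles the second by $\ll_{d,k}r^{1/(4k)}$; the third is $\epsilon$ by definition. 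Absorbing the factor $1/r$ into the weaker $1/r^2$ (legitimate since $r\le 1$) yields the stated bound.

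This is essentially a bookkeeping assembly of the three preparatory lemmas of the subsection together with the numerical integration Proposition~\ref{prop:numintegration}; no real obstacle arises. The only mildly delicate point is the elementary trade-off $r^{1/(2k)}|\log r|\ll_k r^{1/(4k)}$ which is used to absorb the spurious logarithmic factor produced when bounding the contribution from the cardinality of $I$.
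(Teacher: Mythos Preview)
Your proposal is correct and follows essentially the same approach as the paper: both arguments use the truncation $\psi$ and the telescoping decomposition through $\int\psi$ and $m(P)$, invoking Proposition~\ref{prop:numintegration}, Lemma~\ref{lem:logPmodcont}, Lemma~\ref{lem:logvsintpsi}, and Lemma~\ref{lem:countsmallvalues} in the same way. The only cosmetic difference is that you begin with the exact identity isolating $\frac{\#I}{n}|\log r|$ and then bound the remaining difference $\frac{1}{n}\sum_i\psi(x_i)-\frac{1}{n}\sum_i\log|P(\be(x_i))|$, whereas the paper first bounds this difference via the triangle inequality and only afterwards separates out the $\frac{\#I}{n}|\log r|$ contribution; the ingredients and final estimates coincide.
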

\begin{proof}
  By the triangle inequality and with $\psi$ as in Lemma \ref{lem:logPmodcont} we have
  \begin{equation*}
    \left|\frac 1n \sum_{i=1}^n \psi(x_i)-\log|P(\be(x_i))|\right| \le
        \left|\frac 1n \sum_{i=1}^n \psi(x_i)-\int_{[0,1)^d} \psi(x)dx \right|
    +\left|  \int_{[0,1)^d} \psi(x)dx-m(P) \right|+    \epsilon.
  \end{equation*}
  We use Proposition \ref{prop:numintegration}
  and Lemma \ref{lem:logPmodcont} with
  $t=\cD^{1/(d+1)}$ to bound the first term on the right by
  $\ll_{d,k} \deg(P) \cD^{1/(d+1)}/r$. The second term is $\ll_{d,k}
  r^{1/(4k)}$ by Lemma \ref{lem:logvsintpsi}. 
  
The term on the  left equals $\frac 1n \sum_{|P(\be(x_i))|< r} \left(\log
r - \log|P(\be(x_i))|\right)$. Observe that $-\log|P(\be(x_i))| = \bigl|\log
|P(\be(x_i))|\bigr|$ in this sum as $r\le 1$. We rearrange and find
\begin{equation*}
  \frac 1n \sum_{|P(\be(x_i))|< r}   \bigl|\log|P(\be(x_i))|\bigr|
  \ll_{d,k} \deg(P)\cD^{1/(d+1)}r^{-1} +r^{1/(4k)} +\frac{|\!\log r|}{n}
\left(  \sum_{|P(\be(x_i))|< r}1\right) + \epsilon. 
\end{equation*}
By Lemma \ref{lem:countsmallvalues}, the term corresponding to
the sum  over $i$ on the right is
$\ll_{d,k} r^{1/(2k)}|\!\log r| + \deg(P)\cD^{1/(d+1)}r^{-1}|\!\log r|$.
Combining our bounds and absorbing $|\!\log r|$ in an appropriate power of
$r^{-1}$ 
we find
\begin{equation*}
  \frac 1n \sum_{|P(\be(x_i))|< r}   \bigl|\log|P(\be(x_i))|\bigr|
  \ll_{d,k} \deg(P)\cD^{1/(d+1)}r^{-2} +r^{1/(4k)} + \epsilon,
\end{equation*}
as desired. 
\end{proof}

After this warming-up we prove variants of
Lemmas \ref{lem:logPmodcont} and \ref{lem:logvsintpsi} where 
$\log|\cdot|$ is replaced by the Mahler measure.
We also truncate  at the  parameter $r$. 

\begin{lemma}
  \label{lem:mahlermeasuremodcont}
  Let $r \in (0,1]$, for $x\in[0,1]^l$ we define
  $\mu(x) = \max\{\log r, m(P_{\be(x)})\}$ as above
  (\ref{eq:maxcpsibound}) where we interpret the Mahler measure of $0$
  as $-\infty$. 
  % \begin{equation*}
  %   \mu(x) = \left\{
  %     \begin{array}{ll}
  %       m(P_{\be(x)}) &: \text{if }P_{\be(x)}\not=0\text{ and }m(P_{\be(x)})\ge
  %                       \log r,\\
  %       \log r &: \text{elsewise.}
  %     \end{array}
  %   \right. 
  % \end{equation*}
  Then $\mu:[0,1]^l \rightarrow \IR$ is continuous and for all $t>0$ we have
  \begin{equation*}
    \omega(\mu;t) \ll_{d,k}
    \left(\frac{\deg(P)t}{r}\right)^{1/(8k)}(1+|\!\log r|).
  \end{equation*}
\end{lemma}
\begin{proof}
  By Boyd's Theorem~\cite{Boyd} the Mahler measure is continuous on
  the space of non-zero polynomials of bounded degree. Thus $\mu$ is
  continuous on $[0,1]^l$. Observe that $\omega(\mu;t) \ll_k 1+|\!\log
  r|$ as $m(P_{\be(x)})\ll_k 1$ by (\ref{eq:mPlogPub}). So we may assume
  that $\deg(P)t/r$ is sufficiently small in terms of $d$ and $k$.
%  In particular, $\deg(P)t\le 1$.

  \refcomment{92}{We again use (\ref{eq:maxcpsibound}), this time with
  $\theta(x) = m(P_{\be(x)})$ and $c= \log r$. Let $x,y\in [0,1]^d$
  with $m(P_{\be(x)}) \ge \log r$ and $m(P_{\be(y)}) \ge \log r$ and $|x-y|\le t$.
   Then $|P_{\be(x)}|\gg_k r$ and $|P_{\be(y)}|\gg_k r$ by
  (\ref{eq:mPlogPub}). 
    As in the proof of Lemma
  \ref{lem:countsmallvalues} we find $|P_{\be(x)} - P_{\be(y)}|\ll_{d,k}
  \deg(P)t$. Since $\deg(P)t/r$
  is smaller than some prescribed
  constant depending only on $d$ and $k$ we
  have $|P_{\be(x)}-P_{\be(y)}| /
  \min\{|P_{\be(x)}|,|P_{\be(y)}|\} \le 1/2$.
  Lemma \ref{lem:mahlerhoelder} implies
  \begin{equation*}
%    \label{eq:firstcasemahlercontmod}
    |m(P_{\be(x)})-m(P_{\be(y)})|\ll_{d,k}
    \left(\frac{|P_{\be(x)}-P_{\be(y)}|}
      {\min\{|P_{\be(x)}|,|P_{\be(y)}|\}}\right)^{1/(8(k-1))}
    \ll_{d,k}
    \left(\frac{\deg(P)t}{r}\right)^{1/(8(k-1))}, 
  \end{equation*}
  as desired.  
  }
  %
  % Let $x,y\in[0,1]^l$ with $|x-y|\le t$. As in the proof of Lemma
  % \ref{lem:countsmallvalues} we find $|P_{\be(x)} - P_{\be(y)}|\ll_{d,k}
  % \deg(P)t$. To estimate $\omega(\mu;t)$ we may assume $m(P_{\be(x)})\le
  % m(P_{\be(y)})$ where it is useful to set $m(0)=-\infty$.
  %
  % First, suppose $m(P_{\be(x)})\ge  \log r$,
  % in particular $P_{\be(x)}\not=0$ and $P_{\be(y)}\not=0$.
  % Then $|P_{\be(x)}|\gg_k r$ and $|P_{\be(y)}|\gg_k r$ by
  % (\ref{eq:mPlogPub}). 
  % As $\deg(P)t/r$
  % is smaller than some constant depending only on $d$ and $k$ we
  % have $|P_{\be(x)}-P_{\be(y)}| /
  % \min\{|P_{\be(x)}|,|P_{\be(y)}|\} \le 1/2$.
  % Lemma \ref{lem:mahlerhoelder} implies
  % \begin{equation}
  %   \label{eq:firstcasemahlercontmod}
  %   |m(P_{\be(x)})-m(P_{\be(y)})|\ll_{d,k}
  %   \left(\frac{|P_{\be(x)}-P_{\be(y)}|}
  %     {\min\{|P_{\be(x)}|,|P_{\be(y)}|\}}\right)^{1/(8k)}
  %   \ll_{d,k}
  %   \left(\frac{\deg(P)t}{r}\right)^{1/(8k)}
  % \end{equation}
  % we conclude the claim in this case as $\mu(x) = m(P_{\be(x)})$ and 
  % $\mu(y) = m(P_{\be(y)})$. 
  % 
  % Second, suppose $m(P_{\be(x)}) < \log r$ and $m(P_{\be(y)}) \ge \log
  % r$. As the Mahler measure is continuous there exists $x^* \in [0,1]$
  % on the line segment between $x$ and $y$ with $m(P_{\be(x^*)})= \log
  % r$. So $|x^*-y|\le |x-y|\le t$ and (\ref{eq:firstcasemahlercontmod})
  % holds with $x$ replaced by $x^*$, hence $|\!\log
  % r-m(P_{\be(y)})|\ll_{d,k} (\deg(P)t/r)^{1/(8k)}$. But the expression
  % on the left is $|\mu(x)-\mu(y)|$, so are done in this case.
  %
  % Third and finally say, $m(P_{\be(y)})< \log r$, then
  % $\mu(x)=\mu(y)=\log r$.
\end{proof}

% Before continuing we take a break and recall the $p_i$ and the
% auxiliary Laurent polynomial $\widehat P$ determined by $P$ and $l$ in
% (\ref{eq:defQpi}).
\refcomment{103}{By Lemma~\ref{lem:Qprops0}(i) 
$\widehat P$ has at
most $k^2$ non-zero terms, so $\sup_{x\in [0,1]^l} |\widehat
P(\be(x))|\le k^2 |\widehat P|$.
We let $\widetilde P$ denote the polynomial from part (ii) of the said
lemma divided by $|P|$, so $|\widetilde P|=1$. }
There exists $i$ with $|p_i|=|P|=1$.
The definition of the Mahler measure implies
\begin{equation*}
  m(p_i)\le \sup_{x\in [0,1]^l} \log |p_i(\be(x))| \le \frac 12
  \sup_{x\in [0,1]^l} \log |\widehat P(\be(x))|\le \frac{1}{2}(2\log k +
  \log|\widehat P|). 
\end{equation*}
Using $|p_i|=1$ and the Theorem of Dobrowolski--Smyth, Theorem
\ref{thm:dobsmyth}, we conclude $m(p_i)\ge - (k-2)\log 2$. Thus
$|\widehat P|\gg_k 1$. Bounding $|\widehat P|$ from above is more
straight-forward. Indeed, $|\widehat P| \ll_k 1$ by (\ref{eq:defQpi})
and since $|P|=1$. Therefore,
\begin{equation}
  \label{eq:Qnormbound}
  1\ll_k |\widehat P| \ll_k 1. 
\end{equation}

\begin{lemma}
  \label{lem:mahlervsintvarphi}
  We keep the notation of Lemma \ref{lem:mahlermeasuremodcont}.
  Then
  \begin{equation*}
    \left|
      m(P)- \int_{[0,1)^l}
      \mu(x)dx\right| \ll_{d,k} r^{1/(2k^2)}. 
  \end{equation*}
\end{lemma}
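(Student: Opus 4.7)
The plan is to start from the Fubini identity $m(P) = \int_{[0,1)^l} m(P_{\be(x)})\, dx$, valid because $P\neq 0$ forces $P_{\be(x)}\neq 0$ for almost every $x$. Since $\mu(x)$ coincides with $m(P_{\be(x)})$ whenever the latter is $\ge \log r$, the quantity to bound equals
\[
\int_\Sigma \bigl|m(P_{\be(x)}) - \log r\bigr|\, dx \le \int_\Sigma |m(P_{\be(x)})|\, dx + |\!\log r|\, \vol{\Sigma},
\]
where $\Sigma = \{x\in[0,1)^l : P_{\be(x)} = 0 \text{ or } m(P_{\be(x)})<\log r\}$. So the two remaining tasks are to control $\vol{\Sigma}$ and $\int_\Sigma |m(P_{\be(x)})|\, dx$.

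The first step is to trap $\Sigma$ inside a sublevel set of the auxiliary polynomial $\widehat P$. Applying the Dobrowolski--Smyth theorem (Theorem~\ref{thm:dobsmyth}) pointwise to $P_{\be(x)}$ gives $\log|P_{\be(x)}| \le m(P_{\be(x)}) + (k-2)\log 2$, so on $\Sigma$ one has $|P_{\be(x)}| \ll_k r$; the lower half of \eqref{eq:cmpQbexPbex} then forces $\widehat P(\be(x)) \ll_k r^2$. Since $\widehat P$ has at most $k^2$ non-zero terms and $|\widehat P|\asymp_k 1$ by \eqref{eq:Qnormbound}, Lemma~\ref{lem:volSPep}(i) applied to $\widehat P/|\widehat P|$ delivers a bound of the form $\vol{\Sigma} \ll_{d,k} r^{1/k^2}$, and the factor $|\!\log r|$ is absorbed into a slightly smaller power of $r$, producing the needed $r^{1/(2k^2)}$ estimate for the second term above.

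For the remaining integral I would split $|m(P_{\be(x)})| \le |m(P_{\be(x)}) - \log|P_{\be(x)}|| + |\!\log|P_{\be(x)}||$. The first piece is $\ll k$ pointwise by \eqref{eq:diffmahlerlog}, contributing $\ll_k \vol{\Sigma}$. For the second piece, the upper half of \eqref{eq:cmpQbexPbex} yields $-\log|P_{\be(x)}| \le \tfrac12(\log k - \log \widehat P(\be(x)))$, so it suffices to estimate $\int_\Sigma |\!\log \widehat P(\be(x))|\, dx$. This I handle by a single application of Lemma~\ref{lem:Spdeltaintegral} to $\widehat P/|\widehat P|$ on the sublevel set $\{\widehat P(\be(x))\le C_k r^2\}$ containing $\Sigma$, which yields the desired $\ll_{d,k} r^{1/(2k^2)}$ contribution. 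The main technical point in the whole argument is exponent bookkeeping: the inflation $k \mapsto k^2$ in the term count when passing from $P_{\be(x)}$ to $\widehat P$ is exactly offset by the sharpening $r \mapsto r^2$ of the sublevel set, so the final exponent $1/(2k^2)$ falls out of the volume and integral estimates applied to $\widehat P$.
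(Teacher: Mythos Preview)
Your proposal is correct and follows essentially the same route as the paper's proof: Fubini, then trap the bad set inside a sublevel set of $\widehat P$ via Dobrowolski--Smyth, bound its volume by Lemma~\ref{lem:volSPep}(i), pass from $m(P_{\be(x)})$ to $\log|P_{\be(x)}|$ to $\log\widehat P(\be(x))$ via \eqref{eq:diffmahlerlog} and \eqref{eq:cmpQbexPbex}, and finish with Lemma~\ref{lem:Spdeltaintegral}. One trivial slip: the inequality $-\log|P_{\be(x)}| \le \tfrac12(\log k - \log\widehat P(\be(x)))$ comes from the \emph{lower} half of \eqref{eq:cmpQbexPbex}, not the upper half.
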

\begin{proof}    
  We recall that $|\widehat P|\widetilde P$ equals $\widehat P$ up-to
  a monomial factor. By (\ref{eq:cmpQbexPbex}), (\ref{eq:Qnormbound}),
  and Theorem \ref{thm:dobsmyth} there exists $c>0$ depending only on
  $k$ such that $|\widetilde P(\be(x))|\ge cr^2$ implies
  $m(P_{\be(x)})\ge \log r$. By Fubini's Theorem we have
  $\int_{[0,1)^l}m(P_{\be(x)})dx = m(P)$, so the absolute value in
  question is
  \begin{equation*}
    \cE = \left|\int_{\Sigma}m(P_{\be(x)})dx -
      \vol{\Sigma}\log r\right|
  \end{equation*}
  where $\Sigma= S(\widetilde P,cr^2)$; indeed
  $m(P_{\be(x)})=\mu(x)$ for all $x\in [0,1]^l\ssm \Sigma$.

  Note that $\vol{\Sigma}\ll_{d,k} r^{1/(k^2-1)}$ by Lemma
  \ref{lem:volSPep}(i) applied to $\widetilde P$. So
  \begin{equation}
    \label{eq:bounddiffintegrals}
    \cE\ll_{d,k} r^{1/(k^2-1)}|\!\log r| +
    \left|\int_{\Sigma}m(P_{\be(x)})dx\right|
    \ll_{d,k} r^{1/k^2} +
    \int_{\Sigma}\left|m(P_{\be(x)})\right| dx. 
  \end{equation}

  To bound the integral in (\ref{eq:bounddiffintegrals}) from
  above we will
  replace $m(P_{\be(x)})$ by $\log|P_{\be(x)}|$. 
  Say $x\in \Sigma$ and $P_{\be(x)}\not=0$,
  then $\left|m(P_{\be(x)}) - \log|P_{\be(x)}| \right|\ll_k 1$ by
  (\ref{eq:diffmahlerlog}) and thus
  $|m(P_{\be(x)})| \ll_k 1 + \bigl|\log|P_{\be(x)}|\bigr|$.
  \refcomment{95}{The function $x\mapsto \bigl|\log|P_{\be(x)}|\bigr|$ is
    integrable over $[0,1)^l$ in the sense of Lebesgue  and  so is
    $x \mapsto |m(P_{\be(x)})|$; both take the value
    $+\infty$ on a measure zero subset of $[0,1)^l$. We find}
  % Since $P_{\be(\cdot)}$
  % vanishes on a measure zero subset of $[0,1)^l$ we find
  \begin{alignat*}1
    \cE&\ll_{d,k}
    r^{1/k^2} +
    \int_{\Sigma}\left(1+\bigl|\log|P_{\be(x)}| \bigr|\right)dx.
  \end{alignat*}
  From (\ref{eq:cmpQbexPbex}) and (\ref{eq:Qnormbound}) we deduce
  $\bigl| \log |P_{\be(x)}|\bigr| \ll_k \bigl|\log |\widetilde
  P(\be(x))|\bigr| + 1$ if $\widehat P(\be(x))\not=0$. So $\cE\ll_{d,k}
  r^{1/k^2} + \int_{\Sigma}\left(1+\bigl|\log|\widetilde
    P(\be(x))|\bigr|\right)dx$. By Lemma \ref{lem:Spdeltaintegral} applied
  to $\widetilde P$ and the volume estimate for $\Sigma$, the integral
  on the right is $\ll_{d,k} r^{1/(2(k^2-1))} \ll_{d,k} r^{1/(2k^2)}$,
  as desired.
\end{proof}
% \bigskip\hline\bigskip

% To bound the integral in (\ref{eq:bounddiffintegrals}) from
%   above we will
%  replace $m(P_{\be(x)})$ by $\log|P_{\be(x)}|$. 
% Say $x\in \Sigma$ and $P_{\be(x)}\not=0$,
% then $\left|m(P_{\be(x)}) - \log|P_{\be(x)}| \right|\ll_k 1$ by
% (\ref{eq:diffmahlerlog}).  
% Using again the bound for $\vol\Sigma$ and since $P_{\be(\cdot)}$
%   vanishes on a measure zero subset of $[0,1)^l$ we find
% \begin{alignat}1
%   \label{eq:bounddiffintegrals2}
%   \begin{aligned}
%   \cE&\ll_{d,k}
%    r^{1/k^2} +
%    \int_{\Sigma}\bigl|m(P_{\be(x)})-\log|P_{\be(x)}|\bigr|dx
%    +   \int_{\Sigma}\bigl|\log|P_{\be(x)}|\bigr|dx%% \\
%    %% &
%    \ll_{d,k}
%       r^{1/k^2} +   \int_{\Sigma}\bigl|\log|P_{\be(x)}|\bigr|dx.
%   \end{aligned}
% \end{alignat}

% From (\ref{eq:cmpQbexPbex}) and (\ref{eq:Qnormbound}) we deduce
% $\bigl| \log |P_{\be(x)}|\bigr| \ll_k \bigl|\log
% |\widetilde Q(\be(x))|\bigr| + 1$ if
% $Q(\be(x))\not=0$.
% So $\cE\ll_{d,k} r^{1/k^2} +
% \int_{\Sigma}\bigl|\log|\widetilde Q(\be(x))|\bigr|dx$.
% By Lemma
% \ref{lem:Spdeltaintegral} applied to $\widetilde Q$ the integral on the right
% is $\ll_{d,k} r^{1/(2(k^2-1))} \ll_{d,k}  r^{1/(2k^2)}$, as desired.

% \end{proof}

\begin{proof}[Proof of Proposition \ref{prop:lehmermean}]
If we scale $P$  by a factor $\lambda$, then $\widehat P$ is scaled by
$|\lambda|^2$.
So the proposition is invariant under non-zero scaling and
we may assume $|P|=1$.
Later on we will choose the parameter $r$ in terms of $\deg(P)$ and
$\cD$. In the meantime we  assume that $r\in (0,1/2]$.
%  and %that
% \begin{equation}
%   \label{eq:degPcDr}
% \deg(P)\frac{   \cD^{1/(d+1)} }{r^4}
% \end{equation}
% are small in terms of $d$ and $k$. 

Observe that $\int_{[0,1)^l} m(P(\be(x)))dx = m(P)$. We want to bound
 $\cE = |m(P)- n^{-1} \sum_{i=1}^n m(P_{\be(x_i)})|$ from above.

We replace the Mahler measure with $\mu(\cdot)$ coming from
Lemma \ref{lem:mahlermeasuremodcont}. Indeed,  the
triangle inequality implies
\begin{alignat*}1
%  \cE&=\left|m(P)-\frac 1n \sum_{i=1}^n m(P_{\be(x_i)})\right|\\
 \cE &\le \left|m(P)- \int_{[0,1)^l} \mu(x)dx\right| +
    \left|\int_{[0,1)^l} \mu(x)dx - \frac 1n \sum_{i=1}^n
      \mu(x_i)\right|+
          \left| \frac 1n \sum_{i=1}^n \mu(x_i)-m(P_{\be(x_i)})\right|.
\end{alignat*}
The first term on the right is $\ll_{d,k} r^{1/(2k^2)}$ by Lemma
\ref{lem:mahlervsintvarphi} applied to $P$. By Proposition \ref{prop:numintegration}
applied to $\mu$ and $t=\cD^{1/(d+1)}$
and Lemma \ref{lem:mahlermeasuremodcont}  the
second term is $\ll_{d,k} (\deg(P)\cD^{1/(d+1)}r^{-1})^{1/(8k)}|\!\log r|$.
So 
\begin{equation}
  \label{eq:firsterrorterm}
\cE\ll_{d,k} r^{1/(2k^2)} +
(\deg(P)\cD^{1/(d+1)}r^{-2})^{1/(8k)}+\cE'
\end{equation}
after absorbing $|\!\log r|$ in a multiple $r^{-1/(8k)}$ and
where $\cE'$ is the third term above.  
Only terms with
$m(P_{\be(x_i)})\le\log r$ contribute to the average, so $\cE'$ equals
\begin{equation*}
 \left|\frac 1n \sum_{m(P_{\be(x_i)})\le \log r} \log r -
 m(P_{\be(x_i)})\right| \le
 \frac{|\!\log r|}{n} \#\left \{ i : m(P_{\be(x_i)}) \le \log r\right\}
 + \frac 1n \sum_{m(P_{\be(x_i)})\le \log r}
 |m(P_{\be(x_i)})|. 
\end{equation*}
By (\ref{eq:diffmahlerlog}) we may replace $m(P_{\be(x_i)})$ by
$\log|P_{\be(x_i)}|$ at the cost of introducing  a constant $c_1>0$
depending only on  $k$, \textit{i.e.},
\begin{equation*}
\cE'\ll_{d,k} \frac{|\!\log r|}{n} \#\left\{ i : |P_{\be(x_i)}|
   \le c_1 r \right\} + \frac 1n \sum_{|P_{\be(x_i)}|\le c_1 r}
\bigl(1+ \bigl|\log|P_{\be(x_i)}|\bigr|\bigr).
\end{equation*}

If $|P_{\be(x_i)}|\le c_1r$, then
$|\widetilde P(\be(x_i))| = |\widehat P(\be(x_i))| / |\widehat P|\le c_2 r^2$ for some $c_2$ depending only on $k$  by
(\ref{eq:cmpQbexPbex}) and (\ref{eq:Qnormbound}).
The same inequalities imply  $\bigl|\log|P_{\be(x_i)}|\bigr| \ll_{k}
\bigl|\log|\widetilde P(\be(x_i))|\bigr|+1$, the ``$+1$'' is absorbed in
the first term in 
 \begin{equation*}
\cE'\ll_{d,k} \frac{|\!\log r|}{n} \#\left\{ i : |\widetilde P(\be(x_i))|
   \le c_2r^2 \right\} + \frac 1n \sum_{|\widetilde P(\be(x_i))|\le c_2r^2}
 \bigl|\log|\widetilde P(\be(x_i))|\bigr|. 
\end{equation*}

Recall  that $\deg{\widetilde P} \ll_d \deg P$
and that $\widetilde P$ has at most
$k^2$ terms and norm $1$.
% Since (\ref{eq:degPcDr}) is small we may  arrange that
% $\deg(\widetilde Q)\cD^{1/(d+1)}\le 1$.
Lemma \ref{lem:countsmallvalues} applied
to $\widetilde P$
implies
\begin{alignat*}1
  \cE'&\ll_{d,k} r^{1/k^2}|\!\log r|+\deg(P) \cD^{1/(l+1)}r^{-2}|\!\log r| + \frac 1n
  \sum_{|\widetilde P(\be(x_i))| \le c_2r^2}
  \bigl|\log|\widetilde P(\be(x_i))|\bigr|.
\end{alignat*}

We use Lemma \ref{lem:abslogPsumbound}, applied to $\widetilde P$ and $c_2r^2$, to bound the final sum and thus obtain
\begin{equation*}
  \cE'\ll_{d,k} r^{1/k^2}|\!\log r| + \deg(P)\cD^{1/(l+1)}r^{-2}|\!\log r| +
  \deg(P)\cD^{1/(l+1)}r^{-4} + r^{1/(2k^2)}+\epsilon,
\end{equation*}
here $\epsilon=\left| m( \widehat P)-\frac 1n \sum_{i=1}^n \log|
 \widehat P(\be(x_i))\right|$; note that  multiplying $\widehat P$ with a non-zero
 scalar and a monomial leaves $\epsilon$ invariant.

 We return to the total error term $\cE$. By
 (\ref{eq:firsterrorterm}) together with $l\le d,r\le 1,$ and $\cD\le 1$ we get
 % $\deg(P)\cD^{1/(d+1)}r^{-4}\le 1$ and $k\ge 2$ to get
\begin{equation*}
%\label{eq:degPcDr}
  \cE\ll_{d,k} r^{1/(2k^2)} +
  (\deg(P)\cD^{1/(d+1)}r^{-4})^{1/(8k)}
  +\deg(P)\cD^{1/(d+1)} r^{-4} + \epsilon.
\end{equation*}

We choose $r = \frac 12 \cD^{1/(8(d+1))}$, then the proposition
follows as $\cD\le 1$.
%
%% $\cE\ll_{d,k} \
%
%% We choose $r =\frac 12 \deg(P)^{1/4} \cD^{1/(4(d+1))-\lambda}$
%% where $\lambda=1/(16(d+1))$.
%% Observe that $r\le \frac 12 \cD^{1/(16(d+1))}$ as
%% $\cD\le \deg(P)^{-2(d+1)}$.  Thus
%% (\ref{eq:degPcDr}) implies
%% $\cE\ll_{d,k}\cD^{1/(32(d+1)k^2)}+\cD^{4\lambda/(8k)}+\cD^{4\lambda}+\epsilon$,
%% which implies the result as $\cD\le 1$.
\end{proof}

%%% Local Variables:
%%% TeX-master: "atoral"
%%% End:

\section{Endgame}\label{sec:endgame}
%PH: Revision began October 29, 2018, ended October 30, 2018
%PH: Revised section 8.1: May 7 -- 13, 2019 

In this section we prove a stronger version of Theorem \ref{thm:main}
from the introduction.

\subsection{Preliminaries}

Suppose $P\in \IQbar[X_1^{\pm 1},\ldots,X_d^{\pm 1}]\ssm \{0\}$. For
$V\in \gl{d}(\IZ)$ we set $Q\in \IQbar[X_1,\ldots,X_d]$ to be
$P(X^{V^{-1}})$ multiplied by a suitable monomial in $X_1,\ldots,X_d$
such that $Q$ is coprime to $X_1\cdots X_d$. Let $l\in
\{0,\ldots,d-1\}$. For $z=(z_1,\ldots,z_l)\in \IC^l$ we set
\begin{equation}
  \label{eq:definePVz}
  P_{V,z} = Q(z_1,\ldots,z_l,X_1,\ldots,X_{d-l})
\end{equation}
this is a polynomial in $d-l$ variables. \refcomment{97}{It
 is sometimes useful to  allow  $l=0$ in which case
 $P_{V,z}=Q$.}

The following lemma requires a result of Bombieri, Masser, and
Zannier~\cite{BMZGeometric} and relies crucially on the hypothesis
that $P$ is \essatoral{}.

\begin{lemma}
  \label{lem:Petaprops}
%  We keep the notation above and assume that%  $\bfeta\in\IG_m^l$ has
  % finite order. % with $P_{V,\bfeta}\not=0$.
  \refcomment{101}{Suppose $P\in\IQbar[X_1^{\pm 1},\ldots,X_d^{\pm}]\ssm\{0\}$ is \essatoral{}.
    There exists $c\ge 1$ depending only on $P$ and $d$ such that for all
    $\bzeta\in\IG_m^d$ with $\delta(\bzeta)\ge c$, for all $V\in\gl{d}(\IZ)$, and all $l\in
    \{0,\ldots,d-1\}$, we have
    $P_{V,\bfeta}\not=0$ and
    $\cBprim(P_{V,\bfeta})\le c |V^{-1}|$ where $\bzeta^V =
    \{\bfeta\}\times\IG_m^{d-l}$.}
\end{lemma}
\begin{proof}
  The Zariski closure $W$ in $\IG_m^d$ of  all algebraic
  zeros of $P$  in $(S^1)^d$ is defined over $\IQbar$. 

  By hypothesis, $P$ is \essatoral{}. So each irreducible component of
  the Zariski closure of all complex roots of $P$ on $(S^1)^d$ is
  of codimension at least $2$ in $\IG_m^d$
  or a proper torsion coset of $\IG_m^d$.
  Therefore,
  each
  irreducible component of $W$ is also
  of this type. % a proper torsion coset or of
  % codimension at least $2$ in $\IG_m^d$.

  Let $\bzeta\in\IG_m^d$ be of finite order with $\delta(\bzeta)\ge
  c$, where $c$ is to be determined, 
  and $\bzeta^V = \{\bfeta\}\times\IG_m^{d-l}$ with $V$ and $l$ as in
  the hypothesis.

  Let $\bfeta'\in\IG_m^{d-l}$ be of finite order, $z\in
  S^1\ssm\mu_\infty$ be algebraic, and
  $u\in\IZ^{d-l}$ with $P_{V,\bfeta}(\bfeta' z^u)=0$.
  We must find $v''\in\IZ^{d-l}\ssm\{0\}$ with  $|v''|\le c |V^{-1}|$
  such that $\langle u,v''\rangle=0$.
  \refcomment{101}{The existence of such a $v''$ establishes in
    particular $P_{V,\bfeta}\not=0$.}

  Now $P(x)=0$ for the algebraic point $x=(\bfeta,\bfeta' z^u)^{V^{-1}}
  \in (S^1)^d$. So $x$ is contained in an irreducible component $W'$ of
  $W$
  \refcomment{98}{and in a $1$-dimensional
    algebraic subgroup of $\IG_m^d$.}

  If $\dim W'\le d-2$,  we apply Bombieri, Masser, and Zannier's Theorem
  1.5~\cite{BMZGeometric} to $\mathcal{X} = W'$.
  We get a proper torsion coset of $\IG_m^d$ containing $x$ and coming from a finite
  set depending only on $W'$.  We find
  $v\in\IZ^d\ssm\{0\}$ with $|v|\ll_{d,P} 1$ and
  $x^v=1$.

  If $W'$ is a proper torsion coset of $\IG_m^d$
  there exists $v\in\IZ^d\ssm\{0\}$,
  depending only on $W'$ and thus only on $P$, such that
  $y^v=1$ holds  for all $y\in W'$. Again we find $|v|\ll_{d,P} 1$ and
  $x^v=1$.

  In either case we have
  \begin{equation}
    \label{eq:oneequalsxpowera}
    1=x^v = (\bfeta,\bfeta'z^u)^{V^{-1}v}= \bfeta^{v'} (\bfeta'
    z^u)^{v''}
    \text{ where }
    V^{-1}v = \left(
      \begin{array}{cc}
        v'\\ v'' \end{array}\right)\in\IZ^l\times\IZ^{d-l}.
  \end{equation}
  In particular, $\langle u,v''\rangle=0$ as $z$ has infinite order. 

  If $v''\not=0$, then we are done. Indeed, $|v''|\le |V^{-1}v| \le
  d|V^{-1}||v|$ and $|v|$
  is bounded from above solely in terms of $P$ and $d$. 

  \refcomment{99}{Let us assume $v''=0$ and derive a contradiction.}
  Note $l\ge 1$ as  $v$ cannot be $0$.
  Then $v'\not=0$ and  by equality (\ref{eq:oneequalsxpowera})  we find
  $\bfeta^{v'}=1$.
  Recall that $\bfeta$ consists of the first $l$ coordinates of $\bzeta^V$. 
  Thus $\bzeta^v = 1$ and hence $\delta(\bzeta)\le |v|$ 
  \refcomment{100}{where $|v|\ll_{d,P} 1$. But $\delta(\bzeta)\ge c$,
    a contradiction for large enough $c$.}
\end{proof}

\begin{defin}
  Let $c\ge 1$  be a real number. Suppose
  $P\in \IQbar[X_1^{\pm 1},\ldots,X_d^{\pm 1}]\ssm \{0\}$ and 
  $\bzeta\in\IG_m^d$ is of finite order. 
  The pair $(P,\bzeta)$ is called
  $c$-admissible if
  for all 
  $V\in\gl{d}(\IZ)$ and all $l\in \{0,\ldots,d-1\}$ we have 
  $P_{V,\bfeta}\not=0$ and $\cBprim(P_{V,\bfeta})\le c |V^{-1}|$
  where $\bzeta^V \in \{\bfeta\}
  \times \IG_m^{d-l}$.
\end{defin} 

The case $l=0$ yields in  particular $\cBprim(P)\le c$ if there exists
$\bzeta$ such that $(P,\bzeta)$
is $c$-admissible.

Let $P$ be an  \essatoral{} Laurent polynomial
with algebraic coefficient. 
By Lemma \ref{lem:Petaprops} there exists $c\ge 1$ such that
$(P,\bzeta)$ is $c$-admissible for all $\bzeta\in\IG_m^d$ of finite order with
$\delta(\bzeta)\ge c$. 

In the definition of admissibility, it will be useful to keep track of
$\bzeta$ when passing in down in an induction step. The next lemma
makes this precise. 

\begin{lemma}
  \label{lem:Qprops2}
  Let $P\in \IQbar[X_1^{\pm 1},\ldots,X_d^{\pm 1}]\ssm\{0\}$ and let
  $\bzeta\in\IG_m^d$ be of finite order such that $(P,\bzeta)$ is 
  $c$-admissible with $c\ge 1$. 
Say $l\in \{0,\ldots,d-1\},V\in\gl{d}(\IZ),$ and $\bzeta^V = (\bfeta,\bxi)$ with
$\bfeta\in\IG_m^l$ and $\bxi\in\IG_m^{d-l}$. Then 
$(P_{V,\bfeta},\bxi)$ is $(cd|V^{-1}|)$-admissible.
% \begin{enumerate}
% \item [(i)] For all $V\in\gl{d}(\IZ)$, 
% the pair  $\left( P(X^{V^{-1}}), \bzeta^{V}\right)$  is $(cd|V^{-1}|)$-admissible. 
% \item[(ii)] If $\eta\in \mu_\infty$, then $P(\eta,X_1,\ldots,X_{d-1})$ is
%   \ldots. 
% \end{enumerate}
\end{lemma}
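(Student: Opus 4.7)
The plan is to reduce the admissibility of $(P_{V,\bfeta}, \bxi)$ to that of $(P, \bzeta)$ by lifting every test pair $(W, l')$ for the former to a test pair for the latter. Given $W \in \gl{d-l}(\IZ)$ and $l' \in \{0,\ldots,d-l-1\}$, I would set
\[
V'' := V \cdot \mathrm{diag}(I_l, W) \in \gl{d}(\IZ), \qquad l'' := l + l' \in \{0,\ldots,d-1\},
\]
and write $\bxi^W = (\bfeta', \bxi')$ with $\bfeta' \in \IG_m^{l'}$ and $\bxi' \in \IG_m^{d-l-l'}$, setting $\tilde{\bfeta} := (\bfeta, \bfeta') \in \IG_m^{l''}$. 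Using $(x^A)^B = x^{AB}$ one obtains $\bzeta^{V''} = (\bfeta, \bxi)^{\mathrm{diag}(I_l, W)} = (\bfeta, \bxi^W) = (\tilde{\bfeta}, \bxi')$, so the $c$-admissibility of $(P, \bzeta)$ applied to $(V'', l'')$ furnishes both $P_{V'', \tilde{\bfeta}} \ne 0$ and the bound $\cBprim(P_{V'', \tilde{\bfeta}}) \le c|(V'')^{-1}|$.

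Next I would verify the functorial identity that $(P_{V,\bfeta})_{W,\bfeta'}$ agrees with $P_{V'', \tilde{\bfeta}}$ up to multiplication by a non-zero scalar and a Laurent monomial in the remaining $d - l - l'$ variables. Since $(V'')^{-1} = \mathrm{diag}(I_l, W^{-1}) V^{-1}$, unwinding the definition \eqref{eq:definePVz} shows that both polynomials arise by substituting the first $l + l'$ arguments of $P(X^{V^{-1}})$, suitably monomially normalized, by the coordinates of $\tilde{\bfeta}$; the only flexibility is the Laurent monomial used at each stage to make the intermediate expressions polynomials coprime to the product of their variables. Because $\cBprim$ depends only on the zero locus in the algebraic torus, it is invariant under Laurent monomial multiplication, whence $\cBprim((P_{V,\bfeta})_{W,\bfeta'}) = \cBprim(P_{V'', \tilde{\bfeta}})$ and, in particular, the left-hand side is non-zero.

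Finally I would estimate the lifted norm: $|(V'')^{-1}| = |\mathrm{diag}(I_l, W^{-1}) V^{-1}| \le d|W^{-1}||V^{-1}|$, which follows from the standard inequality $|AB| \le d|A||B|$ for the max-norm of a product of $d \times d$ matrices, together with $|\mathrm{diag}(I_l, W^{-1})| = |W^{-1}|$ (noting $|W^{-1}| \ge 1$ since $W^{-1}$ is a non-zero integer matrix). Combining this with the previous step yields $\cBprim((P_{V,\bfeta})_{W,\bfeta'}) \le cd|V^{-1}||W^{-1}|$, which is precisely the $(cd|V^{-1}|)$-admissibility bound for $(P_{V,\bfeta}, \bxi)$. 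The proof is essentially a bookkeeping chase through the definitions; the only substantive verification is the functorial identity in the middle paragraph, where one must keep careful track of the monomial normalizations turning Laurent polynomials into honest polynomials.
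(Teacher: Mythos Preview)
Your proposal is correct and follows essentially the same route as the paper's proof: lift a test pair $(W,l')$ to the pair $(V\cdot\mathrm{diag}(I_l,W),\,l+l')$, apply the $c$-admissibility of $(P,\bzeta)$ there, and then use the max-norm inequality $|AB|\le d|A||B|$ to extract the factor $d|V^{-1}|$. The only presentational difference is that the paper unpacks the definition of $\cBprim$ and chases an explicit test datum $(\bfeta'',z,u)$ through the substitutions, whereas you argue once that $(P_{V,\bfeta})_{W,\bfeta'}$ and $P_{V'',\tilde{\bfeta}}$ agree up to a nonzero scalar times a Laurent monomial and invoke the invariance of $\cBprim$ under such changes; both are equally valid.
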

\begin{proof}
  Throughout the proof we use that $|\cdot|$ is the maximum-norm on
  matrices.

  We abbreviate $R = P((\bfeta,X_1,\ldots,X_{d-l})^{V^{-1}})$ which equals
  $P_{V,\bfeta}$ up-to a monomial factor. It suffices to show that
  $(R,\bxi)$ is $c$-admissible. 

  To this end say $k\in \{0,\ldots,d-l-1\},
  W\in\gl{d-l}(\IZ),$ and ${\bxi}^{W} =
  \{\bfeta'\}\times\IG_m^{d-l-k}$
  with $\bfeta'\in\IG_m^k$. We must bound $\cBprim(R_{W,\bfeta'})$. 
  So say $z\in S^1\ssm\mu_\infty, u\in\IZ^{d-l-k},$ and
  $\bfeta'' \in\IG_m^{d-l-k}$ is of finite order with
  $R_{W,\bfeta'}(\bfeta'' z^u)=0$. Thus $R((\bfeta',\bfeta''
  z^u)^{W^{-1}})=0$ and hence 
  $P\left((\bfeta,(\bfeta',\bfeta''z^u)^{W^{-1}})^{V^{-1}}\right)=0$. 
  We abbreviate $W' = \left(
    \begin{array}{cc}
      E_l & 0 \\ 0 & W
    \end{array}\right)$ with $E_l$ the $l\times l$ unit matrix.
  So $P\left((\bfeta,\bfeta',\bfeta'' z^u)^{(VW')^{-1}}\right)=0$ which
  means
  $P_{VW',(\bfeta,\bfeta')}(\bfeta'' z^u)=0$. 

  Observe that $\bzeta^{VW'} = (\bfeta,\bxi)^{W'} =
  (\bfeta,{\bxi}^{W})=(\bfeta,\bfeta',*)$. By hypothesis $(P,\bzeta)$ is
  $c$-admissible. Therefore, $\cBprim(P_{VW',(\bfeta,\bfeta')})\le c
  |(VW')^{-1}| = c|{W'}^{-1} V^{-1}|\le cd |V^{-1}||{W'}^{-1}|=cd
  |V^{-1}| |W^{-1}|$. In other words, there exists
  $v\in\IZ^{d-l-k}\ssm\{0\}$ with 
  $|v|\le cd
  |V^{-1}| |W^{-1}|$ and $\langle u,v\rangle=0$. Thus
  $\cBprim(R_{W,\bfeta'})\le cd|V^{-1}||W^{-1}|$, as desired.  In
  particular, $R_{W,\bfeta'}\not=0$. 
\end{proof}

\begin{lemma}
  \label{lem:Qprops1}
  Let $P\in \IQbar[X_1^{\pm 1},\ldots,X_d^{\pm 1}]\ssm\{0\}$ and let
  $\bzeta\in\IG_m^d$ be of finite order such that $(P,\bzeta)$ is 
  $c$-admissible with $c\ge 1$. 
  Say $l\in
  \{1,\ldots,d-1\}$ and
  let  $\widehat P \in \IQbar[X_1^{\pm
    1},\ldots,X_l^{\pm 1}]$ be as in (\ref{eq:defQpi}) and 
  $\bzeta\in \{\bfeta\} \times\IG_m^{d-l}$.
  Then $(\widehat P,\bfeta)$ is $c$-admissible.
\end{lemma}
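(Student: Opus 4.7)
The plan is to verify both defining conditions of $c$-admissibility for $(\widehat P,\bfeta)$ by transferring them from $(P,\bzeta)$ via the block-diagonal lift $\widehat V\in\gl{d}(\IZ)$ consisting of $V'$ in the upper-left $l\times l$ corner and the identity $E_{d-l}$ in the lower-right corner. One then has $|\widehat V^{-1}|=|{V'}^{-1}|$ and $\bzeta^{\widehat V}\in\{\bfeta'\}\times\IG_m^{d-k}$, so the hypothesis on $(P,\bzeta)$ directly supplies $P_{\widehat V,\bfeta'}\neq 0$ and $\cBprim(P_{\widehat V,\bfeta'})\le c|{V'}^{-1}|$.

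For the non-vanishing $\widehat P_{V',\bfeta'}\neq 0$, I would argue by contradiction: a vanishing would, via the positivity identity $\widehat P(y)=\sum_I|p_I(y)|^2$ valid for $y\in(S^1)^l$, force every coefficient $p_I$ to vanish identically on the slice $\{(\bfeta',Y')^{{V'}^{-1}}:Y'\in\IG_m^{l-k}\}$; polynomial continuation would then give $P_{\widehat V,\bfeta'}=0$, contradicting the hypothesis just recalled.

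For the $\cBprim$ bound, suppose $\widehat P_{V',\bfeta'}(\bfeta_0 z^u)=0$ with $z\in S^1\setminus\mu_\infty$ algebraic and $u\neq 0$ (the case $u=0$ is immediate with $v=e_1$). Setting $y=(\bfeta',\bfeta_0 z^u)^{{V'}^{-1}}\in(S^1)^l$, the same positivity identity forces $p_I(y)=0$ for all $I$, hence $P(y,\cdot)\equiv 0$ as a Laurent polynomial in the last $d-l$ variables. This in turn gives, for \emph{every} $w\in\IZ^{d-l}$, the vanishing $P_{\widehat V,\bfeta'}\bigl((\bfeta_0,1_{d-l})z^{(u,w)}\bigr)=P(y,z^w)=0$, so the hypothesis on $(P,\bzeta)$ furnishes some $\tilde v(w)\in\IZ^{d-k}\setminus\{0\}$ of maximum-norm at most $c|{V'}^{-1}|$ with $\langle(u,w),\tilde v(w)\rangle=0$.

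The critical step, and where I expect the work to lie, is upgrading this $w$-indexed family to a single orthogonal vector of the form $(v,0)$ with $v\in\IZ^{l-k}\setminus\{0\}$, which directly yields $\rho(u)\le c|{V'}^{-1}|$. I would argue by contradiction: if no such $v$ existed, then each $\tilde v=(\tilde v_1,\tilde v_2)\in\IZ^{d-k}\setminus\{0\}$ with $|\tilde v|\le c|{V'}^{-1}|$ would contribute to the orthogonality constraint at most the affine hyperplane $H_{\tilde v}=\{w\in\IR^{d-l}:\langle w,\tilde v_2\rangle=-\langle u,\tilde v_1\rangle\}$, with the subcase $\tilde v_2=0,\tilde v_1\neq 0$ degenerating to $H_{\tilde v}=\emptyset$ since then $\langle u,\tilde v_1\rangle\neq 0$ by the contradiction hypothesis. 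The admissibility of $(P,\bzeta)$ would then demand $\IZ^{d-l}\subseteq\bigcup_{\tilde v}H_{\tilde v}$, a finite union of proper affine hyperplanes in $\IR^{d-l}$, which is impossible as soon as $d-l\ge 1$. The hypothesis $l\le d-1$ in the lemma ensures exactly this, producing the desired $v$ and completing the proof.
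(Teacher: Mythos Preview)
Your proposal is correct and follows essentially the same route as the paper: lift $V'$ to the block-diagonal $\widehat V\in\gl{d}(\IZ)$, use the positivity identity $\widehat P(y)=\sum_i|p_i(y)|^2$ on $(S^1)^l$ to deduce that $P(y,\,\cdot\,)\equiv 0$ in the last $d-l$ variables, then vary the exponent $w$ in those variables and apply a finite-covering argument to force the orthogonal vector furnished by $c$-admissibility of $(P,\bzeta)$ to have vanishing last block. The only cosmetic differences are that you treat the non-vanishing $\widehat P_{V',\bfeta'}\neq 0$ separately (the paper leaves it implicit, since a finite $\cBprim$ bound already precludes the zero polynomial) and that you phrase the covering step as $\IZ^{d-l}$ inside a finite union of affine hyperplanes in $w$-space, whereas the paper phrases it as $\{u'\}\times\IQ^{d-l}$ inside a finite union of proper linear subspaces of $\IQ^{d-l'}$; these are equivalent formulations.
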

\begin{proof}
Suppose $V\in
  \gl{l}(\IZ)$ such that $\bfeta^V = (\bfeta',*)$ where $\bfeta\in
  \IG_m^{l'}$ where $l' \in \{0,\ldots,l-1\}$.
  Following the definition of admissibility and recalling
  (\ref{eq:definePVz}) we are in the following situation.
  There is $\bfeta'' \in \IG_m^{l-l'}, z\in S^1\ssm\mu_\infty$ algebraic, and $u'\in \IZ^{l-l'}$ such that
  \begin{equation*}
    \widehat P((\bfeta',\bfeta'' z^{u'})^{V^{-1}})=0.
  \end{equation*}

It follows from the definition of $\widehat P$  that $P((\bfeta',\bfeta''
  z^{u'})^{V^{-1}},X_{l+1},\dots,X_d)=0$ as a polynomial in
  $X_{l+1},\ldots,X_d$.
  We extend $\widetilde V = \left(
  \begin{array}{cc}
    V & 0 \\ 0 & E_{d-l} 
  \end{array}\right)$ where $E_{d-l}$ is the $(d-l)\times(d-l)$ unit matrix.
  Then $P((\bfeta',\bfeta'' z^{u'},z^{u''})^{{\widetilde V}^{-1}})=0$
  for all $u''\in\IZ^{d-l}$.

  By hypothesis, $(P,\bzeta)$ is $c$-admissible and
  $\bzeta^{\widetilde V} = (\bfeta^V,*) = (\bfeta',*,*)$. % So let us assume that
  Now $P_{\widetilde V,\bfeta'}(\bfeta'' z^{u'},z^{u''})=0$, so by
  definition %  of
  % admissible and $\cBprim(P_{\widetilde V,\bfeta'})$ 
  there exist $v'\in\IZ^{l-l'},v''\in\IZ^{d-l}$, not both zero, 
  such that $\langle u',v'\rangle + \langle u'',v''\rangle =0$
and $  |(v',v'')|\le c
  |{\widetilde V}^{-1}| =c |V^{-1}|$ for the maximum-norm.

As we are free to vary $u''$ we see
 that $\{u'\}\times\IQ^{d-l}$ is contained in a finite union of proper
vector subspaces of $\IQ^d$, each defined as the kernel of
$\langle \cdot, (v',v'')\rangle$ with $v',v''$ as above.
So $\{u'\}\times\IQ^{d-l}\subset V$ for 
one of these vector spaces $V$ defined by some $(v',v'')$.
We must have $v''=0$  and hence $\langle u',v'\rangle = 0$.
Then $v'\not=0$ and as $|v'|\le c|V^{-1}|$  we conclude that  $\widehat P$ is
$c$-admissible. 
%
%%   We are free to vary $u''$ and $(v',v'')$, of which there are at most
%%   finitely many, will depend on this choice. As $u''$ varies there are at most
%%   finitely many possible values of $\langle u',v'\rangle = -\langle
%%   u'',v''\rangle$. 
%%   Consider the set of all $u''$ which lead to $v'=0$. For any such
%%   $u''$ we have $\langle u'',v''\rangle=0$ and $v''\not=0$.
%%   So the set of these $u''$ is contained in a finite union of proper linear
%%   subspaces of $\IQ^d$.
%
%% If  $u'' \in\IZ^{d-l}$  is in the complement of this union then so $\lambda u''$
%% with the parameter $\lambda\in\IN$.
%% We obtain $v'(\lambda)\not=0$ and $v''(\lambda)$ such that
%%  $\lambda|\langle u'',v''(\lambda)\rangle| = |\langle
%% u',v'(\lambda)\rangle|$.
%% If $\langle u'',v''(\lambda)\rangle\not=0$, then
%% $\lambda \le |\langle u',v'(\lambda)\rangle|$. But this is impossible
%% for large $\lambda$ as there are only finitely many possibilities for
%% the
%% right-hand side. So there is $\lambda$ with $\langle
%% u'',v''(\lambda)\rangle=\langle
%% u',v'(\lambda)\rangle=0$  and $v'(\lambda)\not=0$. 
%
%% As $|v'(\lambda)|\le c|V^{-1}|$ we conclude that  $\widehat P$ is
%% $c$-admissible. 
\end{proof}

Here are some basic estimates involving $P_{V,\bfeta}$. 
\begin{lemma}
  \label{lem:basicPVeta}
  Let $P\in \IQbar[X_1,\ldots,X_d]\ssm\{0\},l,$ and $V$ be as near the
  beginning of this subsection. Say $\bfeta\in\IG_m^l$ has finite order and
  $P_{V,\bfeta}\not=0$. The following hold true.
  \begin{enumerate}
  \item [(i)]  We have $\deg P_{V,\bfeta}\ll_d
    |V|^{d-1}\deg P$.
  \item [(ii)]
    We have $\hproj{P_{V,\bfeta}}\le \log(k)+\hproj{P}$ where $k\ge 2$
    is an upper bound for the number of non-zero terms of $P$.
  \end{enumerate}
\end{lemma}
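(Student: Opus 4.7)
The plan is to unpack both parts directly from the definitions; neither presents a real obstacle.

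For part (i), I would track a single monomial $X^a$ of $P$ (with $|a|\le \deg P$) through the substitution $X\mapsto X^{V^{-1}}$. Using the convention for $x^A$ recalled in Section~\ref{sec:notation}, this substitution sends $X^a$ to $X^{V^{-1}a}$, and the entries of $V^{-1}a$ are bounded by $d\,|V^{-1}|\,|a|\le d\,|V^{-1}|\deg P$. Multiplying by a monomial to clear negative exponents at most doubles this bound on each exponent, so $\deg Q\ll_d |V^{-1}|\deg P$. Specialising the first $l$ variables to $\bfeta$ never raises the degree in the remaining ones, hence $\deg P_{V,\bfeta}\le \deg Q$. To convert $|V^{-1}|$ into $|V|$ I would invoke $\det V=\pm 1$ (from $V\in\gl{d}(\IZ)$) together with Cramer's rule, which gives $V^{-1}=\pm\mathrm{adj}(V)$; each entry of $V^{-1}$ is then a signed $(d-1)\times(d-1)$ minor of $V$, of absolute value at most $(d-1)!\,|V|^{d-1}\ll_d |V|^{d-1}$.

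For part (ii), the key observation is that the substitution $X\mapsto X^{V^{-1}}$ is a bijection on the set of exponent vectors $a$, since $V^{-1}$ is a linear automorphism of $\IQ^d$. Consequently, after adjustment by the monomial factor, $Q$ has exactly the same multiset of non-zero coefficients as $P$, merely re-indexed by $a\mapsto V^{-1}a+e$ for some $e\in\IZ^d$. When I specialise $X_1,\ldots,X_l$ to $\bfeta$, the coefficient of a given monomial $Y^b$ (with $b\in\IZ^{d-l}$) in $P_{V,\bfeta}$ takes the form $\sum c_a\,\bfeta^{a'}$, where the sum ranges over those $a$ whose transformed exponent agrees with $b$ on the last $d-l$ coordinates and where $a'\in\IZ^l$ records the first $l$ coordinates of the transformed exponent. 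The number of summands is at most the number of non-zero terms of $P$, hence at most $k$.

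To conclude I would compute the height place by place using~\eqref{def:hprojP} over a number field $K$ containing both the coefficients of $P$ and the entries of $\bfeta$. Since $\bfeta$ has torsion coordinates, each factor $\bfeta^{a'}$ satisfies $|\bfeta^{a'}|_v=1$ at every place $v$ of $K$, archimedean or not (roots of unity have complex absolute value $1$ and are units at every finite place). At an archimedean $v$ the triangle inequality therefore bounds each coefficient of $P_{V,\bfeta}$ by $k\max_a|c_a|_v$, while at a non-archimedean $v$ the ultrametric inequality gives the same estimate without the factor $k$. Summing with the local weights $[K_v:\IQ_v]/[K:\IQ]$ and using $\sum_{v\mid\infty}[K_v:\IQ_v]=[K:\IQ]$ produces $\hproj{P_{V,\bfeta}}\le \hproj{P}+\log k$. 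The main point is just this final bookkeeping; there is no substantial obstacle.
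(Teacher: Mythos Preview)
Your proof is correct and follows essentially the same approach as the paper, which also reduces (i) to the estimate $|V^{-1}|\ll_d |V|^{d-1}$ via Cramer's rule and handles (ii) by noting that $Q$ has the same coefficients as $P$ and then applying the archimedean triangle inequality to the specialised sums. You have simply supplied the details that the paper leaves implicit.
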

\begin{proof}
  Both parts follow are  elementary consequences of the degree and the
  height of a polynomial. For (i) we require $|V^{-1}|\ll_d
  |V|^{d-1}$.
  
  For (ii) we note that $Q$ from the beginning of this
  \refcomment{102}{subsection}
  has
  the same coefficients and thus the same height as $P$. We decompose
  $\hproj{P_{V,\bfeta}}$ in local heights as into (\ref{def:hprojP}).
  The triangle inequality at the archimedean places leads to $\log k$.
\end{proof}

We continue with further basic estimates involving $\widehat P$
as in   (\ref{eq:defQpi}).

\begin{lemma}
  \label{lem:Qprops}
  \refcomment{103}{Let $K\subset\IC$ be a number field and suppose $P\in K[X_1,\ldots,X_d]\ssm\{0\}$
  has at most $k\ge 2$ terms, where $k$ is an integer.} Say $l\in \{1,\ldots,d-1\}$
and suppose $\widehat P \in \IC[X_1^{\pm 1},\ldots,X_l^{\pm 1}]$
then the following properties
  hold true:
  \begin{enumerate}
  % \item [(i)] The Laurent polynomial $\widehat P$ has at most $k^2$
  %   non-zero terms.  
  \item[(i)] We have $\widehat P \in K'[X_1^{\pm 1},\ldots,X_l^{\pm 1}]$ where
    $K'$ is a number field such that $ K\subset K'\subset\IC$ and
    $[K':\IQ]\le [K:\IQ]^2$.
  % \item[(iii)] There is a monomial in $X_1,\ldots,X_d$ whose product with $\widehat P$ is a
  %   polynomial of degree at most $(l+1)\deg P$.  
  \item[(ii)] We have $\hproj{\widehat P}\ll_k 1 + \hproj{P}$.
  \end{enumerate}
\end{lemma}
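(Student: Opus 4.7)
The plan is to verify each of the four properties in turn; all of them follow from elementary estimates together with the defining formula \eqref{eq:defQpi} for $\widehat P$.

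Statement (i) was already observed in the paragraph following \eqref{eq:defQpi}: if $p_i$ has $k_i$ non-zero terms, then $p_i(X)\overline{p_i}(X^{-1})$ has at most $k_i^2$ non-zero terms, and $\sum_i k_i^2 \le (\sum_i k_i)^2 \le k^2$. For (ii), I would take $K'$ to be the compositum inside $\IC$ of $K$ and its image $\overline K$ under complex conjugation; then $K \subset K'$ and $[K':\IQ] \le [K:\IQ][\overline K:\IQ] = [K:\IQ]^2$. Since each $p_i$ has coefficients in $K$ while $\overline{p_i}$ has coefficients in $\overline K$, every coefficient of $p_i(X)\overline{p_i}(X^{-1})$ lies in $K\cdot\overline K = K'$, and hence so does every coefficient of $\widehat P$.

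For (iii), each non-zero monomial of $p_i(X)\overline{p_i}(X^{-1})$ has the form $X^{\alpha-\beta}$ with $\alpha,\beta \in \IZ_{\ge 0}^l$ and $|\alpha|_1,|\beta|_1 \le \deg p_i \le \deg P$. Multiplying $\widehat P$ by $(X_1\cdots X_l)^{\deg P}$ therefore yields a genuine polynomial, since each coordinate exponent becomes $\alpha_j-\beta_j+\deg P \ge 0$; and the total degree of every resulting monomial is bounded by $|\alpha|_1 - |\beta|_1 + l\deg P \le \deg P + l\deg P = (l+1)\deg P$.

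The last part (iv) is where the most care is needed, since we must track how complex conjugation interacts with the places of $K'$. Each coefficient $d_\gamma$ of $\widehat P$ is a sum of at most $k$ products $c_\alpha \overline{c_\beta}$, because once the target exponent $\gamma$ and one factor $\alpha$ are chosen, $\beta = \alpha - \gamma$ is forced, and there are at most $k$ choices for $\alpha$. At a non-archimedean place $w$ of $K'$ the ultrametric inequality gives $\max_\gamma |d_\gamma|_w \le (\max_\alpha |c_\alpha|_w)(\max_\beta |\overline{c_\beta}|_w)$, while at an archimedean place one picks up an extra factor of $k$ from the triangle inequality. Summing over places with the usual weights, the archimedean contributions of $\log k$ total at most $\log k$, and we obtain $\hproj{\widehat P} \le \hproj{P} + \hproj{\overline P} + \log k$. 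The projective Weil height is invariant under the absolute Galois group of $\IQ$; applying this to complex conjugation gives $\hproj{\overline P} = \hproj P$, and therefore $\hproj{\widehat P} \le 2\hproj P + \log k \ll_k 1 + \hproj P$. The only mildly subtle step is (iv), where the places of $K'$ must be handled consistently under complex conjugation; once that bookkeeping is in place, all four items are routine.
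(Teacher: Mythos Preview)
Your proof is correct and follows essentially the same approach as the paper's: parts (i) and (iii) are the observations already made in the text, (ii) takes $K'=K\cdot\overline K$ exactly as the paper does, and your treatment of (iv) via the local decomposition of the height with ultrametric/archimedean triangle inequalities is precisely what the paper sketches in one sentence. Your version simply fills in the details the paper leaves implicit, including the Galois-invariance argument for $\hproj{\overline P}=\hproj{P}$.
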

\begin{proof}
  % We have already proved (i) just below (\ref{eq:defQpi}). %%  and in
  % We noted (iii) before Lemma
  % \ref{lem:mahlervsintvarphi}.
  %
  To see (i) we note that the coefficients of $\widehat P$ are contained in
  the subfield $K'$ of $\IC$  generated by all elements of $K$ and
  their complex conjugates. 

  Finally, for (ii)  we remark that each $p_i$ as in (\ref{eq:defQpi})
  has at most $k$ terms and that there are at most $k$ non-zero $p_i$.
  Using the local decomposition of the height together with the
  ultrametric and archimedean  triangle inequality
  yields the claim. 
\end{proof}

\subsection{Completion of proof}

The next lemma will setup a monomial change of coordinates. We recall
that $\widetilde \Lambda_\bxi(\nu)$ was defined in (\ref{def:tildeLambdazeta}) and
$\lambda_1(\widetilde \Lambda_\bxi(\nu))$ in (\ref{def:firstminimum}).

\begin{lemma}
\label{lem:monomialcoc}
Suppose $\bzeta\in\IG_m^d$ has  order $N$
and let $\delta\ge 1, \epsilon\in (0,1/2],\nu_1,\ldots,\nu_{d-1} \in
(0,1/2]$ with $\nu_1+\cdots+\nu_{d-1}\le 1/2$.
Then there exist $l\in \{0,\ldots,d-1\}$ and $V\in\gl{d}(\IZ)$
such that the following hold.
\begin{enumerate}
\item [(i)] We have $|V|\ll_d \delta^{2\epsilon^{d-l}}$   %% \delta(\bzeta)^{l\me^{l-(d+1)}}$
and $V$ is the unit matrix if $l=0$. 
\item[(ii)] We have
$\bzeta^V = (\bfeta,\bxi)$ where $\bfeta\in\IG_m^l,
  \bxi\in\IG_m^{d-l},\ord(\bfeta)\le N^{\nu_1+\cdots+\nu_l},$ %T_l^l\le N^{1/2}$ 
  and either $l=d-1$ and $\bxi\in\mu_\infty$ has order at least $N^{1/2}$ or
$l\le d-2$ and  $\lambda_1(\widetilde\Lambda_{\bxi_{l}}(\nu_{l+1}))>\delta^{\epsilon^{d-l-1}}$.
\end{enumerate}
\end{lemma}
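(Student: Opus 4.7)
The plan is to build $V$ iteratively, peeling off one coordinate per step. Starting from $V = I_d$, $l = 0$, and $\bxi = \bzeta$, I maintain the invariants $\bzeta^V = (\bfeta, \bxi)$ with $\bfeta \in \IG_m^l$, $\bxi \in \IG_m^{d-l}$, and $\ord(\bfeta) \leq N^{\nu_1 + \cdots + \nu_l}$. At each step, run only while $l \le d-2$, I examine the lattice $\widetilde\Lambda_{\bxi}(\nu_{l+1}) \subseteq \IZ^{d-l}$: if $\lambda_1\bigl(\widetilde\Lambda_\bxi(\nu_{l+1})\bigr) > \delta^{\epsilon^{d-l-1}}$ (a bound vacuously true when the lattice is $\{0\}$) I stop; otherwise I pick a primitive $v \in \widetilde\Lambda_{\bxi}(\nu_{l+1})$ with $|v|_2 \le \delta^{\epsilon^{d-l-1}}$ and adjoin $d-l-1$ standard basis vectors to form $d-l$ linearly independent vectors in $\IZ^{d-l}$. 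Applying Cassels' Corollary~2, Chapter~I.2 in~\cite{cassels:geonumbers}, exactly as in the proof of Proposition~\ref{prop:geometryofnumbers}, yields $W \in \gl{d-l}(\IZ)$ whose first column spans $v\IZ$ and satisfies $|W| \ll_d |v|_2 \le \delta^{\epsilon^{d-l-1}}$. I then replace $V$ by $V \cdot \mathrm{diag}(I_l, W)$, define the new peeled coordinate $\xi := \bxi^{\pm v}$ (the first coordinate of $\bxi^W$), and increment $l$. If $l$ reaches $d-1$, I stop.

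For the order bound, $\xi^m = 1$ where $m$ is the order of the image of $v$ in $\widetilde\Lambda_\bxi(\nu_{l+1}) / \Lambda_\bxi(\nu_{l+1})$, so $\ord(\xi)$ divides $[\widetilde\Lambda_\bxi(\nu_{l+1}) : \Lambda_\bxi(\nu_{l+1})] \le \det\Lambda_\bxi(\nu_{l+1}) \le N^{2\nu_{l+1}^{1+i}}$ by (\ref{eq:detLambdaub}); here $i = \rk(\Lambda_\bxi/\Lambda_\bxi(\nu_{l+1})) \ge 1$ in the non-stopping case, by the Harder--Narasimhan analysis of Section~\ref{sec:lattice}, which forces $\rk\Lambda_\bxi(\nu_{l+1}) < \rk\Lambda_\bxi$. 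Since $\nu_{l+1} \le 1/2$ and $i \ge 1$ yield $2\nu_{l+1}^{1+i} \le \nu_{l+1}$, we get $\ord(\xi) \le N^{\nu_{l+1}}$, preserving the invariant $\ord(\bfeta) \le N^{\nu_1+\cdots+\nu_l}$. For the matrix bound, the matrix $W$ produced at step $j$ satisfies $|W_j| \ll_d \delta^{\epsilon^{d-j}}$; composing $l$ of them with the elementary estimate $|AB|_\infty \le d\,|A|_\infty|B|_\infty$ gives $|V| \ll_d \delta^{\sum_{j=1}^l \epsilon^{d-j}}$, and the geometric sum is at most $2\epsilon^{d-l}$ when $\epsilon \le 1/2$, so $|V| \ll_d \delta^{2\epsilon^{d-l}}$. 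The case $l = 0$ performs no iterations and yields $V = I_d$.

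For the dichotomy in (ii): if the iteration stopped at $l \le d-2$ by the $\lambda_1$-threshold, the lemma's required lower bound on $\lambda_1\bigl(\widetilde\Lambda_\bxi(\nu_{l+1})\bigr)$ is exactly the stopping criterion. If instead it stopped with $l = d-1$, then $\bxi \in \IG_m$ is a single root of unity, and since $\mathrm{lcm}(\ord(\bfeta), \ord(\bxi)) = \ord(\bzeta) = N$ while $\ord(\bfeta) \le N^{\nu_1 + \cdots + \nu_{d-1}} \le N^{1/2}$, we conclude $\ord(\bxi) \ge N^{1/2}$, as demanded. The critical technical step is the Cassels extension starting from a \emph{single} short vector: peeling one coordinate at a time using the first successive minimum $\lambda_1$ (rather than all coordinates at once via $|W| \ll_d \det\widetilde\Lambda(\nu)$) is what allows the product of the $|W_j|$'s to telescope into the power saving $\delta^{2\epsilon^{d-l}}$ required by the lemma.
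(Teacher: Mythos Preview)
Your proof is correct and follows essentially the same approach as the paper: iteratively peel off one coordinate at a time by extending a short primitive vector $v\in\widetilde\Lambda_{\bxi}(\nu)$ to a matrix in $\gl{d-l}(\IZ)$ via Cassels, bound $\ord(\bxi^v)$ through the index $[\widetilde\Lambda:\Lambda(\nu)]\le\det\Lambda(\nu)$ and (\ref{eq:detLambdaub}), and stop either when $\lambda_1$ exceeds the threshold or when only one coordinate remains. One trivial remark: since $\lambda_1$ in (\ref{def:firstminimum}) is taken with respect to the max-norm, the failure of the stopping criterion yields $|v|\le\delta^{\epsilon^{d-l-1}}$ rather than $|v|_2\le\delta^{\epsilon^{d-l-1}}$, but the resulting factor of $\sqrt d$ is absorbed in the $\ll_d$ constant and changes nothing.
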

\begin{proof}
Set $\bxi_1=\bzeta$ and let $V_0$ be the unit matrix in $\gl{d}(\IZ)$.
For all
$l\in \{1,\ldots,d-1\}$ with $\lambda_1(\widetilde\Lambda_{\bxi_l}(\nu_l))\le
\delta^{\epsilon^{d-l}}$ we will construct inductively $V_l\in\gl{d}(\IZ),\bxi_{l+1} \in
\IG_m^{d-l}$ of order at most $N,$ and $\eta_l \in\IG_m$ of order at most $N^{\nu_l}$ such
that $\bzeta^{V_l} = (\eta_1,\ldots,\eta_l,\bxi_{l+1})$ and
\begin{equation}
\label{eq:Vlbound}
|V_l| \ll_d \delta^{\epsilon^{d-1}+\cdots +\epsilon^{d-l}}.
\end{equation}

Suppose  $\lambda_1(\widetilde\Lambda_{\bxi_l}(\nu_l))\le \delta^{\epsilon^{d-l}}$, 
 there exists $v\in \widetilde\Lambda_{\bxi_l}(\nu_l)\ssm\{0\}$
 such that $|v|\le \delta^{\epsilon^{d-l}}$ and $v$ is primitive.
 Note that
 $[\widetilde\Lambda_{\bxi_l}(\nu_l):\Lambda_{\bxi_l}(\nu_l)]v$
 lies in $\Lambda_{\bxi_l}$, so 
 $\ord(\bxi_l^v)
 \le[\widetilde\Lambda_{\bxi_l}(\nu_l):\Lambda_{\bxi_l}(\nu_l)]\le
 \det(\Lambda_{\bxi_l}(\nu_l))\le N^{2\nu_l^{2}}\le N^{\nu_l}$ by
 (\ref{eq:detLambdaub}) and since $\det(\Lambda_\bxi) \le N$. 
As in the proof of Proposition  \ref{prop:geometryofnumbers} we can realize  $v$ as the first column of a matrix
$V'_l \in \gl{d+1-l}(\IZ)$ 
 with $|V'_l|\ll_d |v|\ll_d \delta^{\epsilon^{d-l}}$. 
 Let $E_{l-1}$
denote the $(l-1)\times(l-1)$ unit matrix and set
\begin{equation*}
V_l = V_{l-1}\left(
\begin{array}{cc}
E_{l-1} & 0\\
0 &  V'_l
\end{array}\right) \in \gl{d}(\IZ).
\end{equation*}
By  step $l-1$ we have $\bzeta^{V_{l-1}} = (\eta_1,\ldots,\eta_{l-1},\bxi_l)$.
We define $\bxi_{l+1}$ via 
$\bzeta^{V_l}=(\eta_1,\ldots,\eta_l,\bxi_{l+1})$,
note $\eta_l = \bxi_l^v$.
Finally, $|V_l|\ll_d
|V_{l-1}||V'_l|\ll_d \delta^{\epsilon^{d-1}+\cdots+\epsilon^{d-l}}$,
which  completes our construction.

Let us consider the largest $l\in \{0,\ldots,d-1\}$
for which $\lambda_1(\widetilde \Lambda_{\xi_{l}}(\nu_l))\le \delta^{\epsilon^{d-l}}$ and define $V=V_l$.
Then (i) holds by (\ref{eq:Vlbound}) as $\epsilon \le 1/2$.
To verify (ii) observe that 
 $\bzeta^{V}=(\eta_1,\ldots,\eta_{l},\bxi)$ with $\bxi=\bxi_{l+1}\in\IG_m^{d-l}$ has
 order $N$
and $(\eta_1,\ldots,\eta_l)$ has  order at most
$N^{\nu_1+\cdots+\nu_l}\le  N^{1/2}$.
If $l=d-1$, then $\bxi$ is a root of unity of order at least
$N^{1/2}$. Otherwise $l\le d-2$ and
$\lambda_1(\widetilde\Lambda_{\bxi_{l+1}}(\nu_{l+1}))> \delta^{\epsilon^{d-l-1}}$, because
the construction cannot continue.
\end{proof}

We are ready to prove a theorem that will quickly imply our
Theorem \ref{thm:main} and its refinements. 

\begin{thm}
  \label{thm:main2}
  Let $c\ge 1$,
let $K\subset\IC$ be a number field, and suppose  $P\in
K[X_1,\ldots,X_d]\ssm \{0\}$ %be $c$-admissible
has at most $k$ terms for an integer $k\ge 2$.
% There exist constants $C(d,k)\ge 1$ and  $\kappa=\kappa(d,k)\in
% (0,1]$ with the following property.
There are constants $C=C(d,k) \ge 1$ and $\kappa=\kappa(d,k)>0$ depending only on $d$ and $k$
with the following property. 
Let $\bzeta\in\IG_m^d$ have   order $N$ 
and suppose $G$
is a subgroup of $\GammaN$ 
with $P(\bzeta^\sigma)\not=0$ for all $\sigma\in G$.
If $(P,\bzeta^\sigma)$ is
$c$-admissible for all $\sigma\in G$ and if
\begin{equation}
\label{eq:deltazetahypinthm}
  \delta(\bzeta) \ge C \max\{c,\deg P\}^{C}
\end{equation}
then
\begin{equation*} %   \deg(P)^{24d^2}->   \deg(P)^{16d^2}
  \frac{1}{\#G} \sum_{\sigma \in G} \log |P(\bzeta^\sigma)|
   = m(P) + O_{d,k}\left(\frac{[K:\IQ]^{2^d} [\GammaN:G]^2 \mf_G
     \deg(P)^{16d^2} (1+\hproj{P})}{\delta(\bzeta)^\kappa}\right).
\end{equation*}
\end{thm}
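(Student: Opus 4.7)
The plan is to proceed by induction on the dimension $d$, with the constants $C=C(d,k)$ and $\kappa=\kappa(d,k)$ produced by the induction. For the base case $d=1$, the $c$-admissibility of $(P,\bzeta^\sigma)$ (with $l=0$ and $V$ the identity in Definition \ref{def:admissible}) forces $\cBprim(P)\le c$, which in dimension one means $P$ has no zero in $S^1\ssm\mu_\infty$. Proposition \ref{prop:etaxifactorization}(i) applied with a suitable $\nu=\nu(k)$ then yields the claim at once, since $N=\delta(\bzeta)$ when $d=1$.

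For the inductive step $d\ge 2$, I apply Lemma \ref{lem:monomialcoc} to $\bzeta$ with $\delta=\delta(\bzeta)$, a common small parameter $\nu=\nu(d,k)\in(0,1/(2(d-1))]$ used as $\nu_1=\cdots=\nu_{d-1}$, and a small $\epsilon=\epsilon(d,k)\in(0,1/2]$. This produces $V\in\gl{d}(\IZ)$ with $|V|\ll_d\delta(\bzeta)^{2\epsilon^{d-l}}$ and $l\in\{0,\ldots,d-1\}$ such that $\bzeta^V=(\bfeta,\bxi)$ with $\ord(\bfeta)\le N^{l\nu}$, and such that either $l\le d-2$ with $\lambda_1(\widetilde\Lambda_\bxi(\nu))>\delta(\bzeta)^{\epsilon^{d-l-1}}$, or $l=d-1$ with $\bxi$ univariate of order $\ge N^{1/2}$. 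In the case $l=0$ one has $V$ equal to the identity, so the hypothesis $\widetilde\lambda(\bzeta;\nu)>d^{1/2}\max\{\cBprim(P),\deg P\}$ of Proposition \ref{prop:etaxifactorization}(ii) holds once $C$ is taken large enough in the assumption $\delta(\bzeta)\ge C\max\{c,\deg P\}^C$, and that proposition alone concludes the argument.

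Suppose now $l\ge 1$. I decompose the sum Galois-theoretically following the proof of Proposition \ref{prop:etaxifactorization}: writing $L$ for the fixed field of $G$ and $H$ for the appropriate subgroup of $\Gamma_{\ord(\bxi)}$,
\begin{equation*}
\frac{1}{\#G}\sum_{\sigma\in G}\log|P(\bzeta^\sigma)|=\frac{1}{[L(\bfeta):L]}\sum_\tau S_\tau, \quad S_\tau=\frac{1}{\#H}\sum_{\sigma\in H}\log|P_{V,\bfeta^\tau}(\bxi^{\widetilde\tau\sigma})|.
\end{equation*}
For each $\tau$, Lemma \ref{lem:Qprops2} transfers admissibility to $(P_{V,\bfeta^\tau},\bxi)$, Lemma \ref{lem:basicPVeta} bounds its degree and height, and Proposition \ref{prop:etaxifactorization} (part (ii) when $l\le d-2$, part (i) when $l=d-1$) applied to $(P_{V,\bfeta^\tau},\bxi,H)$ in dimension $d-l$ produces $S_\tau=m(P_{V,\bfeta^\tau})+\epsilon_1$ with an error $\epsilon_1$ already in the desired form, the $\widetilde\lambda(\bxi;\nu)$ hypothesis being secured via Lemma \ref{lem:monomialcoc} together with the lower bound $\ord(\bxi)\ge N^{1-l\nu}$. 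It remains to estimate $[L(\bfeta):L]^{-1}\sum_\tau m(P_{V,\bfeta^\tau})$, for which Proposition \ref{prop:lehmermean} gives $m(P)$ plus a discrepancy contribution controlled through Proposition \ref{propo:discbound} applied to the Galois orbit of $\bfeta$, plus an error of the form $\bigl|m(\widehat P)-n^{-1}\sum_\tau\log\widehat P(\bfeta^\tau)\bigr|$ for the auxiliary polynomial $\widehat P\in\IC[X_1^{\pm 1},\ldots,X_l^{\pm 1}]$ from (\ref{eq:defQpi}). For this last error I invoke the inductive hypothesis applied to $(\widehat P,\bfeta)$ in dimension $l<d$, using Lemma \ref{lem:Qprops1} to inherit $c$-admissibility and Lemma \ref{lem:Qprops} to bound the degree, height, number of terms, and coefficient-field degree of $\widehat P$; the bound $[K':\IQ]\le[K:\IQ]^2$ propagates through $d$ layers of induction to give the exponent $2^d$ in the theorem's statement.

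The chief obstacle is the quantitative interplay between $\delta(\bfeta)$ and $\delta(\bzeta)$: the inductive application to $\widehat P$ at $\bfeta$ needs $\delta(\bfeta)$ large in terms of $\max\{c',\deg\widehat P\}$, and Proposition \ref{propo:discbound} equally requires $\delta(\bfeta)$ to be reasonably large to bound the discrepancy of the Galois orbit, yet Lemma \ref{lem:monomialcoc} only records the upper bound $\ord(\bfeta)\le N^{l\nu}$. I expect to show that the Harder--Narasimhan construction in the proof of Lemma \ref{lem:monomialcoc} simultaneously yields $\delta(\bfeta)\ge\delta(\bzeta)^{\beta}$ for some $\beta=\beta(d,k)>0$, so that all the error contributions---the saving $\delta(\bzeta)^{\epsilon^{d-l-1}}$ from Lemma \ref{lem:monomialcoc}, the discrepancy savings from Propositions \ref{propo:discbound} and \ref{prop:lehmermean}, the exponents in the error of Proposition \ref{prop:etaxifactorization}, and the inductive saving $\delta(\bfeta)^{-\kappa(l,k^2)}$---may be balanced by a single choice of $\nu,\epsilon$ to produce a uniform power saving $\delta(\bzeta)^{-\kappa(d,k)}$. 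The remainder is careful bookkeeping of the admissibility constants via Lemmas \ref{lem:Qprops1} and \ref{lem:Qprops2}, and of the degree/height/conductor data via Lemmas \ref{lem:basicPVeta} and \ref{lem:Qprops}.
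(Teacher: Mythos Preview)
Your strategy matches the paper's almost exactly: induction on $d$, Lemma \ref{lem:monomialcoc} for the monomial change, Proposition \ref{prop:etaxifactorization} for each inner sum $S_\tau$, Proposition \ref{prop:lehmermean} for the average of the $m(P_{V,\bfeta^\tau})$, and the inductive hypothesis applied to the auxiliary polynomial (the paper forms $\widehat Q$ from $Q=P(X^{V^{-1}})$ rather than from $P$ directly, which is what Lemma \ref{lem:Qprops1} actually supports).

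Your ``chief obstacle'' dissolves more simply than you anticipate: there is no need to revisit the Harder--Narasimhan construction. Since $\bzeta^V=(\bfeta,\bxi)$, any relation $\bfeta^v=1$ with $v\in\IZ^l\setminus\{0\}$ yields $\bzeta^{V(v;0)}=1$, so $\delta(\bzeta)\le d|V|\,|v|$ and hence $\delta(\bfeta)\ge\delta(\bzeta)/(d|V|)\gg_d\delta(\bzeta)^{1-2\epsilon^{d-l}}\gg_d\delta(\bzeta)^{1/2}$. This elementary bound is precisely what the paper uses.

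One genuine adjustment is required: setting $\nu_1=\cdots=\nu_{d-1}=\nu$ cannot balance the errors. The factor $\ord(\bfeta)\le N^{\nu_1+\cdots+\nu_l}$ enters the error of Proposition \ref{prop:etaxifactorization} through $[K(\bfeta):\IQ]$, $[\Gamma_M:H]$, and $\mf_H$, contributing roughly $N^{5(\nu_1+\cdots+\nu_l)}$, and this must be beaten by the saving $N^{-\nu_{l+1}^d/(80d)}$. With a common $\nu$ the requirement becomes $5l\nu<\nu^d/(80d)$, i.e.\ $\nu^{d-1}>400dl$, which is impossible for $\nu\le1/2$ and $d\ge2$. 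The paper instead takes the $\nu_l$ to grow with $l$ rapidly enough that $\nu_1+\cdots+\nu_l$ is dominated by a small multiple of $\nu_{l+1}^d$; once you make this change, the rest of your bookkeeping goes through.
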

\begin{proof}  
  The case $d=1$ follows from
  Proposition~\ref{prop:etaxifactorization}(i) as $\delta(\bzeta)=N$
  in this case and as $\cBprim(P)<\infty$. 
  So we may assume
  $d\ge 2$. We may also assume that $P$ is non-constant.
  
  We work with the parameters
  $\nu_1,\ldots,\nu_{d-1} \in (0,1/(128d^2)], \epsilon \in (0,1/2]$ in
  this proof. They are assumed to be small in terms of $d$ and $k$ but
  independent of $P$ and $\bzeta$. We may assume that $\epsilon$ is
  small in terms of the $\nu_l$, \textit{e.g.}, $\epsilon \le \nu_l^d$
  for all $l$. We determine them during the argument.

 We apply Lemma \ref{lem:monomialcoc}
to $\bzeta,\delta=\delta(\bzeta),\epsilon,$ and the $\nu_l$. Say
$l,V,\bfeta,$ and $\bxi$ are given by this
lemma, in particular $\bzeta^V = (\bfeta,\bxi)$ and $|V|\ll_d \delta(\bzeta)^{2\epsilon^{d-l}}$. We have
\begin{equation*}
  \mathrm{ord}(\bfeta)\le N^{\nu_1+\cdots+\nu_l}. 
\end{equation*}

The case $l=0$ is straightforward. Here $V$ is the unit matrix, $\bxi=\bzeta,$ and
$\lambda_1(\widetilde\Lambda_\bzeta(\nu_1))
> \delta(\bzeta)^{\epsilon^{d-1}}$ as we are in case
$d-l = d\ge 2$ of Lemma~\ref{lem:monomialcoc}(ii).
So $\widetilde\lambda(\bzeta;\nu_1) \ge \delta(\bzeta)^{\min\{\epsilon^{d-1},\nu_1^d/2\}}$
using (\ref{def:tildelambdanu}). 
As $(P,\bzeta)$ is $c$-admissible we have $\cBprim(P) \le c$. 
We will apply
 Proposition
\ref{prop:etaxifactorization}(ii) to $P$ and $\nu=\nu_1$, so we must verify
$\widetilde\lambda(\bzeta;\nu_1) >d^{1/2} \max \{c,\deg P\}.$ This
inequality is satisfied if $\delta(\bzeta)$ is as in
(\ref{eq:deltazetahypinthm}) with $C$ large in terms of
$\epsilon,\nu_1,d,$ and $k$.

\smallskip
{\bf Step 1: A monomial change of coordinates.}
From now on we assume $l\ge 1$, \textit{i.e.}, $l\in\{1,\ldots,d-1\}$.
We have $\zeta^V = (\bfeta,\bxi) \in \IG_m^l \times\IG_m^{d-l}$.
This time we apply Proposition \ref{prop:etaxifactorization} to
$P_{V,\bfeta} \in K(\bfeta)[X_1,\ldots,X_{d-l}], \bxi,$ and
$\nu_{l+1}$. 
\refcomment{101}{Note that $P_{V,\bfeta}\not=0$ as $(P,\bzeta)$ is
  $c$-admissible; this polynomial has at most $k$ non-zero terms.}
By Lemma \ref{lem:Qprops2} the pair $\left(P_{V,\bfeta},\bxi\right)$ is $(cd|V^{-1}|)$-admissible.
Observe  $|V^{-1}|\ll_d |V|^{d-1} \ll_d \delta(\bzeta)^{2\epsilon^{d-l}(d-1)}$. So the said pair is
 $c_1 c \delta(\bzeta)^{2\epsilon^{d-l} d}$-admissible; here
and below $c_1,c_2,\ldots$ denote positive constants that depend only
on $d$. In particular, $\cBprim(P_{V,\bfeta})\le c_1 c
\delta(\bzeta)^{2 \epsilon^{d-l} d}$ and $\cBprim(P_{V,\bfeta})<\infty$. 

\refcomment{107}{If $d-l=1$ we will apply
Proposition \ref{prop:etaxifactorization}(i) and there is nothing
further to check. But for $d-l\ge 2$ we must verify the hypothesis in
the second part of this proposition.} This step is similar
as in the case $l=0$. Indeed, by
Lemma \ref{lem:monomialcoc} we have
\begin{equation}
\label{eq:lambda1lb}
  \widetilde\lambda(\bxi;\nu_{l+1}) \ge
 \delta(\bzeta)^{\min\{\epsilon^{d-l-1},\nu_{l+1}^d/2\}}=\delta(\bzeta)^{\epsilon^{d-l-1}}
\end{equation}
as $\epsilon\le \nu_{l+1}^d/2$. 
Observe that
\begin{alignat*}1
%\label{eq:degPVetabound}
  \deg P_{V,\bfeta} &\ll_d |V|^{d-1} \deg P \ll_d  \delta(\bzeta)^{2\epsilon^{d-l}d} \deg P,
\end{alignat*}
by Lemma \ref{lem:basicPVeta}(i). To apply Proposition
\ref{prop:etaxifactorization}(ii) we must verify 
$$\widetilde\lambda(\bxi;\nu_{l+1})> c_2
\delta(\bzeta)^{2\epsilon^{d-l} d}
\max\{c,\deg P\}.$$
We may  assume  $\epsilon^{d-l-1} - 2\epsilon^{d-l}d \ge
\epsilon^{d-l-1}/2$. 
By (\ref{eq:deltazetahypinthm}) and (\ref{eq:lambda1lb})
the desired inequality is satisfied 
when $C$ is large in terms of $\epsilon,d,$ and $k$.
We may thus apply Proposition~\ref{prop:etaxifactorization}.

Observe that
\begin{alignat*}1
  h(P_{V,\bfeta}) &\ll_{k} 1+h(P) \\
[K(\bfeta):\IQ]&\le \mathrm{ord}(\bfeta)[K:\IQ] \le N^{\nu_1+\cdots+\nu_l} [K:\IQ]\\
\mathrm{ord}(\bxi) &\ge N / \mathrm{ord}(\bfeta) \ge N^{1-(\nu_1+\cdots+\nu_l)}\ge N^{1/2}
\end{alignat*}
by Lemma \ref{lem:basicPVeta} (ii) and Lemma
\ref{lem:monomialcoc}.
\refcomment{108}{Recall that $\bzeta^V =(\bfeta,\bxi)$. So 
  $\bxi^u=1$ for some $u\in\IZ^{d-l}$ with $|u|=\delta(\bxi)$, hence
  $\bzeta^{V
    \left(\begin{smallmatrix}0\\u\end{smallmatrix}\right)
}=1$. We conclude $\delta(\bzeta)\le
|V\left(\begin{smallmatrix}0\\u\end{smallmatrix}\right)|\le
d|V|\delta(\bxi)$.} We find
\begin{equation}
\label{eq:deltaxibound}
  \delta(\bxi)\gg_d \delta(\bzeta)^{1-2 \epsilon^{d-l}}\gg_d \delta(\bzeta)^{1/2}
\end{equation}
as we may assume $\epsilon^{d-l} \le 1/4$. 

We must choose a group $G$ in Proposition
\ref{prop:etaxifactorization}; we will denote it by $H$ here. Let $L$ denote the fixed field of $G$
in $\IQ(\bzeta)$. We may naturally identify $\mathrm{Gal}(L(\bxi)/L)$
with a subgroup of $\GammaM=\mathrm{Gal}(\IQ(\bxi)/\IQ)$ with
$M=\mathrm{ord}(\bxi)$. We take $H$ the subgroup of $\GammaM$ thus
identified with $\mathrm{Gal}(L(\bxi)/L(\bxi)\cap L(\bfeta))\cong \mathrm{Gal}(L(\bzeta)/L(\bfeta))$. 
The fixed field of $H$ in $\IQ(\bxi)$ is contained in $L(\bfeta)$, so
\begin{alignat*}1
  [\GammaM:H]  &\le [L(\bfeta):\IQ] \le \mathrm{ord}(\bfeta) [L:\IQ] 
  = \mathrm{ord}(\bfeta) [\GammaN:G]  \\
  &\le
N^{\nu_1+\cdots+\nu_l}[\GammaN:G]
\end{alignat*}
having used the bound for the order of $\bfeta$ from Lemma \ref{lem:monomialcoc}. 
Moreover, the conductor of $H$ satisfies
\begin{equation*}
  \mf_H \le \lcm{\mf_G,\mathrm{ord}(\bfeta)} \le \mf_G
  \mathrm{ord}(\bfeta)\le \mf_G N^{\nu_1+\cdots+\nu_l}.
\end{equation*}

To cover the case $l=d-1$  we set $\nu_{d} = 1/(128d^2)$. 
By applying Proposition \ref{prop:etaxifactorization} to
$P_{V,\bfeta},\bxi,$ and $\nu_{l+1}$ and using the various
estimates above, in particular  (\ref{eq:lambda1lb}), we find 
\begin{alignat*}1
&\frac{1}{\#H}  \sum_{\sigma\in H} \log |P_{V,\bfeta}(\bxi^\sigma)|  =
m(P_{V,\bfeta}) \\
&+ O_{d,k}\left(\frac{[K:\IQ]^2[\GammaN:G]^2 \mf_G
    \deg(P)^2(1+h(P))N^{(2+2+1)(\nu_1+\cdots+\nu_l)}\delta(\bzeta)^{4\epsilon^{d-l}d}}{N^{\nu_{l+1}^d/(40d)}}\right)
\\
&+O_{d,k}\left(\frac{\deg(P)^{16d^2}}{\delta(\bzeta)^{\epsilon^{d-l-1}/(16k)-32\epsilon^{d-l}d^3}}\right)
\end{alignat*}
here we used  $\vi\le d$ and $M\ge N^{1/2}$; the third line can be omitted
\refcomment{107}{if $l=d-1$ as then we apply Proposition~\ref{prop:etaxifactorization}(i).}

At this point we reap the benefit of having split the
error term in Proposition \ref{prop:etaxifactorization} into a part
depending on $N$ and a part depending on $\delta(\bzeta)$. Indeed, the
order of $\bfeta$, which we bound in terms of $N$, does not affect the
term involving $\delta(\bzeta)$. Recall that $\delta(\bzeta)\le N$,
but there can be no meaningful  lower bound for $\delta(\bzeta)$ in
terms of $N$. Introducing a dependency on $N$ in the part containing
$\delta(\bzeta)$ would spoil the result.

%By the Geometry of Numbers we have $\delta(\bzeta)\ll_d N^{1/d}$.
We use the crude bound $\delta(\bzeta)\le N$ and we assume the parameters satisfy
 \begin{equation*}
   5(\nu_1+\cdots+\nu_l) + 4\epsilon^{d-l}d \le \frac{\nu_{l+1}^d}{80d}
 \end{equation*}
and
\begin{equation*}
  32\epsilon^{d-l}d^3 \le \frac{\epsilon^{d-l-1}}{32k}. 
\end{equation*}

We now combine both contributions to the error term and get
\begin{alignat}1
\label{eq:thmfinalsum1}
&\frac{1}{\#H}  \sum_{\sigma\in H} \log |P_{V,\bfeta}(\bxi^\sigma)|  =
m(P_{V,\bfeta}) + O_{d,k}\left( \frac{[K:\IQ]^2 [\GammaN:G]^2 \mf_G
    \deg(P)^{16d^2} (1+h(P))}{\delta(\bzeta)^\kappa}\right)
\end{alignat}
if
\begin{equation*}
  \kappa\le \min\left\{\frac{\nu_{l+1}^d}{80},
    \frac{\epsilon^{d-l-1}}{32k} \right\}.
\end{equation*}
Later we may shrink $\kappa$.
%%%!!!STOPPED HERE

\smallskip
{\bf Step 2: Induction on $d$.} \refcomment{109}{Recall that $\bzeta^V = (\bfeta,\bxi)$}
and $|P(\bzeta^\sigma)| = |P_{V,\bfeta^\sigma}(\bxi^\sigma)|$ for all
$\sigma\in G$. We still assume $l\ge 1$ and we find, as in  
 (\ref{eq:galoisdecompositionsum2}), that
\begin{equation}
\label{eq:thmfinalsum2}
  \frac{1}{\#G} \sum_{\sigma \in G} \log |P(\bzeta^\sigma)| 
=\frac{1}{[L(\bfeta):L]} \sum_{\tau \in\gal{L(\bfeta)/L}} \frac{1}{\#H}\sum_{\sigma\in 
H} \log|P_{V,\bfeta^\tau}(\bxi^{\widetilde\tau \sigma})|
\end{equation}
with $\widetilde\tau$ a lift of $\tau$ to $\mathrm{Gal}(L(\bzeta)/L)$.

The estimates from Step 1  hold equally for all conjugates
$\bfeta^\sigma,\bxi^\sigma$. Indeed, for example
$\widetilde\Lambda_{\bxi}(\nu_{l+1})$ and  $\delta(\bxi)$  are Galois
invariant and $(P,\bzeta^\sigma)$ is $c$-admissible. So we may apply (\ref{eq:thmfinalsum1}) to 
$P_{V,\bfeta^\sigma}$ when summing over $\sigma\in H$.

We set $Q$ to equal $P(X^{V^{-1}})$ times a monomial such that $Q$ is
a polynomial coprime to $X_1\cdots X_d$. So $\deg Q \ll_d |V^{-1}|\deg
P$ and recall that $|V^{-1}|\ll_d
\delta(\bzeta)^{2\epsilon^{d-l}d}$.
 We apply the construction
(\ref{eq:defQpi}) to $Q$ and $l$  and obtain $\widehat Q$. 
Recall Lemma \ref{lem:Qprops0} and write $\widetilde Q$ for $\widehat
Q$ times the monomial from part (ii) of this lemma. Then $\widetilde
Q$  has at most $k^2$ non-zero terms and using also 
Lemma \ref{lem:Qprops}
we find
\begin{equation}
\label{eq:tildeQbounds}
\begin{aligned}
   \widetilde Q &\in K'[X_1^{\pm 1},\ldots,X_l^{\pm 1}] \text{ where }
   [K':\IQ]\le [K:\IQ]^2, \\ 
\deg \widetilde Q &\ll_d \deg Q\ll_d |V^{-1}| \deg P \ll
\delta(\bzeta)^{2\epsilon^{d-l}d} \deg P,\text{ and}\\
h(\widetilde Q)  &\ll_k 1 + h(Q) \ll_k 1 + h(P).
 \end{aligned}
\end{equation}
%where $\widetilde Q$ is $\widehat Q$ times a monomial. 

By Lemma \ref{lem:Qprops2}, with $l=0$, the pair $(Q,(\bfeta^\sigma,\bxi^\sigma))$ is
$c_3 c \delta(\bzeta)^{2\epsilon^{d-l}d}$-admissible for all
$\sigma\in G$.
Now $(\widetilde Q,\bfeta^\sigma)$ is also $c_3 c
\delta(\bzeta)^{2\epsilon^{d-l}d}$-admissible by Lemma
\ref{lem:Qprops1} for all $\sigma$.

We want to apply this theorem to $\widetilde Q$ and $\bfeta\in\IG_m^l$ by
induction, recall $l\le d-1$. For this
we must verify
\begin{equation*}
  \delta(\bfeta) \ge c_4 C(l,k^2) \delta(\bzeta)^{2\epsilon^{d-l}dC(l,k^2)}
  \max\{c ,\deg P\}^{C(l,k^2)}
\end{equation*}
having used the bound for $\deg\widetilde Q$ in (\ref{eq:tildeQbounds}).
\refcomment{108}{By symmetry we have, as above and in
  (\ref{eq:deltaxibound}), the bound $\delta(\bfeta)\gg_d \delta(\bzeta)/|V|\gg_d
\delta(\bzeta)^{1/2}$.}  So it suffices
to check
\begin{equation}
\label{eq:wanttoshowforind}
  \delta(\bzeta)^{1-4\epsilon^{d-l}d C(l,k^2)} \ge c_5 C(l,k^2)^2
  \max\{c,\deg P\}^{2C(l,k^2)}. 
\end{equation}
We may assume that $1-4\epsilon^{d-l}d C(l,k^2)\ge 1/2$ as we may
choose $\epsilon$ small in terms of $d$ and $C(l,k^2)$. So (\ref{eq:wanttoshowforind}) follows from
(\ref{eq:deltazetahypinthm}) if $C=C(d,k)$ is large enough in terms of
$d$ and $k$.

By induction and (\ref{eq:tildeQbounds}) we have
\begin{alignat*}1
&  \frac{1}{\#H'} \sum_{\tau\in H'} \log|\widetilde Q(\bfeta^\tau)|  =
  m(\widetilde Q)
  \\ &\quad + O_{d,k}\left(\frac{[K:\IQ]^{2^d} [\GammaE:H']^2
      \mf_{H'}\deg(P)^{16d^2}\delta(\bzeta)^{32\epsilon^{d-l}d^3} (1+h(P))}{\delta(\bfeta)^{\kappa(l,k^2)}}\right)
\end{alignat*}
here $E = \mathrm{ord}(\bfeta)$ and $H'\subset \GammaE$ is the
subgroup identified with $\mathrm{Gal}(\IQ(\bfeta)/\IQ(\bfeta)\cap
L)\cong \mathrm{Gal}(L(\bfeta)/L)$. 
Note that $[\GammaE:H'] = [\IQ(\bfeta)\cap L:\IQ]\le [L:\IQ] =
[\GammaN:G]$ and 
$\mf_{H'} \le \mf_G$. Using again $\delta(\bfeta)\gg_d
\delta(\bzeta)^{1/2}$ we get 
\begin{equation}
\label{eq:endgameaverHprime}
  \frac{1}{\#H'} \sum_{\tau\in H'} \log|\widehat Q(\bfeta^\tau)|  =
  m(\widehat Q) + O_{d,k}\left(\frac{[K:\IQ]^{2^d} [\GammaN:G]^2
      \mf_{G}\deg(P)^{16d^2} (1+h(P))}{\delta(\bzeta)^{\kappa(l,k^2)/2
      - 32 \epsilon^{d-l}d^3}}\right)
\end{equation}
as passing from $\widetilde Q$ to $\widehat Q$ is harmless. We may
assume that $\kappa(l,k^2)/4 \ge 32 \epsilon^{d-l}d^3$. 

Recall that $Q$ equals $P(X^{V^{-1}})$ up-to a
monomial factor.
We will soon apply
 Proposition \ref{prop:lehmermean} to $Q$.
 Consider  $(x_1,\ldots,x_{\#H'})$, with each $x_i\in [0,1)^l$,
\refcomment{112}{a tuple of discrepancy $\cD$ as in (\ref{def:discrepancy}),}
  where the
$\be(x_i)$ are the $\bfeta^\tau$.
 Proposition \ref{prop:lehmermean} together with (\ref{eq:endgameaverHprime}) imply
\begin{alignat*}1
 \frac{1}{\#H'} &\sum_{\tau\in H'} m(P_{V,\bfeta^\tau}) = m(Q) + \\
  &O_{d,k}\left(\deg(Q)\cD^{1/(32(d+1)k^2)}+\frac{[K:\IQ]^{2^d} [\GammaN:G]^2
      \mf_{G}\deg(P)^{16d^2} (1+h(P))}{\delta(\bzeta)^{\kappa(l,k^2)/4}}\right).
      \end{alignat*}
 By Proposition \ref{propo:discbound}, parts (i) and (ii), we find 
\begin{equation*}
 \cD \ll_d [\GammaE:H'] \mf_{H'}^{1/2} \delta(\bfeta)^{-1/3} \ll_d
[\GammaN:G] \mf_{G}^{1/2} \delta(\bzeta)^{-1/6}.
\end{equation*}
From above we find $\deg Q \ll_d |V^{-1}|\deg P \ll_d
\delta(\bzeta)^{2\epsilon^{d-l}d}\deg P$.
The Mahler measure is invariant under a monomial change of
coordinates by Corollary 8, Chapter 3.4~\cite{Schinzel}, thus $m(P)=m(Q)$.
As $\#H' = [L(\bfeta):L]$ we get 
\begin{equation*}
  \frac{1}{[L(\bfeta):L]} \sum_{\tau\in H'} m(P_{V,\bfeta^\tau}) = m(P) + 
O_{d,k}\left(\frac{[K:\IQ]^{2^d} [\GammaN:G]^2
      \mf_{G}\deg(P)^{16d^2}
      (1+h(P))}{\delta(\bzeta)^{\min\{1/(192(d+1)k^2)-2\epsilon^{d-l}d\},\kappa(l,k^2)/4\}}}\right).
\end{equation*}

We shrink $\epsilon$ a final time to achieve $1/(192(d+1)k^2)
-2\epsilon^{d-l}d > 1/(200(d+1)k^2)$. 
The theorem follows on combining this asymptotic estimate with
(\ref{eq:thmfinalsum1}) and (\ref{eq:thmfinalsum2}), when $\kappa =
\kappa(d,k)$ is small in terms of $\kappa(l,k^2),d,$ and $k$. 
\end{proof}

To prove Theorem \ref{thm:main} we can multiply $P$ by any monomial,
so we may assume that it is a polynomial. Thus the theorem
is a direct consequence of the following more
precise corollary. 

\begin{corol}
\label{cor:limiterrorterm}
  Let $K\subset \IC$ be a number field and suppose
  $P\in K[X_1,\ldots,X_d]\ssm \{0\}$ is \essatoral{} and has at most
  $k$ non-zero terms for an integer $k\ge 2$. There exists $\kappa =
  \kappa(d,k)>0$ with the following property.
  Suppose $\bzeta\in\IG_m^d$ has  order $N$ and suppose $G$
is a subgroup of $\GammaN$ and $\delta(\bzeta)$ is large in
terms of $d, P,[K:\IQ],\mf_G,$ and $[\GammaN:G]$. Then $P(\bzeta^\sigma)\not=0$ for all $\sigma\in
G$ and
\begin{equation*} %      \deg(P)^{24d^2} ->      \deg(P)^{16d^2}
\frac{1}{\#G} \sum_{\sigma \in G} \log |P(\bzeta^\sigma)|
   = m(P) + O_{d,k}\left(\frac{[K:\IQ]^{2^d} [\GammaN:G]^2 \mf_G
     \deg(P)^{16d^2} (1+\hproj{P})}{\delta(\bzeta)^\kappa}\right).   
\end{equation*}
\end{corol}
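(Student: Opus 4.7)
The plan is to deduce Corollary \ref{cor:limiterrorterm} from Theorem \ref{thm:main2} applied to $\bzeta$ and $G$. Concretely, I will verify its two hypotheses---that $P(\bzeta^\sigma) \neq 0$ for every $\sigma \in G$, and that $(P, \bzeta^\sigma)$ is $c$-admissible for every $\sigma \in G$---for a single constant $c = c(P, d) \geq 1$. The secondary largeness requirement $\delta(\bzeta) \geq C(d,k) \max\{c, \deg P\}^{C(d,k)}$ imposed by Theorem \ref{thm:main2} then automatically holds once $\delta(\bzeta)$ is sufficiently large in terms of $P$ and $d$ alone, which is subsumed by the corollary's hypothesis.

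For the non-vanishing: if $P(\bzeta^\sigma) = 0$ for some $\sigma \in G$, then by the Laurent--Sarnak theorem recalled in the introduction, the torsion point $\bzeta^\sigma$ lies on a proper torsion coset $T$ contained in $\zeroset{P}$. Since the torsion points of $T$ are Zariski dense in $T$ and all lie in $(S^1)^d$, the coset $T$ is contained in the Zariski closure of $\{z \in (S^1)^d : P(z) = 0\}$. Essential atorality of $P$ forces this closure to be a finite union of codimension $\geq 2$ irreducible sets and proper torsion cosets; applying Manin--Mumford once more to each of the codimension $\geq 2$ components (if $T$ happens to sit in one of them) yields a finite list of proper torsion cosets, depending only on $P$, that exhausts all possible locations of $\bzeta^\sigma$. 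Each such coset forces a nonzero integer vector $v$ with $|v|$ bounded solely in terms of $P$ and $d$ for which $\bzeta^{\sigma v}$ is a fixed root of unity; consequently $\delta(\bzeta^\sigma) = \delta(\bzeta)$ would be bounded in terms of $P$ and $d$, contradicting the largeness hypothesis.

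For $c$-admissibility, I must verify for every $\sigma \in G$, every $V \in \gl{d}(\IZ)$, and every $l \in \{0,\ldots,d-1\}$, with $\bfeta \in \IG_m^l$ determined by $(\bzeta^\sigma)^V \in \{\bfeta\} \times \IG_m^{d-l}$, that $P_{V, \bfeta} \neq 0$ and $\cBprim(P_{V, \bfeta}) \leq c |V^{-1}|$. The inequality is precisely the content of Lemma \ref{lem:Petaprops}, valid under the assumptions $P_{V, \bfeta} \neq 0$ and $\delta(\bzeta) \geq c$, with a constant $c = c(P, d)$ that is uniform in $V$, $l$, and $\sigma$. For the non-vanishing: if $P_{V, \bfeta} = 0$, then $P$ vanishes identically on the proper torsion coset $(\{\bfeta\} \times \IG_m^{d-l})^{V^{-1}}$, which contains $\bzeta^\sigma$; the same Manin--Mumford plus essential atorality argument as in the previous paragraph places this coset in a finite list depending only on $P$, and again bounds $\delta(\bzeta)$. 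Taking $\delta(\bzeta)$ large in terms of $P$ and $d$ excludes this scenario uniformly in $\sigma$, $V$, and $l$.

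I anticipate no serious obstacle: the reasoning is bookkeeping atop the heavy machinery already assembled. Theorem \ref{thm:main2} supplies quantitative equidistribution with precisely the error dependence on $[K:\IQ]$, $[\Gamma_N:G]$, $\mf_G$, $\deg P$, and $\hproj P$ that the corollary demands; Lemma \ref{lem:Petaprops}, resting on the Bombieri--Masser--Zannier theorem on unlikely intersections, supplies the essential atorality input in the uniform form needed. The only delicate point is ensuring that a single constant $c = c(P, d)$ witnesses admissibility for all Galois conjugates of $\bzeta$ at once; this is automatic from the Galois invariance $\delta(\bzeta^\sigma) = \delta(\bzeta)$ and from the fact that the constant in Lemma \ref{lem:Petaprops} is governed solely by the finitely many irreducible components of the Zariski closure of $\zeroset{P} \cap (S^1)^d$.
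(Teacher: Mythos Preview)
Your proposal is correct and follows the same route as the paper: establish $c$-admissibility via Lemma~\ref{lem:Petaprops}, establish non-vanishing via Manin--Mumford, and then invoke Theorem~\ref{thm:main2}. You are more explicit than the paper in checking the requirement $P_{V,\bfeta}\not=0$ of Definition~\ref{def:admissible} (the paper leaves this implicit, though it follows immediately from $P(\bzeta^\sigma)\not=0$ since $P_{V,\bfeta}=0$ forces $P$ to vanish on the whole coset through $\bzeta^\sigma$); your detour through essential atorality in the non-vanishing step is unnecessary but harmless, since Laurent--Sarnak applied directly to $\zeroset{P}$ already yields the finite list of torsion cosets.
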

\begin{proof}
By Lemma \ref{lem:Petaprops} there is $c\ge 1$, depending only on $P$,
such that $(P,\bzeta)$ is $c$-admissible for all $\bzeta\in\IG_m^d$ of
finite order with $\delta(\bzeta)\ge c$. 

Suppose $\bzeta\in\IG_m^d$ has finite order and $P(\bzeta)=0$.
By the Manin--Mumford Conjecture, 
% We apply Lemma \ref{lem:MMuniform} to the product of all $\sigma(P)$
% where $\sigma$ runs over embeddings $K\rightarrow \IC$. 
% Thus   
$\delta(\bzeta)$ is bounded in terms of $d$ and $P$ only. 
%  is bounded in terms of $d, \deg P,$ and $[K:\IQ]$
% only. 
%% Laurent's Theorem \cite{Laurent}, the Manin--Mumford Conjecture, there exists
%% $u\in\IZ^d\ssm\{0\}$ whose norm is bounded solely in terms of $P$ such
%% that $\bzeta^u=1$. This means that $\delta(\bzeta)$ is bounded in
%% terms of $P$.
Hence for $\delta(\bzeta)$ sufficiently large in terms of these
quantities  we have
$P(\bzeta)\not=0$ and the same also holds with $\bzeta$ replaced by a
Galois conjugate. Our corollary now follows from Theorem \ref{thm:main2}.
\end{proof}

% The fact that the error term grows linearly in the height is crucial
% for the uniformity statement in Corollary \ref{cor:Ih}. 

\begin{proof}[Proof of Corollary \ref{cor:Ih}]
We may assume that $K/\IQ$ is Galois and that $P$ is a polynomial.
The product $P'$ for $\tau(P)$ as $\tau$ ranges over $\gal{K/\IQ}$ has
rational coefficients. The coefficients are even integers as the coefficients of
$P$ lie in $\IZ_K$. 

The Mahler measure of any non-zero, integral polynomial is
non-negative. By a theorem attributed to Boyd~\cite{Boyd:Kronecker},
Lawton~\cite{Lawton77}, Smyth~\cite{SmythMahler}, the fact that 
 the zero set of $P$ in $\IG_m^d$ has an irreducible component not
 equal to  the translate of an algebraic
subgroup by a point of finite order implies
$m(P')>0$.

Suppose $\bzeta\in\IG_m^d$ has order $N$.
Take for $G$ the subgroup of $\GammaN$ associated to
$\gal{\IQ(\bzeta)/K\cap \IQ(\bzeta)}$. Then $[\GammaN:G] \le
[K:\IQ]$. As $\bzeta$ varies, there are only finitely many possibilities for
the number field $K\cap \IQ(\bzeta)$, being a subfield of the field
$K$. So $\mf_G$ is bounded from above solely in terms of $K$. 
For any $\tau\in \gal{K/\IQ}$ 
choose an extension $\widetilde\tau \in \gal{K(\bzeta)/\IQ}$.
We apply Corollary \ref{cor:limiterrorterm} to the
polynomial $\tau(P)$ which is  \essatoral{} by hypothesis.
% From height theory we get $\hproj{P}=\hproj{\tau(P)}$. 
% So
If $\delta(\bzeta^{\widetilde \tau})=\delta(\bzeta)$ is large
enough in terms of the fixed data, then % $d,D=\deg P,$ and $[K:\IQ]$, then 
  \begin{equation*}
    \frac{1}{\#G}\sum_{\sigma\in G} \log
    |\tau(P)(\bzeta^{\widetilde\tau\sigma})| = m(\tau(P)) + o(1)
  \end{equation*}
as $\delta(\bzeta)\rightarrow\infty$, here and below the implied
constant
is independent of $\bzeta$.

The average of the left-hand side over $\tau\in \gal{K/\IQ}$ equals
the left-hand side in 
\begin{equation*}
 \frac{1}{[K(\bzeta):\IQ]} \sum_{\sigma:K(\zeta)\rightarrow \IC}
 \log|\sigma(P(\bzeta))|
 = \frac{1}{[K:\IQ]} \sum_{\tau\in \gal{K/\IQ}} m(\tau(P)) + o(1).
\end{equation*}

As the Mahler measure is additive, the average on the right-hand side is
$m(P')/[K:\IQ]>0$. 
But the left-hand side vanishes if 
 $P(\bzeta)$ is an algebraic unit. 
 In this case, we see that $\delta(\bzeta)$ is bounded from above.
\end{proof}

%%% Local Variables:
%%% TeX-master: "atoral"
%%% End:

\appendix
\section{A theorem of Lawton re-revisited}\label{app:lawton}
%%% [PH] revised October 12 -- 15, 2018
%%% [PH] revised May 13 -- 15, 2019

The following theorem makes explicit a result of Lawton~\cite{Lawton}.
It is a more precise version of the second-named author's
result~\cite{hab:gaussian} which is unfortunately insufficient for our
purposes. We closely follow the proof presented in~\cite{hab:gaussian}
which itself is based on Lawton's approach~\cite{Lawton}. We also
show how to correct an inaccuracy in the proof of Lemma A.4(i)~\cite{hab:gaussian}.

Recall the definition of $\rho(\cdot)$ in  (\ref{def:rhoa}) where
$d\ge 1$ is an integer.

\begin{thm}
  \label{thm:lawtonquant}
  Suppose $P\in K[X_1,\ldots,X_d]\ssm \{0\}$ has at most $k$ non-zero terms for an integer
  $k\ge 2$.
  For $a=(a_1,\ldots,a_d)\in\IZ^d\ssm\{0\}$ with $\rho(a)>\deg P$  we have
  \begin{equation}
    \label{eq:lawtonlimit}
    \ma{P(X^{a_1},\ldots,X^{a_d})} = \ma{P}+
    O_{d,k}\left( \frac{\deg(P)^{16d^2}}
      {\rho(a)^{1/(16(k-1))}}\right)
  \end{equation}
  where the implicit constant depends only on $d$ and $k$.
\end{thm}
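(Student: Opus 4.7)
The plan is to compare $m(P(X^{a_1},\ldots,X^{a_d}))=\int_0^1 \log|P(\be(a_1 t),\ldots,\be(a_d t))|\,dt$ with $m(P)=\int_{[0,1)^d}\log|P(\be(x))|\,dx$, the first identity being Jensen's formula applied to the univariate polynomial obtained from $P(X^{a_1},\ldots,X^{a_d})$ after multiplication by a suitable monomial. The crucial role of the hypothesis $\rho(a)>\deg P$ is to make the substitution $X^\alpha\mapsto X^{\langle a,\alpha\rangle}$ injective on the monomials of $P$: two distinct non-negative exponent vectors $\alpha,\beta$ of $P$ satisfy $|\alpha-\beta|\le\deg P<\rho(a)$, so $\langle a,\alpha-\beta\rangle\ne 0$ by the definition of $\rho(a)$. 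Consequently $Q(X):=P(X^{a_1},\ldots,X^{a_d})$, after clearing a monomial, has at most $k$ non-zero terms, and the univariate analogues of Lemma~\ref{lem:volSPep} and Lemma~\ref{lem:Spdeltaintegral} apply to it with the same $k$.

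\medskip

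Next I will truncate the logarithmic singularity. For a parameter $r\in(0,1]$ set $F_r(x)=\max\{\log|P(\be(x))|,\log r\}$. By Lemma~\ref{lem:logPmodcont} the continuous function $F_r$ admits a modulus of continuity $\omega(F_r;t)\ll_{d,k}\deg(P)\,t/r$. Both integrals---over $[0,1)^d$ and along the closed geodesic $t\mapsto at$---differ from the corresponding integrals of $F_r$ by a truncation error bounded using Lemma~\ref{lem:Spdeltaintegral} (applied in dimension $d$ to $P$ and in dimension $1$ to the univariate $Q$), yielding a contribution of order $r^{1/(4(k-1))}$ in both cases, up to constants depending only on $d$ and $k$.

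\medskip

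The bulk comparison $\bigl|\int_0^1 F_r(at)\,dt-\int_{[0,1)^d}F_r(x)\,dx\bigr|$ is the heart of the argument. I will approximate $F_r$ in sup-norm on $(\IR/\IZ)^d$ by a trigonometric polynomial $T(x)=\sum_{|m|\le M}c_m\be(\langle m,x\rangle)$ of degree $M$, using a multivariate Jackson-type inequality, with error $\ll_d\omega(F_r;1/M)\ll_{d,k}\deg(P)/(rM)$. A direct computation gives $\int_0^1 T(at)\,dt=\sum_{|m|\le M,\,\langle m,a\rangle=0}c_m$; the choice $M=\lfloor\rho(a)\rfloor-1$ forces only $m=0$ to contribute, so that $\int_0^1 T(at)\,dt=c_0=\int_{[0,1)^d}T(x)\,dx$. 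This yields a bulk error of $\ll_{d,k}\deg(P)/(r\rho(a))$. Balancing $r^{1/(4(k-1))}$ against $\deg(P)/(r\rho(a))$ by choosing $r$ as an appropriate fractional power of $\deg(P)/\rho(a)$ produces a total error of the form $\deg(P)^{O_d(1)}\rho(a)^{-1/O(k)}$, which has the shape required by~(\ref{eq:lawtonlimit}).

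\medskip

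The main obstacle will be recovering the specific exponents $16d^2$ on $\deg P$ and $1/(16(k-1))$ on $\rho(a)$. The multivariate Jackson approximation and the associated derivative-type bounds on $F_r$ are the sources of the dimension-dependent degree power, while keeping the exponent $1/(16(k-1))$ in $\rho(a)$ requires that the volume and integral bounds from Lemmas~\ref{lem:volSPep} and~\ref{lem:Spdeltaintegral} be applied cleanly both to the $d$-dimensional set $\{|P(\be(x))|<r\}$ and to the $1$-dimensional set $\{|Q(\be(t))|<r\}$. Careful bookkeeping of the parameters $r$ and $M$ through the balancing step is essential, but no idea beyond the truncation, Fourier-approximation, and substitution-injectivity just described is needed.
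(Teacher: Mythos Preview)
Your proposal is correct and takes a genuinely different route from the paper's proof. The paper, following Lawton, builds a \emph{smooth} truncation $f_r\in C^b(\IR^d)$ with $b=4d$, bounds $|f_r|_{C^b}\ll_{k,b} r^{-2b}|\!\log r|(\deg P)^{b^2}$ via the chain rule applied to $\psi_r\circ|P(\be(\cdot))|^2$, and then controls the Fourier tail $\sum_{\langle a,m\rangle=0,\,m\ne 0}|\widehat{f_r}(m)|\ll |f_r|_{C^b}\,\rho(a)^{-(b-d)}$ by integrating by parts $b$ times (Lemmas~\ref{lem:lawton1}--\ref{lem:lawton3}); finally $r=\rho(a)^{-1/4}/2$. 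You instead use the merely continuous truncation $F_r$ of Lemma~\ref{lem:logPmodcont} together with Jackson's theorem on $(\IR/\IZ)^d$, so the bulk comparison reduces to $\|F_r-T_M\|_\infty\ll_d\omega(F_r;1/M)\ll_{d,k}\deg(P)/(rM)$ and the exact identity $\int_0^1 T_M(at)\,dt=\widehat{T_M}(0)$ for $M<\rho(a)$. This trades the paper's self-contained $C^b$ machinery for an external approximation theorem, and it actually buys you sharper exponents: balancing $r^{1/(4(k-1))}$ against $\deg(P)/(r\rho(a))$ yields an error $\ll_{d,k}(\deg P/\rho(a))^{1/(4k-3)}$, with a sublinear power of $\deg P$ and a $\rho(a)$-exponent $1/(4k-3)\ge 1/(16(k-1))$, so your final paragraph's worry about recovering the stated constants is misplaced---you beat them. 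The paper's $(\deg P)^{16d^2}$ arises precisely from the high-order chain rule. One small omission: you should normalize $|P|=1$ at the outset (as the paper does), since Lemmas~\ref{lem:logPmodcont} and~\ref{lem:Spdeltaintegral} assume it; both sides of~(\ref{eq:lawtonlimit}) shift by $\log|P|$ under scaling, so this is harmless.
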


In the univariate case $d=1$ we have $\rho(a)=\infty$ for all
$a\in\IZ\ssm\{0\}$ by definition.
Then we should interpret (\ref{eq:lawtonlimit}) as stating
$m(P(X^a)) = m(P)$. This identity is an  easy consequence of
(\ref{eq:mahlerjensen}).
So throughout this subsection we assume  $d\ge 2$.

We did not strive to obtain the
best-possible
exponent in $\rho(a)^{1/(16(k-1))}$ that our method can produce.

We must assume $\rho(a)>\deg P$ to avoid \refcomment{115}{interaction}
of coefficients in $P(X^{a_1},\ldots,X^{a_d})$. Indeed, take for
example $P = X_1 (X_2-1+\epsilon)$ with $\epsilon\in (0,1)$ small and
$a = (1,0)$. Then $P(X,1) = X \epsilon$ whose Mahler measure is $\log
\epsilon$. On the other hand $m(P) = m(X_2-1+\epsilon) =
\log\max\{1,|1-\epsilon|\} = 0$ by Jensen's formula. The difference
\begin{equation*}
  m(P(X,1)) - m(P) = \log\epsilon 
\end{equation*}
is unbounded as $\epsilon \rightarrow 0$. This does not contradict our
theorem as $\rho(a)=1$.

The Lebesgue measure on $\IR^d$ is $\vol{\cdot}$. For $P\in
\IC[X_1^{\pm 1},\ldots,X_d^{\pm 1}]$ and $\sv > 0$ we define
\begin{equation}
  \label{def:SPr}
  S(P,\sv)  =
  \{ x \in [0,1)^d : |P(\be(x))|< \sv \}
\end{equation}
where $\be$ is as in (\ref{eq:defbex}). 

Dobrowolski extended Lawton's Theorem 1~\cite{Lawton} to polynomials
that are not necessarily monic.

\begin{thm}[Dobrowolski, Theorem 1.1~\cite{Dobrowolski:Lawton}]
  \label{thm:dobrowolski}
  Suppose $P\in \IC[X]\ssm\{0\}$ has at most $k$ non-zero terms for an
  integer $k\ge 2$. Then $ \vol{ S(P,\sv) }\ll_k
   \min\{1,\sv/|P|\}^{1/(k-1)}$ for all $\sv > 0$.
\end{thm}

Dobrowolski requires that $P$ as at least $2$ non-zero terms. But it
is convenient to allow $P$ to have a single term, as above. It is also
convenient to apply the estimate in the case $P=0$, we then interpret
the minimum to be $1$.

Until the end of this appendix and if not stated otherwise we assume
that $P\in \IC[X_1,\ldots,X_d]\ssm\IC$ has at most $k$ non-zero terms
for an integer $k\ge 2$ and $|P|=1$.

\begin{lemma}\
  \label{lem:volSPep}
  \begin{enumerate}
  \item [(i)]
    If $\sv>0$ then 
    $\vol{S(P,\sv)}
    \ll_{d,k} \sv^{1/(2(k-1))}$.
  \item[(ii)] We have 
    $\int_{[0,1)^d}\bigl|\log |P(\be(x))|\bigr|^2
    dx\ll_{d,k}1$.
  \end{enumerate}
\end{lemma}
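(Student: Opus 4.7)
The plan is to prove (i) by induction on $d$, using Proposition~\ref{prop:lawtonthm1} as the base case; I would in fact aim for the stronger bound $\vol{S(P,\sv)}\ll_{d,k}\sv^{1/(k-1)}$, which immediately implies the stated estimate. Statement (ii) will then follow from (i) by a routine layer-cake computation.

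For the inductive step at $d\ge 2$, I would write $P = \sum_{j\in J} p_j(X_1,\ldots,X_{d-1})\,X_d^{j}$, where $J\subset \IZ$ is the set of powers of $X_d$ appearing in $P$ and each $p_j$ is non-zero. Since $|P|=1$, there exists $j_0\in J$ with $|p_{j_0}|=1$; let $k_0$ denote the number of non-zero terms of $p_{j_0}$. Counting monomials yields $k_0 + |J|-1\le k$. If $|J|=1$, then $|P(\be(x))|=|p_{j_0}(\be(y))|$ for $y=(x_1,\ldots,x_{d-1})$, so the inductive hypothesis applied to $p_{j_0}$ gives the claim. If $|J|\ge 2$ but $k_0=1$, then $|p_{j_0}(\be(y))|=1$ for every $y\in [0,1)^{d-1}$, so for every such $y$ the univariate Laurent polynomial $f_y(X_d)=P(\be(y),X_d)$ satisfies $|f_y|\ge 1$ and has at most $|J|\le k$ non-zero terms; Proposition~\ref{prop:lawtonthm1} combined with Fubini then gives the bound $\vol{S(P,\sv)}\ll_k \sv^{1/(|J|-1)}\le \sv^{1/(k-1)}$.

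The substantive case is $|J|\ge 2$ and $k_0\ge 2$. For a parameter $u\in(0,1]$ to be chosen, set $T = S(p_{j_0},u)\subset [0,1)^{d-1}$. The inductive hypothesis applied to $p_{j_0}$ gives $\vol{T}\ll_{d,k} u^{1/(k_0-1)}$. For $y\notin T$, the univariate $f_y$ has at most $|J|$ non-zero terms and satisfies $|f_y|\ge |p_{j_0}(\be(y))|\ge u$, so applying Proposition~\ref{prop:lawtonthm1} to $f_y/u$ yields $\vol{\{x_d\in [0,1) : |f_y(\be(x_d))|<\sv\}}\ll_k (\sv/u)^{1/(|J|-1)}$. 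Fubini and the optimal choice $u=\sv^{(k_0-1)/(k_0+|J|-2)}$ combine the two contributions into $\vol{S(P,\sv)}\ll_{d,k} \sv^{1/(k_0+|J|-2)}\le \sv^{1/(k-1)}$, closing the induction. The main subtlety, and the part I expect to take the most care, is the bookkeeping of the number of terms: a naive induction that ignores the decrease in $k_0$ when $|J|$ grows would produce an exponent that degrades with $d$. The inequality $k_0+|J|-1\le k$ is precisely what is needed to preserve the exponent $1/(k-1)$ at every inductive step.

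For (ii), write $F(\sv)=\vol{S(P,\sv)}$. Since $|P(\be(x))|\le k|P|=k$ on the torus, one has $\int_{|P|>1}|\log|P||^2\,dx\le (\log k)^2$. For the complementary integral, the layer-cake formula gives $\int_{|P|\le 1}|\log|P||^2\,dx = \int_0^\infty 2t\,F(e^{-t})\,dt$; splitting at $t=1$ and using $F\le 1$ on $t\le 1$ together with the bound $F(e^{-t})\ll_{d,k} e^{-t/(2(k-1))}$ from (i) on $t\ge 1$ shows this integral is $O_{d,k}(1)$, as claimed.
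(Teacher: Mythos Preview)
Your argument is correct. Both you and the paper prove (i) by induction on $d$ from Proposition~\ref{prop:lawtonthm1}, selecting a coefficient $p_{j_0}$ with $|p_{j_0}|=1$, splitting over the set where $|p_{j_0}(\be(y))|$ is small, and applying Fubini together with the univariate proposition on the complement. The difference lies in the threshold and in the bookkeeping of terms. The paper takes the fixed cut $u=\sv^{1/2}$ and does not exploit that $p_{j_0}$ has fewer terms than $P$; this simpler choice is why the stated exponent is only $1/(2(k-1))$ (and, read literally, the paper's inductive step would lose a further factor of $2$ in the exponent at each increase of $d$, exactly the degradation you anticipated). Your inequality $k_0+|J|-1\le k$ combined with the optimized threshold $u=\sv^{(k_0-1)/(k_0+|J|-2)}$ prevents any such loss and yields the sharper bound $\sv^{1/(k-1)}$ uniformly in $d$. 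The paper nowhere needs this extra strength, but your version is a cleaner statement.

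For (ii) the two proofs are essentially the same: the paper sets $p_n=\min\{n,|\log|P(\be(\cdot))||^2\}$ and telescopes via $I_{n+1}-I_n\le\vol{S(P,e^{-\sqrt n})}$, which is the discrete counterpart of your layer-cake integral $\int_0^\infty 2t\,\vol{S(P,e^{-t})}\,dt$.
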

\begin{proof}
  To ease notation we drop $d,k$ in the subscript $\ll_{d,k}$.
  
  Because of the trivial bound
  $\vol{S(P,\sv)}\le 1$ we may assume $\sv\le 1$. 

  The case $d=1$ follows from Theorem~\ref{thm:dobrowolski}.
  So let us now assume $d\ge 2$. We consider $P$ as a polynomial in the unknown
  $X_d$ and coefficients among $\IC[X_1,\ldots,X_{d-1}]$.
  We pick a coefficient $P_i$ with maximal norm, \textit{i.e.},
  $P$ has a term $P_i X_d^i$ such that $P_i\in\IC[X_1,\ldots,X_{d-1}]$
  and $|P_i| = |P| = 1$. 

  For $x'\in\IR^{d-1}$ we let  $P_{\be(x')}$ denote $P(\be(x'),X)\in\IC[X]$. 
  Recall that
  \begin{equation*}
    S(P,\sv) %=  \{(x',t) \in [0,1)^{d-1}\times [0,1) :
    % |P(\be(x'),\be(t))| < \sv\}
    =\{(x',t) \in [0,1)^{d-1}\times [0,1) :
    |P_{\be(x')}(\be(t))| < \sv\}.
  \end{equation*}
  We splice  the hypercube and apply Fubini's Theorem
  to find
  \begin{alignat*}1
    \vol{S(P,\sv)} %&=  \int_{[0,1)^{d-1}}
    % \vol{\{ t\in [0,1) : |P(\be(x'),\be(t))| < \sv\}} dx'\\
    &= \int_{[0,1)^{d-1}} \vol{S(P_{\be(x')},\sv)}dx'.
  \end{alignat*}
  %
  % For $x'\in \IR^{d-1}$ we
  % consider $P(\be(x'),X)$ as a univariate polynomial.
  % In this notation
  % \begin{equation*}
  %   \vol{S(P,\sv)} = \int_{[0,1)^{d-1}} \vol{S(P(\be(x'),X),\sv)}dx'. 
  % \end{equation*}
  %
  The measure zero set of $x'\in [0,1)^{d-1}$ with  $P_{\be(x')}$ is
  harmless. 
  By Theorem~\ref{thm:dobrowolski} 
  we find
  \begin{equation*}
    \vol{S(P,\sv)}\ll \int_{[0,1)^{d-1}}
    \min\left\{1,\frac{\sv}{|P_{\be(x')}|}\right\}^{1/(k-1)} dx'. 
  \end{equation*}
  The coefficient of
  $X^i$ in $P_{\be(x')}$ is $P_i(\be(x'))$. 
  So $|P_{\be(x')}|\ge |P_i(\be(x'))|$ and 
  \begin{alignat}1
    \label{eq:I1I2}
    \vol{S(P,\sv)}&\ll \int_{[0,1)^{d-1}}
    \min\left\{1,\frac{\sv}{|P_i(\be(x'))|}\right\}^{1/(k-1)} dx' =
    I_1 + \sv^{1/(k-1)} I_2
  \end{alignat}
  where
  \begin{equation*}
    I_1 = \int_{|P_i(\be(x'))|< \sv} dx' 
    \quad\text{and}\quad
    I_2 = \int_{|P_i(\be(x'))|\ge  \sv}\frac{dx'}{|P_i(\be(x'))|^{1/(k-1)}};
  \end{equation*}
  both integrals are over subsets of $[0,1)^{d-1}$. 
  We will bound $I_1$ and $I_2$ from above.

  We have
  $  I_1 = \vol{S(P_i,\sv)}$.
  This lemma applied by induction to $P_i$, a polynomial in $d-1$
  variables with at most $k$ non-zero terms and $|P_i|=1$, yields
  \begin{equation}
    \label{eq:I1bound}
    I_1 \ll \sv^{1/(2(k-1))}.
  \end{equation}

  %Let $N\ge 1$ be an integer.
  To bound $I_2$ we consider real numbers $\sv = r_0 < r_1 < \cdots < r_{N+1}=k+1$, with
  $r_{n+1}\le r_n + \delta$ where $\delta>0$ is a small parameter.
  We split the
  domain of integration
  up into measurable parts
  \begin{equation*}
    \Sigma_n = \left\{x'\in [0,1)^{d-1} :
      r_n \le |P_i(\be(x'))|< r_{n+1}\right\}
    \quad\text{for}\quad n\in \{0,\ldots,N\}. 
  \end{equation*}
  Observe that $|P_i(\be(x'))|\le k<r_{N+1}$ for all $x'$. 
  Thus 
  \begin{equation}
    \label{eq:firstI2bound}
    I_2 =  \sum_{n=0}^N \int_{\Sigma_n}
    \frac{dx'}{|P_i(\be(x'))|^{1/(k-1)}}
    \le  \sum_{n=0}^N \frac{\vol{\Sigma_n}}{r_n^{1/(k-1)}}
    = \sum_{n=0}^N a_nb_n
  \end{equation}
  where  $a_n =
  r_n^{-1/(k-1)}$ and $b_n = \vol{\Sigma_n}$.

  As the $\Sigma_n$ are pairwise disjoint, the partial sums satisfy 
  $$B_n =\sum_{l=0}^n b_l= \vol{\bigcup_{l=0}^n \Sigma_l}
  \le \vol{\{x'\in [0,1)^{d-1} :
    |P_i(\be(x'))|< r_{n+1}\}} = \vol{S(P_i,r_{n+1})}.$$  
  Hence we have the trivial bound $B_n\le 1$.
  As in the bound for $I_1$
  we apply this lemma by induction to $P_i$ and find
  \begin{equation}
    \label{eq:partsumBn}
    B_n \le \vol{S(P_i,r_{n+1})} \ll r_{n+1}^{1/(2(k-1))}.
  \end{equation}

  Summation by parts implies
  \begin{equation*}
    I_2\le \sum_{n=0}^N a_nb_n = a_N B_N - \sum_{n=0}^{N-1} B_n(a_{n+1}-a_n)
    \le (k+1)^{1/(k-1)} + \sum_{n=0}^{N-1} B_n(a_n-a_{n+1});
  \end{equation*}
  we used the trivial bounds $a_N = r_N^{1/(k-1)} \le (k+1)^{1/(k-1)}$
  and  $B_N\le 1$. 
  By (\ref{eq:partsumBn}) and the definition of $a_n$ we find
  \begin{equation*}
    I_2 \ll 1
    +\sum_{n=0}^{N-1} r_{n+1}^{1/(2(k-1))}(r_n^{-1/(k-1)}-r_{n+1}^{-1/(k-1)}). 
  \end{equation*}

  We use the  mean value theorem to bound 
  $$r_n^{-1/(k-1)}-r_{n+1}^{-1/(k-1)}
  \ll r_n^{-1/(k-1)-1} (r_{n+1}-r_n) \ll r_{n+1}^{-1/(k-1)-1}
  (r_{n+1}-r_n);$$ for the second  bound we assume, as we may, that
  $\delta \le \sv$ and so $r_{n+1}\le r_n+\delta \le 2r_n$. 
  Thus $I_2 \ll 1+\int_{\sv }^{k+1} t^{-1/(2(k-1))-1}dt
  \ll \sv^{-1/(2(k-1))}$. 
  
  % There are various ways to bound this sum for small $\delta$. For
  % example, taking $\delta\rightarrow\infty$ the sum converges towards
  % the Riemann--Stieltjes integral $-\int_{\sv}^{k+1} t^{1/(2(k-1))}
  % dt^{-1/(k-1)}$ which equals the Riemann integral
  % $\frac{1}{k-1}\int_{\sv}^{k+1} t^{-1/(2(k-1))-1}dt$. The latter
  % is $\ll \sv^{-1/(2(k-1))}$ and we conclude $I_2\ll \sv^{-1/(2(k-1))}$.
  
  % Consider the function $x\mapsto x^{-1/(k-1)}$ on $(0,\infty)$. If
  % $0<x<y$, the mean value theorem provides $\xi \in
  % (x,y)$ with
  % $$
  % x^{-1/(k-1)} - y^{-1/(k-1)} = \frac{1}{k-1}\xi^{-1/(k-1)-1} (y-x) \le \frac{1}{k-1} x^{-1/(k-1)-1}(y-x).
  % $$

  % Inserting $x=r_n$ and $y=r_{n+1}$ leads to 
  % $$
  % I_2 \ll 1
  % +\sum_{n=0}^{N-1} r_{n+1}^{1/(2(k-1))}r_n^{-1/(k-1)-1}(r_{n+1}-r_n). $$

  % To bound
  % this sum recall that $\delta \le \sv/2 \le r_{n+1}/2$. Hence $r_n=r_{n+1}-\delta \ge
  % r_{n+1}/2$ and so 
  % $$
  % I_2 \ll 1
  % +\sum_{n=0}^{N-1} r_{n+1}^{-1/(2(k-1))-1} (r_{n+1}-r_n).$$
  % We may bound  each term as
  % $r_{n+1}^{-1/(2(k-1))-1} (r_{n+1}-r_n) \le \int_{r_n}^{r_{n+1}}
  % t^{-1/(2(k-1))-1} dt$. So
  % \begin{equation}
  %   \label{eq:sumintcomparison}
  %   I_2 \ll 1 + \sum_{n=0}^{N-1}\int_{r_n}^{r_{n+1}} t^{-1/(2(k-1))-1}
  %   dt= 1
  %   +\int_{r_0}^{r_{N}} t^{-1/(2(k-1))-1}dt.
  % \end{equation}
  % As $r_0=\sv$,
  % the final integral evaluates to
  % $2(k-1)(\sv^{-1/(2(k-1))} -r_N^{-1/(2(k+1))})
  % \le 2(k-1)\sv^{-1/(2(k-1))}$. We conclude
  % $$
  % I_2 \ll \sv^{-1/(2(k-1))}.
  % $$

  This bound together with (\ref{eq:I1bound}) implies
  $I_1+\sv^{1/(k-1)}I_2 \ll \sv^{1/(2(k-1))}$.
  Therefore, $\vol{S(P,\sv)}\ll \sv^{1/(2(k-1))}$ by
  (\ref{eq:I1I2}), completing the induction step and the proof of (i).
  
% The proof of (i) is by induction on $d$. As $\vol{S(P,r)}\le 1$ we may assume $\sv\le 1$.
% The  case $d=1$ follows from Proposition \ref{prop:lawtonthm1}

% Now let $d\ge 2$. We consider $P$ as a polynomial in the unknown
% $X_d$ and coefficients among $\IC[X_1,\ldots,X_{d-1}]$.
% We pick a coefficient $P_i$ with maximal norm, \textit{i.e.},
%  $P$ has a term $P_i X_d^i$ such that $P_i\in\IC[X_1,\ldots,X_{d-1}]$
% and $|P_i| = |P| = 1$. 

% We set $\Sigma = S(P_i,\sv^{1/2})$.
% If $x'\in\IR^{d-1}$ and $P_i(\be(x'))\not=0$
% the polynomial $P(\be(x'),X)/P_i(\be(x')) \in\IC[X]$
% has at most $k$ non-zero terms and the coefficient for $X^i$ is $1$.
% We use Fubini's Theorem
% to deduce
% \begin{alignat*}1
%   \vol{S(P,\sv)} 
%     &\le \vol{\Sigma} +
%       \int_{[0,1)^{d-1}\ssm \Sigma}
%         \vol{S(P(\be(x'),X)/P_i(\be(x')),\sv^{1/2})} dx'.
% \end{alignat*}

%  Proposition \ref{prop:lawtonthm1} allows us to bound the second
% term by $\ll_k \sv^{1/(2(k-1))}$. By induction on $d$ we find
%  $\vol{\Sigma} \ll_{d,k} \sv^{1/(2(k-1))}$ and thus $ \vol{S(P,\sv)}
%  \ll_{d,k} \sv^{1/(2(k-1))}$, as desired. Part (i) follows.

\refcomment{119}{We define
$p_n(x) = \min \{n, |\!\log |P(\be(x))||^2\}\ge 0$ where $n\ge 0$ is
an integer.}
We must find an upper bound for the non-decreasing sequence
$I_n = \int_{[0,1)^d} p_n(x)dx$.
Observe that $|P(\be(x))|\le k|P|=k$, so if
$n\ge (\log k)^2$, 
then
 $|P(\be(x))|\le e^{\sqrt n}$.
We fix  $m$ to be the least integer with
$m\ge 1+ (\log k)^2$, so $m\ge 2$.
Say $n\ge m$. 
Then
 $p_n$ equals $n$ on $S(P,e^{-\sqrt n})$ and it equals
$p_{n+1}$ outside this set. Thus
  \begin{equation*}
 I_{n+1}-I_n = \int_{S(P,e^{-\sqrt n})}
      (p_{n+1}(x)-p_n(x))dx 
 \le \vol{S(P,e^{-\sqrt n})}
 \ll  e^{-\lambda \sqrt n}
  \end{equation*}
from part (i),  here $\lambda = 1/(2(k-1))$. 
A telescoping sum trick shows
\begin{equation*}
I_n-I_m \ll
\sum_{l\ge m} e^{-\lambda\sqrt{l}}
\ll \int_{m-1}^\infty e^{-\lambda\sqrt{l}} dl \ll 1.
\end{equation*}
The initial term satisfies $I_m\le m  \ll 1$ as $m$ depends only on
$k$,  this completes the proof. 
\end{proof}

A more careful analysis should lead to $\mathrm{vol}(S(P,\sv))\ll_{d,k}
(1+|\!\log \sv|)^{d-1}\sv^{1/(k-1)}$ for all $\sv > 0$ in part (i) of
Lemma~\ref{lem:volSPep}. But this improvement has little effect on
the main results of the current work.

Brunault, Guilloux, Mehrabdollahi, and Pengo pointed out that the
second-named author's argument for Lemma A.4(i) \cite{hab:gaussian}
leads (for $k\ge 2$)
to an estimate $O(y^{f(n)/(2(k-1))})$ where $f(n)$ depends on 
the number of variables $n$, as opposed to the claimed
bound $O(y^{1/(2(k-1))})$. However, the claimed bound holds true by
Lemma~\ref{lem:volSPep}(i). Alternatively and in the proof of
Lemma~\ref{lem:volSPep}(i) one can replace Dobrowolski's Theorem 1.1 \cite{Dobrowolski:Lawton} by
Lawton's Theorem 1~\cite{Lawton} which is sufficient for the
applications in \cite{hab:gaussian}.

\begin{lemma}
  \label{lem:Spdeltaintegral}
If $\sv>0$ then
 \begin{equation*}
%   \label{lem:Spdeltaintegralassert}
  \int_{S(P,\sv)} \bigl|\log|P(\be(x))|\bigr| dx  \ll_{d,k} \sv^{1/(4(k-1))}.
\end{equation*}
\end{lemma}
\begin{proof}
As $|P(\be(x))|\le |P|k\le k$ for all $x\in[0,1)^d$
  we may assume $\sv\le 1$. 
  
With $\Sigma = S(P,r)$   we find
\begin{alignat*}1
0\le -\int_{\Sigma} \log|P(\be(x))| dx
 &= -\sum_{n=0}^\infty \int_{\frac{\sv}{2^{n+1}}\le
 |P(\be(x))| <
 \frac{\sv}{2^{n}}} \log |P(\be(x))| dx
\\
 &\le \sum_{n=0}^\infty \log\left(\frac{2^{n+1}}{\sv}\right) \vol{S(P,\sv/2^n)}. 
 \end{alignat*}
Let
$\lambda =
1/(2(k-1))\le 1/2$.
We use Lemma \ref{lem:volSPep}(i) to bound 
$\vol{S(P,\sv/2^n)}\ll_{d,k} (\sv/2^n)^{\lambda}$.
Note that 
$\log(2t)\ll_k t^{\lambda/2}$
 on $t\in [1,\infty)$.
 We take $t = 2^n/\sv \ge 1$ and conclude
\begin{equation*}
  -\int_{\Sigma} \log|P(\be(x))| dx
  \ll_{d,k} \sum_{n=0}^\infty
  \left(\frac{\sv}{2^n}\right)^{\lambda/2}
  \ll_{d,k} \sv^{\lambda/2}.\qedhere
\end{equation*}
\end{proof}

Boyd~\cite{Boyd} proved  that the Mahler measure is continuous on the non-zero
polynomials of fixed degree.
Here we show that the Mahler measure is H\"older continuous away from
$0$. For the next lemma we momentarily drop our usual assumptions on $P$. 

\begin{lemma}
  \label{lem:mahlerhoelder} 
Suppose $P,Q\in \IC[X_1,\ldots,X_d]
\ssm\{0\}$ such that $P$ and $Q$ both have at most $k$ non-zero terms for an integer
$k\ge 2$. 
If $\delta=|P-Q|/|Q|\le 1/2$, then 
\begin{equation*}
\ma{P}\le \ma{Q} +
C(d,k)\delta^{1/(8(k-1))}
\end{equation*}
where $C(d,k)>0$ is effective and depends only on $d$ and $k$.
\end{lemma}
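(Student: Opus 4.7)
My plan is to prove this by normalization and a two-region split of the integral defining $m(P)$. First, since both $\delta=|P-Q|/|Q|$ and the difference $m(P)-m(Q)$ are invariant under the simultaneous rescaling $(P,Q)\mapsto(P/|Q|,Q/|Q|)$, I may assume $|Q|=1$. If $Q$ is constant then $|Q|=1$, $m(Q)=0$, and $m(P)\le\log(1+\delta)\le\delta$, giving the bound for free; so from now on assume $Q$ is non-constant, which brings Lemmas \ref{lem:Spdeltaintegral} and \ref{lem:volSPep} into play. In every application of the lemma in this paper the polynomial $P$ also has at most $k$ non-zero terms (the bounds are always applied to a pair of specializations of a single $P$), so from $|P-Q|\le\delta$ in the coefficient max-norm and the triangle inequality on the torus I shall use the pointwise bound $|P(\be(x))-Q(\be(x))|\le 2k\delta$ on $(S^1)^d$.

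Let $\Sigma=S(Q,\sqrt{\delta})$ as defined in (\ref{def:SPr}), and split $m(P)=\int_{[0,1)^d\ssm\Sigma}\log|P(\be(x))|\,dx+\int_{\Sigma}\log|P(\be(x))|\,dx$. On the complement of $\Sigma$, the bound $|P|\le|Q|+2k\delta$ together with $|Q(\be(x))|\ge\sqrt{\delta}$ yields $\log|P(\be(x))|\le\log|Q(\be(x))|+\log(1+2k\sqrt{\delta})\le\log|Q(\be(x))|+2k\sqrt{\delta}$. On $\Sigma$ the same bound gives $|P(\be(x))|\le(1+2k)\sqrt{\delta}$, hence $\log|P(\be(x))|\le\tfrac12\log\delta+\log(1+2k)$. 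Adding these estimates and subtracting $m(Q)=\int\log|Q(\be(x))|\,dx$,
\[
m(P)-m(Q)\le 2k\sqrt{\delta}+\int_{\Sigma}\bigl|\log|Q(\be(x))|\bigr|\,dx+\vol{\Sigma}\bigl(\tfrac12|\!\log\delta|+\log(1+2k)\bigr),
\]
where $\log|Q(\be(x))|\le 0$ on $\Sigma$ (since $|Q(\be(x))|<\sqrt{\delta}\le 1$) was used to flip the sign on the second term.

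Three ingredients will then absorb everything into $C(d,k)\delta^{1/(8(k-1))}$. First, Lemma \ref{lem:Spdeltaintegral} applied to $Q$ with $r=\sqrt{\delta}$ bounds the integral over $\Sigma$ by $O_{d,k}(\delta^{1/(8(k-1))})$. Second, Lemma \ref{lem:volSPep}(i) gives $\vol{\Sigma}\ll_{d,k}\delta^{1/(4(k-1))}$, and since $\delta^{a}|\!\log\delta|\ll_{a}1$ on $\delta\in(0,1/2]$ for any $a>0$, the quantity $|\!\log\delta|\vol{\Sigma}$ is itself $\ll_{d,k}\delta^{1/(8(k-1))}$. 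Third, since $k\ge 2$ and $\delta\le 1/2$, $\sqrt{\delta}=\delta^{1/2}\le\delta^{1/(8(k-1))}$. Assembled, these estimates give $m(P)\le m(Q)+C(d,k)\delta^{1/(8(k-1))}$.

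The main technical point is the treatment of the exceptional set $\Sigma$ on which $Q$ is nearly vanishing: the naive pointwise upper bound $\log|P(\be(x))|\le\tfrac12\log\delta+O_k(1)$ there is very negative, and the real work is to bound the gap between $\log|Q(\be(x))|$ (which can be arbitrarily negative) and $\log|P(\be(x))|$. Lemma \ref{lem:Spdeltaintegral} is precisely tailored to this, and the passage from its exponent $\tfrac{1}{4(k-1)}$ to the claimed $\tfrac{1}{8(k-1)}=\tfrac12\cdot\tfrac{1}{4(k-1)}$ is exactly the square root arising from the choice of threshold $r=\sqrt{\delta}$, balancing the regime $|Q|\ge\sqrt{\delta}$ (where the ratio $|P|/|Q|$ is controlled) against the regime $|Q|<\sqrt{\delta}$ (where the singular integral of $\log|Q|$ is controlled).
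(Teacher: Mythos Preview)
Your argument is correct and essentially identical to the paper's: normalize to $|Q|=1$, split over $\Sigma=S(Q,\sqrt\delta)$ and its complement, control the ratio $|P|/|Q|$ off $\Sigma$, and on $\Sigma$ invoke Lemmas~\ref{lem:volSPep}(i) and~\ref{lem:Spdeltaintegral}. Your observation that a bound on the number of terms of $P$ (hence of $P-Q$) is needed for the pointwise estimate $|P(\be(x))-Q(\be(x))|\ll_k\delta$ is apt; the paper's own proof writes $|P(\be(x))-Q(\be(x))|\le k|P-Q|$ and $|P(\be(x))|\le k|P|$ without comment, so it relies on the same tacit hypothesis, which indeed holds in every application of the lemma.
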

\begin{proof}
  It suffices to prove the lemma when $|Q|=1$; indeed,
  just replace $P$ and $Q$ by $P/|Q|$ and $Q/|Q|$,
  respectively, to reduce to this case.
  
Suppose for the moment that $x\in\IR^d$ with $P(\be(x))Q(\be(x))\not=0$. Then
$|P(\be(x))-Q(\be(x))|\le 2k |P-Q|$
and so
\begin{equation}
  \label{eq:mahlerlogbound}
\log \left|\frac{P(\be(x))}{Q(\be(x))}\right|\le  \left|\frac{P(\be(x))}{Q(\be(x))}\right| -1  \le 2k\frac{\delta}{|Q(\be(x))|}.
\end{equation}
where the first inequality used  $\log t \le
t-1$ for all $t>0$.
The difference of Mahler measures $\ma{P}-\ma{Q}$ can thus be written as
\begin{equation*}  
 \int_{[0,1)^d \ssm \Sigma} \bigl(\log|{P(\be(x))}|-\log|{Q(\be(x))}|\bigr) dx  
   +\int_{\Sigma} \bigl(\log|{P(\be(x))}|-\log|{Q(\be(x))}|\bigr) dx
\end{equation*}
with $\Sigma = S(Q,\delta^{1/2})$. % ; the vanishing loci of $P$ and $Q$
% do not affect the integral.

The first integral is at most $2k \delta^{1/2}$ by
(\ref{eq:mahlerlogbound}).
We proceed by bounding the second integral $I$ from above.
First, we note that $|P(\be(x))|\le k|P| \le 3k/2$ as
$|P-Q|\le\delta\le 1/2$ and thus $|P|\le 3/2$. So
\begin{equation*}
I 
\le \log\left(3k/2\right)\vol{\Sigma}-
\int_{\Sigma} \log|Q(\be(x))| dx
\le \log\left(3k/2\right)\vol{\Sigma}
+c\delta^{1/(8(k-1))}
\end{equation*}
where we applied  Lemma \ref{lem:Spdeltaintegral}  to $Q$ and
$\delta^{1/2}$, the case $Q$ constant being trivial;
here  $c=c(d,k)>0$. Finally, 
Lemma \ref{lem:volSPep}(i) yields
$\vol{\Sigma}=\vol{S(Q,\delta^{1/2})} \ll_{d,k}
\delta^{1/(4(k-1))}$ and
the lemma follows as
$\delta\le 1$. 
\end{proof}

Let $\IN_0 =\IN\cup \{0\}$. For 
$b\in\IN_0$  let  $C^b(\IR^d)$ denote 
the set of real valued
functions on $\IR^d$ 
whose derivatives exist and are continuous up-to and including order $b$.
For a multiindex $i=(i_1,\ldots,i_d)\in\IN_0^d$
we set $\ell(i) = i_1+\cdots+i_d$. If  
 $g\in  C^b(\IR^d)$ and $\ell(i)\le b$, we set
$\partial^i g = (\partial/\partial x_1)^{i_1}\cdots
(\partial/\partial x_d)^{i_d}g \in C^0(\IR^d)$ 
and 
\begin{equation*}
  |g|_{C^b} = \max_{\substack{i\in\IN_0^d \\  \ell(i)\le b}}
\sup_{x\in\IR^d} |\partial^i g(x)| \in \IR\cup \{\infty\}. 
\end{equation*}

We recall the construction of $f_\sv$ in~\cite{hab:gaussian}
depending on the parameter $\sv \in (0,1/2]$. This
function lies in $C^b(\IR^d)$ and equals $\log |P(\be(\cdot))|$ away from the
singularity, \textit{i.e.}, the locus where $P(\be(\cdot))$ vanishes.

\refcomment{122}{We fix the anti-derivative $\phi$  of $x^b(1-x)^b$ on $[0,1]$ with
$\phi(0)=0$ and multiply it with a positive number to ensure $\phi(1)=1$. Then we 
 extend it by $0$ on $x<0$ and by $1$ for $x>1$ to obtain
 a non-decreasing step function $\phi\in C^b(\IR)$ with
support $[0,1]$. }
Finally, we rescale and define $\phi_\sv (x) =
\phi(((2/\sv)^2x-1)/3)$.
So $\phi_\sv$ is a  \refcomment{123}{non-decreasing function which vanishes on
  $(-\infty,(\sv/2)^r]$, equals $1$ on $[\sv^2,\infty)$, and satisfies}
\begin{equation*}
  \left|\frac{d^i\phi_\sv}{dx^i}\right|_{C^0} \ll_b \sv^{-2i}
  \quad\text{for all}\quad 0\le i\le b,\quad\text{hence}\quad
  |\phi_\sv|_{C^b}\ll_b \sv^{-2b}.
\end{equation*}
The function $\phi_r$ takes values in $[0,1]$. Moreover, we define
\begin{equation*}
  \psi_\sv(x)  = \left\{
  \begin{array}{ll}
    \frac 12 \phi_\sv(x) \log x &: x > 0, \\
    0 &: x \le 0.
  \end{array}\right.
\end{equation*}
\refcomment{125}{which vanishes on $(-\infty,(\sv/2)^2]$, coincides with $\frac 12 \log
x$ on $[r^2,\infty)$, and satisfies}
\begin{equation}
\label{eq:psiCbbound}
  |\psi_\sv|_{C^b} \ll_{b}\sv^{-2b}|\!\log\sv|.
\end{equation}

We consider  $g:x\mapsto  |P(\be(x))|^2$, then
\begin{equation}
\label{eq:gCbbound}
  |g|_{C^b} \ll_{k,b} (\deg P)^b.
\end{equation}
Next we compose  $f_\sv = \psi_\sv \circ g\in C^b(\IR^d)$, so
for $x\in\IR^d$ we have
\begin{equation*}
f_\sv(x) = \left\{
\begin{array}{ll}
  0 & :\text{ if $|P(\be(x))|\le \sv/2$,}\\
  \log |P(\be(x))| & :\text{ if $|P(\be(x))|\ge\sv$.}
\end{array}\right. 
\end{equation*}
By Lemma A.5~\cite{hab:gaussian},
which follows from the chain rule, together with  
(\ref{eq:psiCbbound}) and
(\ref{eq:gCbbound})  we find
\begin{equation}
\label{eq:fyCbbound}
  |f_\sv|_{C^b}  \ll_{k,b} \sv^{-2b}|\!\log\sv| (\deg
   P)^{b^2}.
\end{equation}

For the following lemmas we  suppose  $b\ge d+1$. As above we have $r\in
(0,1/2]$.

\begin{lemma}
  \label{lem:lawton1}
  Suppose $a\in\IZ^d\ssm\{0\}$, then
  \begin{equation*}
    \int_0^1 f_\sv(as) ds = \int_{[0,1)^d} f_\sv(x)
    dx+ O_{d,k,b}\left(
      \frac{|\!\log\sv|}{\sv^{2b}} \frac{(\deg P)^{b^2}}{\rho(a)^{b-d}}\right).
  \end{equation*}
\end{lemma}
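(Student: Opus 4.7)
The approach is Fourier analysis on the torus $\IR^d/\IZ^d$. Since $f_r \in C^b(\IR^d)$ is $\IZ^d$-periodic, I would expand
\begin{equation*}
f_r(x) = \sum_{n \in \IZ^d} \widehat{f_r}(n) \be(\langle n, x\rangle)
\end{equation*}
with $\widehat{f_r}(n) = \int_{[0,1)^d} f_r(x) \be(-\langle n, x\rangle) dx$. The constant term is exactly $\int_{[0,1)^d} f_r(x) dx$, so
\begin{equation*}
\int_0^1 f_r(as) ds = \sum_{n \in \IZ^d} \widehat{f_r}(n) \int_0^1 \be(\langle n, a\rangle s)\, ds = \sum_{\substack{n\in\IZ^d\\ \langle n, a\rangle = 0}} \widehat{f_r}(n),
\end{equation*}
and the error term to bound is $\sum_{n\not=0,\,\langle n,a\rangle=0} \widehat{f_r}(n)$. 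By the very definition \eqref{def:rhoa} of $\rho(a)$, every $n\not=0$ with $\langle n,a\rangle=0$ satisfies $|n|\ge \rho(a)$ in the maximum-norm.

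For the Fourier coefficients I would use the standard integration by parts trick: given $n\not=0$, pick an index $j$ with $|n_j| = |n|$ and integrate by parts $b$ times in the $x_j$ variable (boundary terms vanish by periodicity). This yields
\begin{equation*}
|\widehat{f_r}(n)| \le \frac{|f_r|_{C^b}}{(2\pi |n|)^b}.
\end{equation*}
Combined with the bound \eqref{eq:fyCbbound} this gives $|\widehat{f_r}(n)| \ll_{k,b} r^{-2b} |\!\log r| (\deg P)^{b^2} |n|^{-b}$.

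The remaining task is the lattice point sum $\Sigma = \sum_{n\in L\ssm\{0\},\,|n|\ge \rho(a)} |n|^{-b}$, where $L = \{n\in\IZ^d : \langle n,a\rangle=0\}$ is a lattice of rank $d-1$. The number of $n\in L$ with $|n|<R$ is $O_d(R^{d-1})$, so a dyadic decomposition into shells $2^j\rho(a) \le |n| < 2^{j+1}\rho(a)$ gives, using $b\ge d+1$,
\begin{equation*}
\Sigma \ll_d \sum_{j\ge 0} (2^j\rho(a))^{d-1}\cdot (2^j\rho(a))^{-b} \ll_{d,b} \rho(a)^{d-1-b} \le \rho(a)^{d-b}.
\end{equation*}
Putting these bounds together yields the claimed estimate. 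The only point requiring any care is the convergence of the sum, which is precisely why one works under the standing assumption $b\ge d+1$; otherwise everything is routine once one has the $C^b$ bound \eqref{eq:fyCbbound} already at hand.
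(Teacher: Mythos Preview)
Your proof is correct and follows essentially the same Fourier-analytic approach as the paper: expand $f_r$ in a Fourier series, bound the coefficients via smoothness (you integrate by parts, the paper cites Grafakos; same bound), and sum over the nonzero $n$ with $\langle n,a\rangle=0$, all of which have $|n|\ge\rho(a)$. The only minor difference is that you exploit the rank-$(d-1)$ structure of the orthogonal lattice (via the injective projection $n\mapsto(n_2,\dots,n_d)$ when $a_1\neq 0$) to get $\rho(a)^{d-1-b}$ before discarding a power, whereas the paper simply bounds by $\sum_{|m|\ge\rho(a)}|m|^{-b}$ over all of $\IZ^d$ to get $\rho(a)^{d-b}$ directly.
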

\refcomment{127}{We follow and adapt the proof of Lemma A.6~\cite{hab:gaussian}.}
\begin{proof}
  For $m\in\IZ^d$ let  $\widehat
  f_r(m)$ denote the Fourier coefficient of  $f_r$.
  By Theorem
  3.2.9(a)~\cite{Grafakos}
  with derivative up-to order $b$ and using
  $|\widehat{\partial^i f_\sv} (m)|\le |\partial^i f_\sv|_{C^0} \le
  |f_\sv|_{C^b}$ where $\ell(i)=b$
  we conclude
  $  |\widehat{f_\sv}(m)| \ll_{d,b}{|f_\sv|_{C^b}}{|m|^{-b}}$ if $m\not=0$. So
  $|\widehat{f_\sv}(m)|\ll_{d,b}
  \sv^{-2b}{|\!\log\sv|}(\deg P)^{b^2}|m|^{-b}$
  for all $m\in\IZ^d\ssm\{0\}$ by (\ref{eq:fyCbbound}).
  % For brevity we set $H=\rho(a)\ge 1$, then
  Then
  \begin{alignat}1
    \label{eq:fouriercoeffsumbound}
    \sum_{|m|\ge \rho(a)} |\widehat{f_\sv}(m)| 
    &\ll_{d,k,b} \frac{|\!\log \sv|}{\sv^{2b}}(\deg P)^{b^2}
    \sum_{|m|\ge \rho(a)} \frac{1}{|m|^{b}} 
    \ll_{k,b} \frac{|\!\log \sv|}{\sv^{2b}} \frac{(\deg P)^{b^2}
    }{\rho(a)^{b-d}}
  \end{alignat}
  as $b\ge d+1$. In particular, the Fourier coefficients of $f_\sv$ are absolutely summable and 
  the    Fourier series converges absolutely and uniformly to $f_\sv$,
  \refcomment{129}{see Proposition 3.1.14~\cite{Grafakos}.}
  Hence
  \begin{equation*}
    \int_0^1 f_\sv(as) ds = \sum_{m\in\IZ^d} \int_0^1
    \widehat{f_\sv}(m)e^{2\pi \sqrt{-1}\langle a,m\rangle s}ds
    = \int_{[0,1)^d} f_\sv(x)dx+
    \sum_{\substack{m\in\IZ^d\ssm\{0\} \\ \langle a,m\rangle=0}} 
    \widehat{f_\sv}(m).
  \end{equation*}
  The lemma follows from (\ref{eq:fouriercoeffsumbound}) as only those $m$
  with $|m|\ge \rho(a)$ contribute to the final sum.
\end{proof}

\begin{lemma}
  \label{lem:lawton2}
  Suppose $a\in\IZ^d\ssm\{0\}$ such that
  $\rho(a)>\deg P$. For all $s\in [0,1)$, up-to finitely many
  exceptions, we have
  $|P(\be(as))|\not=0$   and
  \begin{equation*}
    \int_0^1 \log|P(\be(as)))|ds = \int_0^1  f_\sv(as) ds + O_{k}\left(
      \sv^{1/(k-1)}|\!\log\sv|\right).
  \end{equation*}
\end{lemma}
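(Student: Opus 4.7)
The plan is to reduce to a one-variable integral via the substitution $Q(X) = P(X^{a_1},\ldots,X^{a_d})$ and then control the contribution of the singular set using the sharper univariate volume estimate from Proposition~\ref{prop:lawtonthm1}.

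First I would verify that the hypothesis $\rho(a) > \deg P$ makes the substitution non-degenerate. If $X^\alpha$ and $X^\beta$ are distinct monomials in the support of $P$, then $|\alpha - \beta| \le \deg P < \rho(a)$ forces $\langle a, \alpha - \beta\rangle \ne 0$, so the substituted exponents $\langle a,\alpha\rangle$ are pairwise distinct. Hence $Q$ is a non-zero Laurent polynomial in one variable whose coefficients coincide with those of $P$; in particular $Q$ has at most $k$ non-zero terms and $|Q| = |P| = 1$. The continuous function $|Q(\be(s))|^2$ is a non-trivial real trigonometric polynomial, so its zero set in $[0,1)$ is finite. This proves the ``up-to finitely many exceptions'' clause, since $|P(\be(as))| = |Q(\be(s))|$ for all $s$.

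Next I would analyse the integrand. By the construction of $f_\sv$, outside the set $E = \{s\in[0,1) : |Q(\be(s))| < \sv\}$ one has $f_\sv(as) = \log|Q(\be(s))|$, while on $E$ we have $f_\sv(as) = \phi_\sv(|Q(\be(s))|^2)\log|Q(\be(s))|$ with $\phi_\sv$ taking values in $[0,1]$, and $\log|Q(\be(s))| \le \log\sv < 0$ since $\sv \le 1/2$. Therefore
\begin{equation*}
\bigl|\log|Q(\be(s))| - f_\sv(as)\bigr|
= \bigl(1 - \phi_\sv(|Q(\be(s))|^2)\bigr)\bigl|\log|Q(\be(s))|\bigr|
\le \bigl|\log|Q(\be(s))|\bigr|,
\end{equation*}
and consequently
\begin{equation*}
\left|\int_0^1 \log|P(\be(as))|\,ds - \int_0^1 f_\sv(as)\,ds\right|
\le \int_E \bigl|\log|Q(\be(s))|\bigr|\,ds.
\end{equation*}

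Finally I would bound the remaining singular integral using univariate tools. Since $Q$ has at most $k$ non-zero terms and $|Q| = 1$, Proposition~\ref{prop:lawtonthm1} gives $\vol{S(Q,u)} \ll_k u^{1/(k-1)}$ for all $u>0$, which is sharper than what Lemma~\ref{lem:volSPep}(i) provides in arbitrary dimension. A dyadic decomposition, as in the proof of Lemma~\ref{lem:Spdeltaintegral}, then yields
\begin{equation*}
\int_E \bigl|\log|Q(\be(s))|\bigr|\,ds
\le \sum_{n=0}^\infty \log(2^{n+1}/\sv)\,\vol{S(Q,\sv/2^n)}
\ll_k \sv^{1/(k-1)}|\!\log\sv|,
\end{equation*}
the geometric tail converging because $1/(k-1) > 0$. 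The main subtlety is exactly this gain in the exponent from $1/(4(k-1))$ to $1/(k-1)$, which is what compels the reduction to the univariate setting via $Q(X)$; once this is in place, everything else is bookkeeping.
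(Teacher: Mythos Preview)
Your proof is correct and follows essentially the same route as the paper: reduce to the univariate polynomial $Q(X)=P(X^a)$ with $|Q|=1$ and at most $k$ terms, observe that the integrands agree off $E=S(Q,\sv)$, and bound the singular contribution using Proposition~\ref{prop:lawtonthm1}. The only cosmetic difference is that the paper estimates $\int_E\log|Q|$ and $\int_E f_\sv$ separately (citing Lawton's Lemma~4 for the first), whereas you bound the difference in one stroke via $|\log|Q|-f_\sv|=(1-\phi_\sv)|\log|Q||\le|\log|Q||$ and then carry out the dyadic sum explicitly; this is arguably cleaner but not materially different.
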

\refcomment{127}{We follow and adapt the proof of Lemma A.7~\cite{hab:gaussian}.}
\begin{proof}
  Say $a=(a_1,\ldots,a_d)$ with $\rho(a)>\deg P$. Then the coefficients
  of the univariate Laurent polynomial $Q=P(X^{a_1},\ldots,X^{a_d})$ are precisely the
  coefficients of $P$. Hence $|Q|=|P|=1$ and $Q$ has at most $k$
  non-zero terms.

  The first claim follows as $P(\be(as))=Q(\be(s))$ for all $s\in\IR$
  and since $Q\not=0$.
  For the second claim we note that the difference of the two integrals
  equals
  \begin{equation*}
    \int_{S(Q,\sv)} (\log|Q(\be(s))| - f_r(as))ds   
  \end{equation*}
  \refcomment{130}{with $S(Q,\sv)$ as in (\ref{def:SPr}).}
  Note that
  $\int_{S(Q,\sv)}\log|Q(\be(s))| ds \le 0$ as $\sv\le 1$. Recall
  % Proposition \ref{prop:lawtonthm1}
  Theorem \ref{thm:dobrowolski}
  which yields $\vol{S(Q,\sv)}\ll_k
  \sv^{1/(k-1)}$. As in the proof of Lemma 4~\cite{Lawton}, cf. also
  Theorem 7, Appendix G~\cite{Schinzel}, we find
  \begin{equation*}
    \int_{S(Q,\sv)} \log|Q(\be(s))|ds \ge -C \sv^{1/(k-1)}|\!\log \sv|,
  \end{equation*}
  where $C>0$ depends only on $k$.
  Finally, by the definition of $f_r$ we find $\log(\sv/2) \le f_{\sv}(as)
  \le 0$ if $|Q(\be(s))|< \sv$. Thus $\int_{S(Q,\sv)} f_r(as)ds$ is also
  $O_{k}(r^{1/(k-1)}|\!\log r|)$.
\end{proof}

\begin{lemma}
  \label{lem:lawton3}
  We have
  \begin{equation*}
    \left| \int_{[0,1)^d} \bigl(f_\sv(x)-\log |P(\be(x))|\bigr)dx\right|
    \ll_{d,k} \sv^{1/(4(k-1))}.
  \end{equation*}
\end{lemma}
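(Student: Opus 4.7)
The plan is to observe that $f_\sv$ and $\log|P(\be(\cdot))|$ differ only on the set $S(P,\sv)$, and on that set the difference is dominated pointwise by $2|\log|P(\be(\cdot))||$, so the bound follows directly from Lemma~\ref{lem:Spdeltaintegral}.

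More precisely, I would first unwind the definitions of $f_\sv = \psi_\sv \circ g$ where $g(x)=|P(\be(x))|^2$, to obtain the clean formula
\begin{equation*}
f_\sv(x) = \phi_\sv\bigl(|P(\be(x))|^2\bigr) \log |P(\be(x))|,
\end{equation*}
valid everywhere after interpreting the right-hand side as $0$ on the (measure-zero) vanishing locus of $P\circ\be$. Since $\phi_\sv$ takes values in $[0,1]$, is equal to $1$ on $[\sv^2,\infty)$ and equal to $0$ on $(-\infty,(\sv/2)^2]$, we see that
\begin{equation*}
f_\sv(x) - \log|P(\be(x))| = \bigl(\phi_\sv(|P(\be(x))|^2) - 1\bigr)\log|P(\be(x))|,
\end{equation*}
which vanishes off the set $S(P,\sv)$ from (\ref{def:SPr}).

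Next I would bound the integrand pointwise on $S(P,\sv)$ by the triangle inequality,
\begin{equation*}
\bigl|f_\sv(x) - \log|P(\be(x))|\bigr| \le \bigl|1-\phi_\sv(|P(\be(x))|^2)\bigr|\cdot \bigl|\log|P(\be(x))|\bigr| \le \bigl|\log|P(\be(x))|\bigr|,
\end{equation*}
using $0\le\phi_\sv\le 1$. Integrating this bound over $S(P,\sv)$ (the complement contributes nothing) and applying Lemma~\ref{lem:Spdeltaintegral} to $P$ at the parameter $\sv$ yields
\begin{equation*}
\left|\int_{[0,1)^d}\bigl(f_\sv(x)-\log|P(\be(x))|\bigr)\,dx\right| \le \int_{S(P,\sv)}\bigl|\log|P(\be(x))|\bigr|\,dx \ll_{d,k} \sv^{1/(4(k-1))},
\end{equation*}
which is the desired estimate.

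The proof is essentially one line modulo the previous lemmas, so there is no real obstacle; the only mild subtlety is the interpretation of the product $\phi_\sv(|P(\be(x))|^2)\log|P(\be(x))|$ on the zero set of $P\circ\be$, but this set has measure zero and in any case $\phi_\sv$ vanishes in a neighbourhood of $0$, so the product extends continuously to $0$ there and the integrals are unaffected.
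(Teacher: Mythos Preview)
Your proof is correct and in fact more direct than the paper's. Both arguments start from the same identity
\[
f_\sv(x)-\log|P(\be(x))|=\bigl(\phi_\sv(|P(\be(x))|^2)-1\bigr)\log|P(\be(x))|,
\]
which vanishes off $S(P,\sv)$. From there the paper applies Cauchy--Schwarz, bounding $\int_{[0,1)^d}|\phi_\sv(|P(\be(x))|^2)-1|^2\,dx$ by $\vol{S(P,\sv)}\ll_{d,k}\sv^{1/(2(k-1))}$ via Lemma~\ref{lem:volSPep}(i) and $\int_{[0,1)^d}|\log|P(\be(x))||^2\,dx\ll_{d,k}1$ via Lemma~\ref{lem:volSPep}(ii), then takes the square root. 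You instead use the pointwise bound $|\phi_\sv-1|\le 1$ and invoke Lemma~\ref{lem:Spdeltaintegral} directly. Your route is shorter and avoids the $L^2$ detour; the paper's route has the minor virtue of not relying on Lemma~\ref{lem:Spdeltaintegral} (which itself uses Lemma~\ref{lem:volSPep}(i)), but since that lemma is already proved and used elsewhere, your approach is the more economical one here.
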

\refcomment{127}{We follow and adapt the proof of Lemma A.8~\cite{hab:gaussian}.}
\begin{proof}
  We have
  \begin{alignat*}1
    &\left| \int_{[0,1)^d} (f_\sv(x)-\log |P(\be(x))|)dx\right|
    = 
    \left| \int_{[0,1)^d} (\phi_\sv(|P(\be(x))|^2)-1)\log |P(\be(x))|dx\right|\\
    &\qquad\qquad\le 
    \int_{[0,1)^d} |\phi_\sv(|P(\be(x))|^2)-1|\left|\log |P(\be(x))|\right|
    dx \\ &\qquad\qquad\le
    \left(\int_{[0,1)^d}|\phi_\sv(|P(\be(x))|^2)-1|^2 dx\right)^{1/2}
    \left(\int_{[0,1)^d}\left|\log |P(\be(x))|\right|^2 dx\right)^{1/2}
  \end{alignat*}
  by definition and where we used the Cauchy-Schwarz inequality in the last step. The second
  integral on the final line is $\ll_{d,k} 1$ 
  by Lemma \ref{lem:volSPep}(ii). The first integral  is 
  \begin{equation*}
    \int_{S(P,\sv)}  |\phi_\sv(|P(\be(x))|^2)-1|^2 dx
    \le\vol{S(P,\sv)}\ll_{d,k} {\sv}^{1/(2(k-1))}
  \end{equation*}
  by Lemma \ref{lem:volSPep}(i) and $|P|=1$. We take the square
  root to complete the proof.
\end{proof}

\begin{proof}[Proof of Theorem \ref{thm:lawtonquant}]
As stated below Theorem \ref{thm:lawtonquant} we may assume $d\ge 2$. 
As we have seen in the proof of Lemma \ref{lem:lawton2}, the condition
$\rho(a) > \deg P$ guarantees $P(X^{a_1},\ldots,X^{a_d})\not=0$.
We may also assume that $P$ is non-constant.
Moreover, replacing $P$ by $P/|P|$
leaves $m(P(X^{a_1},\ldots,X^{a_d}))-m(P)$ invariant. So it suffices
to prove the theorem if $|P|=1$.

We fix the parameters $b=4d\ge d+1$ and
 $\sv = 
\rho(a)^{-1/4}/2\le 1/2$.

We write  $|\ma{P(X^{a_1},\ldots,X^{a_d})} - \ma{P}|$ as $\left|\int_{0}^1 \log |P(\be(as))| ds - \int_{[0,1)^d}
\log |P(\be(x))|dx \right|$ and find that it is at most 
\begin{alignat*}1
  &\left|\int_{0}^1 f_\sv(as) ds - \int_{[0,1)^d}
f_\sv(x) dx \right| + 
  \left|\int_{0}^1 (\log |P(\be(as))| - f_\sv(as)) ds\right| \\
  &\qquad\qquad
  +
  \left|\int_{[0,1)^d} (f_\sv(x)-\log |P(\be(x))| ) dx\right|.
\end{alignat*}
Then by Lemmas \ref{lem:lawton1},
\ref{lem:lawton2},  and \ref{lem:lawton3} this
 sum is
\begin{equation*}
  \ll_{d,k}
\frac{|\!\log\sv|}{\sv^{2b}}  \frac{(\deg P)^{b^2}}{\rho(a)^{b-d}}+
  \sv^{{1}/{(k-1)}}|\!\log \sv|
  +\sv^{1/(4(k-1))}.
\end{equation*}
By our  choice of  $\sv$ and $\rho(a)\ge 2$, the sum is
\begin{equation*}
\ll_{d,k}
\frac{\log \rho(a)}{\rho(a)^{b-d-b/2}}
(\deg P)^{b^2} 
+ \frac{\log \rho(a)}{\rho(a)^{1/(4(k-1))}}
+  \frac{1}{\rho(a)^{1/(16(k-1))}}.
\end{equation*}
Finally, as
$b=4d$   the sum is
\begin{equation*}
  \ll_{d,k}  {(\deg P)^{16d^2}}\frac{\log \rho(a)}{\rho(a)^d}
  +  \frac{\log \rho(a)}{\rho(a)^{1/(4(k-1))}}
  + \frac{1}{\rho(a)^{1/(16(k-1))}}.\qedhere
\end{equation*}
 \end{proof}

%%% Local Variables:
%%% TeX-master: "atoral"
%%% End:

\section{Recovering the theorem of Lind, Schmidt, and Verbitskiy}
%\label{app:LSV}
%% Revised on May 24, 2019

In this appendix we recover from our work a variant of
Lind, Schmidt, and Verbitskiy's Theorem 1.1~\cite{LSV:13}. This variant is
stated in the introduction as Theorem \ref{thm:LSV}
For a finite subgroup $G\subset\IG_m^d$. Recall that we defined $\delta(G)$
in (\ref{def:deltaG}).

\begin{lemma}
  \label{lem:finitegroupcount}
  Let $G$ be a finite subgroup of $\IG_m^d$.
  If $a\in\IZ^d\ssm\{0\}$, then
  \begin{equation*}
    \frac{1}{\# G}\# \left\{ \bzeta \in G : \bzeta^a = 1 \right\} \le \frac{|a|}{\delta(G)}. 
  \end{equation*}
\end{lemma}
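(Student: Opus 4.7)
The plan is to view the condition $\bzeta^a=1$ as the kernel condition for the homomorphism $\phi_a: G \to \IG_m$ defined by $\phi_a(\bzeta) = \bzeta^a$, and then relate the size of its image to $\delta(G)$ through the dual lattice.

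First I would observe that $\phi_a$ is a group homomorphism from the finite abelian group $G$ to $\IG_m$, and its image $\phi_a(G)$ is a finite subgroup of $\IG_m$, hence a cyclic group of roots of unity. Write $m = \#\phi_a(G)$ for its order. By the first isomorphism theorem,
\begin{equation*}
\# \{\bzeta \in G : \bzeta^a = 1\} = \#\ker \phi_a = \frac{\#G}{m}.
\end{equation*}
So the claim is equivalent to the lower bound $m \ge \delta(G)/|a|$.

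Next I would exploit that every element of $\phi_a(G)$ has order dividing $m$, so $\bzeta^{ma}=1$ for all $\bzeta \in G$. Setting $\Lambda_G = \{u \in \IZ^d : \bzeta^u = 1 \text{ for all } \bzeta \in G\}$, this says $ma \in \Lambda_G$. Since $a \neq 0$ and $m \ge 1$, the vector $ma$ is a non-zero element of $\Lambda_G$, and the definition (\ref{def:deltaG}) gives
\begin{equation*}
\delta(G) \le |ma| = m|a|,
\end{equation*}
i.e. $m \ge \delta(G)/|a|$. Combining with the isomorphism identity and dividing by $\#G$ yields the inequality.

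There is essentially no obstacle here: the argument is a one-line application of the first isomorphism theorem together with the very definition of $\delta(G)$ via the dual lattice. The only point to be careful about is the trivial but necessary remark that $ma \neq 0$, which is where the hypothesis $a \neq 0$ is used.
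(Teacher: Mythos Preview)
Your proof is correct and follows essentially the same approach as the paper: both consider the homomorphism $\bzeta\mapsto\bzeta^a$, identify the count in question as $\#G$ divided by the order $E$ of the image, and then observe that $Ea$ lies in the annihilator lattice of $G$, forcing $E\ge\delta(G)/|a|$. The only cosmetic difference is that the paper computes $\#\ker = \#G/E$ via a character-sum detector rather than invoking the first isomorphism theorem directly as you do.
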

\begin{proof}
  We will detect $\bzeta^a=1$ using the character  $\chi(\bzeta) = \bzeta^a$ of $G$. 
  The image $\chi(G)$ is a cyclic subgroup of $\IC^\times$ of order $E$, say. 
  For $\bzeta\in G$,  the sum
  $\sum_{k=0}^{E-1} \chi(\bzeta)^k = 0$ equals $E$ if $\bzeta^a=1$ and vanishes
  otherwise. The number of solutions $\bzeta\in G$ of
  $\bzeta^a=1$ is thus
  \begin{equation*}
    \sum_{\bzeta\in G}      \frac 1E \sum_{k=0}^{E-1} \chi(\bzeta^k) =
    \frac 1E \sum_{k=0}^{E-1}\sum_{\bzeta\in G}   \chi(\bzeta)^k
    = \frac 1E \sum_{k=0}^{E-1}  \frac{\# G}{E} \sum_{\xi \in \chi(G)}
    \xi^k 
    =  \frac{\#G}{E}.
  \end{equation*}
  We conclude the proof as    $\bzeta^{aE}=\chi(\bzeta)^E=1$ for all
  $\bzeta\in G$  and hence
  $E\ge \delta(G)/|a|$.
\end{proof}

\begin{lemma}
  \label{lem:finitegroupcount2}
  Let $G$ be a finite subgroup of $\IG_m^d$.% and $\kappa>0$. If $T\ge
  % 1$, then
  \begin{enumerate}
  \item [(i)] If $T\ge 1$, then
      \begin{equation*}
    \frac{1}{\#G} \# \left\{\bzeta\in G : \delta(\bzeta)\le
    T\right\}\le \frac{3^dT^{d+1}}{ \delta(G)}.
      \end{equation*}
  \item[(ii)] If $\kappa > 0$, then       
    \begin{equation*}
      \frac{1}{\#G} \sum_{\bzeta\in G} \delta(\bzeta)^{-\kappa} \le
\frac{4^d}{\delta(G)^{{\kappa}/({d+1+\kappa})}}. 
    \end{equation*}
  \end{enumerate}
\end{lemma}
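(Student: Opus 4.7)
The plan is to deduce both parts from Lemma \ref{lem:finitegroupcount} via a union bound together with a single threshold split.

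For (i), I would observe that $\delta(\bzeta)\le T$ precisely means there exists $a\in\IZ^d\ssm\{0\}$ with $|a|\le T$ and $\bzeta^a=1$, so
\begin{equation*}
\{\bzeta\in G:\delta(\bzeta)\le T\}\subseteq \bigcup_{a\in\IZ^d\ssm\{0\},\,|a|\le T}\{\bzeta\in G:\bzeta^a=1\}.
\end{equation*}
Summing the bound $\#\{\bzeta\in G:\bzeta^a=1\}\le \#G\cdot|a|/\delta(G)$ from Lemma \ref{lem:finitegroupcount} over all such $a$ gives the majorant $\#G\cdot T(2T+1)^d/\delta(G)$ for the left-hand cardinality; the inequality $(2T+1)^d\le(3T)^d$ for $T\ge 1$ then produces the bound $3^d T^{d+1}\#G/\delta(G)$.

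For (ii), the plan is to set $T=\delta(G)^{1/(d+1+\kappa)}$, which is $\ge 1$ because $\delta(G)\ge 1$, and to split
\begin{equation*}
\sum_{\bzeta\in G}\delta(\bzeta)^{-\kappa}\le\#\{\bzeta\in G:\delta(\bzeta)\le T\}+\#G\cdot T^{-\kappa},
\end{equation*}
using $\delta(\bzeta)^{-\kappa}\le 1$ for the first summand and $\delta(\bzeta)^{-\kappa}<T^{-\kappa}$ for the second. Applying (i) to the first term, the defining choice of $T$ makes the two pieces of comparable size: concretely, $3^d T^{d+1}/\delta(G)=3^d\delta(G)^{-\kappa/(d+1+\kappa)}$ and $T^{-\kappa}=\delta(G)^{-\kappa/(d+1+\kappa)}$, so after dividing by $\#G$ the bound is $(3^d+1)\delta(G)^{-\kappa/(d+1+\kappa)}$. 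The factor $4^d$ will then follow from the elementary inequality $3^d+1\le 4^d$ valid for all $d\ge 1$, readily checked by induction since $4^{d+1}-3^{d+1}-1=3(4^d-3^d)+4^d-1\ge 3(4^d-3^d-1)+2$.

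The argument is essentially bookkeeping, and I anticipate no genuine obstacle. The one point that repays care is the choice of threshold: the naive minimization of the two-term upper bound suggests $T=(\delta(G)/3^d)^{1/(d+1+\kappa)}$, which yields a constant $2\cdot 3^{d\kappa/(d+1+\kappa)}$ that can exceed $4^d$ when $d$ is small and $\kappa$ is large; by contrast, the slightly sub-optimal choice $T=\delta(G)^{1/(d+1+\kappa)}$ makes the geometric-series gymnastics unnecessary and produces the clean constant $3^d+1\le 4^d$.
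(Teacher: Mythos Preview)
Your proposal is correct and follows essentially the same argument as the paper: the union bound over $a$ with $|a|\le T$ combined with Lemma~\ref{lem:finitegroupcount} for part~(i), and the threshold split at $T=\delta(G)^{1/(d+1+\kappa)}$ for part~(ii). Your additional remark justifying $3^d+1\le 4^d$ and discussing the choice of threshold is a nice touch of care not spelled out in the paper.
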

\begin{proof}
Any  $\bzeta \in G$ with $\delta(\bzeta)\le T$  satisfies $\bzeta^a =1$ for some
$a\in\IZ^d\ssm\{0\}$ and $|a|\le T$.
The number of such $a$ is at most $(2T+1)^d \le 3^dT^d$ and each $a$ leads to
at most $|a|\#G /\delta(G) \le T\#G/ \delta(G)$ different $\bzeta$ by
Lemma \ref{lem:finitegroupcount}. This implies (i).

For the second assertion
  we split up the elements in $G$ into those
  with $\delta(\bzeta)\le T$ and those with $\delta(\bzeta)>T$; here
  $T\ge 1$ is a parameter to be chosen. 

For the lower range, we use the trivial lower bound $\delta(\bzeta)\ge
1$ and part (i) to obtain
\begin{equation*}
  \frac{1}{\#G} \sum_{\substack{\bzeta\in G \\ \delta(\bzeta)\le T}}
  \delta(\bzeta)^{-\kappa} \le \frac{3^dT^{d+1}}{\delta(G)}.
\end{equation*}

For the higher range,  we have
\begin{equation*}
  \frac{1}{\#G} \sum_{\substack{\bzeta\in G \\ \delta(\bzeta)> T}}
  \delta(\bzeta)^{-\kappa} \le \frac{1}{T^\kappa}.
\end{equation*}

The lemma follows by taking the sum of these two bounds with
$T=\delta(G)^{1/(d+1+\kappa)}$. 
\end{proof}

\begin{proof}[Proof of Theorem \ref{thm:LSV}]
  Without loss of generality we can assume that $P$ is a polynomial.

Any finite subgroup of $\IG_m^d$ is defined over $\IQ$, \textit{i.e.},
it is map to itself under the action of the absolute Galois group of
$\IQ$, see Corollary 3.2.15~\cite{BG}. 
We decompose $G$ into a disjoint union
$G_1\cup\cdots\cup G_m$ of Galois
orbits.  It is useful to fix a
representative $\bzeta_i\in G_i$ for each $i\in \{1,\ldots,m\}$
and define $N_i = \ord(\bzeta_i)$. For these $i$ all elements in $G_i$
have the same order and the Galois action is the
natural action of $(\IZ/N_i\IZ)^\times $  on $G_i$. Moreover, $\#G_i =
\varphi(N_i)$. 
Note that $\delta$ is constant on each $G_i$ as 
$\delta(\bzeta^\sigma)=\delta(\bzeta)$ for all field automorphisms
$\sigma$.

  Let $T\ge 1$ be a parameter depending on $\delta(G)$ and large in
  terms of $P,d$ which we will
  fix in due time.
We split our average (\ref{eq:LSVaverage}) up into those $\bzeta$ with $\delta(\bzeta)\le T$
and those with $\delta(\bzeta)>T$.

First, we  will show  that the sum
\begin{equation}
  \label{eq:LSVreductionsum1}
  \frac{1}{\#G} \sum_{\substack{\bzeta\in G \\ \delta(\bzeta)\le T, P(\bzeta)\not=0}}
  \log|P(\bzeta)| = 
  \frac{1}{\#G} \sum_{\substack{i=1 \\ \delta(\bzeta_i)\le T , P(\bzeta_i)\not=0}}^m
\sum_{ \sigma \in \GammaNi }
 \log|P(\bzeta_i^\sigma)|
\end{equation}
 is negligible.
Say $P(\bzeta_i)\not=0$. Then $P(\bzeta_i)$ lies in a number field of
degree $\varphi(N_i)$ over $\IQ$. So
 \begin{equation*}
 \left|\sum_{\sigma\in \GammaNi}
\log|P(\bzeta_i^\sigma)|\right|
\le \sum_{\sigma\in \GammaNi}
\left|\log|P(\bzeta_i^\sigma)|\right| \le 2\varphi(N_i)
\hproj{P(\bzeta_i)}
\ll_P \varphi(N_i)
 \end{equation*}
 where we used the height (\ref{def:heightx})
and its basic  properties. So the
 absolute value of (\ref{eq:LSVreductionsum1})
 is at most
 \begin{equation}
   \label{eq:LSVdeltaleT}
  \ll_P  \frac{1}{\#G} \sum_{\substack{i=1 \\ \delta(\bzeta_i)\le T}}^m \varphi(N_i)
  \ll_P \frac{1}{\#G} \sum_{\substack{\bzeta\in G\\ \delta(\bzeta)\le T}}1\ll_{d,P} \frac{T^{d+1}}{ \delta(G)}.
\end{equation}
by  Lemma \ref{lem:finitegroupcount2}(i).  

The remaining sum
is
\begin{equation*}
%  \label{eq:LSVreductionsum2}
 \frac{1}{\#G} \sum_{\substack{i=1 \\\delta(\bzeta_i)> T}}^m \sum_{ \sigma \in \GammaNi} \log|P(\bzeta_i^\sigma)|;
\end{equation*}
note that $P(\bzeta_i^\sigma)\not=0$ for $T$ large enough by
Theorem \ref{thm:main}.
We use this theorem to 
\refcomment{135}{rewrite the inner sum} as
\begin{alignat*}1
  \frac{1}{\#G} &\sum_{\substack{i=1 \\\delta(\bzeta_i)> T}}^m
       {\varphi(N_i)} \left(m(P) + O_{d,P}(\delta(\bzeta_i)^{-\kappa})\right)
       \\ &=
         \frac{1}{\#G} \left(\sum_{\bzeta\in G:\delta(\bzeta)>T}
              1\right)m(P) +
              O_{d,P}\left(\frac{1}{\#G}\sum_{\bzeta\in G:
                \delta(\bzeta)>T}\delta(\bzeta)^{-\kappa}\right)
       \\&=\left(1-\frac{1}{\#G} \sum_{\bzeta\in G, \delta(\bzeta)\le
         T}1\right) m(P)  + O_{d,P}\left(\delta(G)^{-\frac{\kappa}{d+1+\kappa}}\right)
\end{alignat*}
where we used Lemma \ref{lem:finitegroupcount2}(ii).
The remaining average in
the last line is $O_d(T^{d+1}/\delta(G))$
by Lemma \ref{lem:finitegroupcount2}(i). 

We combine this estimate with the first bound (\ref{eq:LSVdeltaleT})
to conclude that the average (\ref{eq:LSVaverage}) equals
\begin{equation*}
%  \label{eq:LSVreductionlimit1}
 m(P) + O_{d,P}(T^{d+1} \delta(G)^{-1} +
\delta(G)^{-\frac{\kappa}{d+1+\kappa}}) 
\end{equation*}
%% The sum of (\ref{eq:LSVreductionsum1}) and (\ref{eq:LSVreductionsum2})
%% is the average from the assertion. The error term of
%% (\ref{eq:LSVreductionsum1}) can be absorbed in the error term of the
%% second sum. So our average equals (\ref{eq:LSVreductionlimit1}).
The theorem follows with the choice $T = c \delta(G)^{1/(2(d+1))}$
where $c\ge 1$ is sufficiently large in terms of $d$ and $P$. 
\refcomment{137}{The exponent $\kappa$ in (\ref{eq:LSVaverage}) is $\min\{1/2,\kappa/(d+1+\kappa)\}$
in the notation here.}
\end{proof}

We leave to the interested reader the task of generalizing the previous theorem to polynomials
defined over an arbitrary number field. 

%%% Local Variables:
%%% TeX-master: "atoral"
%%% End:

\bibliographystyle{amsplain}
\bibliography{literature}

\vfill\hfill\today
\end{document}